\newtheorem{theorem}{Theorem}[section]
\newtheorem{lemma}[theorem]{Lemma}
\newtheorem{conjecture}[theorem]{Conjecture}
\newtheorem*{claim*}{Claim}
\newtheorem{corollary}[theorem]{Corollary}
\numberwithin{figure}{section}
\theoremstyle{definition}
\newtheorem{remark}[theorem]{Remark}
\newtheorem{definition}[theorem]{Definition}
\newtheorem{example}[theorem]{Example}
\renewcommand\Pr{{\mathop{\mathbb P{}}\nolimits}}
\renewcommand\phi{\varphi}
\newcommand\noproof{\hfill$\Box$}
\newcommand\webcite[1]{\texttt{\def~{\~{}}#1}}
\newcommand\arxiv[1]{\\\webcite{http://arxiv.org/abs/#1}}
\newcommand\orlab{labelled}
\newcommand\orlabation{labelling}
\newcommand\orlabing{labelling}
\newcommand\plpa{non-crossing}
\newcommand\plp{non-crossing partition}
\newcommand\tR{{\widetilde R}}
\newcommand\eps{\varepsilon}
\newcommand\nrl{\ell}%lattice element for the shift
\newcommand\pg{\succ}
\newcommand\pl{\prec}
\newcommand\floor[1]{\lfloor #1 \rfloor}
\newcommand\ceil[1]{\lceil #1 \rceil}
\newcommand\la{\lambda}
\newcommand\cc{{\mathrm{c}}}
\newcommand\E{\operatorname{\mathbb E{}}}
\newcommand\area{{\operatorname{\mathrm{area}}}}
\newcommand\Z{{\mathbb Z}}
\newcommand\RR{{\mathbb R}}
\newcommand\TT{{\mathbb T}}
\newcommand\bb[1]{\bigl(#1\bigr)}
\newcommand\dual{*}% was {\mathrm d}}
\newcommand\PP{{\mathcal P}}
\newcommand\cC{{\mathcal C}}
\newcommand\cF{{\mathcal F}}
\newcommand\cH{{\mathcal H}}
\newcommand\cL{{\mathcal L}}
\newcommand\cLl{{\cL}_{m,n}}
\newcommand\cD{{\mathcal D}}
\newcommand\cU{{\mathcal U}}
\newcommand\cA{{\mathcal A}}
\newcommand\cHd{{\mathcal H}^*}
\newcommand\veca{{\bf a}}
\newcommand\vecb{{\bf b}}
\newcommand\vecp{{\bf p}}
\newcommand\vecq{{\bf q}}
\newcommand\vecx{{\bf x}}
\newcommand\vecy{{\bf y}}
\newcommand\ddh{\frac{{\mathrm d}}{{\mathrm d}h}}
\newcommand\dH{d_{\mathrm H}}
\newcommand\Sb{O_{\mathrm b}}
\newcommand\Hb{H_{\mathrm b}}
\newcommand\Hw{H_{\mathrm w}}
\newcommand\Vb{V_{\mathrm b}}
\newcommand\Vw{V_{\mathrm w}}
\newcommand\ALG{{\mathbb A}}%algorithm
\newcommand\diam{\mathrm{diam}}
\newcommand\w{{\mathrm w}}
\newcommand\blk{{\mathrm b}}
\newcommand\rX{{X}}%random shift 
\newcommand\Ed{{E_{\mathrm v}}}
\newcommand\strip{{T}}
\newcommand\ww{width}%of a rectangle
\newcommand\hh{height}
\newcommand\xx{m}
\newcommand\yy{n}
\newcommand\off{w}
\newcommand\td{{\theta^*}}
\begin{document}
\title{Percolation on self-dual polygon configurations}

\date{May 11, 2010}% (compiled \today)}

\author{B\'ela Bollob\'as%
\thanks{Department of Pure Mathematics and Mathematical Statistics,
Wilberforce Road, Cambridge CB3 0WB, UK}
\thanks{Department of Mathematical Sciences,
University of Memphis, Memphis TN 38152, USA}
%\thanks{Trinity College, Cambridge CB2 1TQ, UK}
%\email{b.bollobas@dpmms.cam.ac.uk}
\thanks{Research supported in part by NSF grants DMS-0906634,
 CNS-0721983 and CCF-0728928, and ARO grant W911NF-06-1-0076}
\and Oliver Riordan%
\thanks{Mathematical Institute, University of Oxford, 24--29 St Giles', Oxford OX1 3LB, UK}
%\email{riordan@maths.ox.ac.uk}
}

\maketitle

\begin{abstract}
Recently, Scullard and Ziff noticed that a broad class of planar percolation models
are self-dual under a simple condition that, in a parametrized version of such a model,
reduces to a single equation. They state that the solution of the resulting
equation gives the critical point.
However, just as in the classical case of bond percolation on the square lattice,
self-duality is simply the starting point: the mathematical difficulty
is precisely showing that self-duality implies criticality.
Here we do so for a generalization of the models considered by Scullard and Ziff.
In these models, the states of the bonds need not be independent;
furthermore, increasing events need not be positively correlated,
so new techniques are needed in the analysis.
The main new ingredients are a generalization of Harris's Lemma
to products of partially ordered sets, and a new proof of a type of
Russo--Seymour--Welsh Lemma with minimal symmetry assumptions.
\end{abstract}

\tableofcontents

\section{Introduction}

In 1963 Sykes and Essam~\cite{SykesEssamPRL} noticed that, in independent bond percolation, a star
with bond probabilities $p_1$, $p_2$ and $p_3$ may be replaced by a triangle with
bond probabilities $r_1$, $r_2$ and $r_3$, provided the $p_i$ and $r_i$ satisfy
certain equations; in particular, a star in which
each bond has probability $p_0=1-2 \sin (\pi/18)$ of being open may be replaced by a triangle
with bond probabilities $1-p_0$. Sykes and Essam went on to use this
star-triangle transformation to predict that $p_0$ and $1-p_0$ should be the critical probabilities
for bond percolation on the hexagonal and triangular lattices, respectively.
In 1981 Wierman~\cite{WiermanHT} gave a rigorous
proof of this result with the aid of a Russo--Seymour--Welsh-type theorem.
In 1982 Kesten~\cite{KestenBook} extended Wierman's theorem to describe
the `critical surface' of weighted bond percolation on the
triangular lattice, although the details were worked out only in 1999
by Grimmett~\cite{GG99}. Later, in 2008, it was shown~\cite{kfold} that the
sum of the critical probabilities of a centrally symmetric planar lattice and its
dual is 1; more generally, an analogous statement allowing for percolation
with different probabilities for different bonds was proved,
giving the Kesten--Grimmett theorem as an easy corollary.

In 1984 Wierman~\cite{Wierman84} used the general star-triangle transformation to determine
the exact critical probability for bond percolation (with equal bond probabilities)
on a lattice obtained from the square lattice by adding some diagonals. Using
a simpler transformation, Suding and Ziff~\cite{SudingZiff}
deduced the critical probability for site percolation
on the extended Kagom\'e lattice from Wierman's result for the hexagonal lattice.

Recently, extending work of Scullard~\cite{Scullard}
and Chayes and Lei~\cite{CL},
Ziff~\cite{Ziff_cdc} and Ziff and Scullard~\cite{ZS_exactbond}
proposed a simple criterion
predicting the value of the critical probability for a wide
variety of percolation models in the plane.
In addition to the usual independent site and/or bond percolation
models on a number of lattices, these models
include cases which can be seen as bond percolation with
local dependencies between the states of certain bonds.

The predictions of Scullard and Ziff are similar in nature
to those of Sykes and Essam~\cite{SykesEssamPRL} mentioned above:
having shown that for a certain
probability $p_0$ the percolation model is `self-dual', they
state that this probability $p_0$ is therefore critical.
Mathematically, there is a folklore `conjecture' (with, as far as we are
aware, no precise formulation) stating that any `reasonable' 
self-dual planar model is critical. This conjecture is still wide open.
It is well known to hold in certain special cases,
in particular for site or bond percolation on lattices with certain
symmetries, such as reflection in a line (see Kesten~\cite{KestenBook}),
or rotational symmetry of any order, as shown in~\cite{kfold}. More generally,
as remarked in~\cite{kfold}, it can be shown for site or bond percolation on any lattice by
combining results of Sheffield~\cite[Corollary 9.4.6]{Sheffield},
Aizenman, Kesten and Newman~\cite{AKNunique} and Menshikov~\cite{Menshikov}.

Chayes and Lei~\cite{CL} independently described a special
case of the Scullard--Ziff criterion (as well as a generalization
to random cluster models), and gave a sketch proof
of criticality under an extra assumption.
In a recent preprint, Wierman and Ziff~\cite{WZ}
proved criticality in certain special cases,
using known results on self-dual planar lattices.

In this paper we shall prove that the Scullard--Ziff criterion does indeed give the 
critical point for a wide variety of planar percolation models.
In the original papers in the physics literature, the exact
scope of applicability of the criterion is not entirely clear. In this paper
we shall define \emph{precisely} a general class of models that
are self-dual in the appropriate sense, and
use new methods to show that the self-dual point is indeed
critical in \emph{all} cases.

Although full definitions will be given only in the next section, let us illustrate
some simple special cases of our main result, starting
with one very concrete (but rather specific) example, and then turning to a more general family.

\begin{figure}[htb]
 \centering
 \input{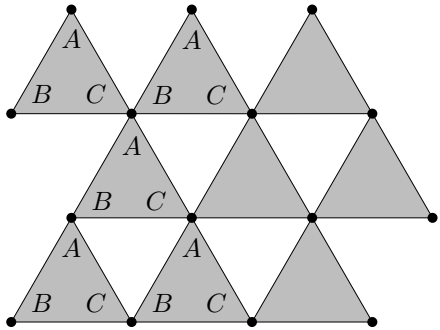}
 \caption{The triangular lattice, with alternate triangles shaded.}\label{fig_simpletri}
\end{figure}

\begin{example}\label{ex1}
Consider the usual triangular lattice shown in Figure~\ref{fig_simpletri}.
Given a parameter $0<p<1$, we initially select each bond (i.e., edge)
independently with probability
$p$. But then bonds within a shaded triangle `compete': if there are two bonds,
the first in the clockwise order `wins', and the other is deleted. However,
if all three bonds are present there is a standoff, and all three remain.
This results in a configuration of bonds such that, within each shaded triangle,
with probability $(1-p)^3$ no bonds are present, with probability $p^3$
all three are present, and otherwise exactly one bond is present, with
each of these cases having probability $\frac{1}{3}(1-p^3-(1-p)^3)=p(1-p)$.
The configurations in different triangles are of course independent.

Applying our main results to this particular model we shall
see that (with probability 1) the
remaining bonds form a graph containing an infinite component
if and only if $p>1/2$; in other words, the model \emph{percolates}
if and only if $p>1/2$. Note for later that considering a single shaded
triangle, the probability that the vertices $A$ and $B$ are connected
within this triangle is $p(1-p)+p^3$, as is the probability that
$B$ and $C$ are connected. The probability that both events
hold is $p^3$, since this happens if and only if all three bonds are selected.
\end{example}

\begin{figure}[htb]
 \centering
 \input{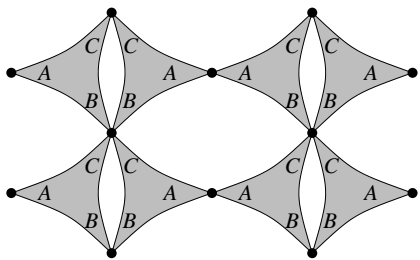}
 \caption{A lattice of labelled triangles with no axes of symmetry, and no rotational symmetries.}\label{fig_reflsym_1}
\end{figure}

\begin{example}\label{ex2}
More generally, consider, for example, either the usual triangular lattice as above (viewed
as an arrangement of shaded triangles), or the lattice of triangles a small
part of which is illustrated in Figure~\ref{fig_reflsym_1}. These are
both examples of \emph{self-dual hyperlattices}, to be defined 
in the next section. Suppose that
each shaded triangle contains some mechanism that connects certain subsets
of its vertices, with these processes independent in different triangles;
we have given one example above, but the mechanism is irrelevant, only
the final connection probabilities matter.
Suppose that in each triangle the probability that all vertices are connected
(inside the triangle)
is $p_{ABC}$, that none are connected is $p_\emptyset$, that $A$ and $B$
are connected to each other but not to $C$ is $p_{AB}$, and so on, with these probabilities the same
for all triangles. Then, except possibly in the degenerate case $p_{ABC}=p_\emptyset=0$,
there is (with probability 1) an infinite connected component
if and only if $p_{ABC}>p_\emptyset$.
\end{example}

Even the very special cases described above are outside the scope of existing results,
for several reasons. Firstly, in general they do
not correspond to independent bond or site percolation on any lattice
(as required in~\cite{WZ}). This is certainly the case when increasing
events are not positively correlated; see the discussion in Section~\ref{sec_mod}. 
Such correlation is absent in Example~\ref{ex1} when $p$ is equal to or close to $1/2$,
since $p^3 < (p(1-p)+p^3)^2$ when $p=1/2$.
Positive correlation is also required for the arguments in~\cite{CL}.
Secondly, in Example~\ref{ex2} there is no rotational or reflectional symmetry, as required
in~\cite{WZ} and in~\cite{CL}, so existing Russo--Seymour--Welsh-type results do not
apply. 

Our results show that self-duality implies criticality in a context
that is much broader than the Scullard--Ziff criterion; an example
is given in Figure~\ref{fig_square}.

Mathematically, the main interest of this paper
is perhaps in the development of new tools needed to analyze the general
model, including new proofs of analogues of the Russo--Seymour--Welsh
Lemma, and a generalization of Harris's Lemma. The rest of the paper is organized
as follows. In Section~\ref{sec_mod} we describe the model we shall study, and
state our main results.
In Section~\ref{sec_Harris} we present a generalization of Harris's Lemma to
products of posets. In Section~\ref{sec_cc} we prove various technical
results about the model. The heart of the paper is Section~\ref{sec_RSW},
where we prove a new RSW-type lemma; the proof is rather involved,
so we first illustrate the ideas in a simpler setting (bond percolation
on $\Z^2$) in Subsection~\ref{ss_z2}. In Section~\ref{sec_RSWapp} we show
how to apply this lemma using only the minimal symmetry guaranteed by self-duality.
Finally, in Section~\ref{sec_deduce} we show that (as in~\cite{ourKesten}), using
a suitable sharp-threshold result, it is but a small step from rectangle
crossings to the final results. In Section~\ref{sec_crit} we conclude with a brief discussion
of the behaviour of the model in the critical case.

\section{The model and results}\label{sec_mod}

The starting point of our investigation is
an embedding of a suitable hypergraph in the plane,
as described implicitly by Ziff and Scullard~\cite{ZS_exactbond} and explicitly
by Wierman and Ziff~\cite{WZ}.
In these papers the hypergraphs considered are
3-uniform, but much of the time there is no need for this restriction.
Since the concept of a plane hypergraph seems fundamental, we give several
equivalent definitions. To avoid irrelevant difficulties we always
assume piecewise linearity of all curves.

By a \emph{polygon} we mean a piecewise-linear closed curve $P$
in the plane that, if it touches itself at all, does so only externally
at some vertices. (To be pedantic, $P$ is the image of some regular $n$-gon $P'$
under a piecewise linear map defined on the closed domain bounded by $P'$
that is injective except possibly at the vertices of $P'$.)
Such a polygon surrounds (more precisely winds around) a simply connected
open set that we call its \emph{interior}.

By a \emph{plane hypergraph} $\cH$ we mean a set of points of $\RR^2$, the \emph{vertices},
together with a set of polygons, the \emph{hyperedges},
with the following properties:

\begin{enumerate}
\renewcommand{\theenumi}{\textup{(\roman{enumi})}}
\renewcommand{\labelenumi}{\theenumi}
\item\label{p1} any bounded subset of the plane
contains only finitely many vertices, and meets only finitely many hyperedges,

\item the interiors of the hyperedges are disjoint,

\item each hyperedge is incident with at least one vertex, and

\item hyperedges meet themselves or each other only at vertices.
\end{enumerate}
Note that we allow a hyperedge to meet the same vertex several times,
as in Figure~\ref{fig_hypergraph}, simply because there turns out to be no reason not to.
Property \ref{p1} ensures that each vertex meets only finitely many hyperedges,
and vice versa. In this paper, all plane hypergraphs we consider are connected,
in the natural sense.
\begin{figure}[htb]
 \centering
\[ \epsfig{file=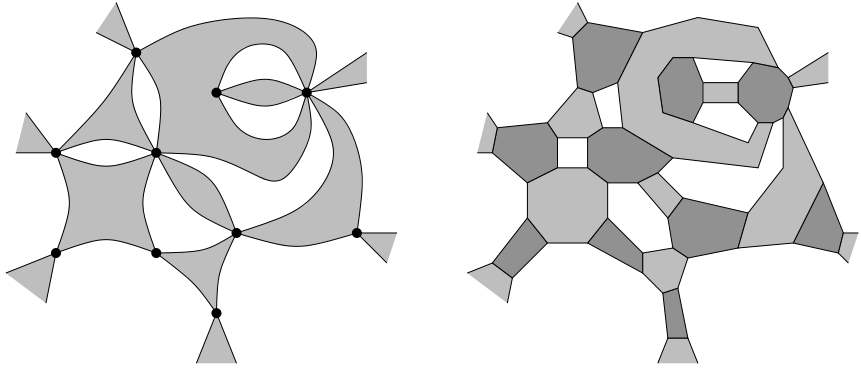}  \]                                             
 \caption{On the left is part of a plane hypergraph $\cH$: the shaded regions are the
(interiors of the) polygons
corresponding to hyperedges. The same picture may be seen as a proper 2-colouring
of the faces of a plane graph. On the right is the corresponding 3-coloured cubic map.
Note that not all edges are drawn as straight lines (see the top
right of the figure).}\label{fig_hypergraph}
\end{figure}

Two plane hypergraphs are \emph{isomorphic} if there is a homeomorphism from
the plane to itself mapping one into the other, in the obvious sense.
Of course, given a plane hypergraph (or indeed an isomorphism class
of plane hypergraphs) there is a corresponding abstract hypergraph;
more precisely, a
(multi-)hyper(-multi-)graph, where we have included `multi' twice
to indicate that two or more hyperedges may be incident with the same set of vertices, and a single
hyperedge may be incident with a vertex more than once.  However, we shall
work directly with the drawings throughout.

Plane hypergraphs are natural generalizations of plane (multi)graphs.
Indeed, we may think of a plane graph as a representation
of an abstract graph, with the vertices represented by points, and
the edges by connected sets meeting only at the vertices.
The edges of a graph are naturally represented by minimal sets
connecting the corresponding vertices, i.e., simple curves, but in the hypergraph case polygons are more
natural, so we use them even for hyperedges with only two vertices (or indeed,
one vertex). When we come to percolation in a moment, the idea is that
instead of each edge in a graph randomly
either connecting its vertices or not, independently of the other edges, each
hyperedge will randomly connect some subsets of its vertices, independently
of the other hyperedges.

It is easy to check that (connected, as always in this paper)
plane hypergraphs $\cH$ correspond exactly to
\emph{shaded} locally finite connected plane multigraphs $G$, where the faces of $G$ are
properly coloured grey and white, so that every edge borders faces of
different colours, with any unbounded faces white. (At this stage
there may be one or more unbounded faces; in the bulk of the paper
all faces will be bounded.)
Indeed, vertices correspond to vertices, and the hyperedges of $\cH$ are
simply the grey faces of $G$, as on the left in Figure~\ref{fig_hypergraph}.
Of course, a plane graph $G$ with at most one unbounded face has an appropriate
shading if and only if every degree is even, and then it has either
one or two shadings, depending on whether or not it has an unbounded face.

Note that if $e$ is a hyperedge incident with $|e|$ vertices (counting
multiplicity), then, as a polygon, $e$ is made up of $|e|$ segments joining vertices,
corresponding to the edges of $G$.
We cannot in general draw these segments as straight lines. For example,
$\cH$ may contain triples $uvw$ and $uvw'$, say; furthermore,
there may be further hyperedges inside the region bounded by the two curves
joining $u$ and $v$ associated to these triples.

By a \emph{face} of a plane hypergraph $\cH$ (defined as above)
we mean a component of what is left of the plane after removing all hyperedges
and their interiors, i.e., a white face of the corresponding graph $G$.
For our next few definitions (in particular that of duality) to make
sense, it is convenient to insist that each face of $\cH$ has finitely
many edges in its boundary. This is equivalent to imposing the condition
that if $\cH$ is infinite, then it has no unbounded faces.
(Of course, if $\cH$ is finite, then it necessarily has exactly one
unbounded face.) When it comes to percolation,
we naturally consider only infinite $\cH$.

There is yet another way of defining plane hypergraphs, which will turn out to be much
more convenient to work with, but is at first sight perhaps less natural (at
least for percolation).
Given a (connected, as usual) plane hypergraph $\cH$ and the corresponding
graph $G$ as above,
replace each vertex $v$ of $\cH$ by a \emph{black} $2d$-gon, where
$d=d(v)$ is the degree of $v$, each face by a \emph{white} $2d$-gon, where
$d$ is the number of edges (of $G$) bounding the face,
and each hyperedge $e$ incident with $d$ vertices (counted
with multiplicity) by a \emph{grey} $2d$-gon. In this way we obtain
a cubic planar map $M$ in which the faces are properly coloured black, white
and grey; see Figure~\ref{fig_hypergraph}. (As before, we cannot necessarily
draw the edges of the polygons as straight lines.)

The reverse transformation is even simpler: starting from a cubic map $M$
(i.e., a connected locally finite 3-regular plane graph in which
each face has finitely many edges in its boundary)
in which the faces are properly coloured black, white and grey,
we simply contract each black face to a point to form a vertex
of $\cH$, and take the grey faces to form the hyperedges.
In what follows we shall refer to such a coloured map $M$ as a \emph{map hypergraph}
(or simply a hypergraph, when there is no danger of confusion),
and denote it also by $\cH$.

Note that when $\cH$ is a graph (i.e., $\cH$ is 2-uniform),
then the corresponding
map is the one considered in Chapter 3 of~\cite{BRbook} (see Figures
2 and 4, for example). There, the 4-gon corresponding to an edge
is coloured black or white according to whether the edge
is open or closed; here, the $2|e|$-gon corresponding to 
a hyperedge $e$
is grey for now, but will
be coloured with a mixture of black and white later.

By a \emph{plane hyperlattice} $\cH$ we shall mean an infinite connected
plane hypergraph (defined in any of the three ways above)
with a lattice $\cL$ of translational symmetries, i.e., such that there are
linearly independent vectors $\veca$ and $\vecb$ with the property
that translation of the plane through either vector maps the drawing into
itself in the obvious sense, corresponding to an isomorphism of the underlying
hypergraph.
Throughout, we view $\cL=\{m\veca+n\vecb:m,n\in\Z\}$ as a subset of $\RR^2$.
More formally, we define a \emph{plane hyperlattice} to be a pair
$(\cH,\cL)$ as above, since in what follows $\cL$ need not be the full lattice
of translational symmetries of $\cH$; in spite of this, we usually omit
$\cL$ from the notation.
Naturally, when we consider isomorphisms of plane hyperlattices,
these are required to preserve the corresponding lattices of symmetries.
More precisely, a homeomorphism $S:\RR^2\to \RR^2$ is an isomorphism
from the plane hyperlattice $(\cH,\cL)$ to $(\cH',\cL')$ if it 
corresponds to a plane hypergraph isomorphism and satisfies
$S(x+\ell)=S(x)+T(\ell)$ for all $x\in \RR^2$ and $\ell\in \cL$,
where $T$ is a linear map with $T(\cL)=\cL'$.

In the context of percolation, the natural notion of the dual of a plane
hypergraph $\cH$ turns out to be the plane hypergraph $\cHd$ defined as follows.
Take a vertex of $\cHd$ inside each face of $\cH$.
To obtain the hyperedges of $\cHd$, replace each hyperedge $e$ of $\cH$
by the \emph{dual hyperedge} $e^*$ joining the vertices corresponding to the faces
that $e$ meets, as in Figure~\ref{fig_dual}.
\begin{figure}[htb]
 \centering
 \input{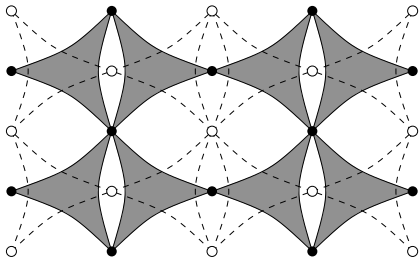}
 \caption{Part of a 3-uniform plane hyperlattice (filled circles and shaded triangles)
and its dual (open circles and dashed triangles). The hyperedges of the dual
are the concave dashed triangles.}\label{fig_dual}
\end{figure}
In the special case of 2-uniform hypergraphs, i.e., graphs, this
is the usual notion of planar duality.
In the 3-uniform case considered (with mild additional restrictions)
by Wierman and Ziff~\cite{WZ},
the notion of duality just defined coincides with theirs;
the description of Ziff and Scullard~\cite{ZS_exactbond} seems to be ambiguous.

In terms of the shaded graph $G$, the operation of taking the dual is rather complicated --
it is \emph{not} simply reversing the shading. However, in the 3-coloured map formulation,
it is very simple to construct the dual: simply exchange black and white. Indeed, one can think
of a hyperedge and its dual together as corresponding to a grey face
of the map $M$ (for example, in Figure~\ref{fig_dual} one can take the intersection
of $e$ and $e^*$ as the grey face); the vertices of $\cH$ correspond
to the black faces of $M$, and the vertices of $\cHd$ to the white faces of $M$.

Of course, choosing the drawing appropriately, we may take the dual
$\cHd$ of a plane hyperlattice $\cH$ to be a plane hyperlattice, and the dual of $\cHd$
to be $\cH$. A plane hyperlattice $\cH$ is \emph{self-dual} if $\cHd$ is isomorphic
to $\cH$; examples are shown in Figures~\ref{fig_dual},~\ref{fig_tri} and~\ref{fig_square}.

If $P$ is a polygon in the plane,
then by a \emph{\plp}
of its vertex set $V(P)$  we mean
a partition $\pi$ such that no two distinct parts of $\pi$ contain
interlaced pairs of vertices: if $x$, $y$, $z$, $w$ are four distinct
vertices appearing in this cyclic order around $P$,
a \plp\ $\pi$ cannot contain two parts one
of which includes $x$ and $z$, and the other
$y$ and $w$. Equivalently, a partition $\pi$ of $V(P)$ is a \plp\ if
and only if it may be realized by constructing disjoint (path-)connected
subsets $S_i$ of $P$ (which we take to include its interior)
so that each part of $\pi$ is the intersection
of some $S_i$ with $V(P)$.

The \emph{dual} $\pi^*$ of a \plp\ $\pi$ is the \plp\ of
the \emph{edges} of $P$ in which two edges $e$ and $f$ are in the same
part if and only if they are not interlaced with two vertices
$x$ and $y$ in a common part of $\pi$. Here interlaced means
that the edges and vertices occur in the cyclic order $e$, $x$, $f$, $y$
or its reverse.

Turning finally to percolation, the \emph{state} of a hyperedge $e$
will simply be a \plp\ of its vertices. (More precisely,
of the incidences of $e$ with its vertex set, so if $e$ touches
itself, the relevant vertex appears multiple times in the groundset
of the partition.) A \emph{configuration} $\omega$ is an assignment
of a state to each hyperedge of the hypergraph $\cH$ under consideration.
We think of the state of a hyperedge $e$ as describing connections
within $e$. In particular, by an \emph{open path}
in a configuration $\omega$ we mean a sequence $v_0e_1v_1e_2\ldots e_\ell v_\ell$
such that, for each $i$, the partition 
of the vertices of $e_i$ has a part containing both $v_{i-1}$ and $v_i$.
Two vertices are \emph{connected} in $\omega$ if they are joined by an open
path, and the \emph{open clusters} of $\omega$ are the maximal connected
sets of vertices.

Finally, a \emph{hyperlattice percolation model} consists
of a plane hyperlattice $(\cH,\cL)$ together with a probability measure
on configurations on $\cH$ such that the states of different
hyperedges are independent, and the measure is preserved by
the action of $\cL$. In other words, if $e'$ is a translate
of $e$ under an element of $\cL$, then corresponding states
in $e'$ and $e$ have the same probabilities. Note that for
a single hyperedge $e$, all probability distributions on the set of \plp s
associated to $e$ are allowed.

As usual, the sigma-field of measurable events is the one generated by
cylindrical sets, i.e., by events depending only on the states of a finite
set of hyperedges. In fact, except when defining percolation, throughout
this paper we can
work with large enough finite regions of $\cH$,
so there are no issues of measurability.

The \emph{dual} $\omega^\dual$ of a configuration $\omega$ on $\cH$
is the configuration on $\cHd$ in which the state of $e^*$
is the dual of the state of $e$ (noting that vertices
of $e^*$ correspond to edges of the polygon $e$). It is not hard
to check that finite open clusters in $\omega$ are
surrounded by open cycles in $\omega^*$
and \emph{vice versa}; this is most easily
seen in the colouring formulation described at
the start of Section~\ref{sec_cc}.

Given a plane hyperlattice $(\cH,\cL)$, suppressing
$\cL$ in the notation as usual,
the hyperlattice percolation models on $\cH$ may be
parametrized as follows. First pick
one representative $e_i$ of each orbit of the action of $\cL$
on the hyperedges. Then for each \plp\ $\pi$ of
the vertices of $e_i$, choose a probability $p_{i,\pi}$,
subject only to $\sum_\pi p_{i,\pi}=1$ for each $i$.
We call such a vector $\vecp=(p_{i,\pi})_{i,\pi}$ a \emph{probability vector} (for $\cH$),
and write $\cH(\vecp)$ for the corresponding percolation model.
The \emph{dual vector} $\vecp^*$ assigns the probability $p_{i,\pi}$
to the partition $\pi^*$ of $e_i^*$, so $\cHd(\vecp^*)$ is a hyperlattice
percolation model on $\cHd$. 

The hyperlattice percolation model $\cH(\vecp)$ is \emph{self-dual}
if $\cH(\vecp)$ and $\cHd(\vecp^*)$ are isomorphic, i.e., if there
is an isomorphism from $\cH$ to $\cHd$ such if $e\in E(\cH)$ and $f\in E(\cHd)$
correspond under the isomorphism, then each partition $\pi$ of $f$
has the same probability in $\cHd(\vecp^*)$ as the corresponding partition
of $e$ does in $\cH(\vecp)$. Our aim is to show that self-dual hyperlattice
models are `critical', but first we must define what critical means.

The set of partitions of a (here finite) set $S$ forms
a poset $\PP$ in a natural way: we have $\pi\preccurlyeq \pi'$ if
any two elements in the same part of $\pi$ are in the same part
of $\pi'$, i.e., the parts of $\pi'$ are unions of those of $\pi$,
i.e., if $\pi$ \emph{refines} $\pi'$.

Given a hyperedge $e_i$ as above, let $\PP=\PP_{e_i}$ be the poset
formed by the \emph{\plpa} partitions
of the vertices of $e_i$.
An \emph{upset} $\cU$ in $\PP$ is a subset of $\PP$ such that if $\pi\in \cU$
and $\pi\prec \pi'$ then $\pi'\in \cU$.
Given an upset $\cU$ in $\PP_{e_i}$ and a probability vector $\vecp$,
let $p_i(\cU)=\sum_{\pi\in \cU}p_{i,\pi}$ denote the probability that
the state of $e_i$ is in $\cU$.
Given two probability vectors $\vecp$ and $\vecq$, we say that
$\vecq$ \emph{dominates} $\vecp$
if $q_i(\cU)\ge p_i(\cU)$ for each $i$ and each upset $\cU\subset \PP_{e_i}$.
We say that $\vecq$ \emph{strictly dominates} $\vecp$, and write $\vecq\pg\vecp$,
if $q_i(\cU)> p_i(\cU)$ for each $i$ and each non-trivial upset $\cU\subset \PP_{e_i}$,
i.e., for all upsets apart from
$\cU=\emptyset$ and $\cU=\PP_{e_i}$. Note that we can have $\vecp\ne\vecq$ such
that $\vecq$ dominates $\vecp$ but does not strictly dominate it.

Hall's theorem implies that $\vecq$ dominates
$\vecp$ if and only if $\vecq$ can be obtained from $\vecp$ by moving
`probability mass' from elements $p_{i,\pi}$ to elements $p_{i,\pi'}$ with $\pi\prec \pi'$.
In the case
of strict domination, we can assume that, for each $i$, a non-zero mass is moved from
each $\pi$ to each $\pi'\succ \pi$.

A percolation model $\cH(\vecp)$ \emph{percolates} if the probability that the
open cluster containing any given vertex is infinite is positive. As usual,
this is equivalent to the existence with probability 1 of an infinite open cluster.
The model $\cH(\vecp)$ is \emph{critical} if two conditions hold:
for any $\vecq\pg\vecp$ the model $\cH(\vecq)$ percolates, and for any
$\vecq\pl\vecp$ the model $\cH(\vecq)$ does not percolate.

We say that the model $\cH(\vecp)$ exhibits \emph{exponential decay} (of the volume)
if there is a constant $\alpha>0$ such that for any fixed vertex $v$
the probability that the open cluster containing $v$ contains at least
$n$ vertices is at most $e^{-\alpha n}$ for all $n\ge 2$. Our main result is the following.

\begin{theorem}\label{th1}
Let $\cH(\vecp)$ be a self-dual hyperlattice percolation model.
Then for any $\vecq\pg\vecp$ the model $\cH(\vecq)$ percolates,
and for any $\vecq\pl\vecp$ the model $\cH(\vecq)$ exhibits exponential decay.
In particular, $\cH(\vecp)$ is critical.
\end{theorem}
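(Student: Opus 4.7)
The plan is to follow the classical arc of self-duality-implies-criticality proofs: first show that at the self-dual point $\vecp$ the probability of crossing a suitable parallelogram is bounded strictly between $0$ and $1$; then bootstrap to crossings of long rectangles via an RSW lemma; and finally apply a sharp-threshold argument to move the crossing probability to $1-o(1)$ (respectively $o(1)$) after perturbing to $\vecq \pg \vecp$ (respectively $\vecq \pl \vecp$). The starting point is the observation, essentially from Section~\ref{sec_cc}, that a finite open cluster in a configuration $\omega$ on $\cH$ is surrounded by an open cycle in the dual configuration $\omega^\dual$ on $\cHd$. For a large $\cL$-aligned parallelogram $R$, the existence of a primal left--right crossing is therefore (essentially) the complementary event to the existence of a dual top--bottom crossing of the same region. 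Self-duality means that $\cHd(\vecp^\dual)$ is isomorphic to $\cH(\vecp)$, so the primal left--right and dual top--bottom crossing probabilities must coincide on appropriately chosen parallelograms, forcing some canonical crossing event at $\vecp$ to have probability exactly $1/2$, and in particular bounded away from $0$ and $1$ uniformly in the scale.

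The next step is to promote this single-shape estimate to crossings of rectangles of any fixed aspect ratio via the RSW-type lemma of Section~\ref{sec_RSW}, applied in the form of Section~\ref{sec_RSWapp} that uses only the minimal symmetry which self-duality alone provides. The principal obstacle at this stage is that the states of bonds are not independent: within a hyperedge $e_i$ the joint state is a non-crossing partition $\pi \in \PP_{e_i}$, and even increasing cylinder events can fail to be positively correlated under the product measure on hyperedges. This is precisely what the generalization of Harris's Lemma to products of posets in Section~\ref{sec_Harris} is designed to remedy: it gives FKG-type positive association of increasing events with respect to the refinement order $\preccurlyeq$ on each $\PP_{e_i}$, which is what the RSW gluing steps and the standard annulus construction require. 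The main technical difficulty throughout the argument, and the reason a new proof of RSW is needed, is the total absence of rotational or reflectional symmetry in the general self-dual setting (as illustrated by Example~\ref{ex2}); square crossings must be combined only using the weak symmetry guaranteed by self-duality itself.

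With long-rectangle crossings in hand at $\vecp$, I would close the argument as in~\cite{ourKesten}. The influence of any single hyperedge on a large-rectangle crossing event is small, so a Friedgut--Kalai-type sharp-threshold inequality, adapted to the poset-valued setting via the generalized Harris Lemma and the technical input of Section~\ref{sec_cc}, implies that the crossing probability moves from bounded away from $0$ and $1$ at $\vecp$ to $1-o(1)$ at any fixed $\vecq \pg \vecp$, with the $o(1)$ in fact a negative power of the rectangle side. A standard block/annulus construction, combining four long-rectangle crossings to produce an open circuit in each of a sequence of concentric annuli, then yields an infinite open cluster almost surely, so $\cH(\vecq)$ percolates. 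For $\vecq \pl \vecp$ one applies the same reasoning to the dual model $\cHd(\vecq^\dual)$, noting that $\vecp^\dual$ strictly dominates $\vecq^\dual$, so the dual crossing probability is $1-o(1)$; the existence of dual open circuits around any fixed vertex on every scale gives sub-exponentially decaying cluster radii, and the usual summation over nested dual circuits converts this to the required exponential decay of the cluster volume. The most delicate ingredient in the whole scheme is the RSW step under such weak symmetry, which is the heart of the paper.
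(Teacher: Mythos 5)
Your proposal reproduces the paper's high-level architecture — duality gives a starting crossing estimate, an RSW argument boosts it to long rectangles, and a sharp-threshold plus 1-independent block argument finishes, with exponential decay obtained by dualizing. But two of your claims misstate the paper's actual technical content in ways that would sink a naive attempt to execute the plan.

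First, the generalized Harris Lemma (Lemmas~\ref{l_ourHarris},~\ref{ourHarris2}) does \emph{not} restore FKG-type positive association. It only yields $\Pr(A\cap B)\ge(\Pr(A)\Pr(B))^C$ with $C$ typically strictly greater than $1$, and the paper shows one cannot take $C=1$; the models genuinely have negatively correlated increasing events. This is precisely why the standard RSW gluing arguments break down and a new proof was needed. The core new idea in Section~\ref{sec_RSW} is the random-shift coupling $f_\ALG(\omega_1,\omega_2,\rX)$: one explores the lowest/highest crossings of two overlapping rectangles in $\omega_1$, overwrites the ``gap'' between them with a uniformly random $\cL$-translate of an independent configuration $\omega_2$, and uses a chain of translated copies of a roughly vertical path in $\omega_2$ to show the gap is bridged with probability bounded away from $0$. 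Your plan states ``apply RSW'' without this mechanism, which is what actually makes the argument go through under the weak correlation bound and without any reflection or rotation symmetry. Second, the self-dual starting point does not yield a ``canonical crossing event of probability exactly $1/2$'' uniformly in scale; Lemma~\ref{2mod} and the large-rectangles property (Definition~\ref{lrrdef}) only give, at each area scale $A$, \emph{some} rectangle of that area whose horizontal and vertical crossing probabilities are bounded away from $0$ and $1$ — and its orientation and aspect ratio may vary with $A$ and cannot be fixed in advance. Handling this drift is a substantial part of Section~\ref{sec_RSWapp}, and it also forces the weaker conclusion of Theorem~\ref{rswc} to be stated in the slightly unusual large-rectangles/large-ellipses form rather than as a bound on crossings of a fixed shape. (You also omit that Theorem~\ref{th1} is proved via Theorem~\ref{th1p} which needs the malleability hypothesis, subsequently removed by perturbing $\vecp$; this is a minor technical point by comparison.)
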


Although this is far from the main point, a very special case is that the
critical probability for bond percolation on any self-dual planar lattice
is $1/2$. Note that the condition $\vecq\pg\vecp$ is stronger than what one
could hope for, namely a similar result with this condition
replaced by $\vecq$ dominating $\vecp$ and $\vecq\ne \vecp$. However, one
would then need to rule out degenerate special cases (corresponding
to increasing the probability of bonds that are `dead ends' in a bond percolation model,
for example).
Also, Theorem~\ref{th1} with the present conditions
is strong enough for the main application, Corollary~\ref{c1} below.

It turns out that in proving Theorem~\ref{th1}, we do not require
an exact isomorphism between $\cH(\vecp)$ and $\cHd(\vecp^\dual)$.
We call two hyperlattice percolation models $\cH_1(\vecp_1)$ and $\cH_2(\vecp_2)$
\emph{equivalent} if they can be coupled so that, for some constant $C$,
for every open path $P$ in either model there is an open path $P'$
in the other model at Hausdorff distance at most $C$ from $P$.
Roughly speaking, the typical reason for two models to be equivalent is
that they can be viewed as different
ways of realizing a single underlying model.

For example, consider a plane triangulation $G$ with a lattice $\cL$ of translational symmetries,
such as the triangular lattice. Then there are two natural ways to form a hyperlattice
from $G$, illustrated (for a more complicated lattice)
in Figure~\ref{fig_tri_rel}.
\begin{figure}[htb]
\[
 \epsfig{file=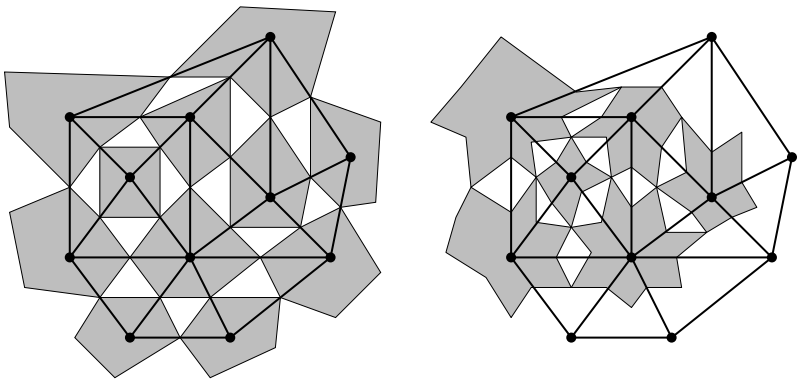}
\]
\caption{The filled circles and thick lines show part of a planar triangulation $G$,
which is assumed to have a lattice of symmetries not visible on this scale. On the left is part
of the hyperlattice $\cH$ formed from $G$ by taking a vertex for each edge,
and a hyperedge for each vertex. On the right is part of the hyperlattice $\cH'$
formed by taking a vertex for each face of $G$, and a hyperedge for each vertex. Note
that $\cH$ and $\cH'$ are dual as hyperlattices.}\label{fig_tri_rel}
\end{figure}
In the first, there is a vertex for each
edge of $G$, in the second, a vertex for each face of $G$. In either case, there is a hyperedge
for each vertex $v$ of $G$; this hyperedge is incident to all vertices
corresponding to edges or faces of $G$ that $v$ is incident to.
Within each hyperedge, assign probability $p$ to the partition in which all vertices
are in a single part, and $1-p$ to that in which every vertex is in a separate part.
Let us call a hyperedge \emph{open} if we select the partition into one part,
and \emph{closed} otherwise.
Then the resulting models $\cH(p)$ and $\cH'(p)$ are equivalent using the natural coupling,
i.e., the coupling in which the hyperedges in the two models corresponding to a vertex $v$ of $G$ have the same state,
open or closed.
Indeed, in either model an open path of length more than 1 consists of a sequence of open hyperedges
with consecutive ones sharing (hyperlattice) vertices. But two hyperedges share a vertex
in $\cH$ if and only if the corresponding vertices of $G$ are joined by an edge,
and share a vertex in $\cH'$ if and only if the corresponding vertices of $G$ are in a common
face. In a triangulation, two vertices are in a common face if and only if they are joined by an edge.

Emphasizing the lattice in the notation, for a change,
we say that a hyperlattice percolation model $(\cH(\vecp),\cL)$ is \emph{approximately
self-dual} if there is a model $(\cH'(\vecp'),\cL)$ equivalent to $(\cHd(\vecp^*),\cL)$
such that $(\cH'(\vecp'),\cL)$ and $(\cH(\vecp),\cL)$ are isomorphic as plane hyperlattices,
with the corresponding linear map $T$ an isometry of the plane.
The last restriction is a technicality: in the case of (exact) self-duality
we did not impose it, but as we shall see in Lemma~\ref{lsymclass}, any isomorphism
witnessing self-duality has this property (after a suitable
affine transformation). This is presumably also true for approximate
self-duality, but as the condition will (we believe) self-evidently hold in any applications,
we do not bother checking this. For our proofs, approximate self-duality
is (apart from one technicality)
just as good as self-duality, so we obtain the following strengthening
of Theorem~\ref{th1}. In this result `malleability' is a technical
condition we shall introduce later (see Definitions~\ref{maldef} and~\ref{maldef2}); in the 3-uniform case, all
hyperlattice percolation models are malleable.
Also, any `site percolation' model, where only the partition
into singletons and that into a single part occur, is malleable.

\begin{theorem}\label{th2}
Let $\cH(\vecp)$ be a malleable approximately self-dual hyperlattice percolation model.
Then for any $\vecq\pg\vecp$ the model $\cH(\vecq)$ percolates,
and for any $\vecq\pl\vecp$ the model $\cH(\vecq)$ exhibits exponential decay.
In particular, $\cH(\vecp)$ is critical.
\end{theorem}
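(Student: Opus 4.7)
The plan is to follow the classical Kesten route (self-duality $\Rightarrow$ uniform rectangle crossings $\Rightarrow$ sharp threshold $\Rightarrow$ percolation/exponential decay) adapted to hypergraphs and to events that are not positively correlated, using the tools assembled in Sections~\ref{sec_Harris}--\ref{sec_deduce}. Approximate self-duality provides a plane-hyperlattice isomorphism realized by an isometry $T$ from $\cH(\vecp)$ to a model $\cH'(\vecp')$ that is coupling-equivalent to $\cHd(\vecp^*)$. The first step is to convert this into a symmetry of rectangle-crossing probabilities at the self-dual parameter: for a suitable rectangle $R$, either $R$ has a primal open long-way crossing in $\cH(\vecp)$ or the geometrically dual rectangle has a dual open crossing in $\cHd(\vecp^*)$, and $T$ together with the bounded Hausdorff distortion from the equivalence identifies the two crossing events up to a bounded error. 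Hence the crossing probability of some fixed aspect ratio is bounded away from $0$ and $1$ uniformly in scale at $\vecp$.

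I would then feed this into the RSW-type lemma of Section~\ref{sec_RSW}, applied via Section~\ref{sec_RSWapp}, which is engineered precisely so that a single isometric symmetry suffices; its output is that crossing probabilities of arbitrarily long rectangles in $\cH(\vecp)$ stay bounded below by a positive constant. The principal technical obstacle, and the reason classical RSW does not apply off-the-shelf, is that increasing events in $\cH(\vecp)$ need not be positively correlated (as illustrated near Example~\ref{ex1}), so Harris's inequality is unavailable. The substitute is the generalization of Harris's lemma to products of posets from Section~\ref{sec_Harris}, which underlies both the RSW lemma and the routine gluing of crossings needed inside and after it.

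To finish, I would run a sharp-threshold argument of the type used in Section~\ref{sec_deduce}. Strict domination $\vecq \pg \vecp$ means, by the Hall-type reformulation stated in the excerpt, that a nonzero probability mass is moved from every partition $\pi$ to every $\pi' \succ \pi$ in each hyperedge, so a BKKKL/Friedgut--Kalai-style threshold inequality applied to the event of a long-rectangle crossing pushes its probability from bounded-below at $\vecp$ to $1 - o(1)$ at $\vecq$; malleability enters here as the structural hypothesis that lets one make sense of influences on the poset-valued hyperedge states and carry out the symmetrization the threshold inequality requires. A standard block-renormalization argument then produces an infinite open cluster almost surely, so $\cH(\vecq)$ percolates. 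For $\vecq \pl \vecp$, approximate self-duality transports this into $\vecq^* \pg \vecp^*$ for the dual model, so dual long-rectangle crossings have probability tending to $1$; these block all sufficiently long primal open paths, giving exponential decay of the radius of the open cluster, which is upgraded to exponential decay of the volume by the usual argument. The hardest part of the programme, and where I expect to spend most of the effort, is the RSW step with only a single isometric symmetry and without FKG, which is exactly the technical heart of the paper.
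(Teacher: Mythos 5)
Your overall programme matches the paper's: realize the model by an independent lattice colouring, extract rectangle-crossing bounds from duality, run an RSW-type argument built on the poset Harris lemma, apply a BKKKL/Friedgut--Kalai sharp-threshold bound to cross from $\vecp$ to $\vecq$, and finish with a $1$-independent block argument; the paper in fact proves a single statement (Theorem~\ref{th1p}) from which both Theorems~\ref{th1} and~\ref{th2} follow immediately. However, two of your intermediate claims are stronger than what the paper is able to prove, and the role of malleability is misattributed.

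First, you assert that duality gives a crossing probability of ``some fixed aspect ratio'' bounded away from $0$ and $1$ uniformly in scale, and that the RSW step then outputs a uniform lower bound for ``crossing probabilities of arbitrarily long rectangles.'' Neither is available here. With only a single reflection or rotation symmetry, one cannot fix the orientation or aspect ratio in advance; the paper's Lemma~\ref{2mod} and Theorem~\ref{rswc} only yield the weaker \emph{large rectangles property} (Definition~\ref{lrrdef}): for each sufficiently large area there is \emph{some} rectangle, of uncontrolled orientation and dimensions, with good crossing probabilities. Obtaining fixed-aspect-ratio bounds is exactly the (open) content of the conjecture in Section~\ref{sec_crit}. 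The existential form turns out to suffice for the sharp-threshold and block steps, so your conclusion is unaffected, but the step as you state it is false and would have to be replaced by the weaker, correct statement.

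Second, malleability is not what makes the sharp-threshold inequality work---that only needs non-degeneracy so the product structure and Lemma~\ref{ourHarris2} apply. Malleability is used inside the proof of the RSW lemma itself (Theorem~\ref{th_RSW1}), in the recolouring argument needed to splice the randomly translated path $P+\rX$ onto the interfaces $I_1,I_2$ across the ``gap'' (Figures~\ref{fig_problem} and~\ref{fig_gap}); without it the glued configuration may have probability zero. It is also the reason the approximately self-dual case needs malleability as an explicit hypothesis, whereas in the exactly self-dual case the paper removes it by perturbing $\vecp$ to a nearby self-dual $\vecp'$ with all entries positive.

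Finally, for the $\vecq \pl \vecp$ direction you write that approximate self-duality transports the hypothesis to $\vecq^* \pg \vecp^*$ for the dual; this needs (and the paper supplies) a short argument that $\cHd(\vecp^*)$ is itself approximately self-dual, since the equivalence relation used in the definition is not manifestly preserved under passing to duals. This is a minor point but worth spelling out.
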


To indicate that this extension may be useful, consider a plane triangulation $G$
with a lattice of translational symmetries, and a real number $0<p<1$.
Consider the two hyperlattice percolation models $\cH(p)$ and $\cH'(p)$
corresponding to site percolation on $G$, defined 
as above. Then $\cH$ and $\cH'$ are dual to each other,
so $\cH(p)$ and $\cH'(1-p)$ are dual as hyperlattice percolation models. Since, as noted
above, $\cH(p)$ and $\cH'(p)$ are equivalent, we see that $\cH(1/2)$
is approximately self-dual, so Theorem~\ref{th2} implies that the critical
probability for site percolation on $G$ is $1/2$. In itself this is not new (see
the discussion in the introduction),
but it indicates that the models to which Theorem~\ref{th2} applies
include ones with a site percolation `flavour'.

We have just seen that site percolation on a triangulation with a
lattice of symmetries, which for $p=1/2$ is easily seen to be
self-dual in an appropriate sense, may be transformed
to a hyperlattice percolation model that is only approximately self-dual.
In this case, the duality is clearer in the site percolation formulation
than the hyperlattice one.
Unsurprisingly, there are also cases where the reverse holds.
Indeed, for any plane hyperlattice $\cH$,
selecting only the partitions in which all vertices in a given
edge are connected or none are, we obtain a site percolation model on
an (in general non-planar) graph. For example, taking $\cH$ as in Figure~\ref{fig_reflsym_1},
one obtains the non-planar graph in Figure~\ref{fig_nonpl}.
\begin{figure}[htb]
\centering
\[ \epsfig{file=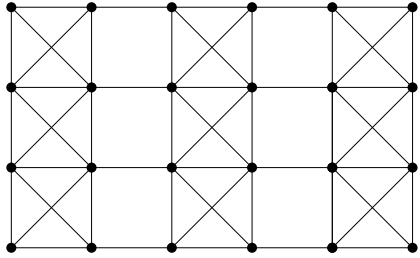} \]
\caption{A non-planar lattice which is `self-dual' for site percolation.}\label{fig_nonpl}
\end{figure}
Theorem~\ref{th1}
shows that if $\cH$ is self-dual, then the critical probability for site percolation on
the resulting graph is $1/2$.

Returning to the original motivation,
the key observation of Ziff~\cite{Ziff_cdc} and Ziff and Scullard~\cite{ZS_exactbond}
(present also in the original papers of Scullard~\cite{Scullard} and
Chayes and Lei~\cite{CL}
in the special case where $\cH$ is the `triangular hyperlattice' $T$
shown in Figure~\ref{fig_tri})
is that if $\cH$ is 3-uniform, then the condition for self-duality becomes
very simple, at least if one takes the same connection probabilities
in all triangles. Unfortunately, this involves some further definitions, 
to allow for non-symmetric cases.

By a \emph{\orlab\ plane hyperlattice} we mean a plane hyperlattice
in which the vertices around each hyperedge are labelled $1,2,\ldots,k$
in a way that is consistent with either the clockwise or anti-clockwise cyclic order
within each hyperedge, and is globally consistent with the lattice $\cL$ of translational
symmetries.
When $k=3$ or $k=4$ we often use letters $A$, $B,\ldots,$ to denote the labels
rather than numbers.
In the 3-uniform case, assigning labels as above amounts to designating the vertices
of a hyperedge $e$ as its $A$-, $B$- and $C$-vertex in any order,
as in~\cite{WZ}.
The simplest example of a \orlab\ plane hyperlattice
is the \emph{\orlab\ triangular hyperlattice} shown in Figure~\ref{fig_tri}.
Another example is illustrated in Figure~\ref{fig_reflsym}.
\begin{figure}[htb]
 \centering
 \input{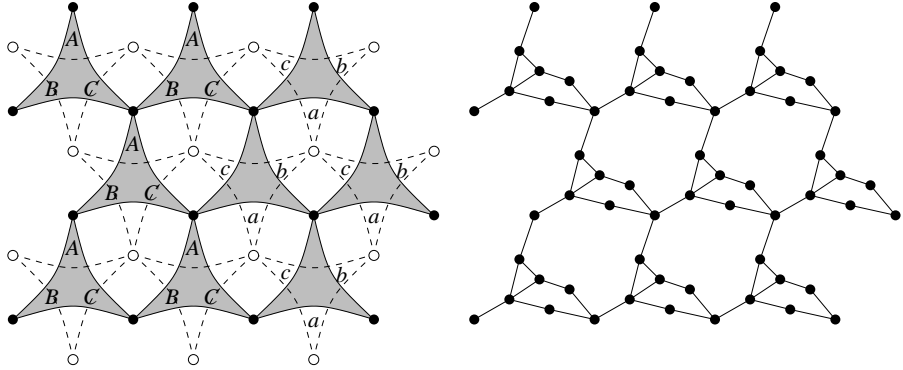}
 \caption{The shaded triangles depict the
\emph{\orlab\ triangular hyperlattice} $T$, i.e., the \orlab\ 3-uniform hyperlattice
obtained by taking alternate triangles in the triangular lattice, and \orlabing{}
them consistently; capital letters denote the \orlabation{} of $T$.
The dashed lines and lower case letters depict the dual $T^*$. 
%For clarity, the \orlabation{} of only one of each triangle/dual triangle pair is indicated.
Note that $T$ is self-dual: there is a rotation through $\pi$
mapping $T$ to $T^*$.
The figure on the right shows a lattice $L$ obtained by substituting
a `generator' into each hyperedge of $T$.}\label{fig_tri}
\end{figure}

\begin{figure}[htb]
 \centering
 \input{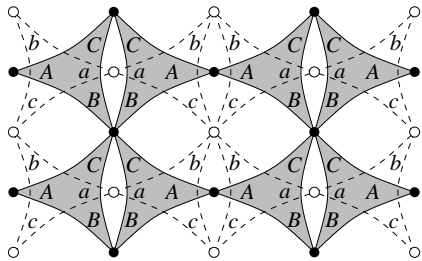}
 \caption{A \orlab\ 3-uniform hyperlattice with only translational symmetries
that is isomorphic to its dual by reflection in a horizontal line. For visual clarity,
the \orlabation{} of the dual is indicated with lower case letters.}\label{fig_reflsym}
\end{figure}

Note that the $A$-vertex of one hyperedge may be the $B$-vertex
of another hyperedge (or indeed, of the same hyperedge if it touches itself).
Given a $k$-uniform \orlab\ plane hyperlattice and a probability
vector $\vecp$ consisting of probabilities $p_{\pi}$, $\pi\in \PP_{\{1,2,\ldots,k\}}$,
that sum to $1$, there is a corresponding hyperlattice percolation
model $\cH(\vecp)$: for every hyperedge $e$, we assign $p_{\pi}$
as the probability that the vertices of $e$ are partitioned according
to $\pi$, with the groundset $\{1,2,\ldots,k\}$ of $\pi$ corresponding
to the vertices of $e$ in a manner indicated by the labelling.

Specializing to the 3-uniform case, the dual $e^*$ of an edge
$e$ inherits a labelling from $e$: take the first vertex
to be the one opposite the first vertex of $e$, and so on,
so a \orlab\ 3-uniform plane hyperlattice $\cH$ has a dual $\cHd$
that is again a \orlab\ 3-uniform plane hyperlattice.

If $AcBaCb$ is a triangle with vertices $A$, $B$, $C$ and edges
$a$, $b$ and $c$, then there are five possible partitions
of the vertex set, all of which are \plpa. These are represented by
Wierman and Ziff~\cite{WZ} as $A|B|C$, $AB|C$, $AC|B$, $BC|A$, and $ABC$;
we use the more compact notation $\emptyset$, $AB$, $AC$, $BC$ and $ABC$.
The duals of these partitions are, respectively, $abc$, $ab$, $ac$, $bc$ and $\emptyset$.
In this setting, a probability vector $\vecp$ is simply a vector
$\vecp=(p_\emptyset,p_{AB},p_{AC},p_{BC},p_{ABC})$ of non-negative reals summing to $1$,
and the dual of $\cH(\vecp)$ is simply $\cHd(\vecp^*)$,
where $\vecp^*$ is formed from $\vecp$ by interchanging $p_\emptyset$ and $p_{ABC}$.

The observation of Scullard and Ziff mentioned earlier may be formulated
as follows: if the \orlab\ plane
hyperlattice $\cH$ is self-dual, then the model $\cH(\vecp)$
is self-dual if and only if $p_\emptyset=p_{ABC}$.
The key point is that the other three partitions are all self-dual,
as long as the dual triangle is labelled in the appropriate way,
so only two entries in the probability vector, namely $p_\emptyset$ and $p_{ABC}$,
change when we pass to the dual.
[In the papers~\cite{Ziff_cdc,ZS_exactbond}, the formulation 
of duality is not totally clear. Wierman and Ziff~\cite{WZ}
clearly formulate the notion of self-duality for
un\orlab\ plane hyperlattices, and state that one
can consider any \orlabation{} with lattice structure,
but this seems to be an oversight; one needs the \orlab\
hyperlattice to be self-dual \emph{as a \orlab\ hyperlattice},
which is not always the case.]

If the probabilities in $\vecp$ are appropriate
functions of a single parameter $p$, the condition $p_\emptyset=p_{ABC}$
allows one to determine the critical point
of the model; in general it gives the critical surface.
As remarked earlier, Scullard and Ziff do not discuss whether
self-duality in fact implies criticality; that it does
is shown by Theorem~\ref{th1}.

\begin{corollary}\label{c1}
Let $\cH$ be a \orlab\ 3-uniform plane hyperlattice that
is isomorphic (as a \orlab\ plane hyperlattice) to its dual,
and let $\vecp$ be a probability vector $(p_\emptyset,p_{AB},p_{AC},p_{BC},p_{ABC})$.
Then $\cH(\vecp)$ percolates if $p_{ABC}>p_\emptyset$, and exhibits
exponential decay if $p_{ABC}<p_\emptyset$.
\end{corollary}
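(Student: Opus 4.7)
The plan is to reduce the statement to Theorem~\ref{th1} by comparing $\vecp$ with a symmetrized, self-dual probability vector. Let $\vecp_0$ be obtained from $\vecp$ by replacing both $p_\emptyset$ and $p_{ABC}$ by their common average $(p_\emptyset+p_{ABC})/2$, while leaving $p_{AB}$, $p_{AC}$, $p_{BC}$ unchanged. This is still a probability vector since the entries still sum to $1$. As explained in the discussion preceding the corollary, under the labelling that $e^*$ inherits from $e$ the three partitions $AB$, $AC$, $BC$ are each self-dual, while the duals of $\emptyset$ and $ABC$ are each other. Hence, using the hypothesized isomorphism of $\cH$ with $\cHd$ as \orlab\ hyperlattices, $\vecp_0$ is its own dual, and therefore $\cH(\vecp_0)$ is a self-dual hyperlattice percolation model in the sense of Theorem~\ref{th1}.

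It therefore suffices to show that $\vecp\pg\vecp_0$ when $p_{ABC}>p_\emptyset$ and $\vecp\pl\vecp_0$ when $p_{ABC}<p_\emptyset$. Since $\emptyset$ is the unique minimum and $ABC$ the unique maximum of the five-element poset $\PP$ of (necessarily \plpa) partitions of $\{A,B,C\}$, every non-trivial upset $\cU\subset\PP$ contains $ABC$ and avoids $\emptyset$; moreover, any middle elements $AB$, $AC$, $BC$ that $\cU$ contains contribute identical weight under $\vecp$ and under $\vecp_0$. A direct computation then yields
\[
p(\cU)-p_0(\cU) \;=\; \tfrac{1}{2}\bb{p_{ABC}-p_\emptyset}
\]
for \emph{every} non-trivial upset $\cU$. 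Hence strict domination in either direction holds precisely according to the sign of $p_{ABC}-p_\emptyset$.

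Applying Theorem~\ref{th1} to the self-dual model $\cH(\vecp_0)$: if $p_{ABC}>p_\emptyset$ then $\vecp\pg\vecp_0$ and so $\cH(\vecp)$ percolates, while if $p_{ABC}<p_\emptyset$ then $\vecp\pl\vecp_0$ and so $\cH(\vecp)$ exhibits exponential decay. This is essentially book-keeping once Theorem~\ref{th1} is granted; the only delicate point is verifying that the three ``middle'' partitions really are self-dual under the 3-uniform labelled duality, which is exactly where the hypothesis that $\cH$ is isomorphic to $\cHd$ \emph{as a \orlab\ hyperlattice} (rather than merely as a plane hyperlattice) is used.
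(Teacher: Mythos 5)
Your proof is correct and follows essentially the same route as the paper: both construct the symmetrized vector (your $\vecp_0$, the paper's $\vecq$) by averaging $p_\emptyset$ and $p_{ABC}$, observe that it is self-dual under the labelled isomorphism $\cH\cong\cHd$, and then invoke Theorem~\ref{th1} together with the domination relation between $\vecp$ and the symmetrized vector. The only difference is that you spell out the verification that $p(\cU)-p_0(\cU)=\tfrac12(p_{ABC}-p_\emptyset)$ for every non-trivial upset $\cU$, whereas the paper simply asserts the domination; this extra detail is correct and harmless.
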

\begin{proof}
Let $\vecq$ be the probability
vector $(p,p_{AB},p_{AC},p_{BC},p)$, where $p=(p_\emptyset+p_{ABC})/2$,
and note that $\vecq^*=\vecq$.
Then the dual of the hyperlattice percolation model $\cH(\vecq)$ is 
$\cHd(\vecq^*)=\cHd(\vecq)$, which is isomorphic to $\cH(\vecq)$ by
the assumption on $\cH$.
Thus $\cH(\vecq)$ is a self-dual hyperlattice percolation model,
and so is critical by Theorem~\ref{th1}.
If $p_{ABC}>p_\emptyset$, then $\vecp\pg\vecq$, while if $p_{ABC}<p_\emptyset$,
then $\vecp\pl\vecq$, so the result follows from Theorem~\ref{th1}.
\end{proof}
When $p_{ABC}=p_\emptyset$, the model $\cH(\vecp)$ may or may not percolate.
A (degenerate) example that percolates is given by taking $p_{AB}=1$ and all other
probabilities zero in the triangular hyperlattice shown in Figure~\ref{fig_tri}.
An example that does not is given by taking connection probabilities in the
same hyperlattice corresponding to critical bond percolation on the triangular lattice.
As we shall see in Section~\ref{sec_crit}, in non-degenerate models
there is no percolation at the self-dual point.

As we have seen, Corollary~\ref{c1} follows from Theorem~\ref{th1} simply by restricting
the parametrization of the percolation model, using
the same partition probabilities
for all triangles, rather than allowing different ones for each
orbit under the action of the lattice of symmetries. In other words,
we took all triangles to be of the same \emph{type}. Of course, one
can restrict the model in other ways, considering two or more types of triangle,
or one type of triangle and one type of $4$-gon, etc. Since any results
obtained in this way are simply special cases of Theorem~\ref{th1} we
omit the details; the case of a single type of triangle is of special
importance, since (for self-dual $\cH)$, self-duality reduces
to a single equation, so one obtains the entire critical surface,
rather than a lower dimensional subset of it.

For example, consider the self-dual plane hyperlattice $\cH$ shown in Figure~\ref{fig_square}.
\begin{figure}[htb]
 \centering
 \input{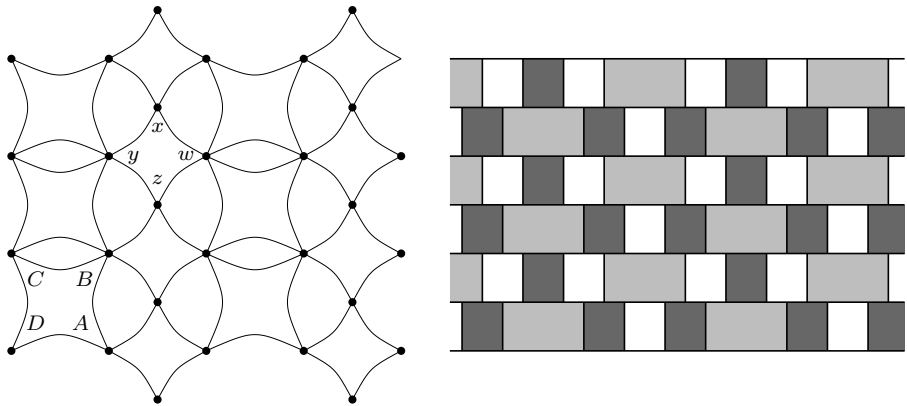}
 \caption{Part of a self-dual 4-uniform hyperlattice $\cH$, shown both as a plane hypergraph
and as the corresponding 3-coloured cubic map.}\label{fig_square}
\end{figure}
Depending on the parameters we choose, $\cH(\vecp)$ can be self-dual
via several different maps $S$ from the plane to itself.
Letting $e_1$ denote the hyperedge $ABCD$ and $e_2$ $xyzw$, there is a translation
mapping $e_1$ into $e_2^*$ and $e_2^*$ into a hyperedge congruent to $e_1$. The model
$\cH(\vecp)$ is self-dual under this translation if and only if
the following equations hold: $p_{1,\emptyset}=p_{2,xyzw}$,
$p_{1,AB}=p_{2,xyz}$, $p_{1,ABC}=p_{2,yz}$, $p_{1,AC}=p_{2,xw|yz}$, $p_{1,AB|CD}=p_{2,xz}$,
and $p_{1,ABCD}=p_{2,\emptyset}$, together with the images of these equations
under rotation. One natural way to satisfy these equations is simply to take
the same probability $p=1/14$ for each of the $14$ \plp s in each square;
taking probability $1/14$ for all partitions other than that into a single
part or into four singletons, and probabilities $p$ and $1/7-p$
for these two partitions, it follows that the percolation threshold in this model
is at $p=1/14$.

Before turning to the proof of Theorem~\ref{th1}, let us make some remarks.
Firstly, an important special class of hyperlattice percolation models
consists of those corresponding to bond or site percolation on (planar) lattices.
By a \emph{$k$-generator} we mean a finite graph $G$ with $k$ distinguished
vertices;
when $k=3$, we denote these vertices by $A$, $B$ and $C$. In a \emph{planar generator} we
insist that $G$ is planar, and that the distinguished vertices lie
in a common face, which we may take to be the outer face.
Suppose that each bond (edge) of $G$ is assigned
a probability. Then, taking the bonds open independently with these probabilities,
for each partition $\pi$ of the distinguished vertices, let $p_\pi$ be the probability
that precisely those vertices in the same part of $\pi$ are connected by open
paths in $G$. Replacing each hyperedge in a \orlab\ $k$-uniform plane hyperlattice $\cH$
by a copy of $G$ one obtains a (planar, if $G$ is planar) lattice $L$.
Taking all bonds open independently with the appropriate
probabilities, the resulting bond percolation model is equivalent to $\cH(\vecp)$ 
in an obvious sense. In this case we say that $\cH(\vecp)$ is \emph{bond realizable}.
We may define \emph{site realizability} analogously; this time, in each generator
we insist that the distinguished vertices are always open.
The definitions extend to general plane hyperlattices using one generator for each
equivalence class of hyperedges.

The key observation of Ziff~\cite{Ziff_cdc} and Ziff and Scullard~\cite{ZS_exactbond}
is that, for any self-dual \orlab\ $3$-uniform plane hyperlattice $\cH$
and any generator $G$,
no matter how complicated, the equation $\vecp=\vecp^\dual$ reduces to a \emph{single}
polynomial equation in the probabilities associated to the bonds or sites of $G$,
so, assuming self-duality implies criticality, this equation gives the entire
critical surface in the inhomogeneous case. Moreover, if we take
the same probability for each bond or site, this polynomial equation gives
the critical probability for bond or site percolation on the lattice generated.
Indeed, this was how Scullard~\cite{Scullard} found the critical point for the `martini
lattice'.

As an example, this method predicts the critical point for bond percolation
on the lattice $L$
shown on the right in Figure~\ref{fig_tri},  as the root of a certain
polynomial of degree~9. Corollary~\ref{c1} proves that this predicted value
is indeed critical. To see the power of the method,
consider the original proof
of Wierman~\cite{WiermanHT} that the critical
probability for bond percolation on the triangular lattice is the
root $2\sin(\pi/18)$ of the equation $p^3-3p+1=0$. This proof relied
on the star-triangle transformation, and an apparent coincidence between
the various connection probabilities associated to a star and to a triangle.
Scullard and Ziff's method gives an argument (which becomes a proof
using the results of Wierman and Ziff~\cite{WZ} or Corollary~\ref{c1})
that does not rely on this -- one simply considers a triangle
and writes down the equation that the probability $p^3+3p^2(1-p)$
that all vertices are connected is equal to the probability $(1-p)^3$
that none are. The fact that one need not consider the dual
lattice is key for examples such as
the lattice shown on the right in Figure~\ref{fig_tri}, which is not
simply related to its dual.

We should emphasize that having observed that a certain percolation model is self-dual,
one is very far from proving criticality. The classical example
is bond percolation on the square lattice with $p=1/2$. This model is obviously
self-dual; proving the conjectured criticality was one of the key open
problems in the early days of percolation theory, finally settled
after 20 years by Kesten~\cite{Kesten1/2}.
Since then, criticality at the self-dual point has been proved
for a number of other models, but these remain the exceptions.
In the hyperlattice context, Wierman and Ziff~\cite{WZ} proved
a result of this type using existing results
on planar lattices; for this reason they consider
only a subclass of bond-realizable models, with planar
generators and certain symmetries. This does
not include examples such as that in Figure~\ref{fig_tri},
which has no symmetries other than translations. However, as outlined
earlier, one can easily adapt their method to such lattices,
as long as the generator is planar. 
Chayes and Lei~\cite{CL} proved a result of this class for
triangular hyperlattices;
they considered only the symmetric case, and imposed an additional condition
on the parameters to ensure positive correlation (see below).
They sketched an argument using standard techniques
for independent percolation on planar lattices that they
claimed extends to this case.

The real significance of Theorem~\ref{th1} is that it applies
to models that are not bond or site realizable. In such cases,
standard results (such as Menshikov's
Theorem~\cite{Menshikov}, for example) do not apply,
and there seems to be no simple way to adapt the arguments
of Chayes and Lei or Wierman and Ziff. Indeed, considerable work will be
needed to prove lemmas corresponding to, for example, Harris's Lemma
and the Russo--Seymour--Welsh Lemma.
Let us note that there \emph{are} hyperlattice percolation models
that are not bond or site realizable. Indeed, considering a single
generator $G$ and the corresponding probabilities $\vecp$,
the event that two given vertices are connected in $G$ is an upset
in terms of the states of the individual bonds or sites.
Thus Harris's Lemma implies that if the vector $\vecp$
is realizable, then for any two upsets $\cU_1$, $\cU_2$ in $\PP$ we have
$\Pr(\cU_1\cap \cU_2)\ge \Pr(\cU_1)\Pr(\cU_2)$.
For example, we must have $p_{ABC}\ge (p_{ABC}+p_{AB})(p_{ABC}+p_{BC})$.
(In the 3-uniform case, Chayes and Lei~\cite{CL} show
that a necessary and sufficient condition for such positive
correlation is that $p_{ABC}p_{\emptyset}\ge p_{AB}(p_{BC}+p_{AC})$,
and the equations obtained from this by permuting $A$, $B$ and $C$, all hold.)
Of course it is trivial to find a probability vector $\vecp$ for which this does
not hold; an example is given in the introduction.

The applicability of Corollary~\ref{c1} is also not limited to models
that are bond or site realizable.
Indeed Scullard and Ziff~\cite{Scullard,Ziff_cdc,ZS_exactbond}
noted that their duality observation does not require the model
to be bond or site realizable.
As in the original paper of Scullard~\cite{Scullard}, one may think of any model $\cH(\vecp)$
as bond percolation on a suitable (planar) lattice, where the states
of the bonds within a generator may be dependent, although those in different
generators must be independent. For example, we may take each 3-generator to be a
triangle, and declare that with probability $p_{i,\emptyset}$
none of the edges are open, with probability $p_{i,ABC}$ all three are,
with probability $p_{i,AB}$ the edge $AB$ is open and the other edges
are closed, and so on. Alternatively, we can take the generator
to be a star.
However, there is no need to think of bonds at all; for percolation,
the only relevant property of the configuration within a triangle
is which of the vertices $A$, $B$ and $C$ the configuration connects
to which others, so it is natural to take this (random) partition
of the vertices as the fundamental object of study. For the mathematical
work (deducing criticality from self-duality), the details of the hyperlattice
turn out to be mostly irrelevant; this is why we consider
general plane hyperlattices in the rest of the paper.

Finally, let us note that extensions of the Scullard--Ziff criterion
to the random cluster model and Potts model have
been described by Chayes and Lei~\cite{CL} and Wu~\cite{Wu};
establishing criticality at the self-dual point remains an open
problem in these cases. Returning to hyperlattice percolation,
in a few very special cases results have been proved that go further
than determining the critical point. For example,
Sedlock and Wierman~\cite{SedlockWierman} established
the equality of the critical exponents between certain
pairs of models, and Chayes and Lei~\cite{CL_cardy} extended
Smirnov's conformal invariance result~\cite{Smirnov}
to what is essentially a very restricted case of the present model.
In this paper we shall not consider such extensions; rather we
shall prove that self-duality does imply criticality,
in the full generality of hyperlattice percolation.

\section{A generalization of Harris's Lemma}\label{sec_Harris}

Given posets $\PP_1,\ldots,\PP_n$, their \emph{product} is the poset $\PP_1\times\cdots\times \PP_n$
where each element $\vecx$ is a list $(x_1,\ldots,x_n)$ with $x_i$ an element
of $\PP_i$, with $\vecx\preccurlyeq\vecy$ if and only if $x_i\preccurlyeq y_i$
for $i=1,\ldots,n$.
If $\Pr$ is a product probability measure on a product
of posets, then with a slight abuse of notation we write $\Pr$
for any of the corresponding marginal measures.

In the later sections of this paper we shall make repeated use of the following generalization
of Harris's Lemma~\cite{Harris} to products of posets. We only need
the case where all $\PP_i$ are equal and finite, but since the proof
gives a little more, we state the result more generally. As usual, a \emph{greatest element}
in a poset $\PP$ means an element $y$ such that $x\preccurlyeq y$ for all $x\in \PP$.
Of course, if a greatest element exists, then it is unique.

\begin{lemma}\label{l_ourHarris}
Let $p>0$.
There is a constant $C=C(p)>0$
such that if $\Pr$ is a product measure on a product
$\PP=\PP_1\times\cdots\times \PP_n$ in which each factor $\PP_i$ is a poset
with a greatest element
whose probability is at least $p$, then for any two upsets
$A$ and $B$ in $\PP$ we have
\[
 \Pr(A\cap B)\ge (\Pr(A)\Pr(B))^C.
\]
\end{lemma}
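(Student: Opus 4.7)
The plan is to prove the lemma by induction on $n$, starting with the single-poset case and then conditioning on one coordinate at a time.

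For the base case $n=1$, the crucial observation is that in a poset with a greatest element $y$ of probability at least $p$, every nonempty upset must contain $y$. Hence if both $A$ and $B$ are nonempty upsets, $A \cap B$ contains $y$, so $\Pr(A \cap B) \geq p$, and simultaneously $\Pr(A), \Pr(B) \geq p$. Combining this with the elementary union-bound inequality $\Pr(A \cap B) \geq \Pr(A) + \Pr(B) - 1$ (which controls the regime where $\Pr(A), \Pr(B)$ are both close to $1$), a direct optimization over $p \leq \Pr(A), \Pr(B) \leq 1$ produces a finite constant $C_1 = C_1(p)$ with $\Pr(A \cap B) \geq (\Pr(A)\Pr(B))^{C_1}$.

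For the inductive step, I would write $\Pr(A \cap B) = \E_{X_n}[\Pr(A_{X_n} \cap B_{X_n})]$, where $A_z := \{\vecx' : (\vecx', z) \in A\}$ is the slice at $z \in \PP_n$. Because $A$ is an upset, each $A_z$ is an upset in $\PP_1 \times \cdots \times \PP_{n-1}$, and $z \preccurlyeq z'$ implies $A_z \subseteq A_{z'}$. Consequently $f(z) := \Pr(A_z)$ and $g(z) := \Pr(B_z)$ are non-decreasing on $\PP_n$ with $\E_{X_n} f = \Pr(A)$ and $\E_{X_n} g = \Pr(B)$. Applying the inductive hypothesis slice-by-slice gives $\Pr(A_z \cap B_z) \geq (f(z) g(z))^{C_{n-1}}$, whence
\[
\Pr(A \cap B) \;\geq\; \E_{X_n}\bigl[(f(X_n) g(X_n))^{C_{n-1}}\bigr].
\]

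The main obstacle is to make the constant uniform in $n$. Retaining only the $z=y_n$ contribution yields the crude bound $\Pr(A \cap B) \geq p \cdot (\Pr(A)\Pr(B))^{C_{n-1}}$, and absorbing the leading factor of $p$ into the exponent costs an increment of order $|\log p|/|\log \Pr(A)\Pr(B)|$, which explodes as $\Pr(A)\Pr(B) \to 1$. To close the loop with a single $C(p)$, I would re-use the single-coordinate case of the lemma itself at the outer level: via layer-cake $f(z)= \int_0^1 \mathbf{1}[f(z)\geq t]\,dt$, each level set is an upset of $\PP_n$, so the $n=1$ bound applies to every pair of such indicator events and integrates up to an estimate of the form $\E_{X_n}[fg] \geq (\Pr(A)\Pr(B))^{C_1}$; a convexity (Jensen) step then transfers $\E[(fg)^{C_{n-1}}]$ to $\E[fg]^{C_{n-1}}$ with a controlled loss. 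The delicate bookkeeping here--choosing exactly how much exponent is spent per induction step, and possibly splitting into the regimes $\Pr(A)\Pr(B) \leq q_0(p)$ versus $> q_0(p)$ so that in the latter the bare union bound already gives an exponent near $1$--is what I expect to be the hardest part of the proof.
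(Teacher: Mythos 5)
Your base case and the slice decomposition are exactly the right setup, and match what the paper does. The gap is in how you propose to close the inductive step with a constant that is uniform in $n$. Your route---layer-cake to get $\E[fg]\ge(\Pr(A)\Pr(B))^{C_1}$, then Jensen to pass from $\E[(fg)^{C_{n-1}}]$ to $(\E[fg])^{C_{n-1}}$---chains the constants multiplicatively: it yields $C_n = C_1\cdot C_{n-1}$, hence $C_n = C_1^n$, which diverges. There is no way to ``spend a little exponent per step'' here, because the composition is multiplicative rather than additive, and $C_1>1$ is forced (the paper gives an explicit three-element poset showing the correlation inequality genuinely fails at exponent $1$, so FKG is unavailable and you cannot reduce to $\E[fg]\ge\E f\cdot\E g$). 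Splitting into regimes $\Pr(A)\Pr(B)$ small or large does not rescue this: the loss occurs at \emph{every} level of the induction, not just at the outermost one.

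What is actually needed---and what the paper proves---is a single-variable inequality with the \emph{same} exponent on both sides: for a fixed $C$ (the paper takes $C=\lceil 2/p\rceil$), one must show
\[
 \sum_i p_i\, (a_i b_i)^C \;\ge\; (ab)^C
\]
whenever $p_0\ge p$, $a_0=\max_i a_i$, $b_0=\max_i b_i$, $a=\sum p_i a_i$, $b=\sum p_i b_i$. This is exactly the assertion that one coordinate can be peeled off without degrading the exponent, and it must be proved directly; no black-box appeal to the $n=1$ lemma will give it, because that lemma already costs a factor in the exponent. The paper's proof normalizes to $a=b=1$, sets $\alpha_i=a_i-1$, $\beta_i=b_i-1$, and expands: using $(1+x)^C\ge 1+Cx$, the $i=0$ term contributes a positive $p_0C^2\alpha_0\beta_0$, the terms with $\alpha_i,\beta_i$ of the same sign are nonnegative, and the cross terms are bounded in absolute value by $2C\alpha_0\beta_0$. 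Choosing $C\ge 2/p$ makes the $i=0$ term dominate. That explicit, elementary estimate is the missing ingredient in your proposal.
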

\begin{proof}
We shall prove the result with $C=\ceil{2/p}$.

As usual, we use induction on $n$. When $n=0$, the
set $\PP$ contains only a single element, and the inequality
is trivial. (It is also not hard to verify directly for $n=1$.)
Suppose then that $n\ge 1$, 
and that the result holds for smaller $n$.
Suppose for notational convenience that $\PP_n$ is finite, and list
its elements as $x_0,x_1,\ldots,x_k$, with $x_0$ the greatest element.
Let $p_i$ denote the probability of element $i$ in $\PP_n$, so 
our assumption is that $p_0\ge p$.

Given a set $S\subset \PP$, let
\[
 S_i=\{y\in \PP_1\times\cdots\times\PP_{n-1}: (y,x_i)\in S\} \subset
 \PP_1\times\cdots\times\PP_{n-1}
\]
denote the $i$th \emph{slice} of $S$ (with respect
to the last factor in the product). Clearly, if $S$ is an upset, then so is $S_i$,
so the induction
hypothesis gives $\Pr((A\cap B)_i)\ge (\Pr(A_i)\Pr(B_i))^C$ for each $i$.
Also, the upset conditions give $A_i\subset A_0$ and $B_i\subset B_0$ for $i>0$.
Since $\Pr$ is a product measure, we have
$\Pr(S)=\sum_i p_i\Pr(S_i)$ for any $S$.
Using these observations, it suffices to show that
\begin{equation}\label{abineq}
 \sum_{i=0}^k p_i (a_ib_i)^C \ge (ab)^C
\end{equation}
holds whenever the non-negative real numbers $p_i$, $a_i$ and $b_i$
satisfy the following conditions:
$p_0,\ldots,p_k$ sum to 1, $p_0\ge p$, 
$a_0=\max_i a_i$, $b_0=\max_i b_i$,
$a=\sum p_ia_i$, and $b=\sum p_ib_i$.

In proving \eqref{abineq} we may assume that $a$, $b>0$. Dividing the $a_i$ through
by $a$ and the $b_i$ by $b$, we may assume that $a=b=1$.
Let $\alpha_i=a_i-1$ and $\beta_i=b_i-1$. Since $\sum_i p_i\alpha_i=0$,
we have $\alpha_0=\max_i \alpha_i\ge 0$.
Also,
\begin{equation}\label{ems}
 \sum_{i\,:\,\alpha_i<0} -p_i\alpha_i   = \sum_{i\,:\,\alpha_i>0} p_i\alpha_i
  \le \sum_i p_i\alpha_0 = \alpha_0,
\end{equation}
and similarly for the $\beta_i$.
Our aim is to prove that $\sum_i p_i(1+\alpha_i)^C(1+\beta_i)^C \ge 1$.
Recalling that $\sum_i p_i(\alpha_i+\beta_i)=0$, this is equivalent
to showing that
\begin{equation}\label{sum}
 \Delta = \sum_i p_i\bigl( (1+\alpha_i)^C(1+\beta_i)^C - 1-C(\alpha_i+\beta_i)\bigr) \ge 0.
\end{equation}
Since $\alpha_0$, $\beta_0\ge 0$, we have
$(1+\alpha_0)^C(1+\beta_0)^C\ge 1+C\alpha_0+C\beta_0+C^2\alpha_0\beta_0$, so 
the contribution to the sum $\Delta$ from the $i=0$ term
is at least $p_0C^2\alpha_0\beta_0$.

Turning to the remaining terms, since $(1+x)^n\ge 1+nx$ if $x\ge -1$ and
$n$ is a positive integer, we have
\[
 (1+\alpha_i)^C(1+\beta_i)^C = (1+\alpha_i+\beta_i+\alpha_i\beta_i)^C
 \ge 1+C(\alpha_i+\beta_i+\alpha_i\beta_i).
\]
If $\alpha_i$ and $\beta_i$ have the same sign, then the contribution
of the $i$th summand to \eqref{sum}
is nonnegative.
If $\alpha_i<0$ and $\beta_i>0$, then the negative of the contribution
of the $i$th summand to \eqref{sum}
is at most $p_iC|\alpha_i|\beta_i \le p_iC|\alpha_i|\beta_0$.
By \eqref{ems}, the negative of the sum of the contribution of all such terms
is at most
\[
 C\beta_0\sum_{i\,:\,\alpha_i<0} p_i(-\alpha_i) \le C\beta_0\alpha_0.
\]
The same bound holds for terms with $\alpha_i>0$ and $\beta_i<0$, so we conclude
that the sum in \eqref{sum} satisfies
\[
 \Delta\ge p_0C^2\alpha_0\beta_0-2C\alpha_0\beta_0 = (p_0C-2)C\alpha_0\beta_0.
\]
Since $p_0C\ge 2$ by our choice of $C$, this establishes the inequality, and hence the lemma.
\end{proof}
Note that we have not attempted to optimize the value of $C$ above. Indeed, for $p_0$ small,
the proof above goes through with $C$ only slightly larger
than $1/p_0$, noting that the contribution
to $\Delta$ from $i=0$ is at least $C(C-1)(\alpha_0^2+\beta_0^2)/2+C^2\alpha_0\beta_0
\ge (C(C-1)+C^2)\alpha_0\beta_0$.

Of course, Harris's Lemma itself does not apply in this setting, i.e.,
one cannot simply take $C=1$.
Indeed, considering the upsets $\{x_0,x_1\}$ and $\{x_0,x_2\}$ in the poset
on $\{x_0,x_1,x_2\}$ in which $x_0$ is greatest and $x_1$ and $x_2$ are incomparable,
with $\Pr(x_0)=p_0$ and $\Pr(x_1)=\Pr(x_2)=(1-p_0)/2$,
we may have $\Pr(A)=\Pr(B)=(1+p_0)/2$ and $\Pr(A\cap B)=p_0$.
For $p_0$ small, this shows that we need the exponent $C$ to be at least a constant times
$\log(1/p_0)$.

Since the form of the bound obtained will be irrelevant in our remaining
arguments, let us state
as a corollary a weaker, more abstract version of the result.

\begin{lemma}\label{ourHarris2}
Let $\PP$ be a finite poset with a greatest element $x_0$ and
let $\Pr$ be a probability measure on $\PP$ with $\Pr(x_0)>0$.
There is a function $F=F_{\PP,\Pr}$ from $(0,1]^2$ to $(0,1]$
that is strictly increasing in each argument
such that, for any $n\ge 1$ and any upsets $A$ and $B$ in $\PP^n$
with $\Pr(A)$, $\Pr(B)>0$, we have
\[
 \Pr(A\cap B)\ge F(\Pr(A),\Pr(B)).
\]
\end{lemma}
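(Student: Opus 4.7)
The plan is to deduce this statement directly from Lemma~\ref{l_ourHarris}. In the setting of Lemma~\ref{ourHarris2}, every factor of the product $\PP^n$ is a copy of the same finite poset $\PP$, equipped with the same measure $\Pr$; since $\PP$ has a greatest element $x_0$ with $\Pr(x_0)>0$, we may take $p:=\Pr(x_0)$ and apply Lemma~\ref{l_ourHarris} with this value of $p$. This produces a constant $C=C(p)>0$, depending only on $\PP$ and $\Pr$, such that
\[
 \Pr(A\cap B)\ge (\Pr(A)\Pr(B))^C
\]
for every $n\ge 1$ and every pair of upsets $A,B\subset \PP^n$. (Note that the constant is uniform in $n$, which is what makes the reduction work.)

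I would then simply set $F(a,b)=(ab)^C$ on $(0,1]^2$ and verify that this $F$ has the properties claimed. Since $0<ab\le 1$ and $C>0$, we have $F(a,b)\in(0,1]$, so the codomain is correct. Strict monotonicity in each argument is immediate: for fixed $b\in(0,1]$, the map $a\mapsto a^Cb^C$ is strictly increasing on $(0,1]$ because $C>0$ and $b^C>0$, and symmetrically for the other argument. Finally, the required inequality $\Pr(A\cap B)\ge F(\Pr(A),\Pr(B))$ is just the conclusion of Lemma~\ref{l_ourHarris} rewritten.

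There is essentially no obstacle here: all the work was done in Lemma~\ref{l_ourHarris}, and the present statement is stated in this more abstract, less quantitative form only because its precise shape is what the later sections will use. The one thing worth double-checking is simply that the constant $C$ from Lemma~\ref{l_ourHarris} really is uniform in $n$ (it is, as the proof gives $C=\lceil 2/p\rceil$), so that a single function $F$ works for all $n\ge 1$ simultaneously.
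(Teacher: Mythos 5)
Your proposal is correct and is exactly the paper's argument: the paper's proof consists of the single line ``Immediate from Lemma~\ref{l_ourHarris},'' and you have simply spelled out what that entails — take $p=\Pr(x_0)$, apply Lemma~\ref{l_ourHarris} to get a constant $C$ uniform in $n$, and set $F(a,b)=(ab)^C$. Nothing to add.
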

\begin{proof}
Immediate from Lemma~\ref{l_ourHarris}.
\end{proof}
As usual, the extension to infinite products is immediate by approximating
with the finite case.

Note that while the form of the function $F$ is irrelevant, it is natural
to look for an $F$ of the form $F(a,b)=(ab)^C$. Indeed, given upsets
$A_i$ and $B_i$ in $\PP^{n_i}$ then, considering the product upsets $A_1\times A_2$
and $B_1\times B_2$ in $\PP^{n_1+n_2}$, one sees that the optimal $F$ satisfies
$F(a_1a_2,b_1b_2)\le F(a_1,b_1)F(a_2,b_2)$. Of course the optimal
$F$ cannot be precisely $F(a,b)=(ab)^C$, since we certainly need $C>1$,
and then the bound is not tight if $a=1$, for example.

\subsection{High probability unions of upsets}

In many applications of Harris's Lemma
in percolation, the exact form of the bound is not important,
so the weaker conclusion of the more generally applicable
Lemma~\ref{ourHarris2} may be used instead of Harris's bound. We give
one example that we shall use later: a form of the `square-root' trick,
showing that if the union of a fixed number of upsets has high
enough probability, then one of the upsets has high probability.

\begin{corollary}\label{csr}
Let $\PP$ be a finite poset with a least element $x_0$ and
let $\Pr$ be a probability measure on $\PP$ with $\Pr(x_0)>0$.
Given $\eps>0$ and a positive integer $k$ there
is a $\delta=\delta(\PP,\Pr,k,\eps)>0$ such that, for any $n\ge 1$,
if $A_1,\ldots,A_k$ are upsets in $\PP^n$
with $\Pr(\bigcup A_i)\ge 1-\delta$, then
$\Pr(A_i)\ge 1-\eps$ for some $i$.
\end{corollary}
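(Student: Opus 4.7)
The plan is to reduce to Lemma~\ref{ourHarris2} by passing to the reverse poset and taking complements, then run the standard square-root trick with the quantitative bound replaced by the abstract function $F$.

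First I would observe that reversing the order on $\PP$ gives a poset $\PP^{\mathrm{op}}$ whose greatest element is the original least element $x_0$, and the given measure $\Pr$ still satisfies $\Pr(x_0)>0$. Upsets of $\PP^{\mathrm{op}}$ are exactly downsets of $\PP$, and the same holds for $\PP^n$ vs.\ $(\PP^{\mathrm{op}})^n$. Applying Lemma~\ref{ourHarris2} to $\PP^{\mathrm{op}}$ therefore yields a strictly increasing function $F\colon (0,1]^2\to (0,1]$ such that for any $n$ and any two downsets $B,B'$ in $\PP^n$ with positive probability,
\[
 \Pr(B\cap B')\ge F(\Pr(B),\Pr(B')).
\]
Note also that the intersection of downsets is a downset, which lets us iterate.

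Next I would turn the problem around by setting $B_i=A_i^c$; these are downsets in $\PP^n$, and $\Pr\bb{\bigcup A_i}\ge 1-\delta$ is equivalent to $\Pr\bb{\bigcap B_i}\le \delta$, while $\Pr(A_i)\ge 1-\eps$ is equivalent to $\Pr(B_i)\le\eps$. Suppose for contradiction that $\Pr(B_i)>\eps$ for every $i$. Define a sequence by $c_1=\eps$ and $c_{j+1}=F(c_j,\eps)$; since $F$ takes values in $(0,1]$, each $c_j$ is strictly positive. Then by induction on $j$ and the inequality for downsets,
\[
 \Pr\Bigl(\bigcap_{i=1}^{j}B_i\Bigr)\ge F\Bigl(\Pr\Bigl(\bigcap_{i=1}^{j-1}B_i\Bigr),\Pr(B_j)\Bigr)\ge F(c_{j-1},\eps)=c_j,
\]
using that $F$ is increasing in each argument and $\Pr(B_j)>\eps$. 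Taking $j=k$ gives $\Pr\bb{\bigcap B_i}\ge c_k>0$.

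Finally, choosing $\delta=\delta(\PP,\Pr,k,\eps)$ to be any positive number smaller than $c_k$ (e.g.\ $c_k/2$) contradicts $\Pr\bb{\bigcap B_i}\le\delta$, so some $\Pr(B_i)\le\eps$, as required. There is no real obstacle here: everything reduces to checking that Lemma~\ref{ourHarris2} can be applied to downsets via order reversal, and then running the routine iteration. The only minor point to be careful about is verifying that the iterates $c_j$ stay strictly positive so that $F(c_j,\eps)$ is a legitimate input; this is immediate from the codomain of $F$.
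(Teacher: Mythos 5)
Your proof is correct and takes essentially the same route as the paper's: pass to the reverse poset to apply Lemma~\ref{ourHarris2} to the complements (which are downsets), iterate to bound $\Pr(\bigcap A_i^{\cc})$ from below by a sequence $c_1=\eps$, $c_{j+1}=F(c_j,\eps)$, and take $\delta$ smaller than $c_k$. The paper's $\delta_j$ is your $c_j$ exactly, and the remaining bookkeeping (positivity of the iterates, $\delta=c_k/2$) matches.
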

Note that $x_0$ is a \emph{least} element here, not a greatest one.
\begin{proof}
Set $\delta_1=\eps$. For $j\ge 2$ let $\delta_j=F(\delta_{j-1},\eps)$,
where $F$ is the function given by Lemma~\ref{ourHarris2}
applied to the reverse of $\PP$,  and set $\delta=\delta_k/2$.

If $\Pr(A_i)<1-\eps$ for each $i$, then the downsets $A_i^\cc$
each have probability at least $\eps$. Viewing these downsets
as upsets in the reversed poset, it follows by Lemma~\ref{ourHarris2}
and induction
on $j$ that $\Pr(A_1^\cc\cap \cdots\cap A_j^\cc)\ge \delta_j$.
Thus $\Pr(\bigcup_{i=1}^k A_i)\le 1-\delta_k<1-\delta$, a contradiction.
\end{proof}

We shall also need a related result,
stating that if we have a union of upsets which is extremely likely
to hold, then it is very likely that \emph{many} of the individual upsets hold,
as long as we rule out the trivial case that the union is extremely
likely because one of the individual upsets is itself extremely likely.

\begin{lemma}\label{manyhold}
Let $\PP$ be a poset with a least element $x_0$, and let $\Pr$
be a probability measure on $\PP$ with $\Pr(x_0)>0$.
Given an integer $N>0$ and a real number $\eps>0$, there exists
a $\delta=\delta(\Pr(x_0),N,\eps)>0$ such that, for any $n$ and any collection $A_1,\ldots,A_m$
of upsets in $\PP^n$ with $\Pr(A_i)\le 1-\eps$ for all $i$
and $\Pr(\bigcup A_i)\ge 1-\delta$, the probability
that at least $N$ of the events $A_i$ hold is at least $1-\eps$.
\end{lemma}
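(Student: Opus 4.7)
The plan is to proceed by induction on $N$. The base case $N = 1$ is immediate on taking $\delta = \eps$: the hypothesis $\Pr(\bigcup_i A_i) \ge 1 - \eps$ is exactly the statement that at least one $A_i$ holds with probability at least $1 - \eps$.

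For the inductive step I would assume the result for $N - 1$ and apply it with parameter $\eps/2$ to obtain some $\delta_{N-1} > 0$ such that, under the hypotheses with $\delta \le \delta_{N-1}$, $\Pr(M \ge N - 1) \ge 1 - \eps/2$, where $M$ denotes the number of $A_i$ that hold. Since $\Pr(M \ge N) = \Pr(M \ge N - 1) - \Pr(M = N - 1)$, the induction is complete once we establish, by further decreasing $\delta$, that $\Pr(M = N - 1) \le \eps/2$.

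To bound $\Pr(M = N - 1)$ I would combine the hypothesis $\Pr(\bigcup_i A_i) \ge 1 - \delta$ with Corollary~\ref{csr}: applied with $k = N - 1$ and parameter $\eps$, Corollary~\ref{csr} yields a $\delta_\ast > 0$ such that $\Pr(\bigcup_{i \in S} A_i) \le 1 - \delta_\ast$ for every $S \subseteq [m]$ with $|S| \le N - 1$, and hence $\Pr(\bigcup_{j \notin S} A_j) \ge \delta_\ast - \delta$ for every such $S$. On $\{M = N - 1\}$ the random set $T = \{i : A_i \text{ holds}\}$ equals some $(N-1)$-subset $S$ and no $A_j$ with $j \notin S$ holds, so the event $\{T = S\}$ is disjoint from $\bigcup_{j \notin S} A_j$. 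The intended argument then proceeds by contradiction: assuming $\Pr(M = N - 1) > \eps/2$, one extracts (by an averaging/pigeonhole step involving the joint distribution of $T$ and a well-chosen $A_{j^*}$ with $j^* \notin T$) an $O(N)$-size sub-family of the $A_i$ whose union still has probability close to $1$, with each member of probability at most $1 - \eps$; this contradicts Corollary~\ref{csr} applied now with $k = N$.

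The main obstacle I foresee is carrying out this extraction cleanly: since $m$ is not bounded a priori, a naive union bound over the $\binom{m}{N-1}$ subsets $S$ is hopeless, and the argument must instead exploit the product structure of $\Pr$ and the quantitative Harris inequality of Lemma~\ref{l_ourHarris} to reduce to a bounded-size sub-family on which Corollary~\ref{csr} can be applied directly.
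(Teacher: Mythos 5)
You have correctly identified the difficulty, but your proposal does not resolve it: the inductive step reduces to showing $\Pr(M = N-1) \le \eps/2$, and the ``extraction'' of a bounded sub-family from the event $\{M = N-1\}$ is exactly the missing piece. The obstacle is real. The random index set $T$ of events that hold can take any of $\binom{m}{N-1}$ values with $m$ unbounded, and there is no \emph{a priori} way to select a deterministic $O(N)$-size sub-family from the fact that $\Pr(M = N-1)$ is large; the quantitative Harris inequality of Lemma~\ref{l_ourHarris} compares probabilities of given upsets and downsets, but it does not extract a fixed small witness set from a high-probability event over an unbounded index range. (There is also a minor point: you invoke the contrapositive of Corollary~\ref{csr}, which needs strict inequalities $\Pr(A_i) < 1 - \eps$, so you should run it at parameter $\eps/2$; but this is cosmetic.)

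The paper sidesteps the issue entirely by choosing its sub-families \emph{deterministically and greedily}, before looking at the configuration. Let $f_i$ be the probability that none of $A_1, \ldots, A_i$ holds, and let $i_1$ be the first index with $f_{i_1} \le \eps/N$; take $I_1 = \{1, \ldots, i_1\}$. Applying Lemma~\ref{ourHarris2} to the downsets $\bigcap_{i < i_1} A_i^\cc$ and $A_{i_1}^\cc$, using $f_{i_1 - 1} > \eps/N$ and $\Pr(A_{i_1}) \le 1 - \eps$, gives the crucial \emph{lower} bound $f_{i_1} \ge F(\eps/N, \eps) =: \eps'$. A second application of Lemma~\ref{ourHarris2}, to $\bigcap_{i \le i_1} A_i^\cc$ and $\bigcap_{i > i_1} A_i^\cc$, then shows that $\bigcup_{i > i_1} A_i$ still has probability at least $1 - \delta_{k-1}$, where the thresholds are defined by $\delta_k = F(\eps', \delta_{k-1})$, $\delta_1 = \eps/N$. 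Recursing gives $N$ disjoint index blocks whose unions each have probability at least $1 - \eps/N$, and a union bound finishes. So the right induction is on the number of blocks carved off, driven by Lemma~\ref{ourHarris2} alone; Corollary~\ref{csr} and the decomposition by $\Pr(M = N-1)$ are not needed.
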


\begin{proof}
Let $F$ be the function appearing in Lemma~\ref{ourHarris2} applied
to the reverse of the poset $\PP$.
Set $\eps'=F(\eps/N,\eps)$ and $\delta_1=\eps/N$.
Inductively define $\delta_k$ by $\delta_k=F(\eps',\delta_{k-1})$
for $k\ge 2$.

We claim that, for any $k\ge 1$, if $A_1,\ldots,A_m$ is any collection
of upsets in any power $\PP^n$ of $\PP$ with  $\Pr(\bigcup A_i)\ge 1-\delta_k$,
then we can  find disjoint index sets $I_1,I_2,\ldots,I_k$
such that for each $1\le j\le k$ we have $\Pr(\bigcup_{i\in I_j} A_i)\ge 1-\eps/N$.
The result then follows by setting $\delta=\delta_N$ and $k=N$: the claim
tells us that
with probability at least $1-N\eps/N=1-\eps$, for every $j$
at least one of the events $\{A_i:i\in I_j\}$ holds, so at least $N$
of the $A_i$ hold.

For $k=1$ the claim is trivial, taking $I_1=\{1,2,\ldots,m\}$.

Suppose then that $k\ge 2$ and that the claim holds when we replace
$k$ by $k-1$. Let $f_i$ be the probability
that none of $A_1,\ldots,A_i$ holds. For any $k$ we have $\delta_k\le \delta_1=\eps/N$,
so $f_m\le \delta_k\le \eps/N$, and
\[
 i_1 = \min\{i: f_i\le  \eps/N\}
\]
is defined. Setting $I_1=\{1,2,\ldots,i_1\}$, note that the
event $\bigcup_{j\in I_1}A_i$ has probability at least $1-\eps/N$.

Consider the downsets $\bigcap_{1\le i\le i_1-1}A_i^\cc$ and $A_{i_1}^\cc$.
Applying Lemma~\ref{ourHarris2} to these events, seen as upsets in the
reversed poset, we have $f_{i_1}\ge F(f_{i_1-1},1-\Pr(A_{i_1}))$.
Since $F$ is increasing, using the definition of $i_1$
and our assumption on $\Pr(A_i)$, it follows that $f_{i_1}\ge F(\eps/N,\eps)=\eps'$.
Let $\cD_1=\bigcap_{1\le i\le i_1} A_i^\cc$ and $\cD_2=\bigcap_{i_1+1\le i\le m} A_i^\cc$.
Applying Lemma~\ref{ourHarris2} to $\cD_1$ and $\cD_2$, we have
\[
 \delta_k \ge \Pr(\cD_1\cap \cD_2) \ge F(\Pr(\cD_1),\Pr(\cD_2)) \ge F(f_{i_1},\Pr(\cD_2)).
\]
Since $F$ is strictly increasing, $f_{i_1}\ge \eps'$, and $\delta_k=F(\eps',\delta_{k-1})$,
it follows that
\[
 F(\eps',\delta_{k-1}) = \delta_k \ge F(\eps',\Pr(\cD_2)),
\]
so $\Pr(\cD_2)\le \delta_{k-1}$, and the union of $A_{i_1+1},\ldots,A_m$
has probability at least $1-\delta_{k-1}$. Applying the induction hypotheses
to this set of events gives us $I_2,\ldots,I_m$ with the required properties, completing
the proof.
\end{proof}

\section{Colourings, hypergraphs and crossings}\label{sec_cc}

Our next aim is to prove a form of rectangle-crossing
lemma loosely analogous to the Russo--Seymour--Welsh Lemma~\cite{Russo,SW}, but
applicable in the hyperlattice percolation context.
Naturally, this involves considering `open crossings
of rectangles'. 
As in~\cite{Voronoi,ourKesten2}, for example (see also~\cite{BRbook}), to make
this precise and clean we shall work instead with `black crossings' in a suitable
black/white colouring of the faces of a cubic map (i.e., 3-regular plane graph).
We assume throughout that our maps are \emph{well-behaved}, meaning that
the edges are drawn as piecewise-linear curves, every face is bounded, and any
bounded subset of the plane
contains only finitely many vertices and meets only finitely many edges.

Recall that a plane hyperlattice $\cH$ may be thought of as a 3-coloured cubic map, where
the faces are properly coloured black, white and grey, with a lattice
$\cL$ of translational symmetries; as usual we view
$\cL$ as a subset of $\RR^2$. From now on this is our
default viewpoint when considering any plane hyperlattice.
A \emph{colouring} $\cC$ of $\cH$ is a 2-coloured cubic map obtained as follows:
first subdivide each grey face of $\cH$ into one or more \emph{subfaces}, in such a way
that the resulting map is still cubic. Then colour each subface black or white,
as in Figure~\ref{fig_shading3}, for example.
\begin{figure}[htb]
 \centering
 \input{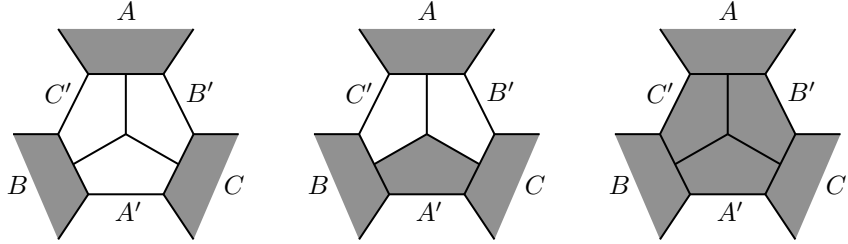}
 \caption{Colourings of a grey face (the central hexagon) corresponding to three of the five
possible partitions of the vertices (black faces) $A$, $B$, $C$ of the corresponding hyperedge,
namely the partitions $A|B|C$, $A|BC$ and $ABC$.
The outer black faces corresponding to two vertices are connected by a black
path inside the hexagon if and only
if the vertices are in the same part of the partition; the outer white
faces and the white connections between them correspond to the dual partition.}\label{fig_shading3}
\end{figure}
The resulting colouring $\cC$ is an (improper, of course) black/white colouring
of the faces of a (well-behaved) cubic map. It will be convenient to declare that points
in the boundary of a face have
the colour of that face, so some points are both black \emph{and} white.
A \emph{black path} in $\cC$ is then simply a (piecewise-linear) path in the plane every point of which is black;
such a path corresponds to a sequence of black faces in which consecutive faces share a point
and thus (since the map is cubic) an edge. White paths are defined similarly.

Recall that a grey face $F_e$ of $\cH$ corresponds to a hyperedge $e$.
Also, since $\cH$
is properly 3-coloured, $F_e$ is surrounded alternately by black and white faces,
corresponding to vertices and dual vertices.
In $\cC$, certain pairs of vertices incident with $e$ are connected by black
paths within $F_e$; this generates a partition $\pi$ of the vertices of $e$,
which is easily seen to be non-crossing. Thus $\cC$ corresponds to a configuration $\omega$ on $\cH$,
with open paths in $\omega$ corresponding to black paths in $\cC$ and vice versa. (Of course,
each open path is represented by many `nearby' black paths).
Crucially, \emph{white} paths within $F_{e^*}=F_e$ induce the dual partition $\pi^*$ of the dual vertices
incident with $e^*$, so the \emph{negative} of $\cC$, obtained by interchanging
black and white throughout, corresponds to the dual configuration $\omega^*$ on $\cH^*$.

By a \emph{black cluster} in $\cC$ we mean a maximal connected black subset of the plane.
A \emph{white cycle} is a white path that starts and ends at the same point.
Since $\cC$ is a black/white colouring of a cubic planar map, it is easy to see
that a black cluster
is surrounded by a white cycle if and only if it is finite.
This is a precise form of the duality property relating
$\omega$ and $\omega^*$ mentioned in Section~\ref{sec_mod}.

\begin{remark}\label{single}
It will be convenient later to assume that all partitions corresponding
to $e$ are realized by colourings of a single subdivision of $F_e$ into subfaces,
as in Figure~\ref{fig_shading3}.
This can be achieved for all \plp s of
hyperedges with any number of vertices, as illustrated
in Figure~\ref{fig_shading6}.
\end{remark}
\begin{figure}[htb]
 \centering
 \input{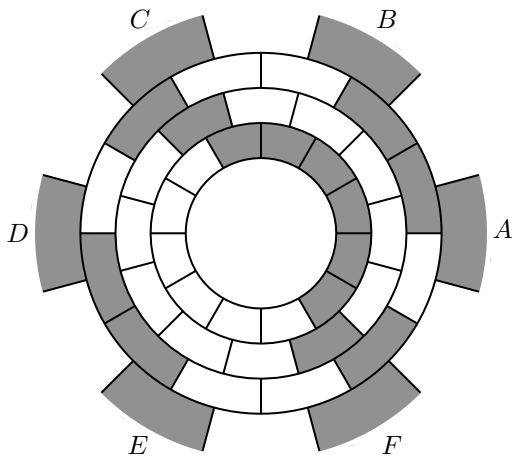}
 \caption{A subdivision of the $2n$-gon corresponding to a hyperedge $e$ with $n=6$ vertices,
and a colouring corresponding to the partition $AB|CF|DE$. In general, it suffices to take
$2k-1$ rings of subfaces of the type shown to realize any \plp, where $k\le n/2$
is the maximum `nesting depth' of a \plp\ of $n$ objects.}\label{fig_shading6}
\end{figure}

Note that there are many possible colourings $\cC$ corresponding to a given
configuration $\omega$ on $\cH$: even if we fix the division of each
grey face into subfaces (as we shall), there may be many colourings of the subfaces
giving the same partition of the vertices. For much of the rest of the paper,
we shall take the random colouring $\cC$ as the fundamental object of study,
rather than the random configuration $\omega$.

More formally, an \emph{independent lattice colouring} $\cC$, or
simply a \emph{colouring}, is a colouring obtained from a plane hyperlattice $(\cH,\cL)$
by subdividing each grey face in a deterministic manner,
and then colouring the resulting subfaces randomly black and white, in such a way that
the colourings inside different grey faces are independent, and 
translations through elements of the lattice $\cL$ preserve the distribution
of $\cC$.
From the remarks above, any plane lattice percolation model $\cH(\vecp)$
can be realized by an independent lattice colouring $\cC$ associated to $\cH$.

We assume throughout that our colourings $\cC$ are \emph{non-degenerate}, meaning
that within any grey face, the all-black and all-white colourings have positive
probability. Later on we shall have to impose some additional conditions for
faces corresponding to hyperedges with more than three vertices.
 
Note that the state space $\Omega$ underlying the random colouring $\cC$ may be viewed
as a product of one poset $\PP_F$ for each grey face $F$ of $\cH$:
in the partial order, we have $c_1\preccurlyeq c_2$ if every subface that is black
in $c_1$ is black in $c_2$.
Picking a finite set $F_1,\ldots,F_n$ of faces representing the orbits of $E(\cH)$ (the set
of grey faces) under the action of $\cL$, from lattice invariance
we may regard $\Omega$ as a power of the poset $\PP=\PP_{F_1}\times\cdots\times \PP_{F_n}$.
From independence, the probability measure associated
to $\cC$ is then a product probability measure on $\Omega$.
The non-degeneracy condition implies that $\PP$ has a greatest
element (all subfaces of each $F_i$ black) and a least element (all white),
and that each has positive probability.

The event that a given path is black, or that a black path exists
with certain properties,
is an upset in $\Omega$, in the sense of Section~\ref{sec_Harris}.
Thus Lemma~\ref{ourHarris2} applies to two such events.
Similarly, considering the reverse poset, Lemma~\ref{ourHarris2}
applies to two events each defined by the existence of a white path with certain properties.
This is the reason for the non-degeneracy assumption.

Note that we are always considering two coloured maps: $\cH$ (which is deterministic) and $\cC$.
To avoid ambiguity, we say that a point or face is \emph{$\cH$-black} if it is black
in $\cH$, and \emph{$\cC$-black} if it is black in $\cC$, and similarly for other colours.
By default, \emph{black} or \emph{white} refers to $\cC$, while \emph{grey} necessarily refers to $\cH$.

The lattice structure ensures that the faces of the hyperlattice $\cH$ (seen as a map, as usual)
cannot be too wild.
\begin{lemma}\label{reg}
Let $(\cH,\cL)$ be a plane hyperlattice, viewed as a cubic map.
There are finitely many faces $F_1,\ldots,F_N$ of $\cH$ 
such that for any face $F$, there is an element of $\cL$ such that
the corresponding translation of $\RR^2$
induces an isomorphism of $\cH$ mapping $F$ to one of $F_1,\ldots,F_N$.
Furthermore, there is a constant $d_0$ such that (i) every face has diameter at most 
$d_0$ and (ii) every point of $\RR^2$ is within distance $d_0$
of an element of $\cL$, 
and for each $r>0$ there is a constant $N_r$ such that any disk of radius
$r$ meets at most $N_r$ faces.
\end{lemma}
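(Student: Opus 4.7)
The approach is to use a fundamental parallelogram of $\cL$ together with the local finiteness built into the definition of a plane hypergraph, which in the cubic-map view amounts to the statement that any bounded region meets only finitely many faces (of any colour). Everything in the lemma will then follow by translating, using $\cL$, the finite list of faces that intersect the fundamental domain.

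Concretely, I would fix generators $\veca, \vecb$ of $\cL$ and take the closed fundamental parallelogram $D_0 = \{s\veca + t\vecb : 0 \le s, t \le 1\}$. Local finiteness produces a finite list $F_1, \ldots, F_N$ of faces meeting $D_0$. Since the translates $D_0 + \ell$, $\ell \in \cL$, cover $\RR^2$ and translation by any $\ell \in \cL$ is an automorphism of $\cH$, any face $F$ contains some point $x$ lying in $D_0 + \ell$ for some $\ell$; then $F - \ell$ is a face meeting $D_0$, hence equals one of the $F_i$, which is exactly the first assertion. Each $F_i$ is a bounded polygon with finitely many edges in its boundary, hence has finite diameter, and setting $d_0 = \max\bigl(\diam D_0, \max_i \diam F_i\bigr)$ one gets (i) because every face is an $\cL$-translate of some $F_i$, and (ii) because every point of $\RR^2$ lies in some translate $D_0 + \ell$.

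For the $N_r$ bound, any face meeting a disk $B$ of radius $r$ is contained, by (i), in the concentric disk $B'$ of radius $r + d_0$. Writing such a face as $F_i + \ell$ and fixing once and for all a reference point $p_i \in F_i$, the point $p_i + \ell$ lies within distance $r + 2d_0$ of the centre of $B$, so $\ell$ is constrained to lie in a disk whose radius depends only on $r$ (and the $p_i$). The number of lattice points in such a disk is finite and depends only on $r$ and $\cL$; summing over the $N$ choices of $i$ yields the required $N_r$.

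There is no substantive obstacle here: the argument is entirely routine. The only mild care needed is in checking that local finiteness, stated in the hypergraph formulation (finitely many vertices in, and hyperedges meeting, any bounded set), translates correctly to local finiteness for all faces of the cubic map, which is immediate from the construction that inflates each vertex to a black polygon and each face to a white polygon of bounded complexity.
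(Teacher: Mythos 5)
Your proof matches the paper's argument: a bounded fundamental domain plus local finiteness gives a finite set of orbit representatives among the faces, and $d_0$ is taken to be the larger of $\diam(D)$ and $\max_i \diam(F_i)$, after which (i), (ii) and the $N_r$ bound all follow by translation. The paper leaves the $N_r$ count as "the remaining statements follow easily"; your lattice-point counting makes that explicit but is the obvious route.
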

\begin{proof}
Let $D$ be a fundamental domain of $\cL$.
Since $D$ is bounded, by the definition of a plane hypergraph
$D$ contains finitely many vertices of $\cH$,
and meets finitely many hyperedges.
It follows that $D$ meets only finitely many faces of $\cH$, viewed
now as a cubic map. Hence there is a finite
set $F_1,\ldots,F_N$ of faces, all meeting $D$, containing
one representative of each orbit of the action of $\cL$ on
the faces of $\cH$.
The remaining statements follow
easily, taking $d_0$ to be the larger of $\max_i\diam(F_i)$ and $\diam(D)$.
\end{proof}

The parameter $d_0=d_0(\cH)$ appearing in Lemma~\ref{reg}
will be used throughout this and the next section.
For example, we say that a rectangle is \emph{large} if all its sides have length
at least $100d_0$. In what follows,
to avoid trivialities such as a rectangle having a black crossing
with probability $1$, we only ever consider large rectangles.

\begin{remark}
Let us remark briefly on the numerical constants appearing in this paper.
In many places, rather than argue that some constants exist with certain
properties, we simply give numerical values that work, such as $100$ (here)
or the less natural constants $0.1$, $1.1$, $8$, etc appearing later. Of course
the precise values are not important.
\end{remark}

Given an angle $\theta$, by a \emph{$\theta$-aligned rectangle}
we mean a rectangle $R\subset \RR^2$ such that one pair of sides
makes an angle $\theta$ to the $x$-axis, measured in the positive sense
from the $x$-axis. We refer to these sides as \emph{horizontal}
and the other sides as \emph{vertical}. Thus, after rotating $R$ clockwise through an angle
$\theta$, the horizontal sides become horizontal in the usual sense.
Whenever we speak of a rectangle $R$, we have an angle $\theta$ in mind and assume
that $R$ is $\theta$-aligned. Note that the same geometric rectangle is $\theta$-aligned
for two values of $\theta$ differing by $\pi/2$.

By the \emph{\ww} and \emph{\hh} of a rectangle, we mean the length of the horizontal
and vertical sides, respectively; which is which depends on whether
we view $R$ as $\theta$-aligned or $(\theta+\pi/2)$-aligned.

We always assume that our rectangles $R$ 
are in \emph{general position} with respect to our colouring $\cC$,
meaning that each vertex of $R$ lies in the interior of a face of the colouring,
no vertices of the colouring are on the boundary of $R$, and the edges of the colouring
can only cross the edges of $R$ transversely.

Given a rectangle $R$, by a \emph{black horizontal crossing} of $R$
we mean a $\cC$-black path within $R$ starting at some point on one vertical
side of $R$ and ending at some point on the other vertical side.
White vertical crossings are defined similarly, and so on. We write $\Hb(R)$ for the event
that $R$ has a black horizontal crossing (in the random colouring $\cC$), and
$\Vw(R)$ for the event that it has a white vertical crossing, and so on.
For a proof of the following `obvious' lemma concerning (well-behaved) 2-coloured maps
see~\cite[Ch. 8, Lemma 12]{BRbook}.

\begin{lemma}\label{oneway}
Given any well-behaved colouring $\cC$ 
of the plane and any rectangle $R$ in general position
with respect to $\cC$,
exactly one of the events $\Hb(R)$ and $\Vw(R)$ holds.\noproof
\end{lemma}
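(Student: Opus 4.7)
The plan is to prove the two assertions of the lemma separately: (a) at least one of $\Hb(R)$ and $\Vw(R)$ holds, and (b) they cannot both hold. Both arguments are topological, using the piecewise-linear and locally finite structure of $\cC$ (the ``well-behaved'' hypothesis) together with the general position assumption on $R$ to avoid pathologies at corners, vertices and edge crossings.

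For the ``at most one'' direction, I would argue by contradiction. Suppose $\gamma_\blk$ is a black horizontal crossing and $\gamma_\w$ a white vertical crossing of $R$. Together with two vertical sub-arcs of the boundary of $R$, the path $\gamma_\blk$ forms a simple closed curve in $R$ (after mild perturbation within general position, one can take $\gamma_\blk$ to be simple), so the Jordan curve theorem splits $R$ into two regions, one containing (part of) the top side and one containing (part of) the bottom side. Since $\gamma_\w$ has its endpoints on these two sides, it must cross $\gamma_\blk$ at some point $x\in R$. Now I would carry out a local analysis of $x$ in the cubic 2-coloured map $\cC$: if $x$ lies in the interior of a face, that face is simultaneously black and white, a contradiction. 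If $x$ lies on an open edge of $\cC$, the edge locally separates a black face from a white face, and one of $\gamma_\blk$, $\gamma_\w$ must enter the face of the wrong colour (again using general position to exclude tangential behaviour). If $x$ is a vertex, three faces meet, and a similar enumeration of the possible colour patterns gives a contradiction.

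For the ``at least one'' direction, let $K$ be the union of all closed black faces of $\cC$ that can be reached, by a chain of closed black faces meeting consecutively inside $R$, from a black face incident with the left vertical side of $R$. If $K$ meets the right vertical side of $R$, the event $\Hb(R)$ holds. Otherwise, I would analyze the topological boundary of $K$ inside $R$. Every edge on this boundary separates a face in $K$ from a face not in $K$; since faces of $\cC$ are 2-coloured and faces adjacent to $K$ cannot themselves be black (else they would lie in $K$), all such boundary edges border white faces on the non-$K$ side. Walking along this boundary from the top side to the bottom side (using that $K$ is closed and bounded away from the right side, and using the cubic structure to see that the boundary is a union of simple arcs that cannot end inside $R$) yields a path along edges of $\cC$ that is entirely white; after a slight transversal push into the adjacent white faces, this becomes a white vertical crossing, establishing $\Vw(R)$.

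The main obstacle is the careful local bookkeeping at vertices of the cubic map and the handling of paths that travel along edges (points that are simultaneously black and white). General position of $R$ with respect to $\cC$ removes the worst of these issues at the boundary of $R$, and the cubic (degree-3) structure is what makes the local casework at interior vertices finite and tractable; the argument is a standard exercise in planar duality, which is why the authors simply cite \cite[Ch.~8, Lemma~12]{BRbook} rather than reproduce it.
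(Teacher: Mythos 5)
Your proposal takes a genuinely different route from the paper. The paper (following \cite[Ch.~8, Lemma~12]{BRbook}, and reproduced as Lemma~\ref{oneway_I} with Figure~\ref{fig_ow}) proves \emph{both} directions at once by a single interface-tracing argument: shade the exterior of $R$ so that two opposite corners are black and two are white, then follow the unique black/white interface leaving a corner; where it exits determines which of the two crossings exists. You instead split the lemma into a uniqueness half (Jordan curve plus local analysis) and an existence half (explore the black cluster from the left side and trace its boundary). Your existence argument is essentially a one-sided version of the interface argument and is sound.

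Your uniqueness argument, however, has a genuine gap that is not merely bookkeeping. Recall that in this setting boundary points carry \emph{both} colours, so the closed black and closed white regions are not disjoint: they overlap precisely on the edges and vertices of $\cC$. The Jordan curve theorem does deliver an intersection point $x$ of $\gamma_\blk$ and $\gamma_\w$, but there is no local contradiction at $x$: an intersection at a point on the interface is perfectly consistent, and neither path need ``enter the face of the wrong colour.'' Indeed a black path and a white path may touch (even share an arc of an edge) without either leaving its own closed region; this happens whenever both run along an edge bordering a black and a white face, or both pass through a vertex. To extract a contradiction you need a global parity or winding-number argument showing that the paths would have to \emph{cross transversally}, and a separate argument that transversal crossing is impossible because each path is confined to its own closed region. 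Nothing in the general position hypothesis (which concerns $R$, not the crossing paths) lets you perturb the paths off the interface, since near a $B,W,B$ vertex or a long shared edge there may be no wiggle room. The interface argument in \cite{BRbook} is precisely the device that avoids this delicacy by never invoking the Jordan curve theorem on the crossings themselves.
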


This lemma, together with self-duality, will be the starting point for our
Russo--Seymour--Welsh-type argument. This argument will be rather involved.
There are various technical complications arising from
the generality of plane hyperlattices; we deal with most of these
in the rest of this section. In the next section we turn
to the core of the argument, where the complications
are mostly due to the lack of symmetry.

For the rest of the section we consider a given non-degenerate independent
lattice colouring $\cC$, associated to a plane hyperlattice $\cH$.

\subsection{How crossing probabilities vary}

Let $h(R)=\Pr(\Hb(R))$ be the probability that $R$ has a black horizontal
crossing, and let $v(R)=\Pr(\Vb(R))$. Note that if we switch from
viewing a given geometric rectangle $R$ as $\theta$-aligned to
viewing it as $(\theta+\pi/2)$-aligned, then $h(R)$ and $v(R)$ swap.
Our next aim is to show that $h(R)$ and $v(R)$ do not change too much
if we move the edges of $R$ slightly.
This is not very surprising, but giving full
details in the present generality requires a little work.
We start with a technical lemma.

We say that a path $P$ in the plane is \emph{potentially black} with respect
to a hyperlattice $\cH$ if no point of $P$ is $\cH$-white,
so $P$ corresponds to a sequence of black and grey faces of $\cH$.
In other words, $P$ is potentially black if and only if there is a positive
probability that $P$ is actually black in the random colouring $\cC$. We
write $\dH$ for the Hausdorff distance between subsets of $\RR^2$.
Let $d_0=d_0(\cH)$ be the constant given by Lemma~\ref{reg},
so every face of $\cH$ has diameter at most $d_0$.

\begin{lemma}\label{ba}
Let $\cH$ be a hyperlattice
and $P$ a piecewise-linear path. Then there is a potentially black path $P'$
with $\dH(P,P')\le 2d_0$.
\end{lemma}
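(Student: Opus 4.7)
The plan is to deform $P$ off the union $W$ of all closed $\cH$-white faces, replacing each excursion of $P$ into $W$ by a short detour that hugs the corresponding white face on its non-white side.

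First I observe that, in the properly $3$-coloured cubic map $\cH$, the closures of distinct white faces are pairwise disjoint: at a common vertex the three incident faces must carry all three colours (since each pair is separated by an edge), and adjacent faces carry different colours, so two white faces can share neither an edge nor a vertex. Hence $W$ is a disjoint union of closed topological disks, each of diameter at most $d_0$ by Lemma~\ref{reg}. After an arbitrarily small preliminary perturbation of $P$ (which costs only an $\eps$ in Hausdorff distance), I may assume $P$ meets every edge of the cubic map $\cH$ transversely in finitely many interior points. Then $P \cap W$ consists of finitely many pairwise disjoint closed subpaths $P|_{[s_j,t_j]}$, each lying in a single closed white face $\overline{F_j}$.

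For each $j$, I replace $P|_{[s_j,t_j]}$ by a piecewise-linear detour $Q_j$ from $P(s_j)$ to $P(t_j)$ that hugs $\partial F_j$ on its non-white side, staying within distance $\eps$ of $\partial F_j$ throughout. Along an edge of $\partial F_j$, $Q_j$ sits in the interior of the unique non-white face on the other side. At a vertex $v$ of $\partial F_j$, the two other faces incident to $v$ are both non-white and meet along an edge $e_v$ not incident to $F_j$; $Q_j$ crosses from one of these non-white faces into the other through an interior point of $e_v$ arbitrarily close to $v$. Crucially, $Q_j$ never touches any vertex or edge incident to $F_j$, so no point of $Q_j$ is $\cH$-white; in particular $Q_j$ is potentially black.

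Splicing the detours $Q_j$ into $P$ in place of $P|_{[s_j,t_j]}$ yields a piecewise-linear potentially black path $P'$. For the Hausdorff bound: since $\diam \overline{F_j} \le d_0$, any point of $P|_{[s_j,t_j]} \subseteq \overline{F_j}$ lies within $d_0$ of $\partial F_j$ and hence within $d_0 + \eps$ of $Q_j$, while any point of $Q_j$ lies within $\eps$ of $\overline{F_j}$ and hence within $d_0 + \eps$ of $P(s_j)$. Since Hausdorff distance is a maximum rather than a sum, pooling over $j$ still gives $\dH(P,P') \le d_0 + \eps$ before the initial perturbation, so choosing $\eps$ small enough yields the claimed bound $2 d_0$. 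The only mildly subtle point is the behaviour of $Q_j$ at vertices of $\partial F_j$, which are themselves $\cH$-white and must be avoided; this is precisely what the short cross-over through the edge $e_v$ is designed to accomplish.
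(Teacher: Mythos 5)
Your argument is essentially the paper's proof spelled out in more detail: exploit the fact that in the properly $3$-coloured cubic map no two white faces are adjacent (so closed white faces are disjoint discs of diameter $\le d_0$), replace each excursion of $P$ into a white face by a short detour on the non-white side of that face's boundary, and observe that each detour moves points by at most $d_0+\eps$; since Hausdorff distance takes a max over the finitely many disjoint detours, this stays within $2d_0$.

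There is one small gap worth flagging. Your construction assumes each excursion $P|_{[s_j,t_j]}$ has both ends $P(s_j),P(t_j)$ on $\partial F_j$, so that $Q_j$ can start and end there while otherwise hugging the outside of $\overline{F_j}$. This fails if $P$ itself begins or ends in the interior of a white face: then $P(s_1)$ (say) lies strictly inside $\overline{F_1}$, so the detour $Q_1$ as you describe it would have to begin at an $\cH$-white point, and the spliced path $P'$ would not be potentially black. The paper handles this explicitly with one extra sentence — if $P$ starts or ends in a white face, move that endpoint just outside the closed face first — and the same patch works for your argument: moving an endpoint of $P$ by at most $d_0+\eps$ is again a local modification near a single white face, so the overall Hausdorff bound is still a max and remains $\le 2d_0$. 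With that addition your proof matches the paper's.
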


\begin{proof}
Recall that in the 3-coloured map $\cH$, no two white faces
are adjacent.
Whenever $P$ passes through a white face, simply take a detour around (or
just outside) this face. Similarly, if $P$ starts or ends in a white face, modify
$P$ to start/end just outside this face.
\end{proof}

\begin{lemma}\label{cont1}
Let $\cC$ be a non-degenerate independent lattice colouring. Then there exists a constant $c>0$,
depending only on (the distribution of) $\cC$,
such that, for any large rectangle $R$, if $R_+$ is
a rectangle formed by moving one vertical side of $R$ outwards by a distance of at most $1$,
then $h(R)\ge h(R_+)\ge c\, h(R)$.
\end{lemma}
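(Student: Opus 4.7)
The first inequality, $h(R) \ge h(R_+)$, is immediate. Since $R\subset R_+$ share the same left vertical side, any black horizontal crossing of $R_+$, truncated at its first intersection with the moved vertical side $e$ of $R$, yields a black horizontal crossing of $R$; hence $\Hb(R_+)\subseteq \Hb(R)$.

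The lower bound $h(R_+)\ge c\,h(R)$ is the substantive part. The plan is to show that, conditional on a horizontal crossing of $R$ existing, the crossing can be extended across the thin strip $S:=R_+\setminus R$ (of width at most $1$) to reach the new vertical side $e'$ with a conditional probability bounded away from $0$, uniformly in $R$. The two main ingredients are Lemma~\ref{ba}, which from any point $v$ near $e$ produces a potentially black piecewise-linear path of length at most $1+4d_0$ from $v$ to $e'$ meeting only $O(1)$ hyperedges of $\cH$, and non-degeneracy, which says each such hyperedge is in its all-black state with probability at least some $c_0>0$. These should be combined by an exploration argument.

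Concretely, I would explore the colouring $\cC$ one hyperedge at a time, starting from the left side of $R$, to identify an extremal horizontal crossing $P^*$ of $R$ (say, the bottom-most). The exploration reveals only the states of hyperedges meeting or lying on the ``below'' side of $P^*$, so the states of all hyperedges strictly above $P^*$ remain independent of everything revealed. Let $v$ be the endpoint of $P^*$ on $e$. Because $\hh(R)\ge 100 d_0$, by choosing to use the bottom-most or top-most crossing I can arrange $50 d_0$ of vertical room on the ``free'' side of $v$; within this region I apply Lemma~\ref{ba} to obtain a potentially black extension $\gamma_v$ from $v$ to $e'$ meeting only $N=O(1)$ hyperedges, all unexplored. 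Conditionally on the exploration, the probability that all $N$ hyperedges are simultaneously in their all-black state is then at least $c_0^N=:c>0$, and on this event $P^*\cup \gamma_v$ is a black horizontal crossing of $R_+$, giving $\Pr(\Hb(R_+)\mid \Hb(R))\ge c$.

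The main technical obstacle is formalising the exploration in the hypergraph setting. A grey face of $\cH$ is a polygon whose interior can straddle the interface defined by $P^*$, so care is needed to ensure the hyperedges carrying the extension $\gamma_v$ are genuinely independent of the data already revealed. The natural resolution is to treat entire hyperedges as the atoms of exploration, revealing a whole hyperedge as soon as any part of it is needed to determine $P^*$, and to route $\gamma_v$ strictly through hyperedges lying on the unexplored side of $P^*$. The minor case split on whether to use the bottom-most or top-most crossing is routine given the hypothesis that $R$ is large.
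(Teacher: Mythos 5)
Your high-level plan -- condition on a crossing of $R$ and extend it across the thin strip $S = R_+ \setminus R$ with conditional probability bounded away from $0$ -- is indeed what the paper does, and Lemma~\ref{ba} plus non-degeneracy are the right ingredients. However, the way you set up the conditioning has a genuine gap that your ``route $\gamma_v$ strictly through hyperedges on the unexplored side of $P^*$'' resolution does not fix.

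The problem is that your extension $\gamma_v$ must start at $v$, the endpoint of the extremal crossing $P^*$ on the moved side $e$ of $R$, and $v$ lies in a grey face $f$ that is necessarily revealed by the exploration (indeed $f$ may straddle $e$ and extend into $S$). Exploring the bottom-most (or top-most) crossing fixes the entire internal colouring of $f$, and there is no guarantee that the black component of $f$ containing $v$ extends in the direction of $e'$, or even exits $f$ into an unexplored face at a convenient place; the revealed state of $f$ (and of any neighbouring revealed faces near $v$) may simply block any black extension. Since $\gamma_v$ cannot avoid $f$, the bound $c_0^N$ on ``all hyperedges met by $\gamma_v$ are all-black'' does not apply, and choosing the bottom-most versus top-most crossing to gain vertical ``elbow room'' does not help because the obstruction is not a lack of space but the already-fixed state of $f$.

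The paper sidesteps this entirely by never conditioning on an exploration of the black crossing of $R$. Instead it first constructs a deterministic potentially-black path $P_0$ a distance $\Theta(d_0)$ \emph{inside} the right edge of $R$, cutting $R_+$ into $R_-$ (to the left) and a strip $S$ (to the right), and sets $\cF_-$ to be the grey faces meeting $R_-$ but not $S$, $\cF_S$ those meeting $S$. It then works with the \emph{relaxed} event $E_0'$: that $R_-$ has a crossing every point of which is $\cC$-black \emph{or} lies in a face of $\cF_S$. Crucially, $E_0'$ depends only on the states of faces in $\cF_-$, so one can condition on all of $\cF_-$ at once (a deterministic $\sigma$-algebra, not an exploration) while $\cF_S$ stays fresh. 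The witness $P$ for $E_0'$ need not be black where it meets $\cF_S$, which is exactly the freedom needed: one truncates and extends $P$ so that it meets only $O(1)$ faces of $\cF_S$, then reveals just those $O(1)$ unconditioned faces and pays the factor $c_0^{O(1)}$. This ``make the boundary faces free'' device is the key idea your proposal is missing; without it the exploration conditioning trips over the face containing the endpoint of the crossing.
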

\begin{figure}[htb]
 \centering
 \input{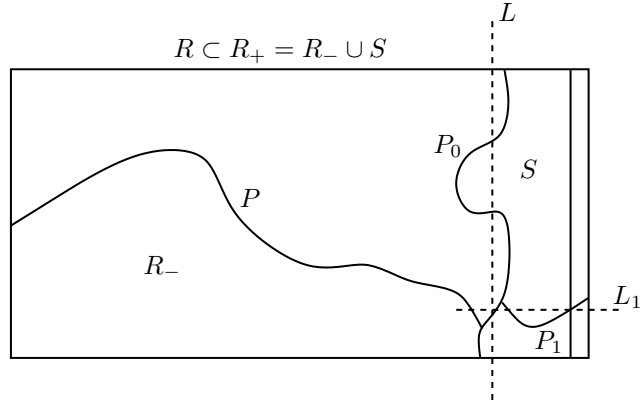}
 \caption{A rectangle $R_+$ slightly extending a rectangle $R$, divided
into $R_-$ and $S$ by a potentially black path $P_0$ that crosses
$R$ from top to bottom near its right-hand side. The path $P$ starts on the left-hand
side of $R$ and ends on $P_0$. Finally, $P_1$ starts on $P_0$ and ends on the right of $R_+$.
All paths lie inside $R_+$.}\label{fig_cont1}
\end{figure}
\begin{proof}
Note first that any black horizontal crossing of $R_+$ contains a black horizontal
crossing of $R$, so $h(R)\ge h(R_+)$. Also, we may assume without loss of generality
that we move a side of $R$ outwards by distance \emph{exactly} 1 to obtain $R_+$.

In the arguments that follow, various constants appear that depend on $\cC$. However,
they will depend only on (a) the quantity $d_0=d_0(\cH)$, where
$\cH$ is the hyperlattice underlying $\cC$, and (b) the minimum probability of the all-black state
in a grey face. These are invariant under rotation and translation, so,
rotating and translating $R$ \emph{and $\cC$}, without loss of generality
we may assume that $\theta=0$, so horizontal means horizontal in the usual sense,
and that
$R=[0,a]\times [0,b]$ and $R_+=[0,a+1]\times [0,b]$, even though $h(R)$ itself
varies as $R$ is rotated and/or translated with $\cC$ fixed.

We may assume that $a,b\ge 100d_0$. Applying Lemma~\ref{ba} to the line segment
$L$ from $(a-3d_0,-3d_0)$
to $(a-3d_0,b+3d_0)$, and truncating the resulting path $P'$ when it last hits
the bottom side of $R$ and first hits the top side,
we find a potentially black path $P_0$ crossing $R$ from top to bottom, where all
points have $x$-coordinate between $a-5d_0$ and $a-d_0$. Let $S$ be the `strip' consisting
of those points of $R_+$ to the right of $P_0$, and let $R_-$ denote the rest
of $R_+$, so $R_-\subset R$ is a `distorted rectangle' whose right-hand side is potentially black.
Let $E_0$ be the event that $R_-$ has a horizontal black crossing,
noting that if $\Hb(R)$ holds then so does $E_0$, so $\Pr(E_0)\ge h(R)$.

Let $\cF_-$ be the set of grey faces meeting $R_-$ but not $S$, and let $\cF_S$
be the set of grey faces meeting $S$, so $E_0$ depends on the colourings
of the faces in $\cF_-\cup \cF_S$. Let $E_0'$ be the event that $E_0$ would
hold after recolouring (in $\cC$) all faces in $\cF_S$ to black; thus $E_0'$ is the event
that there is a path $P$ crossing $R_-$ from left to right, every point of which
is either $\cC$-black or in a face in $\cF_S$.
If $E_0$ holds then so does $E_0'$, so $\Pr(E_0')\ge\Pr(E_0)\ge h(R)$.

Now $E_0'$ depends only on the states of the faces in $\cF_-$. Let us condition
on these states, assuming that $E_0'$ holds. 
Our aim is to show that the conditional probability that $\Hb(R_+)$ holds is not too small.
To do this we shall first modify $P$ in a certain way (if needed), and then extend $P$,
obtaining a path $P_+$ crossing $R_+$ from left to right in which every point is either
$\cC$-black or in a face in $\cF_S$, in such a way that the set of faces
in $\cF_S$ that $P_+$ meets has size $O(1)$.
Then we reveal the (as yet unexamined) states of these faces.
Since there are $O(1)$ of them, with probability bounded
away from zero they are all entirely black in $\cC$.

We start with the modification of $P$. Let us call a face
\emph{bad} if it meets both $R_-$ and $S$, but does not lie entirely in $R_+=R_-\cup S$,
and \emph{good} otherwise.
Since all faces have diameter at most $d_0$, any bad face must contain a point
within distance $10d_0$ of either $(a,0)$ or $(a,b)$. In particular, there
are $O(1)$ bad faces.
Suppose $P$ meets one or more good faces in $\cF_S$. Tracing $P$ from the left, stop the
first time it meets such a face $F$. Since $F$ is connected and meets $S$,
we can continue within this face to a point of $S$. Since $F$ is contained
in $R_+$, in doing so we do not go outside $R_+$, but we do
leave $R_-$, so we cross the right-hand side of $R_-$. Stop when this happens.

After this modification, $P$ has the properties above (all points $\cC$-black
or in faces in $\cF_S$), and it meets at most one good face in $\cF_S$, and
thus (since there are $O(1)$ bad faces in total) 
$O(1)$ faces in $\cF_S$.

Let $(x,y)$ be the right-hand end of $P$, so $(x,y)$ lies on $P_0$.
Pick $y'\in [3d_0,b-3d_0]$ with $|y-y'|\le 3d_0$.
Let $L_1$ be the line-segment from $(a-7d_0,y')$ to $(a+1+2d_0,y')$.
Apply Lemma~\ref{ba} to $L_1$ to obtain a potentially black path $P_1'$.
Then $P_1'$ starts inside $R_-$, ends outside $R_+$, and cannot cross
the lines $y=0$ and $y=b$, so it contains a sub-path $P_1$
within $S$ crossing $S$ from left to right. The left-hand end of $P_1$
is within distance $10d_0$ of $(x,y)$.
To construct our final path $P_+$, trace $P$ from left to right, run along $P_0$
from $(x,y)$ to the first end of $P_1$, and then trace $P_1$.
This path crosses $R_+$ from left to right. Furthermore, any point
of $P_+\setminus P$ is within distance $20d_0$ of $(x,y)$, so $P_+\setminus P$
meets $O(1)$ grey faces. Now $P_+$ is potentially black. Any point
of $P_+$ in a grey face in $\cF_-$ is necessarily a point of $P$,
and so is $\cC$-black by the properties of $P$.
Finally, $P_+$ meets
$O(1)$ grey faces in $\cF_S$. With (conditional) probability bounded away
from 0 the latter faces are all entirely $\cC$-black, and then $P_+$ is a black path,
so $\Hb(R_+)$ holds.
\end{proof}

When $h(R)$ is very close to 1, Lemma~\ref{cont1} is not very informative; it does
not rule out $h(R)$ dropping from $1$ to $1/100$, say, as $R$ is extended a tiny bit
horizontally. In this case the probability of a white vertical crossing
would increase from $0$ to $99/100$.
Note that this vertical crossing probability \emph{can} change by a large ratio:
if the white colouring is subcritical,
with $a$ constant and $b$ large, the white vertical crossing probability
is approximately of the form $e^{-c_a b}$, where $c_a$ is a positive constant
depending on $a$. Increasing $a$ by $1$ decreases $c_a$ to a new constant
value, which can change $e^{-c_a b}$ by an arbitrarily large ratio.

However, it is still true that the white vertical crossing probability cannot jump
from very small to fairly large. We phrase the result in terms of black
horizontal crossings as above.

\begin{lemma}\label{cont2}
Let $\cC$ be a non-degenerate independent lattice colouring. Given $\eps>0$ there is a $\delta>0$
such that for any large rectangle $R$
and any rectangle $R_+$ formed by moving one vertical side of $R$ outwards
by a distance of at most $1$, if $h(R)\ge 1-\delta$ then $h(R_+)\ge 1-\eps$.
\end{lemma}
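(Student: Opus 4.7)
The plan is to prove the contrapositive: to deduce $h(R_+)\ge 1-\eps$ from $h(R)\ge 1-\delta$, I would run many independent local extensions of a black horizontal crossing of $R$ into $R_+$ in parallel, and use Lemma~\ref{manyhold} to control the number of successful extensions. The setup will mirror that of Lemma~\ref{cont1}, but producing many extension attempts rather than a single one.

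Following the proof of Lemma~\ref{cont1}, I would first apply Lemma~\ref{ba} to obtain a potentially black path $P_0$ crossing $R$ from top to bottom with $x$-coordinate in $[a-5d_0,a-d_0]$, and split $R_+$ into the subregion $R_-$ to the left of $P_0$ and $S'$ to the right; let $\cF_-$ and $\cF_{S'}$ be the sets of faces of $\cH$ contained in $R_-$ and $S'$ respectively (the $O(1)$ faces straddling $P_0$ being conditioned on exactly as in Lemma~\ref{cont1}). For a small constant $\eta=\eta(\eps,\cC)>0$ to be chosen, I would partition $P_0$ into $N=\lfloor b/(\eta d_0)\rfloor$ disjoint sub-arcs $I_1,\ldots,I_N$ of diameter $\eta d_0$, and let $A_j$ be the upset, measurable in $\cF_-$, that some black horizontal crossing of $R_-$ terminates in $I_j$. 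The topological argument used in Lemma~\ref{cont1} (restricting an $\Hb(R)$ crossing to its first meeting with $P_0$) yields $\bigcup_j A_j\supseteq\Hb(R)$, so $\Pr(\bigcup A_j)\ge h(R)\ge 1-\delta$. By non-degeneracy of $\cC$, the event that the $O(1)$ faces of $\cF_-$ in a neighbourhood of $I_j$ are in a specific all-white configuration that isolates $I_j$ from the rest of $R_-$ (taking care of any $\cH$-black vertex face on $I_j$ by forcing its grey neighbours to be all-white) has probability $\ge c'=c'(\cC,\eta)>0$, giving the uniform bound $\Pr(A_j)\le 1-c'$.

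Next, for each $j$ I would let $E_j$ be the upset, measurable in $\cF_{S'}$, that the segment of $P_0$ at $I_j$ is actually black in $\cC$ and that there is a black path through $\cF_{S'}$ from $I_j$ to the right side of $R_+$; the construction of the modified path $P_+$ in Lemma~\ref{cont1}'s proof produces such a path using $O(1)$ faces, so $\Pr(E_j)\ge c_0=c_0(\cC,\eta)>0$. The $A_j$'s are independent of the $E_j$'s because $\cF_-$ and $\cF_{S'}$ are disjoint, and for indices $j,k$ with $|j-k|$ large enough the $E_j,E_k$ involve disjoint subsets of $\cF_{S'}$ and so are mutually independent. A direct check shows $A_j\cap E_j\subseteq\Hb(R_+)$: the $R_-$-crossing from $A_j$, concatenated with the now-black segment of $P_0$ on $I_j$ and the $\cF_{S'}$-path from $E_j$, is a black horizontal crossing of $R_+$. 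Given $\eps>0$, I would choose $N_0$ so that $(1-c_0)^{N_0}\le\eps/2$ and $K$ so that any $K$-subset of $\{1,\ldots,N\}$ contains $N_0$ indices pairwise well-separated. Lemma~\ref{manyhold} applied to $\{A_j\}$ with the uniform bound $\Pr(A_j)\le 1-c'$ then provides $\delta>0$ such that $\Pr(\bigcup A_j)\ge 1-\delta$ forces $K$ of the $A_j$ to hold with probability $\ge 1-\eps/2$; on that event, mutual independence of the $E_j$'s on a well-separated sub-family of size $N_0$ makes the probability that none of them holds at most $(1-c_0)^{N_0}\le\eps/2$, yielding $\Pr(\Hb(R_+))\ge 1-\eps$.

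The hardest part will be the careful combinatorial bookkeeping to secure the uniform lower bound on $\Pr(\neg A_j)$ in the presence of $\cH$-black vertex faces near $I_j$, and to guarantee the clean independence of the $A_j$'s from the $E_j$'s across the decomposition $\cF_-\sqcup\cF_{S'}$; as in Lemma~\ref{cont1}, the potentially black path $P_0$ does most of the work. The boundary case in which $b$ is close to $100 d_0$ (so only $O(1)$ intervals fit at a given $\eta$) is handled by taking $\eta$ small enough that $N\ge K$ still holds; the constants $c',c_0$ degrade with $\eta$ but remain positive functions of $\cC$ and $\eps$, which is all that is required.
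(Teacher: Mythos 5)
Your proposal is in substance the paper's own proof.  The skeleton is identical: construct a potentially black separating path $P_0$ near the right edge via Lemma~\ref{ba}; define a family of upsets, measurable in the left region's faces $\cF_-$, recording where a black crossing of $R_-$ terminates; show each has probability bounded away from $1$ by an all-white ``ring'' argument using non-degeneracy; invoke Lemma~\ref{manyhold} to get many such events simultaneously with probability $1-\eps/2$; then extract a well-separated sub-family whose independent extensions across $\cF_{S'}$ each succeed with probability $\ge c_0$, so that at least one succeeds with probability $1-\eps/2$.  The only real difference is the parametrization: you index the events by sub-arcs $I_j$ of $P_0$, whereas the paper indexes by the grey-or-$\cH$-black faces $f_i$ meeting $S$ and defines $E_i$ as ``a black path in $R_-$ reaches a boundary point of $f_i$ without entering any $f_j$.'' The paper's formulation is slightly more robust on two technical points you gloss over: (i) $\cF_-$-measurability of $A_j$ is delicate, since a crossing ending ``in $I_j$'' could live partly in a face of $\cF_{S'}$ — the paper avoids this by ending the path on the \emph{boundary} of $f_i$ and forbidding entry into the interiors of the $f_j$'s; and (ii) the ``bad'' faces straddling $R_-$, $S$ and the exterior of $R_+$ (near the corners of $R_+$) cannot be extended inside $R_+$, so they must be discarded before the greedy well-separation step — the paper explicitly adds $N_1$ (the number of bad faces) to the count requested from Lemma~\ref{manyhold}, whereas you fold this into the remark that ``the constants degrade.''  Neither point is a gap in the idea, just looseness in the write-up; the approach and all the key ingredients match the paper's argument.
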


\begin{proof}
The proof is an extension of that of Lemma~\ref{cont1}; we define the path $P_0$
splitting $R_+$ into $R_-$ and $S$ as before, and let $\cF_-$ be the set
of grey faces meeting $R_-$ but not $S$, and $\cF_S$ the set of grey faces
meeting $S$. As before, let $E_0'$ be the event that $R_-$ has a horizontal
crossing every point of which is $\cC$-black or in a grey face in $\cF_S$,
recalling that $\Pr(E_0')\ge h(R)$.

Let $f_1,\ldots,f_N$ list all grey or $\cH$-black faces of $\cH$ meeting $S$.
Let $E_i$ be the event that $R_-$ contains a $\cC$-black
path $P$ starting on the left-hand
side of $R_-$, ending at a boundary point of $f_i$, and meeting no other $f_j$,
nor the interior of $f_i$.
Note that $E_i$ depends only on the states of grey faces in $\cF_-$, not those in $\cF_S$.
If $E_0'$ holds then, truncating a path $P$ witnessing this event
the first time $P$ meets
any $f_i$, we see that one of the $E_i$ must hold.
Hence,
\begin{equation}\label{eibig}
 \Pr(\bigcup E_i) \ge \Pr(E_0') \ge h(R).
\end{equation}
We claim that any given face $f$ of $\cH$ is surrounded by a
`ring' of white and grey faces of $\cH$ with bounded size
such that if all grey faces in the ring happen to be coloured white in $\cC$,
then no $\cC$-black path starting outside the ring can end at a point of $f$.
Here the bound depends only on $\cH$,
not the face chosen. Indeed, if $f$ is $\cH$-black, we simply take
the faces neighbouring $f$ to form the ring. If $f$ is white or grey, we may
simply take all white or grey faces of $\cH$ within distance $2d_0$ of $f$ as our ring:
since no black face of $\cH$ touches any other black face, any $\cC$-black path
to $f$ from distance more than $2d_0$ must meet a grey face in our ring.

If every grey face in the ring just described about $f_i$ happens
to be coloured entirely white in $\cC$, then no black path ends at any point of $f_i$,
and $E_i$ does not hold. It follows that for some constant $\eps_1>0$ we have
$\Pr(E_i)\le 1-\eps_1$ for all $i$.

Let $N_0$ be the maximum number of faces meeting any disk of radius $100d_0$,
and let $N_1$ be the number of \emph{bad} grey or $\cH$-black faces, i.e., grey or
black faces meeting $S$, $R_-$, and the exterior of $R_+$. Note that
$N_0$ and $N_1$ are bounded by constants as before.
Let $M$ be a huge constant to be chosen in a moment.

By Lemma~\ref{manyhold} (applied with $\min\{\eps/2,\eps_1\}$
in place of $\eps$), our assumption $h(R)\ge 1-\delta$, and \eqref{eibig},
if we choose $\delta$ small enough, then with probability at least $1-\eps/2$
at least $K=N_1+MN_0$ of the events $E_1,\ldots,E_N$ hold.
Let us condition on the states of all faces in $\cF_-$,
assuming that at least $K$ of the $E_i$ hold. It suffices
to show that the conditional probability 
that $\Hb(R_+)$ holds is then at least $1-\eps/2$.
As before, we use the fact that we have not yet
looked at the faces in $\cF_S$.

Since there are at most $N_1$ bad faces among the $f_i$, there is
a set $I_0$ of size at least $MN_0$ such that for every $i\in I_0$
the event $E_i$ holds and $f_i$ is good.
Using the greedy algorithm, we may pick a subset $I\subset I_0$ of size at least $M$
such that for distinct $i,j\in I_0$ the faces
$f_i$ and $f_j$ are at distance at least $30d_0$.
For $i\in I_0$, let $P_i$ be a path witnessing $E_i$. Our aim is to complete the proof as
before, but now showing that each $P_i$ has a not-too-small chance
of being extendable to cross $R_+$, and that these events
(that the particular extensions we look for are present) are independent,
so with high probability at least one hold.
The details are essentially as before: since $f_i$ is good, we may extend
$P_i$ within the face $f_i$ to meet our right-hand side $P_0$.
Then we find an extension $P_i^+$ of $P_i$ as before, remaining within
distance $10d_0$ of the end of $P_i$. The extensions meet disjoint sets of faces,
so we are done.
\end{proof}

Together, Lemmas~\ref{cont1},~\ref{cont2} and~\ref{oneway} show that
no crossing probability changes `too much' when a rectangle is moved slightly.
This statement needs a little interpretation: we could in principle
obtain explicit bounds in Lemmas~\ref{cont1} and~\ref{cont2}. However,
these turn out to be irrelevant. In the end, all we care about is whether
certain probabilities tend to 0 or tend to 1 as some parameter (the area
of the rectangles we consider) tends to infinity. It will thus be
convenient to `re-scale' all probabilities
by an increasing function $\phi:(0,1)\to \RR$ with $\phi(x)\to-\infty$
as $x\to 0$ and $\phi(x)\to\infty$ as $x\to 1$, in such a way
that the maximum change in a probability $p$ `allowed' by our lemmas
corresponds to a change in $\phi(p)$ of at most $2$, say.

To make this precise,
let $c_\blk$ be the constant given by Lemma~\ref{cont1}; recall that this does
not depend on the orientation of $R$. Let $c_\w$ be the corresponding constant
with black and white exchanged, and let $c_0=\min\{c_\blk,c_\w\}$.
Similarly, given $\eps>0$, let $\delta_0(\eps)=\min\{\delta_\blk,\delta_\w\}$,
where $\delta_\blk=\delta_\blk(\eps)$ is given by Lemma~\ref{cont2}, and $\delta_\w$
by Lemma~\ref{cont2} with black and white exchanged.

Define a sequence $(\eps_n)_{n\ge 0}$ inductively
by setting $\eps_0=1/2$
and $\eps_{n+1}=\min\{c_0\eps_n,\delta_0(\eps_n)\}$.
Set $\pi_n=1-\eps_n$ for $n\ge 0$ and $\pi_n=\eps_{-n}$ for $n\le 0$.
Consider the \emph{scaling function} $\phi:(0,1)\to\RR$ defined as follows:
set $\phi(\pi_n)=n$ for all $n\in \Z$, and interpolate linearly between these points.
Note that $\phi(1-p)=-\phi(p)$. This function (or rather its inverse) is illustrated
in Figure~\ref{fig_scaling}.
\begin{figure}[htb]
 \centering
 \input{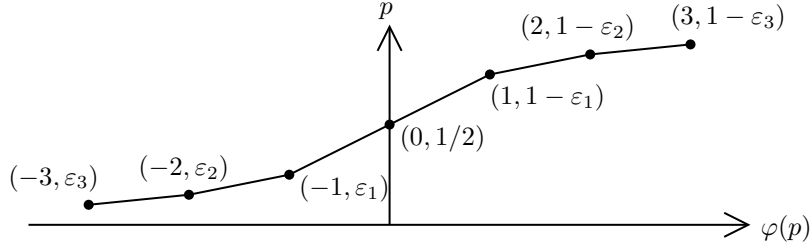}
 \caption{The inverse of the scaling function $p \mapsto \phi(p)$.}\label{fig_scaling}
\end{figure}

Recall that we call a rectangle \emph{large} if
both sides have length at least $100d_0$.

\begin{lemma}\label{cont}
Let $R_1$ and $R_2$ be two large rectangles
such that one of $R_1$ and $R_2$ is obtained from the other by moving one side
outwards by a distance between $0$ and $1$.
Let $f(R)$ be any of the four functions $\Pr(\Hb(R))$, $\Pr(\Hw(R))$,
$\Pr(\Vb(R))$ and $\Pr(\Vw(R))$. Then $|\phi(f(R_1))-\phi(f(R_2))|\le 2$.
\end{lemma}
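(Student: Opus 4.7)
The plan is to reduce by symmetry to a single case and then run a short three-case argument tied to the recursion defining $(\eps_n)$. The only places requiring real care are the symmetry reduction and the ``transition'' interval $p\in[1/2,1-\eps_1]$, where neither Lemma~\ref{cont1} nor Lemma~\ref{cont2} is sharp on its own; everything else is bookkeeping against the scaling function.

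First I would collapse the eight cases---four choices of $f$, two choices of which side of the rectangle is moved---down to one. Swapping $R_1\leftrightarrow R_2$ is harmless. Rotating by $\pi/2$ swaps $h\leftrightarrow v$ and ``move a vertical side''$\leftrightarrow$``move a horizontal side''. Negating $\cC$ swaps the two colours. Finally, in the cases where the relevant crossing direction runs along the moved sides---so that $f$ \emph{increases} as the rectangle grows---I would replace $f$ by $1-f$, using Lemma~\ref{oneway} and the identity $\phi(1-x)=-\phi(x)$. After all this it suffices to assume $f=h$ and $R\subset R_+$, with $R_+$ formed from $R$ by moving a single vertical side outward by a distance in $[0,1]$; writing $p=h(R)$ and $p'=h(R_+)$, the goal is to show $\phi(p)-\phi(p')\le 2$, the reverse inequality being immediate from $p'\le p$.

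Then I would quote the two previous lemmas: Lemma~\ref{cont1} gives $c_0 p\le p'\le p$, and Lemma~\ref{cont2}, applied with the symmetric constant $\delta_0(\eps)$ baked into the definition of $\phi$, gives $p'\ge 1-\eps$ whenever $p\ge 1-\delta_0(\eps)$. The first is informative when $p$ is small, the second when $p$ is close to $1$; the gap between the two regimes is exactly what the scaling function was designed to absorb.

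The core is then a three-case split on the position of $p$ relative to the break points of $\phi$, driven by the recursion $\eps_{n+1}\le\min\{c_0\eps_n,\delta_0(\eps_n)\}$. If $p\in[\eps_{k+1},\eps_k]$ for some $k\ge 0$ (so $\phi(p)\in[-k-1,-k]$), the multiplicative bound gives $p'\ge c_0\eps_{k+1}\ge\eps_{k+2}$, hence $\phi(p')\ge -k-2$. If $p\in[1/2,1-\eps_1]$ (so $\phi(p)\in[0,1]$), then $p'\ge c_0/2\ge\eps_1$, using the $n=0$ instance $\eps_1\le c_0\eps_0=c_0/2$, and hence $\phi(p')\ge -1$; this is the transition range, and it crucially uses that $n=0$ instance of the recursion. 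Finally, if $p\in[1-\eps_k,1-\eps_{k+1}]$ for some $k\ge 1$ (so $\phi(p)\in[k,k+1]$), then $1-p\le\eps_k\le\delta_0(\eps_{k-1})$, and Lemma~\ref{cont2} with $\eps=\eps_{k-1}$ gives $1-p'\le\eps_{k-1}$, so $\phi(p')\ge k-1$. In each case $\phi(p)-\phi(p')\le 2$, as required.
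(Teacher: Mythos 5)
Your proof is correct and takes essentially the same approach as the paper's: both reduce by rotation, colour-swap, and Lemma~\ref{oneway} to the case $f=h$ with a vertical side moved outward, then run a case analysis on the position of $p=h(R)$ relative to the break points $\pi_n$ of the scaling function, invoking Lemma~\ref{cont1} (via $\eps_{n+1}\le c_0\eps_n$) on the lower range and Lemma~\ref{cont2} (via $\eps_{n+1}\le\delta_0(\eps_n)$) on the upper range. Your ``transition'' case $p\in[1/2,1-\eps_1]$ is just the paper's $\lfloor\phi(p_1)\rfloor=0$ subcase handled via Lemma~\ref{cont1}, so the three-way split is a minor reorganization of the paper's two-way split on $i=\lfloor\phi(p_1)\rfloor\le 0$ versus $i\ge 1$.
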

\begin{proof}
Without loss of generality, $R_2$ extends $R_1$ horizontally.
Using Lemma~\ref{oneway} and the fact that $|\phi(1-p_1)-\phi(1-p_2)|=|-\phi(p_1)+\phi(p_2)|=
|\phi(p_1)-\phi(p_2)|$, we may assume that we are considering
horizontal crossings. Exchanging colours, we may assume that $f(R)=\Pr(\Hb(R))=h(R)$.
Let $p_j=f(R_j)=h(R_j)$, noting that $p_1\ge p_2$.
Let $i=\floor{\phi(p_1)}$, so $\pi_i\le p_1< \pi_{i+1}$.
If $i\le 0$ then by definition of $\pi_i$ we have $\pi_{i-1} = \eps_{-i+1}\le c_0 \eps_{-i} = c_0\pi_i$.
By Lemma~\ref{cont1} we have $p_2\ge c_0 p_1$, so $p_2\ge \pi_{i-1}$ and $\phi(p_2)\ge i-1$.

On the other hand, if $i\ge 1$ then $\pi_i=1-\eps_i\ge 1-\delta_0(\eps_{i-1})$.
Since $p_1\ge \pi_i$, Lemma~\ref{cont2} gives $p_2\ge 1-\eps_{i-1}=\pi_{i-1}$.
In either case we have $\pi_{i-1}\le p_2\le p_1< \pi_{i+1}$, so $i-1\le \phi(p_2)\le \phi(p_1)<i+1$
and the result follows.
\end{proof}

Lemma~\ref{cont} shows that if we measure probabilities in the right way, they don't change too much when 
we move a rectangle slightly. Since we have a lattice of translational symmetries,
this has the following consequence. (Recall that a rectangle is `large' if its
sides have length at least $100d_0$, where $d_0=d_0(\cH)$ is the constant
from Lemma~\ref{reg}.)

\begin{corollary}\label{c_trans}
There is a constant $C$ such that if $R'$ is a translate of a large
rectangle $R$, then $|\phi(h(R))-\phi(h(R'))|\le C$.
\end{corollary}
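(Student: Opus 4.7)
The plan is to split an arbitrary translation into a part coming from the symmetry lattice $\cL$ (which costs nothing) and a small remainder of length at most $d_0$ (which we absorb into a bounded number of applications of Lemma~\ref{cont}).

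First, by the second conclusion of Lemma~\ref{reg}, every point of $\RR^2$ is within distance $d_0$ of an element of $\cL$. Writing $R' = R + \vec{v}$, we may therefore decompose $\vec{v} = \vec{\ell} + \vec{w}$ with $\vec{\ell}\in\cL$ and $|\vec{w}|\le d_0$. Since the distribution of $\cC$ is invariant under translation by $\vec{\ell}$, we have $h(R') = h((R+\vec{w})+\vec{\ell}) = h(R+\vec{w})$, so it suffices to bound $|\phi(h(R)) - \phi(h(R+\vec{w}))|$ by a constant depending only on $\cH$.

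Next I would interpolate from $R$ to $R+\vec{w}$ through the smallest $\theta$-aligned rectangle $R^*$ containing $R \cup (R+\vec{w})$. Writing $\vec{w} = (w_h,w_v)$ in the $\theta$-aligned coordinates, $R^*$ is obtained from $R$ by moving one horizontal side outward by $|w_v|$ and one vertical side outward by $|w_h|$, each at most $d_0$; symmetrically, $R+\vec{w}$ is obtained from $R^*$ by moving two sides inward by at most $d_0$. I would perform each of these four side-moves in at most $\lceil d_0\rceil$ substeps of length at most $1$, applying Lemma~\ref{cont} at every substep (the inward moves are covered by swapping the roles of the two rectangles in that lemma). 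Each substep contributes at most $2$ to $|\phi(h(\cdot))|$, so $|\phi(h(R)) - \phi(h(R+\vec{w}))| \le 8\lceil d_0\rceil$, and we may take $C = 8\lceil d_0\rceil$.

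The only point requiring care is that Lemma~\ref{cont} demands all intermediate rectangles be large. In the first phase every intermediate rectangle contains $R$, and in the second phase every intermediate rectangle contains $R+\vec{w}$; both $R$ and $R+\vec{w}$ have all sides of length at least $100d_0$ (since $R$ is large and $R+\vec{w}$ is congruent to $R$), so every intermediate rectangle has all sides of length at least $100d_0$ as well. This is the only obstacle, and routing through $R^*$ rather than a direct linear interpolation from $R$ to $R+\vec{w}$ is what sidesteps it.
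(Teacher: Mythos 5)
Your proof is correct and follows essentially the same strategy as the paper: reduce to a translation of bounded length using lattice invariance, then apply Lemma~\ref{cont} a bounded number of times. You fill in details the paper leaves implicit (the explicit route through $R^*$, the count of substeps, and the check that intermediate rectangles stay large), but the underlying argument is the same.
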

\begin{proof}
Pick a fundamental domain $D$ of the lattice $\cL$ of symmetries.
Since our colouring $\cC$ is invariant under translations corresponding
to elements of $\cL$,
we may assume that $R$ and $R'$ are related by translation by a vector in $D$.
Since $D$ is bounded, the result follows by applying Lemma~\ref{cont} a bounded number of times.
\end{proof}

Define $\psi:(0,1)\to (0,1)$ by $\psi(p)=\phi^{-1}(\phi(p)-C)$,
where $C$ is the constant given by Corollary~\ref{c_trans}.
Then $\psi$ is increasing. In fact, although we shall not use this, $\psi$
is strictly increasing, and $\psi(p)$ tends to $0$ as $p\to 0$ and to $1$ as $p\to 1$.
We may rewrite Corollary~\ref{c_trans} is the following more convenient form.

\begin{corollary}\label{c_trans_lower}
Suppose that $R$ and $R'$ are large rectangles with the same orientation, width and height.
Then $h(R')\ge \psi(h(R))$ and $v(R')\ge \psi(v(R))$.\noproof
\end{corollary}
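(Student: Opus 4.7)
The plan is to observe that this corollary is essentially a reformulation of Corollary~\ref{c_trans} in terms of the auxiliary function $\psi$, with the vertical case handled by the same reasoning.

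First, I would note that two rectangles with the same orientation, width, and height are translates of each other; hence Corollary~\ref{c_trans} applies to give $|\phi(h(R))-\phi(h(R'))|\le C$, and in particular $\phi(h(R'))\ge \phi(h(R))-C$. Since $\phi$ is strictly increasing (so $\phi^{-1}$ is well-defined and increasing on its range), applying $\phi^{-1}$ yields
\[
 h(R') \ge \phi^{-1}\bigl(\phi(h(R))-C\bigr) = \psi(h(R)),
\]
which is the first inequality.

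For the second inequality, I would revisit Lemma~\ref{cont}: its conclusion is stated uniformly for any of the four crossing-probability functions $\Pr(\Hb(R))$, $\Pr(\Hw(R))$, $\Pr(\Vb(R))$, $\Pr(\Vw(R))$, with the same constant (we took $c_0=\min\{c_\blk,c_\w\}$ and $\delta_0=\min\{\delta_\blk,\delta_\w\}$ precisely so that the conclusion does not depend on the colour or orientation). The proof of Corollary~\ref{c_trans} uses only Lemma~\ref{cont} applied a bounded number of times along translates within a fundamental domain, so the very same argument gives $|\phi(v(R))-\phi(v(R'))|\le C$ (possibly after enlarging $C$ to accommodate both $h$ and $v$; since $C$ is chosen once and for all based on the fundamental domain, it can be taken uniform). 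Applying $\phi^{-1}$ as before yields $v(R')\ge \psi(v(R))$.

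There is no substantive obstacle here: the work has already been done in Lemmas~\ref{cont1},~\ref{cont2},~\ref{cont} and Corollary~\ref{c_trans}. The only small point to check is the uniformity of the constant $C$ across the four crossing functions, which follows immediately from the colour-/orientation-symmetric way $\phi$ was defined via $c_0$ and $\delta_0(\cdot)$.
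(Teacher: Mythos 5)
Your proposal is correct and is exactly the intended argument: the paper marks this corollary \verb|\noproof| because it follows immediately from Corollary~\ref{c_trans} (together with its $v$-analogue, which holds for the same reasons since Lemma~\ref{cont} treats all four crossing functions uniformly) by applying the increasing function $\phi^{-1}$ and unfolding the definition $\psi(p)=\phi^{-1}(\phi(p)-C)$.
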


We may also rotate a rectangle slightly without changing the crossing probabilities much.
\begin{corollary}\label{c_rot}
Let $R$ be any rectangle of \ww\ $a$ and \hh\ $b$, with $a,b\ge 100d_0$ (so $R$ is large),
and let $R'$ be obtained from $R$ by rotating it about its centre through an 
angle $\theta\le 1/(10\max\{a,b\})$.
If $f(\cdot)$ denotes any of the four crossing probability functions
considered in Lemma~\ref{cont},
then $|\phi(f(R))-\phi(f(R'))|\le 8$.
\end{corollary}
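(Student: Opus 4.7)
The strategy is to adapt the path-extension arguments of Lemmas~\ref{cont1} and~\ref{cont2} to handle a small rotation in place of a single-side move. Rotate the plane (together with the colouring $\cC$) so that $R$ is axis-aligned, $R=[-a/2,a/2]\times[-b/2,b/2]$, and $R'$ is $R$ rotated by $\theta$ about the origin. The bound $\theta\le 1/(10\max\{a,b\})$ gives $\dH(R,R')\le \sqrt{a^2+b^2}\,\theta/2<0.1$, so the symmetric difference $R\triangle R'$ is contained in eight small ``corner'' regions near the four corners of $R$, each of diameter $O(1)$ and thus meeting only $O(1)$ faces of $\cH$. By the evenness $\phi(1-p)=-\phi(p)$ and the fact that re-aligning a rectangle by $\pi/2$ exchanges horizontal and vertical crossings (and similarly interchanging black and white swaps $\Hb$ with $\Hw$), it suffices to prove $|\phi(h(R))-\phi(h(R'))|\le 8$.

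I would establish analogues of Lemmas~\ref{cont1} and~\ref{cont2} for the ``rotation move'' $R\to R'$. Given a black horizontal crossing $P$ of $R$ witnessing $\Hb(R)$, its left endpoint $s$ lies within distance $<0.1$ of the left side of $R'$; moreover $P$ may exit $R'$ in one of the four corner regions of $R\setminus R'$. Using Lemma~\ref{ba}, I construct a potentially-black extension from $s$ to the left side of $R'$, potentially-black re-routings around the corners where $P$ exits $R'$, and a symmetric extension at the right endpoint, each taking place inside a bounded neighbourhood of the relevant corner. All these modifications involve only $O(1)$ faces (by Lemma~\ref{reg}), so the event that the finitely many involved faces are entirely black has probability bounded below by some constant $c>0$. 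This gives the analogue of Lemma~\ref{cont1}: $h(R')\ge c\,h(R)$.

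For the regime where $h(R)$ is close to $1$, I would apply Lemma~\ref{manyhold} to many disjoint candidate extension sites near each of the four corners, exactly as in the proof of Lemma~\ref{cont2}, to show that $h(R')\ge 1-\eps$ whenever $h(R)\ge 1-\delta(\eps)$. The main obstacle will be organising the local path modifications consistently across all four corners: one must ensure that the four sets of faces used for the independent extensions are genuinely disjoint, and that the pieced-together path is actually a black horizontal crossing of $R'$ (stays inside $R'$, starts on the rotated left side, ends on the rotated right side). The geometric separation provided by $\theta\le 1/(10\max\{a,b\})$ makes the corner neighbourhoods disjoint (since each side of $R$ has length at least $100d_0$), enabling the independence. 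Assuming this is arranged, the same logic as in the proof of Lemma~\ref{cont}---the scale function $\phi$ was designed so that one application of each of Lemmas~\ref{cont1} and~\ref{cont2} costs at most $1$ in $\phi$, i.e., at most $2$ in $\phi$ per combined move---yields a bound of at most $2$ per corner modification. Since four corners are involved, the total bound is $4\cdot 2=8$, as claimed.
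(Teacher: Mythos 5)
There is a genuine gap, and the paper's own proof is far simpler. The paper avoids new probabilistic estimates altogether by a geometric reduction: after changing coordinates so that $R=[-a,a]\times[-b,b]$, it considers the axis-aligned rectangle $R''=[-a-1,a+1]\times[-b+1,b-1]$ obtained from $R$ by four single-side moves of distance $1$ (left and right out, top and bottom in). The hypothesis $\theta\le 1/(10\max\{a,b\})$ forces the left and right sides of the rotated rectangle $R'$ to lie within horizontal distance $<0.2$ of $x=\mp a$ and its top and bottom within vertical distance $<0.2$ of $y=\pm b$, so every black horizontal crossing of $R''$ must cross $R'$ horizontally; hence $h(R')\ge h(R'')$. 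Four applications of Lemma~\ref{cont}, each costing at most $2$ in $\phi$, give $\phi(h(R''))\ge\phi(h(R))-8$, whence $\phi(h(R'))\ge\phi(h(R))-8$, and the reverse inequality follows by swapping $R$ and $R'$.

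The concrete gap in your argument is the final $\phi$-accounting. You propose to prove new analogues of Lemmas~\ref{cont1} and~\ref{cont2} for your ``corner modification'' operation and then to claim that each such modification changes $\phi$ by at most $2$, ``exactly as in the proof of Lemma~\ref{cont}''. But $\phi$ was constructed from the specific constants $c_0$ and $\delta_0(\cdot)$ appearing in the \emph{original} Lemmas~\ref{cont1} and~\ref{cont2}, which concern moving a single side of a rectangle by distance $1$; the sequence $\eps_{n+1}=\min\{c_0\eps_n,\delta_0(\eps_n)\}$ bakes precisely those constants into the graph of $\phi$. Your hypothetical corner lemmas would come with their own constants $c'$, $\delta'(\cdot)$, a priori unrelated to $c_0$, $\delta_0$, and if $c'<c_0$ or $\delta'(\eps)<\delta_0(\eps)$, a corner modification could change $\phi$ by more than $2$. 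You could remedy this by redefining $\phi$ to incorporate the new constants, but then the explicit bound $8$ --- which is used elsewhere (e.g., in Lemma~\ref{1mod} and throughout the proof of Theorem~\ref{rswc}) --- would no longer be justified without reverifying every downstream use. The geometric containment trick dissolves the problem: it reduces the rotation to moves for which $\phi$ is already calibrated, so no new probabilistic lemma and no new constant is needed.
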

\begin{proof}
Without loss of generality we may assume that $f$ is the function
$f(\cdot)=h(\cdot)$ giving the probability of a black horizontal crossing.
Rotating and translating the rectangles and $\cC$ together as before
(or simply changing coordinates)
we may assume that $R=[-a,a]\times [-b,b]$. Let $R''=[-a-1,a+1]\times [-b+1,b-1]$.
Then any horizontal crossing of $R''$ crosses $R'$ horizontally, so $h(R')\ge h(R'')$.
Hence, by Lemma~\ref{cont}, $\phi(h(R'))\ge \phi(h(R''))\ge \phi(h(R))-8$.

A similar argument interchanging $R$ and $R'$ gives $\phi(h(R))\ge \phi(h(R'))-8$, so the result
follows.
\end{proof}

A key consequence of the lemma above is that for any given orientation,
we may find a large rectangle $R$ with $h(R)$ not too close to $0$ or $1$,
and that when we rotate, we can assume that the dimensions of $R$ vary `smoothly'.
For now we formalize only the first of these statements.
Given an angle $\theta$, we write
$h_\theta(m,n)$ and $v_\theta(m,n)$ for $\Pr(\Hb(R))$ and $\Pr(\Vb(R))$,
where $R$ is an $\xx$-by-$\yy$ $\theta$-aligned rectangle centred on the origin.
(Thus $v_\theta(m,n)=h_{\theta+\pi/2}(n,m)$.)

\begin{lemma}\label{1mod}
Let $\cC$ be a non-degenerate independent lattice colouring associated
to a hyperlattice $\cH$.
Given $L\ge 100d_0$, there is a constant $A_0$ that for any $A\ge A_0$ and any angle $\theta$,
there are $\xx,\yy\ge L$ with $\xx\yy=A$
such that $-4 \le \phi(h_\theta(\xx,\yy)) \le 4$.
\end{lemma}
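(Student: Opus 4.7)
The plan is to fix $\theta$, view $g(m):=\phi(h_\theta(m,A/m))$ as a function of $m\in[L,A/L]$, show that $g(L)\ge 4$ and $g(A/L)\le -4$ once $A$ is sufficiently large, verify that $g$ changes by at most $8$ between consecutive terms of a suitable discretization of the hyperbola $mn=A$, and conclude by an intermediate-value argument. The foundation is a uniform positive lower bound on crossings of small squares: by Lemma~\ref{reg} any $L\times L$ square meets at most $N_L$ grey faces of $\cH$ regardless of position or angle, and by Lemma~\ref{ba} such a square admits a potentially black horizontal crossing. Forcing all $\le N_L$ grey faces met by this crossing to be entirely black is a positive-probability event by non-degeneracy, so $\Pr(H_b)\ge p_0^{N_L}=:\alpha>0$ uniformly, where $p_0$ is the minimum probability of the all-black state in a grey face. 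The same argument gives $\Pr(V_w)\ge \alpha'>0$.

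To see that $g(L)\ge 4$, I tile the centred $\theta$-aligned $L\times(A/L)$ strip $R_0$ with $k_1=\Theta(A/L^2)$ disjoint $L\times L$ sub-squares $S_1,\ldots,S_{k_1}$, separated vertically by gaps strictly larger than $d_0$. Since every grey face has diameter at most $d_0$, no face meets two sub-squares, so the events $H_b(S_i)$ depend on disjoint sets of grey faces and are \emph{independent}. Each $S_i$ shares its left and right sides with those of $R_0$, so $H_b(S_i)$ implies $H_b(R_0)$, and
\[
 \Pr(H_b(R_0))\ge 1-(1-\alpha)^{k_1},
\]
which exceeds $\pi_4=1-\eps_4$ once $A\ge A_0$ is large enough. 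A symmetric tiling of the $(A/L)\times L$ rectangle by horizontally-stacked $L\times L$ sub-squares, with $V_w$ in place of $H_b$, combined with Lemma~\ref{oneway}, gives $g(A/L)\le -4$ by the analogous bound $1-(1-\alpha')^{k_1}\ge \pi_4$.

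For continuity I discretize by a sequence $L=m_0<m_1<\cdots<m_N=A/L$ with $\delta_k:=m_{k+1}-m_k=\min\{2,\,m_k^2/A\}$ (adjusting the last step so $m_N=A/L$ exactly). This guarantees $\delta_k\le 2$ and $A/m_k-A/m_{k+1}=A\delta_k/(m_k m_{k+1})\le 1$, so passing from the centred rectangle $R_k$ of width $m_k$ and height $A/m_k$ to $R_{k+1}$ moves each of the four sides by at most $1$. Four applications of Lemma~\ref{cont} then yield $|g(m_{k+1})-g(m_k)|\le 8$. Let $k^*:=\min\{k:g(m_k)\le 4\}$, which exists because $g(m_N)\le -4$. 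If $k^*=0$ then $g(m_0)\in[4,4]\subset[-4,4]$; otherwise $g(m_{k^*})\ge g(m_{k^*-1})-8>4-8=-4$. In either case $g(m_{k^*})\in[-4,4]$, so the pair $(m,n)=(m_{k^*},A/m_{k^*})$ (both at least $L$) satisfies the lemma.

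The main obstacle I expect is the independence used in the tiling step: ensuring gaps exceeding $d_0$ between sub-squares is what converts positive correlation of upsets (which on its own cannot force a union of events to have probability close to $1$) into a genuine independent-trials lower bound, and this requires care in the geometric setup. Once independence is secured, the rest is a calibrated intermediate-value argument, with the factor $8$ arising from the four moving sides precisely matching the width $8$ of the target window $[-4,4]$.
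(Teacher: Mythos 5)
Your proof is correct and follows essentially the same route as the paper's: both establish the endpoint bounds $g(L)\ge 4$ and $g(A/L)\le -4$ by exhibiting many independent events (each of probability bounded away from zero via Lemma~\ref{ba} and non-degeneracy, with independence coming from $d_0$-separation) that force or preclude the relevant crossing, and both finish with the intermediate-value argument via Lemma~\ref{cont}. The only differences are cosmetic: the paper packs $\Theta(A/L)$ well-separated potentially white (resp.\ black) crossings directly into the long rectangle, whereas you tile with $\Theta(A/L^2)$ disjoint $L\times L$ sub-squares each contributing one such crossing, and you spell out an explicit discretization of the hyperbola $mn=A$ where the paper leaves the ``varies smoothly'' step informal.
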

\begin{proof}
By Lemma~\ref{reg} there is a constant $N=N(L)$ such that any disk of radius $2L$
meets at most $N$ faces of $\cH$. 
By Lemma~\ref{ba}, if $R$ is a rectangle of any orientation
with \ww\ $\xx\ge L$ and \hh\ $\yy=L$, then we can find $\floor{\xx/(10d_0)}\ge \xx/(20d_0)$
potentially white paths $P_i$ crossing $R$ from top to bottom, with these
paths separated by distances 
of at least $d_0$. Since the events that these paths are white are independent,
and each path meets at most $N$ grey faces, the probability that no $P_i$ is white is 
at most $\exp(-\alpha \xx)$ for some constant $\alpha>0$ that
does not depend on $\theta$.
By Lemma~\ref{oneway}, if any $P_i$ is white, then $\Hb(R)$ does not hold.
Taking $A$ large enough (i.e., $A\ge L^2$ and $A \ge L\alpha^{-1}|\log(\phi^{-1}(-4))|$), we thus have
$\phi(h_{\theta}(A/L,L))\le -4$;
similarly, if $A$ is large enough then
$\phi(h_{\theta}(L,A/L))\ge 4$.

Now consider a rectangle $R$ with area $A$ that varies smoothly between
these two extremes, centred always on the origin.
As $m$ varies, Lemma~\ref{cont} implies that $\phi(h_{\theta}(m,A/m))$
cannot jump by more than 8 at any point, and the result follows.
\end{proof}

\section{A rectangle-crossing lemma}\label{sec_RSW}

In the original context of independent bond percolation on the square
lattice, there are now several different proofs of the key lemma
of Russo~\cite{Russo} and Seymour and Welsh~\cite{SW};
see, for example,~\cite{ourKesten,ourKesten2,Voronoi}.
The various proofs extend (with differing degrees of additional complication)
to various more general classes of percolation model.
However, as far as we are aware, none of the published proofs
can be made to work in the context of general self-dual hyperlattices
-- in addition to various technical problems,
they all require symmetry assumptions that may not hold here.

In this section we shall prove an analogue of the Russo--Seymour--Welsh
Lemma for self-dual hyperlattice percolation. Since this proof is a little
involved, we first illustrate the key ideas by writing out the
argument for bond percolation on $\Z^2$. This amounts to reproving the original
RSW Lemma in a more complicated way than necessary.
Even among proofs using our new strategy, we do not aim to present the simplest,
but rather one that extends easily to hyperlattices.

\subsection{Bond percolation on $\Z^2$}\label{ss_z2}

Throughout this subsection we consider independent
bond percolation on $\Z^2$ with $p=1/2$. Thus a \emph{configuration} $\omega$
is an assignment of a \emph{state} (\emph{open} or \emph{closed})
to each edge $e$ of $\Z^2$, and $\Pr$ is the probability
measure on the set $\Omega=2^{E(\Z^2)}$
of configurations in which the states $\omega(e)$
of different bonds are independent and each bond is open with probability $1/2$.

All rectangles $R$ we consider will be aligned with the axes and have corners
with integer coordinates; a rectangle includes its boundary.
An \emph{open horizontal crossing} of $R$
is a path of open bonds in $R$ joining a vertex on the left to one on the right;
we write $H(R)$ for the event that $R$ has such a crossing. Similarly,
$V(R)$ is the event that $R$ has an open vertical crossing, defined analogously.

As inputs to the argument we shall present, we need two simple lemmas,
which \emph{do} make use of the symmetries of $\Z^2$; the main argument
will then use only translational symmetries.
The first lemma is a standard fact which is an easy consequence
of the self-duality of $\Z^2$. It is well known to
hold with $c_1=1/2$; see~\cite{ourKesten}, for example.
We write $c_1$ rather than $1/2$ since the main argument
below does not depend on the particular value of $c_1$, and in the case
of hyperlattices, the value of $c_1$ in the analogous statement will be different.

\begin{lemma}\label{Z1/2}
There is a constant $c_1>0$ such that if $S$ is any square in $\Z^2$ then
$\Pr(H(S))=\Pr(V(S))\ge c_1$.\noproof
\end{lemma}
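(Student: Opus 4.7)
The plan is to exploit the self-duality of bond percolation on $\Z^2$ at $p=1/2$ together with the $90^\circ$ rotational symmetry of the lattice. First I would observe that the equality $\Pr(H(S))=\Pr(V(S))$ is immediate: rotating the whole picture through $\pi/2$ about the centre of $S$ maps $S$ to itself, maps horizontal crossings to vertical ones, and preserves the measure since the lattice and the product measure are both rotation invariant.

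For the lower bound, I would pass to the dual lattice $(\Z^2)^*$, declaring a dual edge to be open iff the primal edge it crosses is closed. At $p=1/2$ this produces an independent bond percolation model on $(\Z^2)^*$ with the same law. The standard planar-duality dichotomy on a rectangle says that for $R=[0,m]\times[0,n]\cap \Z^2$, exactly one of two events occurs: either $R$ has an open horizontal primal crossing, or the associated dual rectangle of dimensions $(m-1)\times(n+1)$ (shifted by the half-integer offset) has a closed vertical crossing. Choose $R$ to be an $n\times(n+1)$ rectangle; then the dual rectangle is $(n-1)\times(n+2)$, which is a mild nuisance, so one uses instead the more symmetric version in which one takes $R$ to be an $n\times(n+1)$ strip and checks that the dichotomy pairs $H(R)$ with a closed dual vertical crossing of an $(n+1)\times n$ dual rectangle. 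Self-duality turns the closed dual crossing probability into an open crossing probability of the same shape, and the $\pi/2$ rotational symmetry identifies this with $\Pr(H(R))$ itself. The two probabilities therefore sum to $1$ and each equals exactly $1/2$.

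To pass from this near-square rectangle back to a genuine square $S$, I would use the trivial monotonicity: an $(n+1)\times(n+1)$ square contains an $n\times(n+1)$ rectangle, and having an open horizontal crossing of the rectangle implies having one of the square, so $\Pr(H(S))\ge 1/2$. (One can alternatively quote the FKG inequality and compare with a slightly smaller rectangle.) Combined with $\Pr(H(S))=\Pr(V(S))$, this gives the claim with $c_1=1/2$.

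The main obstacle is purely bookkeeping: matching the primal rectangle $R$ with the correct dual rectangle so that the shift by $(1/2,1/2)$ does not break the symmetry, and then making sure the comparison between the non-square rectangle and the square $S$ goes in the right direction. None of this is conceptually difficult, but it is the place where the argument would need the most care in a careful write-up. Once the rectangle identity $\Pr(H(R))=1/2$ is in hand, the lemma follows with essentially no further work.
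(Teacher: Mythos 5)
The paper gives this lemma no proof of its own, citing the standard argument from~\cite{ourKesten}, and your proposal reconstructs exactly that argument: self-duality at $p=1/2$, the $\pi/2$ rotational symmetry to identify horizontal and vertical crossing probabilities, and monotonicity to pass to a square. The approach is the right one. You do, however, state the dual-rectangle dimensions inconsistently in the one place you yourself flag as the delicate spot: you write that the dual of an ``$n\times(n+1)$'' rectangle is $(n-1)\times(n+2)$ and, in the very next clause, that it is $(n+1)\times n$; both cannot be right under a single convention. The clean choice is to take $R$ one unit \emph{wider} than tall, e.g.\ $R=[0,n+1]\times[0,n]$: for a horizontal primal crossing the dichotomy pairs $H(R)$ with a closed dual vertical crossing of $[1/2,n+1/2]\times[-1/2,n+1/2]$, which has width $n$ and height $n+1$ and is precisely the $\pi/2$ rotation of $R$ about its centre, so self-duality plus rotation gives $\Pr(H(R))=1/2$ exactly. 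The comparison to a square then goes the right way: the $(n+1)$-by-$(n+1)$ square contains this $R$ sharing its left and right sides, so any horizontal crossing of $R$ is a horizontal crossing of the square, giving $\Pr(H(S))\ge 1/2$. With that correction the write-up is complete.
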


Our second `input lemma' is the following consequence of Lemma~\ref{Z1/2},
whose proof also requires the use of symmetry. Here we can take $c_2=c_1^2/16$,
but again the value of $c_2$ is irrelevant later.

\begin{lemma}\label{Zdown}
Given an $n$-by-$n$ square $S$, let $E=E(S)$ be the event that there is an open vertical
crossing $P$ of $S$ such that the $x$-coordinates of the endpoints of $P$ differ
by at most $3n/5$. There is a constant $c_2>0$ such that $\Pr(E(S))\ge c_2$
for all squares $S$.
\end{lemma}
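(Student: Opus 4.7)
The plan is to combine Lemma~\ref{Z1/2} with Harris's inequality and the two axial reflection symmetries of $\Z^2$. Write $S=[0,n]\times[0,n]$, and set $I=[0,3n/5]$ and $J=[2n/5,n]$, so that $I\cup J=[0,n]$ and the reflection $x\mapsto n-x$ (a symmetry of $\Z^2$ preserving bond percolation) interchanges $I$ and $J$. Let $V_I$ denote the event that some open vertical crossing of $S$ has its top endpoint at $x$-coordinate in $I$, and let $V_I'$ be the corresponding event for the bottom endpoint.

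Every vertical crossing has top endpoint in $[0,n]=I\cup J$, so $V(S)=V_I\cup V_J$. The vertical reflection gives $\Pr(V_I)=\Pr(V_J)$, and a union bound together with Lemma~\ref{Z1/2} then yields $\Pr(V_I)\ge \Pr(V(S))/2\ge c_1/2$. The top-to-bottom reflection gives $\Pr(V_I')\ge c_1/2$ by the same argument. Both $V_I$ and $V_I'$ are increasing in the bond states, so Harris's inequality gives $\Pr(V_I\cap V_I')\ge c_1^2/4$. The rest of the argument will show that $V_I\cap V_I'\subseteq E(S)$, at which point we may take $c_2=c_1^2/4$.

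Assume $V_I\cap V_I'$ holds, and pick open vertical crossings $\gamma_1$ witnessing $V_I$ and $\gamma_2$ witnessing $V_I'$. If $\gamma_1$ also has bottom endpoint in $I$, then $\gamma_1$ itself witnesses $E(S)$, and similarly if $\gamma_2$ has top endpoint in $I$. Otherwise $\gamma_1$ ends on the bottom at some $x_1^b\in(3n/5,n]$ and $\gamma_2$ starts on the top at some $x_2^t\in(3n/5,n]$, while $x_1^t,x_2^b\in I=[0,3n/5]$. Traversing $\partial S$ clockwise from $(0,0)$, the four endpoints are encountered in the cyclic order $(x_2^b,0), (x_1^b,0), (x_2^t,n), (x_1^t,n)$, so $\gamma_1$'s endpoints separate $\gamma_2$'s endpoints on $\partial S$. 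Two paths in $S$ with this alternation property must intersect, and by planarity of $\Z^2$ the intersection can be taken at a shared vertex $v$. Splicing the upper part of $\gamma_1$ (from its top down to $v$) with the lower part of $\gamma_2$ (from $v$ down to its bottom) gives an open vertical crossing of $S$ with both endpoints in $I$, hence horizontal spread at most $3n/5$.

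The main thing to get right is the splicing step: one must verify that the particular choice $I=[0,3n/5]$, $J=[2n/5,n]$ forces the four endpoints to alternate on $\partial S$ (which is why $J\setminus I$ must lie strictly to the right of $I\setminus J$), and then appeal to planarity of $\Z^2$ to promote the topological crossing point to a common vertex at which the actual combinatorial splice can be performed. Everything else is a short symmetry plus Harris computation.
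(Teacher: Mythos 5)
Your proof is correct, and it follows the same core strategy as the paper: use reflection symmetry to show that two increasing crossing events both have probability bounded away from zero, apply Harris's inequality to conjoin them, and then use the fact that the two witnessing paths alternate on $\partial S$ (hence meet at a vertex of $\Z^2$) to splice a crossing with small horizontal spread. The decomposition, however, is different. The paper writes $V(S)=E\cup F_1\cup F_2$, where $F_1$ (resp.\ $F_2$) is the event of a vertical crossing whose bottom-to-top horizontal displacement exceeds $3n/5$ to the right (resp.\ left), and argues by contradiction: if $\Pr(E)<c_1/2$ then the left--right reflection gives $\Pr(F_1)=\Pr(F_2)\ge c_1/4$, Harris gives $\Pr(F_1\cap F_2)\ge c_1^2/16$, and any two witnesses of $F_1$ and $F_2$ necessarily cross and splice into a witness of $E$. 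You instead cover $V(S)$ by where the top endpoint lands ($I=[0,3n/5]$ or $J=[2n/5,n]$), use the left--right reflection to get $\Pr(V_I)\ge c_1/2$ and the top--bottom reflection to get $\Pr(V_I')\ge c_1/2$, then apply Harris directly; this avoids the contradiction framing at the small cost of a case split (if $\gamma_1$ or $\gamma_2$ already has both endpoints in $I$, no splicing is needed), whereas in the paper's $F_1\cap F_2$ the displacement constraints force the alternation immediately. Both proofs handle the key topological point (alternating endpoints on $\partial S$ imply the two lattice paths share a vertex, since axis-aligned unit edges of $\Z^2$ can only intersect at lattice points) correctly, and your constant $c_1^2/4$ is marginally better than the paper's $c_1^2/16$, though as both proofs note the value is irrelevant to the subsequent argument.
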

\begin{proof}
Let $c_1>0$ be as in Lemma~\ref{Z1/2} and consider $S=[0,n]^2$.
We may suppose that $\Pr(E)<c_1/2$.
Let $F_1$ be the event that $S$ has an open vertical crossing from some point
$(x,0)$ to some point $(x',n)$
with $x'-x>3n/5$, and $F_2$ the horizontal mirror image of this event.
Now $V(S)=E\cup F_1\cup F_2$, so we must have $\Pr(F_i)\ge c_1/4$ for some $i$.
Since $\Pr(F_1)=\Pr(F_2)$ by symmetry, it follows that $\Pr(F_1)=\Pr(F_2)\ge c_1/4$.
But then, by Harris's Lemma, $\Pr(F_1\cap F_2)\ge c_1^2/16$.
\begin{figure}[htb]
 \centering
 \input{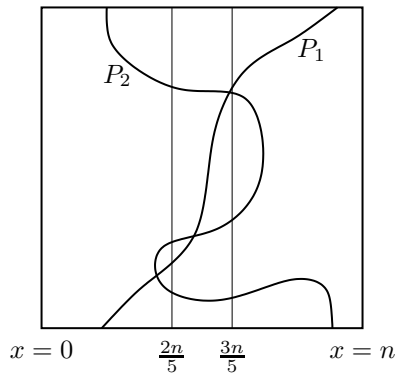}
\caption{Two open paths crossing a square $S$ vertically, $P_1$ from bottom-left to top-right,
and $P_2$ from bottom-right to top-left. Their union contains a path $P$ from bottom-left to top-left.}
\label{fig_F1F2}
\end{figure}
Suppose that $F_1$ and $F_2$ hold, and let $P_1$ and $P_2$ be open paths witnessing
these events, with $P_i$ joining $(x_i,0)$ to $(x_i',n)$; see Figure~\ref{fig_F1F2}.
Since $x_1'-x_1\ge 3n/5$, we have $x_1\le 2n/5$ and $x_1'\ge 3n/5$,
and similarly $x_2\ge 3n/5$ and $x_2'\le 2n/5$.
It follows that $P_1$ and $P_2$ cross. Hence there is an open path $P$ within $S$
joining $(x_1,0)$ to $(x_2',n)$. Since $0\le x_1,x_2'\le 2n/5$, this shows
that $E$ holds.
In conclusion, if $\Pr(E)<c_1/2$, then $\Pr(E)\ge c_1^2/16$, so $\Pr(E)\ge c_1^2/16>0$.
\end{proof}

Let us write $h(\xx,\yy)=\Pr(H(R))$ for the probability that
a rectangle $R$ of \ww\ $\xx$ and \hh\ $\yy$ has an open horizontal crossing,
and $v(\xx,\yy)=\Pr(V(R))$ for the probability that it has an open vertical crossing.
Our aim is to prove the following form of the RSW Lemma.
\begin{theorem}\label{th_RSWZ}
There is a constant $c>0$ such that $v(n,100n)\ge c$ for all $n$.
\end{theorem}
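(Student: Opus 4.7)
The plan is a Russo--Seymour--Welsh bootstrap: parlay the square-crossings of Lemma~\ref{Z1/2} and the constrained-endpoint crossings of Lemma~\ref{Zdown} into a bound $v(n, 2n) \ge c_3 > 0$, and then iterate to $v(n, 100n) \ge c > 0$.

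\emph{Step 1 (normalization).} Applied to an $n\times n$ square, Lemma~\ref{Zdown} gives, with probability at least $c_2$, a vertical crossing whose two endpoints have $x$-coordinates within $3n/5$. Using the horizontal reflection symmetry of $\mathbb{Z}^2$ and a pigeonhole, each of the sub-events ``both endpoints in $[0, 4n/5]$'' (call it $E_L$) and ``both endpoints in $[n/5, n]$'' ($E_R$) has probability at least $c_2/2$.

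\emph{Step 2 (the gluing).} To obtain $v(n, 2n) \ge c_3$, split the rectangle $R = [0,n]\times[0,2n]$ into stacked squares $S_1 = [0,n]\times[0,n]$ and $S_2 = [0,n]\times[n,2n]$, and consider the joint event $E_L(S_1) \cap E_L(S_2) \cap H(T)$, where $T$ is an $n\times n$ bridging square straddling the seam $y = n$. The seam-endpoints of the normalized crossings of $S_1$ and $S_2$ all lie in $[0, 4n/5]\times\{n\}$, which lies inside $T$; by planarity (combined with auxiliary constructions such as taking the \emph{lowest} horizontal crossing of $T$ and the \emph{topmost} vertical crossings of the $S_i$, together with their reflections) the three events combine to produce a vertical crossing of $R$. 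Lemma~\ref{ourHarris2}, applied to these three upsets in the product poset of bond states, gives a positive lower bound on the joint probability, so $v(n, 2n) \ge c_3$.

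\emph{Step 3 (iteration).} The same procedure, now gluing across $y = 2n$ two $n\times 2n$ rectangles whose vertical crossings have been normalized by repeating the Step~1 argument on the output of Step~2, yields $v(n, 2^k n) \ge c_{(k)} > 0$ for each $k$; after $\lceil \log_2 100\rceil$ iterations we conclude $v(n, 100n) \ge c > 0$.

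\emph{Main obstacle.} The crux is the planarity argument in Step 2. A horizontal crossing of a bridging square $T$ placed symmetrically about the seam $y = n$ need not, a priori, meet either of the vertical crossings $P_1 \subset S_1$ or $P_2 \subset S_2$, since its $y$-range can sit entirely above $y = n$. The endpoint constraints provided by Lemma~\ref{Zdown} are meant precisely to pin the geometry down so that a careful choice of $T$ (and of lowest/highest representatives of the various crossings) forces the intersections to occur. Organizing this cleanly -- in a way that uses only horizontal reflection rather than the $\pi/2$-rotation symmetry exploited in classical RSW, so that the plan ports over to the hyperlattice setting of Section~\ref{sec_RSW} where $\pi/2$-rotation is unavailable -- is the main technical work.
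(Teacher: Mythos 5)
There is a genuine gap, and it is exactly the one you flag at the end as ``the main obstacle'' --- but the obstacle is not a detail to be organized cleanly; it is a hole that your Step~2 does not fill. The event $E_L(S_1)\cap E_L(S_2)\cap H(T)$ simply does \emph{not} imply a vertical crossing of $R=S_1\cup S_2$. Concretely, take $P_1$ the straight segment from $(n/5,0)$ to $(n/5,n)$ in $S_1$, take $P_2$ the straight segment from $(3n/5,n)$ to $(3n/5,2n)$ in $S_2$ (both satisfying the $E_L$ endpoint constraint), and let $H$ be a horizontal crossing of $T=[0,n]\times[n/2,3n/2]$ that enters at $(0,5n/4)$, stays above $y=n$ until $x>n/5$, drops below $y=n$ and stays below it past $x=3n/5$, and exits at $(n,3n/4)$. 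Then $H$ passes over the top endpoint of $P_1$ and under the bottom endpoint of $P_2$, meeting neither, and there is no vertical crossing of $R$. Appealing to ``lowest'' and ``highest'' representatives and their reflections does not rescue the argument in the form you have stated it: the classical $\Z^2$ gluing that does work conditions on an explored lowest crossing and then uses reflection symmetry of the \emph{conditioned} measure in a specific and delicate way, and you have not reproduced that mechanism.

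There is a second, more structural problem: your Step~1 invokes horizontal reflection symmetry \emph{in the main argument}, not just in the input Lemmas~\ref{Z1/2} and~\ref{Zdown}. The whole point of the paper's (deliberately complicated) proof of Theorem~\ref{th_RSWZ} is to present an argument that uses \emph{only translational symmetry} outside the two input lemmas, so that it ports to hyperlattices such as the one in Figure~\ref{fig_reflsym_1}, which has no reflectional or rotational symmetries at all. Your approach is, in spirit, the classical RSW gluing with a reflection trick; even if the Step~2 gap were repaired along classical lines, the resulting proof would not serve the paper's purposes. The paper's proof is built on an entirely different idea that your proposal does not touch: explore the lowest crossing of $S_1$ and the highest of $S_2$ (Lemma~\ref{smallgap} guarantees, after a dichotomy, a realization with a small-area gap between them), then \emph{re-randomize} the unexplored region by a lattice-translated independent configuration $\omega_2$, and show that a random lattice translate of a short constrained path from $\omega_2$ (provided by Lemma~\ref{Zdown}), chained into a virtual path $P^*$, lands across the small gap with probability bounded below. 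Harris's Lemma and the ``crossings must meet'' planarity fact are then used to finish via Lemmas~\ref{lJ} and~\ref{l1e}. This random-translate construction is the new ingredient, and it is precisely what lets the argument dispense with reflection and $\pi/2$-rotation; without something playing that role, the proof does not go through.
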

Of course, Theorem~\ref{th_RSWZ} is a well known result of Russo~\cite{Russo}
and Seymour and Welsh~\cite{SW}. As noted above, our aim in this subsection
is to present a (complicated) way of deducing Theorem~\ref{th_RSWZ} from
Lemmas~\ref{Z1/2} and~\ref{Zdown} using minimal properties
of the measure $\Pr$. In particular, we shall restrict
ourselves to properties that extend to general hyperlattice
percolation, so the argument will adapt to prove
Theorem~\ref{th_RSW1} below. For example, we shall use translational symmetry but
no other symmetry. We shall make repeated use of Harris's Lemma, and of the
geometric fact that open horizontal and vertical crossings of the same
rectangle must meet.

We also use one more very important property of the measure $\Pr$:
if a rectangle $R$
has an open horizontal crossing, then it has an \emph{uppermost
open horizontal crossing} $UH(R)$ with the property that
the event $UH(R)=P_0$ is independent of the states of all bonds
below $P_0$. Indeed, $UH(R)$ may be found by `exploring $R$ from above';
see~\cite{ourKesten}, for example. Similarly, if $H(R)$ holds
then $R$ has a \emph{lowest open horizontal crossing} $LH(R)$, defined
analogously, and found by exploring from below.

The proof of Theorem~\ref{th_RSWZ} that we shall present, although 
simpler than that of Theorem~\ref{th_RSW1}, is still somewhat lengthy.
We shall start with three lemmas, the first two of which are
standard observations.

In what follows, we shall often implicitly
assume that $n$ is `large enough', meaning larger than a suitable
constant $n_0$ depending on the parameters, e.g., $\eps$, that we choose.
To avoid clutter, we ignore the rounding of plane coordinates
to integers; it is easy to see that
the effect of rounding can be handled by adjusting the various constants
suitably. (Recall that in this subsection we are not proving new results, merely
rehearsing the arguments that we shall use in the next subsection; formally,
nothing outside this subsection depends on anything inside it. So we do
not feel the need to dot all i's and cross all t's.)

\begin{lemma}\label{l1e}
For any $\eps>0$ and $c'>0$ there is a $c>0$ such that for any $n$,
if $v(n,(1+\eps)n)>c'$ then $v(n,100n)>c$.
\end{lemma}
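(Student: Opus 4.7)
The plan is to deduce a $v(n, 100n)$ bound by stacking overlapping copies of an $n\times(1+\eps)n$ rectangle and gluing the resulting vertical crossings by horizontal crossings of the overlap squares, then applying Harris's Lemma. Since $\eps>0$ is fixed, only a bounded number of copies will be needed.

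Concretely, I would set $K=\lceil 99/\eps\rceil$, let $a_i=(i-1)\eps n$, and define
\[
 R_i=[0,n]\times[a_i,a_i+(1+\eps)n],\qquad i=1,\ldots,K.
\]
Consecutive rectangles $R_i$ and $R_{i+1}$ overlap in the $n\times n$ square
\[
 S_i=[0,n]\times[a_i+\eps n,\,a_i+(1+\eps)n],
\]
and the union of the $R_i$ contains the rectangle $[0,n]\times[0,100n]$, since $a_K+(1+\eps)n=(K\eps+1)n\ge 100n$. By translational invariance of the measure on $\Z^2$, $\Pr(V(R_i))=v(n,(1+\eps)n)\ge c'$ for each $i$, while $\Pr(H(S_i))\ge c_1$ for each $i$ by Lemma~\ref{Z1/2}.

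The geometric point is that if all $V(R_i)$ and all $H(S_i)$ hold, then the various open paths link up into a single open vertical crossing of $[0,n]\times[0,100n]$. Indeed, a vertical crossing of $R_i$ fully traverses $S_i$ from bottom to top, so it meets any horizontal crossing of $S_i$; the same applies to the vertical crossing of $R_{i+1}$, which enters $S_i$ from below. Hence $H(S_i)$ connects the open vertical crossings of $R_i$ and $R_{i+1}$, and chaining through $i=1,\ldots,K-1$ produces the desired vertical crossing of $[0,n]\times[0,100n]$.

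All $2K-1$ events above are increasing, so by Harris's Lemma
\[
 v(n,100n)\ge \Pr\Bigl(\bigcap_{i=1}^K V(R_i)\cap \bigcap_{i=1}^{K-1}H(S_i)\Bigr)\ge (c')^{K}c_1^{K-1}=:c>0,
\]
which is the required constant, depending only on $\eps$, $c'$, and $c_1$. The only mild subtlety is dealing with the rounding of real coordinates to integers, which the author has already disclaimed as absorbable into the constants, so there is no serious obstacle: the argument is a standard Harris-plus-gluing combination that uses only translational symmetry and the square input from Lemma~\ref{Z1/2}, which is exactly why it will adapt to the hyperlattice setting later.
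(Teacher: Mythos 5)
Your proof is correct, and it is essentially the paper's argument: the paper proves the same bound by an induction that grows the rectangle by $\eps n$ per step via the three-event Harris estimate $v(n,(\gamma+\eps)n)\ge v(n,\gamma n)\,h(n,n)\,v(n,(1+\eps)n)$, while you simply unfold that induction into a single application of Harris's Lemma to the $2K-1$ events; the geometry (overlapping $n\times(1+\eps)n$ rectangles glued through the $n\times n$ overlap squares) and the inputs (Lemma~\ref{Z1/2}, translational invariance) are identical, and the resulting constant is the same.
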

\begin{proof}
Given $\gamma\ge 1$, let $R_1$ and $R_3$ be rectangles of width $n$ and heights $\gamma n$
and $(1+\eps)n$, respectively, overlapping in a square $R_2$ of side $n$.
If $V(R_1)$, $H(R_2)$ and $V(R_3)$ all hold, then so does $V(R_1\cup R_3)$;
see Figure~\ref{fig_R123} for an illustration of this in a slightly different
context.
By the original form of Harris's Lemma~\cite{Harris}, it follows that
\begin{equation}\label{c1z}
 v(n,(\gamma+\eps)n)\ge v(n,\gamma n)h(n,n) v(n,(1+\eps)n).
\end{equation}
Since $v(n,n)\ge c_1$ and $h(n,n)\ge c_1$ by Lemma~\ref{Z1/2},
applying \eqref{c1z} inductively $\ceil{99/\eps}$ times
gives the result.
\end{proof}

Given two overlapping  $n$-by-$n$ squares $S_1$ and $S_2$ such that $S_2$ is
obtained from $S_1$ by translating it
upwards through a distance of at most $n$, let $\strip=\strip(S_1,S_2)$ denote the infinite strip 
bounded by the vertical lines containing the vertical sides of $S_1$ and $S_2$.
Let $J(S_1,S_2)$ be the event that $S_1$ and $S_2$ have open horizontal crossings
$P_1$ and $P_2$ that are \emph{joined} within $\strip$, meaning that there is an open path $P$
within $\strip$ joining some point of $P_1$ to some point of $P_2$; this
includes the case where $P_1$ and $P_2$ meet; see Figure~\ref{fig_J1J2}. Note
that when $J(S_1,S_2)$ holds, a minimal $P$ lies entirely between $P_1$ and $P_2$
in the strip, so we may assume that $P$ is contained in $S_1\cup S_2$. 
For later, note also that if there is an open
horizontal crossing $P_1$ of $S_1$ above
an open horizontal crossing $P_2$ of $S_2$, then $P_1$ is in fact contained
in $S_2$, and the crossings $P_1$ of $S_1$ and $P_1$ of $S_2$ meet, so
$J(S_1,S_2)$ holds.

\begin{lemma}\label{lJ}
For any $\eps>0$ and $c'>0$ there is a $c>0$ such that for any $n$,
if there exist $n$-by-$n$ squares $S_1$ and $S_2$
with $S_2$ obtained by translating $S_1$ upwards by a distance of $\eps n$
such that $\Pr(J(S_1,S_2))\ge c'$, then $v(n,100n)\ge c$.
\end{lemma}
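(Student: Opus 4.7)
The plan is to combine the event $J(S_1,S_2)$ with separate vertical crossings of each of $S_1$ and $S_2$ via Harris's Lemma, thereby producing a vertical open crossing of the tall rectangle $R = S_1 \cup S_2$, and then to extend the aspect ratio from $1{+}\eps$ up to $100$ using Lemma~\ref{l1e}.

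First I would observe that each of the three events $V(S_1)$, $V(S_2)$, and $J(S_1,S_2)$ is increasing in the state of the bonds of $\Z^2$ (adding open bonds can only help). By Lemma~\ref{Z1/2}, $\Pr(V(S_i)) \ge c_1$ for $i=1,2$, and by hypothesis $\Pr(J(S_1,S_2)) \ge c'$, so the classical form of Harris's Lemma gives
\[
\Pr\bigl(V(S_1) \cap J(S_1,S_2) \cap V(S_2)\bigr) \ge c_1^2 c'.
\]

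Next I would verify that this intersection implies $V(R)$, where $R = S_1 \cup S_2$ is an $n$-by-$(1+\eps)n$ rectangle. Take witnesses: horizontal crossings $P_1$ of $S_1$ and $P_2$ of $S_2$, joined by an open path $Q$ within $\strip(S_1,S_2)$, and vertical crossings $V_1$ of $S_1$ and $V_2$ of $S_2$. The key geometric observation is that $V_i$ and $P_i$, being vertical and horizontal crossings of the same square $S_i$, must share a vertex (a standard Jordan-curve-type argument; edges of $\Z^2$ can only intersect at their endpoints). Hence $V_1 \cup P_1 \cup Q \cup P_2 \cup V_2$ is a connected open set reaching the bottom side of $S_1$, which coincides with the bottom of $R$, and the top side of $S_2$, which coincides with the top of $R$, so $V(R)$ holds. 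Therefore $v(n,(1+\eps)n) \ge c_1^2 c'$, and Lemma~\ref{l1e} upgrades this to $v(n,100n) \ge c$ for some $c = c(\eps,c') > 0$.

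There is no serious obstacle here: the lemma is a short geometric combination of earlier ingredients. The only point requiring care is the planar topology fact that vertical and horizontal crossings of a common rectangle must share a vertex --- this is precisely what allows the joining path $Q$ furnished by $J(S_1,S_2)$ to be spliced with the vertical crossings $V_1,V_2$ into a single top-to-bottom crossing of $R$. When this same strategy is adapted to the hyperlattice setting in the proof of Theorem~\ref{th_RSW1}, the analogous planar fact will still be available (via the $\cC$-black/$\cC$-white duality encoded in Lemma~\ref{oneway}), so the translation should be essentially mechanical, modulo replacing Harris's Lemma with Lemma~\ref{l_ourHarris} and absorbing extra factors into the final constant.
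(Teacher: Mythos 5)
Your proof is correct and follows exactly the same route as the paper: apply Harris's Lemma together with Lemma~\ref{Z1/2} to bound $\Pr\bigl(J(S_1,S_2)\cap V(S_1)\cap V(S_2)\bigr)$ below by $c_1^2 c'$, observe via the meeting of horizontal and vertical crossings of a common square that this event forces $V(S_1\cup S_2)$, and then invoke Lemma~\ref{l1e}. Both your constant and your geometric argument match the paper's; the only implicit ingredient you rely on (that the joining path in $J(S_1,S_2)$ can be taken inside $S_1\cup S_2$) is already noted when $J$ is defined.
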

\begin{figure}[htb]
 \centering
 \input{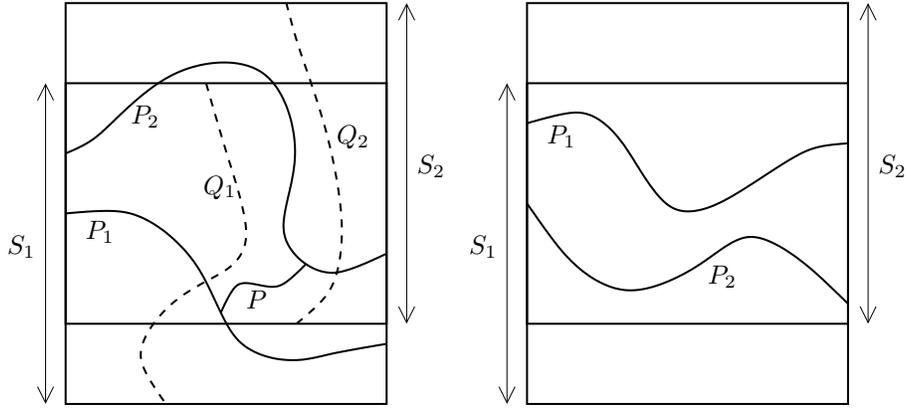}
 \caption{In the left figure, the solid paths illustrate the event $J(S_1,S_2)$. The dashed paths $Q_i$ are vertical crossings of the squares $S_i$. Since $P_i$ and $Q_i$
meet, the event $V(S_1\cup S_2)$ holds. The figure on the right
shows that if any horizontal crossing of $S_1$ is above any horizontal crossing
of $S_2$, then $J(S_1,S_2)$ holds - either crossing crosses both squares.}\label{fig_J1J2}
\end{figure}
\begin{proof}
The event $J=J(S_1,S_2)$ is increasing. Hence, by Harris's Lemma and Lemma~\ref{Z1/2},
the event $E=J\cap V(S_1)\cap V(S_2)$ has probability at least $c_1^2c'$.

Using the fact that horizontal and vertical crossings of the same square must
meet, it is easy to see that whenever $E$ holds, so does $V(S_1\cup S_2)$;
see Figure~\ref{fig_J1J2}. Hence
$v(n,(1+\eps)n)=\Pr(V(S_1\cup S_2))\ge \Pr(E) \ge c_1^2c'$, and
the result follows by applying Lemma~\ref{l1e} with $c_1^2c'$ in place of $c'$.
\end{proof}

Our next lemma is less run-of-the-mill.
Given $\eps>0$ and two $n$-by-$n$ squares $S_1$ and $S_2$ with $S_2$ obtained by translating
$S_1$ upwards by a distance of $\eps n/10$, define the strip $\strip=\strip(S_1,S_2)$
as above, and let $G_\eps(S_1,S_2)$ be the event
that $H(S_1)$ and $H(S_2)$ hold, the path $P_1=LH(S_1)$ is below $P_2=UH(S_2)$ in $\strip$,
and the area of $\strip$ between $P_1$ and $P_2$ is at most $\eps n^2$.
In other words, the lowest open horizontal crossing of the lower square and the highest
of the higher square do not meet, but they are `close together', in the
sense that the area between them is small.

\begin{lemma}\label{smallgap}
For any $0<\eps<1/10$ there are constants $c_3>0$ and $c>0$ such that for any $n$,
either there exist $n$-by-$n$ squares $S_1$ and $S_2$
as above with $\Pr(G_\eps(S_1,S_2))\ge c_3$, or $v(n,100n)>c$.
\end{lemma}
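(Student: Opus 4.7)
Assume the first alternative fails: $\Pr(G_\eps(S_1,S_2))<c_3$ for every valid pair (by translation invariance of bond percolation on $\Z^2$, it suffices to fix $S_1=[0,n]^2$ and $S_2=S_1+(0,\eps n/10)$). We aim to conclude $v(n,100n)>c$ for some $c=c(\eps)>0$. By Lemma~\ref{Z1/2} and Harris, $\Pr(H(S_1)\cap H(S_2))\ge c_1^2$. Exposing $P_1=LH(S_1)$ from below (inside $S_1$) and $P_2=UH(S_2)$ from above (inside $S_2$), the bonds strictly between $P_1$ and $P_2$ in $\strip$ are untouched, hence remain conditionally independent Bernoulli($1/2$). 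Decompose $H(S_1)\cap H(S_2)$ into three sub-events: (a) $P_1$ is not strictly below $P_2$ in $\strip$, which (as noted just before Lemma~\ref{lJ}) forces $J(S_1,S_2)$ to hold; (b) $P_1$ is strictly below $P_2$ and the area of the gap region $D$ between them in $\strip$ is at most $\eps n^2$, which is exactly $G_\eps$; and (c) $P_1$ is strictly below $P_2$ with area of $D$ exceeding $\eps n^2$. Choosing $c_3=c_1^2/4$ gives $\Pr((\text{a})\cup(\text{c}))\ge 3c_1^2/4$.

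The plan is to produce a constant $c_4=c_4(\eps)>0$ with $\Pr(J(S_1,S_2))\ge c_4$ and then invoke Lemma~\ref{lJ} (with $\eps/10$ in place of $\eps$) to deliver the desired $v(n,100n)>c$. If (a) already has probability $\ge 3c_1^2/8$, this is immediate. Otherwise (c) has probability $\ge 3c_1^2/8$, and we aim to show that, conditional on (c), the event $J(S_1,S_2)$ holds with conditional probability at least some $q=q(\eps)>0$, by constructing an open $P_1$-to-$P_2$ path inside $D$ using only the fresh (conditionally independent) bonds inside $D$.

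To construct this conditional crossing: since $D$ has horizontal extent $n$ and total area $\ge\eps n^2$, a pigeonhole argument on $x$-coordinates produces a vertical sub-strip of width $\Theta(\eps n)$ whose intersection with $D$ has area $\Omega(\eps^2 n^2)$; inside that sub-strip, a second pigeonhole extracts an axis-aligned sub-square of side $\Omega(\eps n)$ contained entirely in $D$. On this sub-square the unexposed bonds are independent Bernoulli($1/2$), so Lemma~\ref{Z1/2} gives an open vertical crossing with conditional probability $\ge c_1$. Short extensions upward to $P_2$ and downward to $P_1$—produced by further applications of Lemma~\ref{Z1/2} (or Lemma~\ref{Zdown}) on slightly enlarged regions inside $D$, and combined via Harris on the (still-independent) bonds in $D$—complete the desired $P_1$-to-$P_2$ path and hence certify $J(S_1,S_2)$.

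The main obstacle is precisely this last paragraph: $D$ has a random boundary given by the revealed paths $P_1$ and $P_2$, so the pigeonhole extraction of a fat sub-square inside $D$ and the subsequent extension arguments connecting that sub-square to $P_1$ and $P_2$ must all be carried out with bounds that are uniform over the realizations of $P_1,P_2$ consistent with (c). Handling the measurability (conditioning on the exposed bonds that determine $P_1,P_2$ while keeping the interior of $D$ fresh) and pinning down the extensions across a randomly shaped boundary is the genuine technical content of the lemma, and is the reason this argument is singled out for a warm-up before the hyperlattice version, where even less symmetry is available.
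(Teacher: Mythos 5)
Your approach diverges from the paper's and contains a fatal gap. The paper never attempts to cross the region $D$ between $P_1$ and $P_2$. Instead it considers a stack of $N+1\approx 4/\eps$ overlapping squares $S_0,\dots,S_N$, each shifted up by $\eps n/10$ from the last, and runs a deterministic area-pigeonhole: if all $H(S_i)$ hold (Harris gives this probability $\ge c'$) but no $J(S_i,S_{i+1})$ does, the lowest crossings $P_0^-<P_1^-<\cdots<P_N^-$ are strictly ordered, so the $\floor{N/2}\ge 2/\eps$ disjoint regions between consecutive even-indexed crossings all lie inside a region of area $<2n^2$, and one of them must have area $\le\eps n^2$, forcing some $G_\eps(S_i,S_{i+1})$ to hold. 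Bounding $\Pr(\bigcup J(S_i,S_{i+1}))\le c'/2$ (else done via Lemma~\ref{lJ}) then yields $\Pr(\bigcup G_\eps(S_i,S_{i+1}))\ge c'/2$, and averaging finishes. The essential feature is that no crossing of a large gap is ever required.

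Your case (c) instead tries to cross a gap of area $>\eps n^2$ directly, and rests on the claim that $D$ contains an axis-aligned sub-square of side $\Omega(\eps n)$. That claim is false. The simply connected region $D$ bounded by the lattice paths $P_1$ and $P_2$ can have area $\Theta(\eps n^2)$ yet width $O(1)$ everywhere: take $P_1$ roughly horizontal near $y\approx 1$ and $P_2$ a square-wave of period $\Theta(1)$ oscillating between heights $\approx 2$ and $\approx n/2$. Then $D$ is a union of thin vertical slabs, and a vertical sub-strip of width $\eps n$ may contain $D$-area $\Omega(\eps^2 n^2)$ without containing any fat square; high average density does not yield a fat connected piece. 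Such $(P_1,P_2)$ configurations are individually unlikely, but your argument requires a bound uniform over all realizations consistent with (c), and that uniformity fails. The subsequent extension of a sub-square crossing up to $P_2$ and down to $P_1$ through the random-shaped, possibly arbitrarily thin remainder of $D$ is likewise not something Lemma~\ref{Z1/2} or Lemma~\ref{Zdown} can supply. You flag this as ``the genuine technical content of the lemma'', but it is an obstacle your route cannot surmount; the paper's multi-square pigeonhole exists precisely to avoid ever having to bridge a large random gap.
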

\begin{proof}
Set $N=2\ceil{2/\eps}$,
and, for $i=0,1,\ldots,N$, let $S_i=[0,n]\times [i\eps n/10,n+i\eps n/10]$,
so $S_{i+1}$ is obtained by translating $S_i$ upwards through a distance $\eps n/10$.

Let $H=H(S_0)\cap\ldots\cap H(S_N)$. Since each $H(S_i)$ has probability at least $c_1$,
by Harris's Lemma there is some $c'>0$ such that $\Pr(H)\ge c'$.
(We may take $c'=c_1^{N+1}$, but we prefer to be less specific, with an eye to the hyperlattice case.)
Set $c_3=c'/(2N)$. 
If for some $i$ the event $J(S_i,S_{i+1})$ has probability at least $c_3$,
then we are done by Lemma~\ref{lJ}. Let $J=\bigcup_{i=0}^{N-1} J(S_i,S_{i+1})$.
Then we may assume that $\Pr(J)\le Nc_3\le c'/2$.
Hence $\Pr(H\setminus J)\ge c'/2$.

We claim that if $H\setminus J$ holds, then so does $F=\bigcup_{i=0}^{N-1}G_\eps(S_i,S_{i+1})$.
Assuming the claim, the result follows, since for some $i$ we have
\[
 \Pr(G_\eps(S_i,S_{i+1})) \ge N^{-1} \Pr(H\setminus J) \ge c'/(2N) = c_3.
\]

Suppose then that $H\setminus J$ holds, and let $P_i^-$ and $P_i^+$ be the lowest
and highest open horizontal crossings of $S_i$.
Note that $P_i^-$ and $P_i^+$ may meet, but $P_i^-$ lies (weakly) below $P_i^+$.
Since $J$ does not hold, for $i=0,\ldots,N-1$, the path
$P_i^+$ is strictly below $P_{i+1}^-$; it follows that $P_i^-$ is
strictly below $P_{i+1}^-$.
For $i=0,\ldots,N-2$, let $A_i$ be the region in $\strip$ between $P_i^-$ and $P_{i+2}^-$.
Then the $A_i$ are disjoint. Since there are $\floor{N/2}\ge 2/\eps$ such regions
$A_i$, and their union is contained in a rectangle
of width $n$ and height $n+N\eps n/10 \le n+(6/\eps)\eps n/10 < 2n$, it follows
that some $A_i$ has area at most $\eps n^2$.
Since the region between $P_i^-$ and $P_{i+1}^+$ is contained
in $A_i$, it follows that $G_\eps(S_i,S_{i+1})$ holds, as required.
\end{proof}

Clearly, in the present context the events $G_\eps(S_i,S_{i+1})$ considered above
all have the same probability due to translational symmetry; with an eye
to the general case, we avoided using this
fact.

We now turn to the key idea, which is a rather involved way of generating a configuration.
Given a configuration $\omega$ and a vector $v\in \Z^2$,
let $\omega^v$ be obtained by translating $\omega$ through the vector $v$.
Thus the state of a bond $e$ in $\omega^v$ is the state of its translate $e-v$ in $\omega$.
Let us say that a random configuration $\omega$ has the \emph{standard distribution}
if it has the distribution corresponding to $\Pr$, so bonds are open independently
in $\omega$ and each is open with probability $1/2$.
From translation invariance, if $\omega$ is random with the standard distribution
and $v$ is constant, then $\omega^v$ has the standard distribution.
This conclusion also holds if $v$ is random, as long as $\omega$ and $v$ are independent.

Suppose we have some algorithm $\ALG$ whose input is a configuration $\omega$,
and that $\ALG$ examines the states of bonds one by one, with the next bond to be examined
depending on the results of previous examinations, but not on the states of any
other bonds. We assume that $\ALG$ terminates after a finite number of steps,
and write ${\cal S}={\cal S}_{\ALG}(\omega)$ for the
set of bonds examined by $\ALG$ when $\ALG$ is
run on the configuration $\omega$. Let $\omega_1$ and $\omega_2$ be independent standard
configurations. Define a new configuration $\omega$ by running $\ALG$
on $\omega_1$, 
setting $\omega(e)=\omega_1(e)$ if $e\in {\cal S}_{\ALG}(\omega_1)$ and $\omega(e)=\omega_2(e)$
otherwise. Then it is easy to check that $\omega$ has the standard
distribution: we can think of tossing coins corresponding to $\omega_1$
to determine the states of all bonds, looking at certain bonds (corresponding to $\cal S$),
and then retossing the coins we have not yet looked at.

Given an algorithm $\ALG$ as above, and a corresponding map ${\cal S}_{\ALG}$ from
the set $\Omega$ of all configurations to
the set of subsets of $E(\Z^2)$, define a map $f_\ALG$ from $\Omega\times\Omega\times\Z^2$
to $\Omega$ by
\begin{equation}\label{fA}
 \bb{f_\ALG(\omega_1,\omega_2,v)}(e)= \left\{
   \begin{array}{ll}
    \omega_1(e) & \hbox{if }e\in {\cal S}_{\ALG}(\omega_1), \\
    \omega_2(e-v) & \hbox{if }e\notin {\cal S}_{\ALG}(\omega_1). \\
   \end{array}
  \right.
\end{equation}
Combining the two observations above, we see that if $\omega_1$, $\omega_2$ and $v$
are independent and $\omega_1$ and $\omega_2$ have the standard distribution,
then $\omega=f_\ALG(\omega_1,\omega_2,v)$ does too.

\begin{proof}[Proof of Theorem~\ref{th_RSWZ}]
Recall that our task is to show that the probability $v(n,100n)$ that
an $n$-by-$100n$ rectangle has an open vertical crossing is bounded away
from zero, using Lemmas~\ref{Z1/2} and~\ref{Zdown} as `inputs', and
otherwise making no use of reflectional or rotational symmetry.

With an eye to later generalizations, 
suppose that $\alpha$, $\beta_1$, $\beta_2$ and $\eta$ are positive
constants satisfying
\begin{equation}\label{consts1}
  \alpha,\beta_1\le 1/3, \quad \beta_2\le 2, \quad\hbox{and }\eta>3\beta_1.
\end{equation}
Let $\Ed=\Ed(n)$ be the event that the rectangle $[0,\beta_1 n]\times [0,\beta_2 n]$
contains an open path from some point $(x,y)$ to some point $(x',y')$ with
$y'\ge y+\alpha n$ and $|x-x'|\le (1-\eta)|y-y'|$. (Here `v' stands for `vertical':
the overall orientation of the path is significantly closer to vertical
than to horizontal.)
Taking $\alpha=\beta_1=\beta_2=1/100$ and $\eta=1/10$, Lemma~\ref{Zdown} tells us that
for all (large enough) $n$, we have
$\Pr(\Ed(n))\ge c_2>0$. In the rest of the proof we assume only that
our various constants satisfy \eqref{consts1} and that, for these constants, $\Pr(\Ed)$ is
bounded away from $0$.

Pick $\gamma>0$ such that
\begin{equation}\label{consts2}
  \gamma\le 1/3\quad\hbox{and }\eta\ge 3\beta_1+3\gamma,
\end{equation}
and choose $\eps>0$ such that $\eps<\gamma$ and
\begin{equation}\label{e2}
 \eps < \gamma^2\alpha/10.
\end{equation}

Let $c_3$ and $c$ be the constants given by Lemma~\ref{smallgap}.
For any $n$, by Lemma~\ref{smallgap}, either $v(n,100n)\ge c$, in which case we are done,
or there are squares $S_1$ and $S_2$ with $S_2$ obtained by 
translating $S_1$ upwards by 
a distance of $\eps n/10$ such that
\begin{equation}\label{Fe}
 \Pr(G_\eps(S_1,S_2))\ge c_3.
\end{equation}
We may assume that the second case holds. By translational symmetry,
we may assume that $S_1=[0,n]^2$ and $S_2=[0,n]\times[\eps n/10,(1+\eps/10)n]$.

Recall that $J=J(S_1,S_2)$ is the event that there are open horizontal crossings of $S_1$ and $S_2$
that meet, or are connected by an open path lying within the strip
\[
 \strip = \{(x,y): 0\le x\le n\} \subset \RR^2
\]
generated  by $S_1\cup S_2$. Also, $G_\eps=G_\eps(S_1,S_2)$
is the event
that $H(S_1)$ and $H(S_2)$ hold, the path $P_1=LH(S_1)$ is below $P_2=UH(S_2)$ in $\strip$,
and the area of $\strip$ between $P_1$ and $P_2$ is at most $\eps n^2$.
We shall show that
\begin{equation}\label{newaim}
 \Pr(J\cap G_\eps) \ge c'
\end{equation}
for some constant $c'$ depending on the various constants we have chosen so far,
but not on $n$.
Then $\Pr(J)\ge c'$ and so, applying Lemma~\ref{lJ} and reducing $c$ if necessary,
we have $v(n,100n)\ge c$, as required.

Consider the following algorithm $\ALG$ for testing whether $G_\eps$ holds:
explore $S_1$ from below (as in~\cite{ourKesten}) to find its
lowest horizontal crossing $P_1$, if $H(S_1)$ holds.
Similarly, explore $S_2$ from above to find its uppermost horizontal crossing $P_2$,
if $H(S_2)$ holds; then,
from the positions of $P_1$ and $P_2$, decide whether $G_\eps$ holds. Define $f_{\ALG}$ as 
in \eqref{fA} above.

Let $\omega_1$, $\omega_2$ and $\rX$ be independent,
where the $\omega_i$ are random
configurations (with the standard distribution) and $\rX$ is uniformly random
on $[-5n,5n-1]^2=\{-5n,-5n-1,\ldots,5n-1\}^2\subset \Z^2$,
and let $\omega=f_{\ALG}(\omega_1,\omega_2,\rX)$,
so $\omega$ has the standard distribution. The reason for the slightly
incongruous notation is that the core of our argument will involve
conditioning on $\omega_1$ and $\omega_2$, but keeping $\rX$ random.

To establish \eqref{newaim}, we first examine $\omega$ to check whether $\omega\in G_\eps$.
This depends only on the states of bonds examined by the algorithm $\ALG$ above;
by the definition of $f_{\ALG}$, the state of such bonds in $\omega$ is the same
as in $\omega_1$. Thus $\omega\in G_\eps$ if and only if $\omega_1\in G_\eps$.
When this event holds,
let $P_1$ and $P_2$ be the paths defined above, and write $A$ for the part of the strip $\strip$
on or below $P_1$, $B$ for the part of $\strip$ on or above $P_2$, and $G$ for the `gap' between $P_1$
and $P_2$, i.e., the rest of $\strip$; see Figure~\ref{fig_P*}.
Note that all bonds whose interiors lie within $G$ have their states in $\omega=f_\ALG(\omega_1,\omega_2,\rX)$ given
by $\omega_2^{\rX}$.
Suppose that $\omega_2$ contains an open path $P$ from a point $u$ to a point $v$.
Then $\omega_2^{\rX}$ contains the open path $P+\rX$, the translate
of $P$ through $X$, joining $u+\rX$ to $v+\rX$. If $u+\rX\in A$, $v+\rX\in B$,
and $P+\rX$ remains within the strip $0\le x\le n$,
then the minimal subpath $P'$ of $P+\rX$ meeting $A$ and $B$ contains bonds only in $G$, so this path $P'$
is present in $\omega$ and joins $P_1$ and $P_2$, and $\omega\in J$.

Let $E$ be the event that $\omega_1\in G_\eps$ and $\omega_2\in \Ed$.
Since $\omega_1$ and $\omega_2$ are independent, we have $\Pr(E)=\Pr(G_\eps)\Pr(\Ed)>c_3c_2$.
For the rest of the proof we condition on $\omega_1$ and $\omega_2$,
so the only remaining randomness is
in the choice of $\rX$. We assume that $E$ holds; 
we shall show that for any $\omega_1$ and $\omega_2$ such that $E$ holds,
the conditional probability that $J$
holds satisfies
\begin{equation}\label{aim}
 \Pr(J\mid \omega_1,\omega_2) \ge c_4 = \gamma^2\alpha/400.
\end{equation}
Then we have $\Pr(J)\ge c_4\Pr(E)\ge c_4c_3c_2>0$, establishing \eqref{newaim}.
It remains only to prove \eqref{aim}.

Let us choose an open path $P$ in the configuration $\omega_2$ witnessing
$\omega_2\in \Ed$; thus $P$
lies within $[0,\beta_1 n]\times [0,\beta_2 n]$ and joins some point $(x,y)$
to some $(x',y')$ with $y'-y\ge \alpha n$ and $|x'-x|\le (1-\eta)|y-y'|$.
Let $v$ be the vector $(x'-x,y'-y)$.
Let $I=\ceil{(1+2\gamma)n/(y'-y)}$, and note that
\begin{equation}\label{Imax}
 I\le \ceil{(1+2\gamma)/\alpha} \le 2/\alpha,
\end{equation}
since $y'-y\ge \alpha n$ and $\alpha,\gamma\le 1/3$.
For $0\le i\le I$, set $\off_i=iv$; we think of $\off_i$ as an \emph{offset},
for reasons that will hopefully become clear.

Let $P^*$ be the (`virtual', in the sense that it is not known to be open
in any configuration we are considering) path formed by starting
at the origin and concatenating $I$ copies of $P$. Thus $P^*$ is the union
of the paths $P_1^*,\ldots,P_I^*$, where each $P_i^*$ is the translate of $P$
joining the point $\off_{i-1}$ to $\off_i$; see Figure~\ref{fig_P*}.

\begin{figure}[htb]
 \centering
 \input{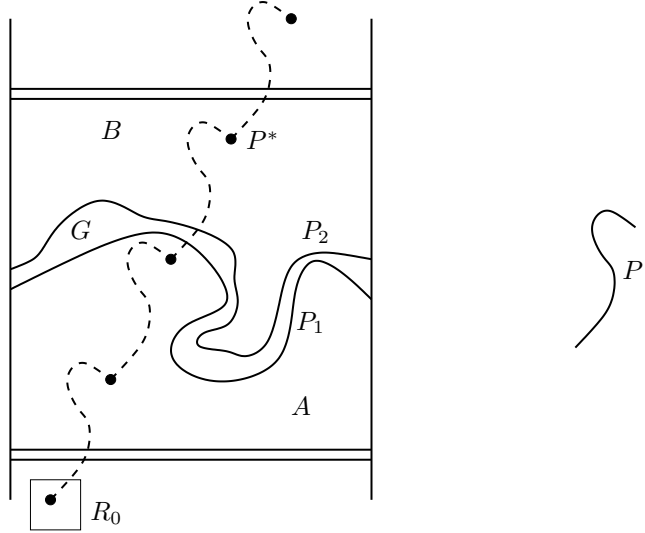}
\caption{The main part of the figure shows the lowest crossing $P_1$ of $S_1$ and
highest crossing $P_2$ of $S_2$, dividing the strip $T$ into the regions $A$, $G$
and $B$. In the final configuration $\omega$, bonds in $A\cup B$
have their states given by $\omega_1$; those in $G$ are given by a random translate of $\omega_2$.
On the right, an open path $P$ in
$\omega_2$ is shown; on the left we see a `virtual' chain $P^*$
of translates of $P$, starting somewhere in the square $R_0$. Since $G$ is small,
for most starting points in $R_0$, at least one copy of $P$ in $P^*$ crosses
from $A$ to $B$. It follows that with not-too-small probability,
the \emph{random} translate of $P$ present in $\omega_2^{\rX}$ crosses
from $A$ to $B$, giving an open path in $\omega$ joining $P_1$ to $P_2$.}
\label{fig_P*}
\end{figure}

If $x'\ge x$, let $v_0$ be the point $((\beta_1+\gamma/2)n,-\gamma n/2)$;
otherwise, set $v_0=(n-(\beta_1+\gamma/2)n,-\gamma n/2)$.
We claim that if a translate $\widetilde{P}^*$ of $P^*$ starts at a point within the square
$R_0$ of side $\gamma n$ centred at $v_0$, then this translate
lies entirely within the strip $\strip$.
Indeed, in the case $x'\ge x$,
the $x$-coordinate of any point of $\widetilde{P}^*$ is within $(\beta_1+\gamma/2)n$
of the $x$-coordinate of some point $v_0+\off_i$, and hence is
at least $0$ and at most
\begin{eqnarray*}
 (2\beta_1+\gamma)n+I(x'-x) &\le& (3\beta_1+\gamma) n+(I-1)(x'-x) \\
 &\le& (3\beta_1+\gamma) n+(I-1)(1-\eta)(y'-y)  \\
 &\le& (3\beta_1+\gamma+(1-\eta)(1+2\gamma))n \le n,
\end{eqnarray*}
using the assumption that $\eta\ge 3\beta_1+3\gamma$ in the final step.
In the case $x'<x$ the argument is similar, subtracting all $x$-coordinates from $n$.

Note that $R_0$ lies entirely below $S_1$, so, recalling
that $A$ and $B$ are the portions of the strip $\strip$
that lie below $P_1$ and above $P_2$, respectively, we have $R_0\subset A$.

If a translate of $P^*$ starts at a point in $R_0$,
then it ends at a point with $y$-coordinate at least
$-\gamma n+I(y'-y)\ge -\gamma n+(1+2\gamma) n =(1+\gamma)n \ge (1+\eps)n$,
so its upper endpoint is above $S_2$, and hence lies in $B$.

For any point $v\in R_0$, we have seen that $v+\off_0=v\in A$,
while $v+\off_I\in B$. Also, $v+\off_i\in \strip=A\cup G\cup B$ for $0\le i\le I$.
Hence, either some point $v+\off_i$, $1\le i<I$, lies in the `gap' $G$,
or there is some $i$ such that $v+\off_{i-1}\in A$ and $v+\off_i\in B$.
Let us colour the points of $R_0$ with $I+1$ colours, assigning colour 0 in the first
case, and colour $i$ in the second (choosing the minimal $i$ if there are several).
Let $C_i$ denote the set of points in $R_0$ assigned colour $i$.

Now $C_0$ is a subset of the union of $I-1$ translates
of $G$. Since $G$ has area at most $\eps n^2$, while $I\le 2/\alpha$,
the area of $C_0$ is thus at most $2\eps n^2/\alpha\le \gamma^2n^2/5$,
recalling \eqref{e2}. Since $R_0$ has area $\gamma^2 n^2$, it follows
that there is some $i>0$ for which $C_i$ has area at least
$I^{-1}\gamma^2n^2/2\ge \gamma^2\alpha n^2/4$.

Recall that in constructing our random configuration $\omega$ as
$\omega=f_{\ALG}(\omega_1,\omega_2,\rX)$, we shift the configuration $\omega_2$
by a random vector $\rX$
uniformly distributed on (the integer points in) $[-5n,5n-1]^2$. Recall also that the open path $P$
in $\omega_2$ starts at $(x,y)\in [0,\beta_1 n]\times[0,\beta_2 n]\subset[0,2n]^2$.
Consider the set $C_i'=C_i+\off_{i-1}-(x,y)$.
Then $C_i'$ has the same area as $C_i$, and certainly lies within $[-5n,5n-1]^2$.
Hence the probability that $\rX$ falls in $C_i'$ is at least
$(\gamma^2\alpha n^2/4)/(100n^2)=\gamma^2\alpha/400$.
But when this happens, the translate $P+X$ of $P$ starts
at a point of $C_i+\off_{i-1}$. Hence we may think of $P+X$ as the path $P_i^*=P_1^*+\off_{i-1}$
in a translate of $P^*$ starting at a point of $C_i$.
From the comments above and the definition of $C_i$ it follows that
$P+X$ lies entirely within $T$, starts in $A$, and ends in $B$.
As noted earlier, the presence of such a path in $\omega_2^{\rX}$
guarantees that the configuration $\omega=f_\ALG(\omega_1,\omega_2,\rX)$ has
the property $J$. Hence the conditional probability that
$\omega$ belongs to $J$ given $\omega_1$ and $\omega_2$ is at least
$\gamma^2\alpha/400$, establishing \eqref{aim} and completing the proof.
\end{proof}

\begin{remark}\label{Rchain}
The key step of the proof above involved selecting an open path $P$ in $\omega_2$,
and then chaining together translates of $P$ to
form a path $P^*$ with the following properties: $P^*$ stays well within the strip
$\strip$, starts well below $S_1$, and ends well above $S_2$, where `well within' means
at least a distance $\gamma n/2$ away from the boundary.
This elbow room ensures that we can translate
$P^*$ through distances of up to $\gamma n/2$ while retaining the properties
of starting below $S_1$, ending above $S_2$, and remaining within $\strip$.
Then we randomly
shifted the whole path $P^*$, and randomly chose one of the links in the chain to focus on,
thinking of this link as being the final random shift $P'$ of the path $P$ in $\omega_2$.
Since it is unlikely that any randomly shifted link starts or ends in the gap $G$, there
is always at least one link that crosses from $A$ to $B$, so the probability
that $P'$ does so is bounded away from zero. In order to construct
a suitable $P^*$, we required that $P$ remain within a fairly small region,
and that its endpoints be significantly further apart vertically than horizontally.

It is just as easy to start from more than one path in $\omega_2$.
Indeed, suppose that there are constants $c>0$ and $C$
such that with probability at least $c$ the configuration
$\omega_2$ contains a set of open paths $P_j$
such that we can chain together at most $C$ paths $P_i^*$, each
of which is a translate of some $P_j$, to form a path $P^*$ with the properties above.
Note that
this will hold (for example) whenever $\omega_2$ contains paths $P_1$ and $P_2$
each of which stays within some not-too-large region, such that the overall
directions of $P_1$ and $P_2$ are significantly different, and each is not too short,
in the sense that the vector from the start to the end is not too short.
Then one can always chain $O(1)$ copies together to produce an approximation to a vertical line.
Taking $\off_{i-1}$ to be the start of the $i$th path in the chain as above,
the proof goes through essentially unaltered, except that at the very end
we set $C_i'=C_i+\off_{i-1}-(x_j,y_j)$ if the $i$th path in our chain is a copy
of $P_j$, where $(x_j,y_j)$ is the starting point of $P_j$.
\end{remark}

\subsection{A rectangle-crossing lemma for hyperlattices}\label{ss_RSWh}

Our aim in this subsection is to prove an analogue of Theorem~\ref{th_RSWZ}
in the context of percolation on plane hyperlattices; we have already
illustrated the main ideas in a simpler context in the previous subsection.

Recall that a plane hyperlattice $(\cH,\cL)$,
originally defined as an embedding of a hypergraph, may also be
defined simply as a cubic map whose faces are properly
coloured black, white and grey, in a way that is invariant under translations
through elements of the lattice $\cL$; see Section~\ref{sec_mod}.
Recall from Section~\ref{sec_cc} that an \emph{independent lattice colouring} $\cC$
associated to $\cH$ is a random black/white-coloured map
obtained from $\cH$ as follows: First subdivide each grey face
into {\em subfaces} in a deterministic manner, keeping the resulting map cubic.
Then recolour these subfaces randomly black and white,
with the colourings inside different grey faces independent, such
that if one grey face is obtained by translating another through an
element of $\cL$, then their colourings have the same distribution.
Such a random colouring $\cC$ gives rise to a hyperlattice percolation model $\cH(\vecp)$:
simply take the \plp\ of a hyperedge $e$ to be the partition of the black
faces around the corresponding grey face $F_e$ induced by the (black part of) the colouring
of $F_e$. We say that $\cC$ \emph{realizes} the model $\cH(\vecp)$.

It turns out that, at one point in the coming argument, we may need to modify
the colouring within certain faces. To enable this, we need
our random colouring to satisfy a certain technical assumption.
Given a colouring $\chi$ of a grey face $F_e$, the \emph{colour components}
of $\chi$ are the maximal connected monochromatic subsets
of $F_e$.
\begin{definition}\label{maldef}
An independent lattice colouring $\cC$ is \emph{malleable} if
two conditions hold. First, within each grey face $F_e$, the all-white colouring
has positive probability. Second, if $\chi$ is a colouring of $F_e$
with positive probability, and $\chi'$ is obtained from $\chi$ by recolouring
a white component black, then $\chi'$ has positive probability.
\end{definition}
Note that, in a malleable colouring, the all-black colouring of a grey
face necessarily has positive probability, so a malleable colouring is non-degenerate
(meaning, as before, that
within each grey face, the all-black and all-white colourings have positive probability).

In general, recolouring as above may change the partition $\pi$ corresponding
to the colouring $\chi$ in many different ways. However, if we restrict the colourings
suitably, the situation becomes simpler. Let us call a part of a partition $\pi$
\emph{non-trivial} if it is not a singleton. A colouring $\chi$ of a grey face
$F_e$ is \emph{minimal} if its black components are in one-to-one correspondence
with the non-trivial parts of $\pi$, and its white components with those
of the dual partition $\pi^*$. Let us say that a part $P$ of $\pi$
is \emph{adjacent} to a part $P'$ of $\pi^*$ if, in the original polygon formulation
of non-crossing partitions, $P$ contains a vertex $v$ and $P'$ an edge $e$
incident to $v$. This corresponds to $P$ containing a black vertex
adjacent to a white dual vertex in $P'$.
If $\chi$ is a minimal colouring associated to a partition $\pi$, then recolouring a white component black
has the effect of uniting all parts of $\pi$ adjacent to some non-trivial part of $\pi^*$.
We call such an operation on a partition a \emph{joining}. In the dual, the operation
is simpler: simply split a non-trivial part into singletons.

Given a hyperedge $e$, by the \emph{top} partition of its vertices we mean
the partition into a single part. The \emph{bottom} partition is that
into singletons. Recall that a probability vector $\vecp$ associated
to a hyperlattice $(\cH,\cL)$ assigns a probability $p_{i,\pi}$ to each
non-crossing partition $\pi$ of the vertices of a hyperedge $e$,
where $i$ encodes which orbit of the action of $\cL$
the hyperedge $e$ belongs to.

A hyperlattice percolation model $\cH(\vecp)$ is \emph{non-degenerate} if $p_{i,\pi}>0$
whenever $\pi$ is a top or bottom partition.

\begin{definition}\label{maldef2}
A hyperlattice percolation model
$\cH(\vecp)$ is \emph{malleable} if it is non-degenerate and, whenever $p_{i,\pi}>0$
and $\pi'$ is obtained from $\pi$ by a joining operation as defined above,
then $p_{i,\pi'}>0$.
\end{definition}
Note that if $\cH$ is 3-uniform, then any non-degenerate model $\cH(\vecp)$
is automatically malleable: any joining operation results in the top
partition. Also, any $\vecp$ assigning positive probability to all top
and bottom partitions but to no other partitions gives a malleable model:
the unique joining operation that may be performed on a bottom partition
yields the corresponding top partition. Thus malleability
holds automatically in the `site percolation' models
considered in the discussion surrounding Theorem~\ref{th2}.

If $\cH(\vecp)$ is self-dual, then malleability
is equivalent to its dual formulation, that if $p_{i,\pi}>0$
and $\pi'$ is obtained from $\pi$ by splitting a part into
singletons, then $p_{i,\pi'}>0$.

The next lemma captures the connection between the notions of malleability
for probability vectors and for lattice colourings.

\begin{lemma}\label{regpos}
Let $\cH(\vecp)$ be a malleable hyperlattice percolation model.
Then $\cH(\vecp)$ may be realized by a malleable independent lattice colouring.
\end{lemma}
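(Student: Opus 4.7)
The plan is to realize $\cC$ as a random \emph{minimal} colouring, with support chosen small enough to be explicit but automatically closed under the white-component recolouring operation of Definition~\ref{maldef}. For each $\cL$-orbit representative $e_i$ among the hyperedges, I would fix the standard subdivision of $F_{e_i}$ from Remark~\ref{single} (cf.\ Figure~\ref{fig_shading6}), transferring it to the rest of the orbit by translation; this subdivision realizes every non-crossing partition of the vertices of $e_i$ by some minimal colouring, in the sense defined just before Definition~\ref{maldef}. Let $\mathcal M_i$ be the set of all minimal colourings of $F_{e_i}$ whose induced partition $\pi$ satisfies $p_{i,\pi}>0$, and for each such $\pi$ let $N_{i,\pi}$ denote the number of elements of $\mathcal M_i$ realizing $\pi$.

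Next I would define $\cC$ by selecting, independently in each grey face of orbit $i$, a colouring $\chi \in \mathcal M_i$, with $\chi$ realizing $\pi$ chosen with probability $p_{i,\pi}/N_{i,\pi}$. The total mass is $\sum_{\pi:\,p_{i,\pi}>0} p_{i,\pi}=1$ as required; $\cC$ is then an independent lattice colouring by construction, and its induced distribution on partitions is exactly $\vecp$, so $\cC$ realizes $\cH(\vecp)$.

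It remains to verify malleability of $\cC$. The all-white colouring is the unique minimal colouring of the bottom partition, which has positive probability by non-degeneracy of $\vecp$, so it lies in $\mathcal M_i$ with positive probability, giving the first condition of Definition~\ref{maldef}. For the second condition, I would establish the closure property: given any $\chi \in \mathcal M_i$ realizing $\pi$ and any white component $W$ of $\chi$, the recoloured colouring $\chi'$ lies in $\mathcal M_i$. Because $\chi$ is minimal, $W$ corresponds to a unique non-trivial part $P^*$ of $\pi^*$; hence $\chi'$ realizes the partition $\pi'$ obtained from $\pi$ by the joining operation associated to $P^*$. By malleability of $\vecp$ (Definition~\ref{maldef2}), $p_{i,\pi'}>0$, so it suffices to check that $\chi'$ is itself a minimal colouring of $\pi'$.

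The main obstacle is precisely this last verification, which must be done using the explicit structure of the standard subdivision. The black components of $\chi'$ are those black components of $\chi$ not adjacent to $W$, together with one new component formed by $W$ and the previously adjacent black components; by the minimality of $\chi$ these should sit in bijection with the non-trivial parts of $\pi'$. Similarly, the white components of $\chi'$ are those of $\chi$ other than $W$, hopefully in bijection with the non-trivial parts of $\pi^*\setminus\{P^*\}$, which are precisely the non-trivial parts of $(\pi')^*$. Once this bookkeeping is carried out, the closure of $\mathcal M_i$ is established, and the malleability of $\cC$ together with the preceding realization step completes the proof.
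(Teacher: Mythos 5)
Your construction --- restrict to minimal colourings of the standard subdivision of Figure~\ref{fig_shading6}, assign each minimal colouring realizing $\pi$ the probability $p_{i,\pi}/N_{i,\pi}$, and verify that recolouring a white component of a minimal colouring yields another minimal colouring whose partition comes from the previous one by a joining --- is exactly the paper's argument, down to the role played by malleability of $\vecp$ in the last step. The verification you flag as ``the main obstacle'' (that $\chi'$ is again minimal) is asserted without detailed proof in the paper as well; your sketch of the bijections between the black/white components of $\chi'$ and the non-trivial parts of $\pi'$ and $(\pi')^*$ is the right way to fill it in, so this is not a genuine departure or a missing idea. The only omission relative to the paper is the trivial handling of hyperedges with $|e|\le 2$, which need no subdivision.
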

\begin{proof}
Regard $\cH$ as a 3-coloured cubic map, as usual.
For each grey face $F_e$ corresponding
to hyperedge $e$ with at least 3 vertices,
subdivide it into subfaces as in Figure~\ref{fig_shading6}. (When $|e|=3$ one can
also use the simpler subdivision shown in Figure~\ref{fig_shading3}.)
It is not hard to check that for any non-crossing partition $\pi$ of the vertices
of $e$, there is at least one black/white colouring of the subfaces
of $F_e$ that gives a \emph{minimal} colouring realizing the partition $\pi$.
If there are $N_{\pi}$ such colourings, assign each probability $p_{i,\pi}/N_{\pi}$,
where $i$ is the probability vector entry corresponding to $e$.
Since $\cH(\vecp)$ is non-degenerate, the all-black and all-white colourings (which
are minimal) receive positive probability.
Also, if $\chi$ is any colouring of $F_e$ receiving positive probability,
then $p_{i,\pi}>0$ for the corresponding $\pi$. If $\chi'$ is obtained
from $\chi$ by recolouring a white component to black, then $\chi'$ 
is minimal, and corresponds to a partition $\pi'$ obtained from $\pi$
by a joining operation. Since $\cH(\vecp)$ is malleable, we have $p_{i,\pi'}>0$,
so $\chi'$ has positive probability.

For a hyperedge $e$ with $|e|=2$, there is no need to subdivide $F_e$ at all; simply
colour $F_e$ black or white with the appropriate (positive) probabilities.
Finally, if $|e|=1$ then the colouring of $F_e$ is irrelevant, so we may colour
$F_e$ black with probability $1/2$ and white otherwise.
\end{proof}

Of course, a similar but simpler argument shows that any non-degenerate hyperlattice percolation
model may be realized by a non-degenerate independent lattice colouring.

As noted in Section~\ref{sec_cc}, a non-degenerate independent lattice colouring $\cC$
corresponds to a product probability measure on a power of a certain
poset in a natural way; furthermore, the non-degeneracy condition
ensures that Lemma~\ref{ourHarris2} applies both to this poset and to its
reverse. Events defined by the existence of black paths with certain
properties are upsets; events defined by the existence of white
paths are downsets. 

For the rest of this section we fix a malleable
independent lattice colouring $\cC$.
Recall that a black horizontal crossing of a rectangle $R$ is a piecewise-linear
path $P$ in the plane joining a point on the left-hand side of $R$ to a point on the right
and otherwise lying in the interior of $R$, such that every point of $P$
is black. White vertical crossings are defined similarly, and so on.
In our product probability space, events such as $\Hb(R)$ are increasing.

We shall use the following lemma, which applies to all
black/white colourings of cubic planar maps, i.e., involves no randomness.

\begin{lemma}\label{oneway_I}                                                                       
Let $R$ be a rectangle in $\RR^2$ in general position with respect
to a colouring $\cC$.
Then precisely one of the events $\Hb(R)$ and $\Vw(R)$ holds.  
\end{lemma}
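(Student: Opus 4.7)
The statement is purely deterministic: for each fixed outcome of $\cC$ it is a topological dichotomy about the rectangle $R$, with no randomness involved. My plan is to reduce it immediately to Lemma~\ref{oneway}. Once the grey faces of $\cH$ have been subdivided as in Section~\ref{sec_cc} and each subface is assigned a colour, $\cC$ becomes a well-behaved 2-coloured cubic planar map in precisely the sense of that earlier lemma: piecewise linearity of the edges holds by our standing assumption on $\cH$, local finiteness follows from Lemma~\ref{reg}, and every face is bounded. Lemma~\ref{oneway} then applies verbatim and yields the conclusion.

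For self-containedness I would also include a brief sketch of the underlying topology, following the standard argument in~\cite[Ch.~8, Lemma~12]{BRbook}. For \emph{existence}, let $K$ be the union of the left-hand side of $R$ with all $\cC$-black points of $R$ reachable from that side by a black path inside $R$. Either $K$ meets the right-hand side, in which case $\Hb(R)$ holds, or the frontier of $K$ inside $R$ traces a white path from the top side of $R$ to the bottom side: at each edge of the cubic map lying on this frontier the opposite side is necessarily white, and general position rules out degenerate meetings at corners of $R$ or at vertices of the map. For \emph{uniqueness}, a black horizontal and a white vertical crossing inside $R$ would have to intersect by a Jordan-curve argument, but no point can lie in the open interior of both a black and a white subface, and the cubic, properly 2-coloured structure rules out any genuine transverse crossing occurring along a shared boundary edge.

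The only real `obstacle' is bookkeeping around the convention that boundary points inherit the colour of the adjacent face, so that some points are simultaneously black \emph{and} white. This is precisely what the general-position hypothesis takes care of: it forbids edges of the map from passing through corners of $R$ and vertices of the map from lying on the sides of $R$, so that both the construction of the white vertical crossing and the non-coexistence argument go through cleanly.
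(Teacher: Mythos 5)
Your proposal is correct and takes essentially the same approach as the paper: the paper's own proof is exactly the interface-tracing argument from~\cite[Ch.~8, Lemma~12]{BRbook} (illustrated in Figure~\ref{fig_ow}), and your observation that Lemma~\ref{oneway} already covers $\cC$ once it is viewed as a well-behaved 2-coloured cubic map is a clean way to say the same thing. The additional topological sketch you give (frontier of the black-reachable set, Jordan-curve uniqueness) is the standard argument underlying both lemmas.
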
                                                                                      
\begin{proof}                                                                             
As in~\cite[Ch. 8, Lemma 12]{BRbook}, from where Figure~\ref{fig_ow} is adapted,
re-colour the points outside $R$ as              
in Figure~\ref{fig_ow}, and consider the interfaces between black and white regions.
\begin{figure}[htb]                                                                              
\[  \epsfig{file=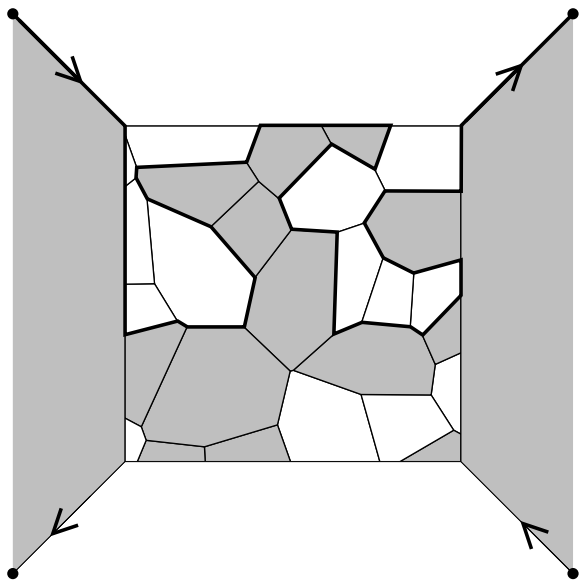,width=3in}  \]                                             
 \caption{The shading inside the (square) rectangle $R$ is from $\cC$. The event $\Hb(R)$
 holds if and only if the outer black regions are joined by a black path,
 and $\Vw(R)$ holds if and only if the outer white regions are joined by a white path.
Tracing the interface between black and white regions shows that one of these events must hold.}
 \label{fig_ow}
\end{figure} 
\end{proof}     

We shall need the equivalent of highest and lowest open crossings; these
are the highest and lowest \emph{black} crossings of a rectangle, defined
below. The definitions require a little care, as the independence properties
are not quite what one would like.

Given a rectangle $R$ and our colouring $\cC$,
shade the outside of the rectangle
as in Figure~\ref{fig_ow}. Let $I^+$ be the \emph{interface} shown by the thick black line,
starting at the top left corner. Thus if $\Hb(R)$ holds, then $I^+$ leaves
the rectangle $R$ from the top
right corner. Let $I^+_0$ be the minimal subpath of $I^+$ meeting both vertical sides of $R$.
In the figure, points just to the right of $I^+$ are black. Hence, points just to the right of $I^+_0$
give a black path within $R$ joining the left to the right. We call this path the \emph{highest
black horizontal crossing} of $R$, and denote it $UH(R)$.
Similarly, again assuming $\Hb(R)$ holds, the interface $I^-$ starting at the bottom right corner
leaves at the bottom left; we define $I^-_0$ to be the minimal subpath joining the vertical
sides of $R$; the points to the right of $I^-_0$ form the 
\emph{lowest black horizontal crossing} of $R$, written $LH(R)$. In fact,
we can usually work directly with the interfaces $I_0^-$ and $I_0^+$.

As before, a rectangle $R$
is \emph{large} if all its sides have length at least $100d_0$,
where $d_0$ is the constant from Lemma~\ref{reg}.

Given an angle $\theta$, a $\theta$-aligned rectangle $R'$,
and `length scales' $\xx$ and $\yy$, let $\Ed(R',\xx,\yy,\alpha,\eta)$
be the event that $R'$ contains a black path $P$ such that,
after rotating so that $R'$ is horizontal,
the endpoints $(x,y)$ and $(x',y')$
of $P$ satisfy $(y'-y)/\yy \ge \alpha$ and $|x'-x|/\xx \le (1-\eta) (y'-y)/\yy$.

Our aim now is to adapt the proof of Theorem~\ref{th_RSWZ} to prove the following result.

\begin{theorem}\label{th_RSW1}
Let $\cC$ be a malleable independent lattice colouring. Given constants
$\alpha,\beta_1\le 1/3$, $\beta_2\le 2$, $\eta>3\beta_1$, and $c_1$,
$c_2> 0$, there exists a constant $c>0$ such that the following holds.
Suppose that $R$ is a large rectangle with \ww\ $\xx$ and \hh\ $\yy$
and any orientation, and $R'$ is a large rectangle
with the same orientation, \ww\ $\beta_1\xx$, and \hh\ $\beta_2\yy$.
If $\Pr(\Hb(R))\ge c_1$, $\Pr(\Vb(R))\ge c_1$, and $\Pr(\Ed(R',\xx,\yy,\alpha,\eta))\ge c_2$,
then $\Pr(\Vb(R''))\ge c$
for any rectangle $R''$ with the same orientation as $R$, \ww\ $\xx$ and \hh\ $100\yy$.
\end{theorem}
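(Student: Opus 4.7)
The argument will mirror the $\Z^2$ warm-up of Subsection~\ref{ss_z2}, with three classes of modifications: integer translations become translations by vectors in the lattice $\cL$ of symmetries; classical Harris is replaced by Lemma~\ref{ourHarris2}; and the exploration algorithm works one grey face at a time, with the final gluing step relying on malleability.

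First I would establish the hyperlattice analogues of Lemmas~\ref{l1e}, \ref{lJ}, and \ref{smallgap}. These are bookkeeping lemmas about overlapping rectangles and pigeonholing, and they go through using the hypotheses $\Pr(\Hb(R)),\Pr(\Vb(R))\ge c_1$ in place of Lemma~\ref{Z1/2}, and Lemma~\ref{ourHarris2} in place of classical Harris (applied to events defined by black paths, which are upsets in the poset of lattice colourings). The conclusion is that either $\Pr(\Vb(R''))\ge c$ for some constant $c>0$, and we are done, or there exist large rectangles $R_1,R_2$, each congruent to $R$, with $R_2$ obtained by shifting $R_1$ perpendicular to its horizontal sides by distance $\eps\yy/10$ for a small $\eps>0$ to be chosen, and $\Pr(G_\eps(R_1,R_2))\ge c_3$ for some constant $c_3>0$.

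Next, I would define the highest and lowest black horizontal crossings $UH(R_2),LH(R_1)$ via the interface construction of Lemma~\ref{oneway_I}, and an algorithm $\ALG$ that finds them by exploring the colouring one grey face at a time (whenever the algorithm needs to query any subface of a grey face $F_e$, it reveals the entire colouring of $F_e$). Since the colourings of distinct grey faces are independent, this keeps the unrevealed region a union of whole grey faces, on which an independent copy of $\cC$ may be freely substituted. Form $\omega=f_\ALG(\omega_1,\omega_2,\rX)$ with $\omega_1,\omega_2$ independent $\cC$-samples and $\rX$ uniform over $\cL\cap[-K\xx,K\xx]\times[-K\yy,K\yy]$ for a suitable constant $K$; by the $\cL$-invariance of $\cC$, the combined $\omega$ has the same distribution as $\cC$. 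Condition on the product event $\omega_1\in G_\eps(R_1,R_2)$ and $\omega_2\in\Ed(R',\xx,\yy,\alpha,\eta)$, which has probability at least $c_3c_2$. Pick a witness path $P\subset R'$ in $\omega_2$ with endpoint displacement vector $v$, approximate $v$ by a lattice vector $\tilde v\in\cL$ to within $O(d_0)$, and chain together $I=O(1/\alpha)$ translates of $P$ by $\tilde v$ to form a virtual chain $P^*$. The numerical hypotheses $\alpha,\beta_1\le 1/3$, $\beta_2\le 2$ and $\eta>3\beta_1$ are exactly what is needed --- as in the $\Z^2$ proof --- to ensure that for any starting shift $v_0$ lying in a target square $R_0$ of side $\gamma\yy$ placed just below $R_1$ (for suitable $\gamma$ with $\eta\ge 3\beta_1+3\gamma$ and with $\eps<\gamma^2\alpha/10$), the translated chain $P^*+v_0$ stays within the strip $\strip$ between the vertical sides of $R_1\cup R_2$, starts below $R_1$, and ends above $R_2$. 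The area-counting of Theorem~\ref{th_RSWZ} then yields an index $i$ and a set of starting shifts in $R_0$ of area $\Omega(\gamma^2\alpha\,\xx\yy)$ for which the $i$th link of $P^*+v_0$ crosses from below $P_1$ to above $P_2$, so $\rX$ produces such a crossing link with probability $\Omega(\gamma^2\alpha)$.

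The main obstacle, and the reason malleability is needed, is the final gluing step: one must verify that the crossing link of $P+\rX$ actually realizes a $\cC$-black connection between $P_1$ and $P_2$ in the spliced $\omega$. Inside grey faces lying entirely in the gap, $\omega$ agrees with $\omega_2^{\rX}$, so the interior segment of the link is automatically black; the difficulty is at the two ends, where the link meets grey faces already revealed by $\ALG$ and whose colouring is inherited from $\omega_1$ rather than $\omega_2^{\rX}$. Malleability (Definitions~\ref{maldef} and \ref{maldef2}) is designed precisely for this: given the partition revealed for such a boundary grey face, one may flip certain white components to black --- equivalently, apply a joining to the underlying partition --- and the resulting colouring still has positive probability, with density ratio bounded in terms of $\cC$ alone. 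By conditioning on the revealed data and resampling just these boundary faces, one gets the required black contact between the middle of the link and each of $P_1,P_2$ with conditional probability bounded below by a constant depending only on $\cC$. Combining this with the $\Omega(\gamma^2\alpha)$ lower bound above gives $\Pr(J(R_1,R_2))\ge c'>0$, and the hyperlattice analogue of Lemma~\ref{lJ} established in Step~1 then yields $\Pr(\Vb(R''))\ge c$, as required.
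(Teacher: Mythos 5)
Your overall strategy matches the paper's: the three bookkeeping lemmas (analogues of Lemmas~\ref{l1e}, \ref{lJ}, \ref{smallgap}), the splice construction $\omega = f_\ALG(\omega_1,\omega_2,\rX)$, the virtual chain $P^*$ with lattice-vector approximation, and the area-counting to show that a random link of the chain crosses the gap with probability $\Omega(\gamma^2\alpha)$. All of that is sound and follows the paper. The constant constraints $\alpha,\beta_1\le 1/3$, $\beta_2\le 2$, $\eta>3\beta_1$ and the auxiliary choices $\gamma,\eps$ play exactly the role you describe.

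The genuine gap is in your gluing step. You write that one handles the two boundary grey faces $f_1,f_2$ (which the crossing link touches at its ends, and which already have their colour inherited from $\omega_1$) ``by conditioning on the revealed data and resampling just these boundary faces,'' concluding that the required black contact holds ``with conditional probability bounded below by a constant.'' But by construction the algorithm $\ALG$ has \emph{already examined} $f_1$ and $f_2$ to locate the interfaces $I_1,I_2$, so in the splice $\omega=f_\ALG(\omega_1,\omega_2,\rX)$ their states are read off deterministically from $\omega_1$; there is no residual randomness to resample, and any resampling would in general be inconsistent with the interfaces you have already committed to. The paper instead performs a \emph{deterministic} recolouring: it replaces $\omega_1$ by $\omega_1'$, obtained by turning $(f_1\cup f_2)\cap G$ black, and observes two things you omit. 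First, this recolouring does \emph{not} move the interfaces $I_1,I_2$ --- the gap side of each interface is already locally black, so making it ``more black'' cannot change the interface --- which is what guarantees that $\ALG$ explores the same set of faces on $\omega_1'$ as on $\omega_1$, so the spliced configuration $f_\ALG(\omega_1',\omega_2,\rX)$ really does lie in $J$. Second, the argument that turns this into a probability bound is not a conditional-probability estimate but an injectivity/change-of-measure argument: the map $g\colon(\omega_1,\omega_2,\ell)\mapsto(\omega_1',\omega_2,\ell)$ has fibres of bounded size (since $f_1,f_2$ can be recovered from the image together with $P$ and $\ell$), each of whose probabilities exceed that of the image by at most $p_0^{-2}$, whence $\Pr(g(E))\ge \Pr(E)/C$. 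Malleability is used exactly to justify that $\omega_1'$ is a legal configuration. Without the interface-preservation observation and the explicit counting of preimages, the bound you claim does not follow, and the ``conditional probability'' framing is not a valid substitute.
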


Our proof of Theorem~\ref{th_RSW1} will follow that of Theorem~\ref{th_RSWZ} in
the previous subsection,
rescaling to map the square $S_1$ considered there onto the rectangle $R$.
Unfortunately, there are various additional complications; for example, we cannot assume
that congruent rectangles with the same orientation have the same crossing
probabilities. These complications can be dealt with using Corollary~\ref{c_trans_lower}.
There will also be some other difficulties.

\begin{remark}\label{Rsym}
As noted in Section~\ref{sec_cc}, although the probabilities of
events such as the existence of various crossings of a rectangle
$R$ will depend very much on the orientation of $R$, and to
a lesser extent on its position, all the lower bounds we shall prove
will depend on $\cC$ only via three quantities: the quantity
$d_0$ appearing in Lemma~\ref{reg} (which provides
an upper bound on the diameter of a face), the maximum
number $N$ of faces meeting any disk of radius $1$, and
the minimum probability $p_0$ of a configuration within a face.
These three quantities are preserved by rotations and translations,
so whenever we consider a single rectangle $R$, we may rotate
and translate $R$ \emph{and $\cC$} together so
that $R$ has the form $[0,\xx]\times [0,\yy]$.
\end{remark}

Let us write $h(R)$ for $\Pr(\Hb(R))$ and $v(R)$ for $\Pr(\Vb(R))$.
Also, we write $h(\xx,\yy)$ for the probability that $[0,\xx]\times [0,\yy]$ has a black horizontal
crossing, and $v(\xx,\yy)$ for the probability that it has a black vertical crossing.
Recall that by Corollary~\ref{c_trans_lower} there is an increasing function
$\psi:(0,1)\to(0,1)$ such that if $R$ and $R'$ are large rectangles with the same orientation,
width and height, then $h(R')\ge \psi(h(R))$ and $v(R')\ge \psi(v(R))$.

Recall that the colouring $\cC$ we are considering is malleable and hence non-degenerate,
meaning that within any grey face, the all-black and all-white colourings have positive
probabilities. As noted earlier, non-degeneracy
allows us to apply our Harris-type lemma, Lemma~\ref{ourHarris2}, to two black-increasing
events, or two white-increasing events.
Throughout the proof of Theorem~\ref{th_RSW1} we write $F$ for
the function whose existence is guaranteed
by Lemma~\ref{ourHarris2}, so for any two black-increasing events $A$ and $B$
we have
\begin{equation}\label{lH}
 \Pr(A\cap B)\ge F(\Pr(A),\Pr(B)).
\end{equation}

The first step in our proof of Theorem~\ref{th_RSW1} is the analogue of Lemma~\ref{l1e};
there is an additional assumption (that $h(\xx,\yy)$ and $v(\xx,\yy)$ are at least $c_1$)
since we do \emph{not} have the analogue of Lemma~\ref{Z1/2} in this context.
\begin{lemma}\label{Cextend}
Let $\cC$ be a non-degenerate independent lattice colouring, and
let $\eps>0$, $c_1>0$ and $c'>0$. There is a $c>0$ such that
for any $\xx,\yy\ge 100d_0$, if $h(\xx,\yy)\ge c_1$, $v(\xx,\yy)\ge c_1$, and
$v(\xx,(1+\eps)\yy)\ge c'$, then $v(\xx,100\yy)\ge c$.
\end{lemma}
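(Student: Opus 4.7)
The plan is to mirror the proof of Lemma~\ref{l1e}, combining three overlapping rectangles via a Harris-type inequality, with three adaptations: Corollary~\ref{c_trans_lower} (providing the function $\psi$) replaces direct translational invariance of crossing probabilities, Lemma~\ref{ourHarris2} (providing $F$) replaces the classical Harris inequality, and the hypothesis $h(\xx,\yy)\ge c_1$ plays the role that reflectional symmetry of $\Z^2$ (via Lemma~\ref{Z1/2}) played in the model case.

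By Remark~\ref{Rsym} I may rotate $\cC$ so that the rectangles are axis-aligned. For $k\ge 1$, set
\[
 R_1^{(k)}=[0,\xx]\times[0,(1+k\eps)\yy],\quad R_2^{(k)}=[0,\xx]\times[k\eps\yy,(1+k\eps)\yy],
\]
\[
 R_3^{(k)}=[0,\xx]\times[k\eps\yy,(1+(k+1)\eps)\yy].
\]
Then $R_2^{(k)}$ is a translate of the canonical $\xx$-by-$\yy$ rectangle, $R_3^{(k)}$ is a translate of the canonical $\xx$-by-$(1+\eps)\yy$ rectangle, and $R_1^{(k)}\cup R_3^{(k)}=R_1^{(k+1)}$. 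All three are large since $\xx,\yy\ge 100d_0$.

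The inductive step: on the event $\Vb(R_1^{(k)})\cap\Hb(R_2^{(k)})\cap\Vb(R_3^{(k)})$ the two vertical black crossings both traverse $R_2^{(k)}$ from bottom to top, so the horizontal black crossing of $R_2^{(k)}$ must meet both; the three paths therefore concatenate into a black vertical crossing of $R_1^{(k+1)}$. By Corollary~\ref{c_trans_lower}, $\Pr(\Hb(R_2^{(k)}))\ge\psi(h(\xx,\yy))\ge\psi(c_1)$ and $\Pr(\Vb(R_3^{(k)}))\ge\psi(v(\xx,(1+\eps)\yy))\ge\psi(c')$. Writing $v_k:=v(\xx,(1+k\eps)\yy)$ and applying Lemma~\ref{ourHarris2} twice to the increasing events $\Vb(R_1^{(k)})$, $\Hb(R_2^{(k)})$ and $\Vb(R_3^{(k)})$ (they are upsets in the product poset of face colourings, and Lemma~\ref{ourHarris2} applies since $\cC$ is non-degenerate), I obtain
\[
 v_{k+1}\ge F\bb{F(v_k,\psi(c_1)),\psi(c')}.
\]
Defining $u_1=c'$ and $u_{k+1}=F(F(u_k,\psi(c_1)),\psi(c'))$, induction gives $v_k\ge u_k>0$ for every $k\ge 1$, because $F$ takes positive values on $(0,1]^2$.

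Finally, I take $K=\ceil{99/\eps}$, so $(1+K\eps)\yy\ge 100\yy$; since a vertical crossing of a taller rectangle of the same width is a vertical crossing of any shorter one at its base, $v(\xx,100\yy)\ge v_K\ge u_K$. The constant $c:=u_K$ depends only on $\eps$, $c_1$, $c'$ and the colouring $\cC$ (through $F$ and $\psi$), not on $\xx$ or $\yy$, completing the proof. There is no real conceptual obstacle here — the work is entirely bookkeeping — and the only point to double-check is that $u_k$ need merely remain positive through a bounded number ($K$) of iterations rather than be uniformly bounded below, which is immediate from the positivity of $F$ on positive inputs.
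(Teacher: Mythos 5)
Your proof is correct and follows essentially the same route as the paper's: the same three overlapping rectangles, Corollary~\ref{c_trans_lower} to handle translates, and two applications of Lemma~\ref{ourHarris2} in each inductive step, yielding the recursion $v_{k+1}\ge F\bb{F(v_k,\psi(c_1)),\psi(c')}$. The only cosmetic difference is the base case — the paper starts the induction at $i=0$ with $\alpha_0=c_1$ (using $v(\xx,\yy)\ge c_1$) whereas you start at $k=1$ with $u_1=c'$ (using $v(\xx,(1+\eps)\yy)\ge c'$); both are fine and lead to the same constant up to one extra iteration.
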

\begin{proof}
Fix $\eps$, $c_1$ and $c'>0$.
Let $\alpha_0=c_1$; for $i\ge 0$ let
\[
 \alpha_{i+1}=F\bb{ F(\alpha_i,\psi(c_1))\ ,\psi(c') },
\]
where $\psi$ is the function appearing in Corollary~\ref{c_trans_lower}, 
and set $c=\alpha_{\ceil{99/\eps}}$. Note that $c>0$.

Suppose that $\xx,\yy\ge 100d_0$, $h(\xx,\yy)\ge c_1$, $v(\xx,\yy)\ge c_1$, and $v(\xx,(1+\eps)\yy)\ge c'$.
We claim that for every $i\ge 0$ we have
\begin{equation}\label{cl1}
 v(\xx,(1+i\eps)\yy)\ge \alpha_i.
\end{equation}
Setting $i=\ceil{99/\eps}$, we then have $1+i\eps\ge 100$,
so $v(\xx,100\yy)\ge v(\xx,(1+i\eps)\yy)\ge c$,
and the result follows.

We prove \eqref{cl1} by induction. For $i=0$ it is true by assumption. 
Turning to the induction step, suppose that
\eqref{cl1} holds for some $i$, and consider
the rectangles $R_1=[0,\xx]\times[0,(1+i\eps)\yy]$,
$R_2=[0,\xx]\times [i\eps\yy,(1+i\eps)\yy]$, and $R_3=[0,\xx]\times[i\eps\yy,1+(i+1)\eps\yy]$,
as in Figure~\ref{fig_R123}.
Let $R_2'=[0,\xx]\times[0,\yy]$ and $R_3'=[0,\xx]\times [0,(1+\eps)\yy]$,
so $R_2'$ and $R_3'$ are images of $R_2$ and $R_3$ under appropriate translations.
\begin{figure}[htb]
 \centering
 \input{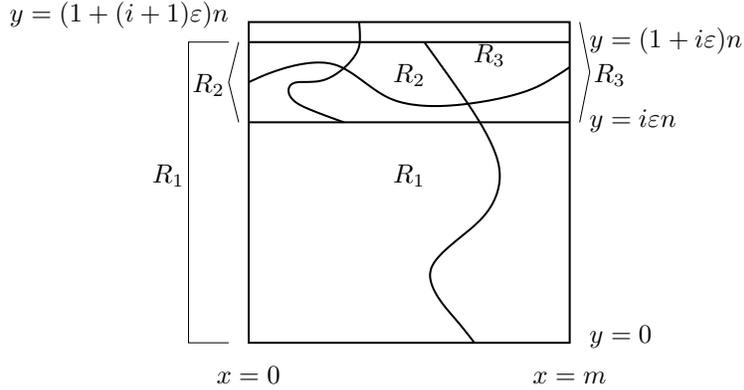}
 \caption{The rectangles $R_1$ and $R_3$, with their intersection $R_2$.
Whenever $R_1$ and $R_3$ have black vertical crossings and $R_2$ has a black
horizontal crossing, then these crossings can be combined to form
a black vertical crossing of $R_1\cup R_3$.}\label{fig_R123}
\end{figure}

By the induction hypothesis we have $\Pr(\Vb(R_1))=v(\xx,(1+i\eps)\yy)\ge \alpha_i$
and by assumption we have $\Pr(\Hb(R_2'))=h(\xx,\yy)\ge c_1$. Hence, by Corollary~\ref{c_trans_lower},
we have $\Pr(\Hb(R_2))\ge \psi(c_1)$.
Similarly, by assumption $\Pr(\Vb(R_3'))=v(\xx,(1+\eps)\yy)\ge c'$, so
$\Pr(\Vb(R_3))\ge \psi(c')$.

Since $\Vb(R_1)$ and $\Hb(R_2)$ are increasing events,
from \eqref{lH} it follows that $\Pr(\Vb(R_1)\cap \Hb(R_2))\ge F(\alpha_i,\psi(c_1))$.
Applying \eqref{lH} to the increasing events $\Vb(R_1)\cap \Hb(R_2)$ and $\Vb(R_3)$,
it follows that with probability at least $\alpha_{i+1}$,
the events $\Vb(R_1)$, $\Hb(R_2)$ and $\Vb(R_3)$ all hold. Choosing black paths $P_1$, $P_2$
and $P_3$ witnessing these events, $P_2$ meets both $P_1$ and $P_3$, and
it follows that $\Vb(R_1\cup R_3)$ holds.
Hence
\[
 v(\xx,(1+(i+1)\eps)\yy) = \Pr(\Vb(R_1\cup R_3)) \ge \alpha_{i+1},
\]
proving the induction step and so
completing the proof of \eqref{cl1}.
\end{proof}

The next step is the analogue of Lemma~\ref{lJ}; this concerns
crossings of nearby squares (now rectangles) that are joined.
As in the previous lemma,
we work with rectangles, not squares, and add the assumptions that
$h(\xx,\yy)$ and $v(\xx,\yy)$ are at least some
constant $c_1$ rather than appeal to Lemma~\ref{Z1/2}.
The definition of the event $J$ is as before, \emph{mutatis mutandis}: specifically,
wherever $n$ appears in the definition, it is replaced by $\xx$ or $\yy$ 
depending on whether we are considering an $x$-coordinate or a $y$-coordinate.

\begin{lemma}\label{lJ2}
Let $\cC$ be a non-degenerate independent lattice colouring.
For any $\eps>0$, $c_1>0$ and $c'>0$, there is a $c>0$ such that
for any $\xx,\yy\ge 100d_0$ with $h(\xx,\yy)$, $v(\xx,\yy)\ge c_1$,
if there exist $\xx$-by-$\yy$ rectangles $R_1$ and $R_2$, with $R_2$
obtained by translating $R_1$ upwards by a distance of $\eps\yy$,
for which $\Pr(J(R_1,R_2))\ge c'$, then $v(\xx,100\yy)\ge c$.
\end{lemma}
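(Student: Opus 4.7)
The plan is to follow the proof of Lemma~\ref{lJ} almost verbatim, substituting the generalized Harris inequality (Lemma~\ref{ourHarris2}) for the classical one and using Corollary~\ref{c_trans_lower} to transport crossing probabilities between $R_1$, $R_2$, and the canonically-placed $\xx$-by-$\yy$ rectangle. Since $R_1$ and $R_2$ are translates of each other (and of a canonical rectangle with crossing probability $v(\xx,\yy)\ge c_1$), Corollary~\ref{c_trans_lower} gives $\Pr(\Vb(R_1)),\Pr(\Vb(R_2))\ge\psi(c_1)$. The events $J(R_1,R_2)$, $\Vb(R_1)$, and $\Vb(R_2)$ are all black-increasing (upsets) in the product poset associated to $\cC$; non-degeneracy of $\cC$ is what lets us apply Lemma~\ref{ourHarris2}. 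Two successive applications yield
\[
 \Pr\bb{J(R_1,R_2)\cap\Vb(R_1)\cap\Vb(R_2)} \ge c'' := F\bb{F(\psi(c_1),\psi(c_1))\,,\,c'} > 0,
\]
where $F$ is the Harris-type function of Lemma~\ref{ourHarris2}.

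Next I would verify the geometric step: whenever $J(R_1,R_2)\cap\Vb(R_1)\cap\Vb(R_2)$ holds, the rectangle $R_1\cup R_2$ has a black vertical crossing. Pick a black horizontal crossing $P_i$ of $R_i$ witnessing $J$, together with a path $P$ in the strip $\strip(R_1,R_2)$ joining $P_1$ to $P_2$, and a black vertical crossing $Q_i$ of $R_i$. Since $P_i$ and $Q_i$ both lie in $R_i$ and cross it in perpendicular directions, they must meet; concatenating the lower portion of $Q_1$ (up to $P_1$), then $P_1$, then $P$, then $P_2$, then the upper portion of $Q_2$, yields a black path crossing $R_1\cup R_2$ from bottom to top. (This is exactly the argument illustrated in the left half of Figure~\ref{fig_J1J2}, which does not use that $R_1$, $R_2$ are squares.) So $\Pr(\Vb(R_1\cup R_2))\ge c''$.

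Finally, $R_1\cup R_2$ has the same orientation as the canonical rectangle and dimensions $\xx\times(1+\eps)\yy$, so a further application of Corollary~\ref{c_trans_lower} gives $v(\xx,(1+\eps)\yy)\ge\psi(c'')$. Combined with the hypothesized lower bounds $h(\xx,\yy),v(\xx,\yy)\ge c_1$, this puts us in the hypothesis of Lemma~\ref{Cextend} (with $\psi(c'')$ in place of $c'$), which produces the desired constant $c=c(\eps,c_1,c',\cC)>0$ such that $v(\xx,100\yy)\ge c$. The argument is essentially mechanical; there is no serious obstacle, only the bookkeeping of a finite composition of the functions $F$ and $\psi$, none of which depend on $\xx$ or $\yy$, so the final constant is uniform in the dimensions, as required.
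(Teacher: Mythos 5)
Your proof is correct and follows essentially the same route the paper intends: the paper's proof of Lemma~\ref{lJ2} is a one-line instruction to repeat the argument of Lemma~\ref{lJ} with Lemma~\ref{ourHarris2} (via~\eqref{lH}) replacing Harris's Lemma and Corollary~\ref{c_trans_lower} supplying the lower bound $\psi(c_1)$ on the crossing probabilities of $R_1$ and $R_2$, and you have carried out exactly that substitution, including the final appeal to Lemma~\ref{Cextend}. The only cosmetic difference is the order in which you nest the two applications of $F$, which is immaterial.
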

\begin{proof}
Modify the proof of Lemma~\ref{lJ} as above: replace each application of Harris's Lemma
by an appeal to \eqref{lH}, and use the fact that, under our assumptions,
any $\xx$-by-$\yy$ rectangle $R$ has $h(R)$, $v(R)\ge \psi(c_1)>0$
by Corollary~\ref{c_trans_lower}.
\end{proof}

Next comes the analogue of Lemma~\ref{smallgap}. For two 
$\xx$-by-$\yy$ rectangles $R_1$ and $R_2$, with $R_2$ obtained by translating
$R_1$ upwards by a distance of $\eps\yy/10$, define the strip $\strip=\strip(R_1,R_2)$
to be the region between the vertical lines containing
the vertical sides of $R_1$ and $R_2$, and let $G_\eps(R_1,R_2)$ be the event
that $\Hb(R_1)$ and $\Hb(R_2)$ hold, the path $P_1=LH(R_1)$ is below $P_2=UH(R_2)$ in $\strip$,
and the area of $\strip$ between $P_1$ and $P_2$ is at most $\eps\xx\yy$.

\begin{lemma}\label{smallgap2}
Let $\cC$ be a non-degenerate independent lattice colouring.
For any $\eps>0$ and $c_1>0$ there are constants $c_3>0$ and $c>0$ such that
for any $\xx,\yy\ge 100d_0$ with $h(\xx,\yy)$, $v(\xx,\yy)\ge c_1$,
either there exist $\xx$-by-$\yy$ rectangles $R_1$ and $R_2$
as above with $\Pr(G_\eps(R_1,R_2))\ge c_3$, or $v(\xx,100\yy)>c$.
\end{lemma}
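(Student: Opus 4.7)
The plan is to mimic the proof of Lemma~\ref{smallgap}, replacing Harris's Lemma by the inequality \eqref{lH} from Lemma~\ref{ourHarris2}, Lemma~\ref{Z1/2} by the assumption $h(\xx,\yy),v(\xx,\yy)\ge c_1$ together with Corollary~\ref{c_trans_lower}, and Lemma~\ref{lJ} by Lemma~\ref{lJ2}. The geometric part of the argument carries over essentially verbatim once one replaces $n$ by $\xx$ for horizontal distances and by $\yy$ for vertical ones.

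Fix $\eps>0$, $c_1>0$ and set $N=2\ceil{2/\eps}$. By translation we may take $R_i=[0,\xx]\times[i\eps\yy/10,\yy+i\eps\yy/10]$ for $0\le i\le N$. Each $R_i$ is congruent to $R_0=[0,\xx]\times[0,\yy]$ with the same orientation, so Corollary~\ref{c_trans_lower} gives $\Pr(\Hb(R_i))\ge \psi(c_1)$ for every $i$. Since $\Hb(R_i)$ is an upset in our product probability space and $\cC$ is non-degenerate, iterating \eqref{lH} yields a constant $c'=c'(c_1,N)>0$ with
\[
\Pr(H)=\Pr\cb{\bigcap_{i=0}^{N}\Hb(R_i)}\ge c'.
\]
Set $c_3=c'/(2N)$. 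If some pair $R_i,R_{i+1}$ satisfies $\Pr(J(R_i,R_{i+1}))\ge c_3$, then Lemma~\ref{lJ2} (applied with $\eps/10$ in place of $\eps$) produces the constant $c$ in the conclusion $v(\xx,100\yy)\ge c$, and we are done. Otherwise, writing $J=\bigcup_{i=0}^{N-1}J(R_i,R_{i+1})$, the union bound gives $\Pr(J)\le Nc_3=c'/2$, so $\Pr(H\setminus J)\ge c'/2$.

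Now I would show that on $H\setminus J$ the event $F=\bigcup_{i=0}^{N-1}G_\eps(R_i,R_{i+1})$ holds. When $H$ holds, let $P_i^-=LH(R_i)$ and $P_i^+=UH(R_i)$ be the lowest and highest black horizontal crossings of $R_i$; these exist by the interface construction described before Theorem~\ref{th_RSW1}. As in the proof of Lemma~\ref{smallgap}, the failure of $J(R_i,R_{i+1})$ forces $P_i^+$ to lie strictly below $P_{i+1}^-$ inside the strip $\strip(R_i,R_{i+1})$, hence also $P_i^-$ strictly below $P_{i+1}^-$. Consequently the regions $A_i$ enclosed between $P_i^-$ and $P_{i+2}^-$ (for $0\le i\le N-2$) are pairwise disjoint and all contained in a rectangle of area at most $2\xx\yy$. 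Since there are $\ge 2/\eps$ such regions, some $A_i$ has area at most $\eps\xx\yy$, and this implies $G_\eps(R_i,R_{i+1})$. Averaging then yields some $i$ with $\Pr(G_\eps(R_i,R_{i+1}))\ge \Pr(H\setminus J)/N\ge c'/(2N)=c_3$, completing the proof with this value of $c_3$ and the $c$ from Lemma~\ref{lJ2}.

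The step I expect to require the most care is the geometric claim that on $H\setminus J$ the lowest crossings $P_i^-$ are strictly stacked; one must observe, as in the remark preceding Lemma~\ref{lJ}, that if any horizontal crossing of $R_i$ lies weakly above any horizontal crossing of $R_{i+1}$ then $J(R_i,R_{i+1})$ holds automatically (both crossings then live in $R_i\cap R_{i+1}$ and meet), so $\neg J$ indeed forces the strict ordering $P_i^+$ below $P_{i+1}^-$. Everything else is just a matter of threading through the Harris-type inequality \eqref{lH} in place of Harris's Lemma and invoking Corollary~\ref{c_trans_lower} each time we translate a rectangle.
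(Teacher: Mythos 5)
Your proof is correct and is precisely the "mutatis mutandis" adaptation of Lemma~\ref{smallgap} that the paper's one-line proof invokes: you replace Harris's Lemma by the poset-Harris inequality \eqref{lH}, replace Lemma~\ref{Z1/2} by the hypothesis $h(\xx,\yy),v(\xx,\yy)\ge c_1$ together with Corollary~\ref{c_trans_lower} for the translates, replace Lemma~\ref{lJ} by Lemma~\ref{lJ2}, and rescale $n\mapsto \xx$ or $\yy$ as appropriate. The geometric stacking argument and the area-pigeonhole step are carried over unchanged (the paper itself notes that the smallgap proof deliberately avoided appealing to translational symmetry of the $G_\eps(S_i,S_{i+1})$ probabilities so that the final averaging step would survive this very adaptation), so nothing further is needed.
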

\begin{proof}
Imitate the proof of Lemma~\ref{smallgap}, \emph{mutatis mutandis}.
\end{proof}

In adapting the main part of the proof of Theorem~\ref{th_RSWZ} there is only
one genuine additional complication: we shall have to work
to join up paths in our two configurations $\omega_1$ and $\omega_2^X$. The other
changes are mostly in notation.

Recall that our independent lattice colouring $\cC$ is obtained
by randomly colouring the subdivided grey faces of the 3-coloured
planar map $\cH$.
As before, let $\Omega$ denote the set of
all configurations, i.e., assignments of states to the grey faces of $\cH$.
Let $\ALG$ be an algorithm that
examines the states of grey faces in its input configuration one by
one, with the next face to be examined determined by the states of the
faces examined so far. Assume that $\ALG$ terminates and write ${\cal S}_{\ALG}(\omega)$
for the set of grey faces examined by $\ALG$ when run on the configuration $\omega$.
Recalling that $\cH$ is invariant under translations
through elements of the lattice $\cL$,
define a function $f_\ALG$ from $\Omega^2\times\cL$ to $\Omega$
by $(\omega_1,\omega_2,\nrl)\mapsto \omega$, where $\omega=f_\ALG(\omega_1,\omega_2,\nrl)$
is the configuration given by
\begin{equation}\label{fA2}
 \omega(g)= f_\ALG(\omega_1,\omega_2,\nrl)(g) = \left\{
   \begin{array}{ll}
    \omega_1(g) & \hbox{if }g\in {\cal S}_\ALG(\omega_1), \\
    \omega_2(g-\nrl) & \hbox{if }g\notin {\cal S}_\ALG(\omega_1). \\
   \end{array}
  \right.
\end{equation}
Here $g$ denotes an arbitrary grey face, and $g-\nrl$ the grey face obtained
by translating $g$ through the vector $-\nrl$.
In other words, as before, the state of a grey face in $\omega=f_\ALG(\omega_1,\omega_2,\nrl)$
is given either by its state in $\omega_1$ or by its state in the translate $\omega_2^\nrl$
of $\omega_2$, according
to whether or not the algorithm $\ALG$ examines $g$ when run on the configuration $\omega_1$.

Let $\cLl$ be the set of points of $\cL$ in $[-5m,5m]\times [-5n,5n]$,
and interpret $\Omega^2\times \cLl$ as the product probability space
in which the two configurations have the distribution associated to $\cC$ and 
are independent, and the random vector $\rX\in\cLl$ is chosen uniformly from $\cLl$.
As before, $\omega=f_\ALG(\omega_1,\omega_2,\rX)$ has the distribution
appropriate for $\cC$.

\begin{proof}[Proof of Theorem~\ref{th_RSW1}]
We follow the proof of Theorem~\ref{th_RSWZ}, concentrating on the differences.
In the light of Remark~\ref{Rsym}, rotating the rectangles
under consideration together with $\cC$, we may assume
that $R$ and $R'$ are aligned with the coordinate axes, i.e.,
are $0$-aligned.

First, note that \eqref{consts1} now holds by assumption.
As before, choose $\gamma>0$ such that \eqref{consts2} holds,
and then choose $\eps>0$
such that $\eps<\gamma$ and \eqref{e2} holds.
Let $\xx$ and $\yy$ satisfy the assumptions of the theorem.
Then, since $h(\xx,\yy)$, $v(\xx,\yy)\ge c_1$,
we can apply Lemma~\ref{smallgap2}. Let $c_3$ and $c$ be the constants given by Lemma~\ref{smallgap2},
so either $v(\xx,100\yy)\ge c$, in which case we are done,
or there are $\xx$-by-$\yy$ $0$-aligned rectangles
$S_1$ and $S_2$ with $S_2$ obtained by translating $S_1$ upwards by 
a distance of $\eps \yy/10$ such that
\begin{equation*}
 \Pr(G_\eps(S_1,S_2))\ge c_3.
\end{equation*}
We may assume that the second case holds.
Translating the rectangles
under consideration together with $\cC$, we may assume
that
$S_1=[0,\xx]\times[0,\yy]$ and $S_2=[0,\xx]\times[\eps\yy/10,(1+\eps/10)\yy]$.

As before, we explore $S_1$ from below to find its lowest black horizontal crossing
$P_1$, if it exists, and $S_2$ from above to find its highest black horizontal crossing
$P_2$. More precisely, we let $I_1$ be the interface $I_0^-$ in $S_1$ described
earlier in the subsection, and $I_2$ the interface $I_0^+$ in $S_2$.
\begin{figure}[htb]
 \centering
 \input{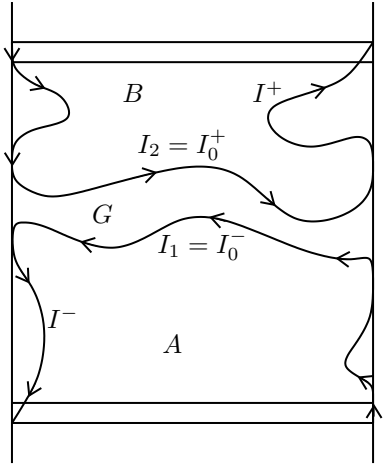}
 \caption{The overlapping congruent rectangles $S_1$ (below) and $S_2$ (above),
with the interfaces $I^-$ in $S_1$ and $I^+$ in $S_2$ defined as in Figure~\ref{fig_ow}.
$I^-_0$ and $I^+_0$ are the minimal subpaths of $I^-$ and $I^+$ crossing
the strip $S_1\cup S_2$. Note that every point just to the right of $I^\pm$ is black or
outside $S_1\cup S_2$ horizontally; points just to the right of $I_0^\pm$
are always inside $S_1\cup S_2$ and hence black. Points
to the left of $I^\pm$ are white or outside $S_1\cup S_2$ vertically. The `gap' $G$ is the region
between $I_1=I^-_0$ and $I_2=I^+_0$.}\label{fig_interfaces}
\end{figure}
Note that orienting these interfaces as in Figure~\ref{fig_interfaces}, the points just to the
right of each interface are black while those to the left
are either white, or inside $T$ but outside the relevant rectangle $S_i$ vertically, if the interface
runs along the top/bottom of $S_i$. It follows that $I_1$ and $I_2$ cannot meet.
Our definitions allow us to view $I_i$ as
a black path itself, although it is perhaps clearer to think of $P_i$ as running
next to $I_i$.
In the present context, the \emph{gap} $G$ is the region between $I_1$ and $I_2$;
we define $A$ and $B$ to be the regions of our strip $[0,\xx]\times\RR$
below $I_1$ and above $I_2$, respectively.

As before, we first test whether $\omega_1\in G_\eps$, using the algorithm $\ALG$
implicitly defined above. More precisely, we implement $\ALG$ by following
the full interfaces $I^-$ and $I^+$ in the relevant rectangles. Note that this only involves
`testing' the state of grey faces $f$ that meet one of these interfaces, where
$f$ \emph{meets} $I$ if $I$ passes through $f$, or along one of the sides of $f$.
It is easy to check that all of $I^-$ lies below $I_0^-=I_1$,
and all of $I^+$ above $I_2$.
Let $Z(I_i)$ denote the union of $I_i$ and all grey faces that meet $I_i$,
which we think of as the  \emph{zone of influence} of $I_i$.
Then when $G_\eps$ holds, the algorithm $\ALG$ establishes
this by looking only at grey faces in $A\cup B\cup Z(I_1)\cup Z(I_2)$.
Defining $\omega=f_\ALG(\omega_1,\omega_2,\rX)$ as before,
it follows that any grey face contained in $G\setminus (Z(I_1)\cup Z(I_2))$
has its state in $\omega$ given by its state in $\omega_2^\rX$,
the configuration $\omega_2$ translated through the random vector $\rX$.

As before, we condition on $\omega_1$, assuming that $G_\eps$ holds, which it does
with probability bounded away from zero. Furthermore, we condition on $\omega_2$, assuming
the existence of a path $P$ with the property described in the definition of
$\Ed(R',\xx,\yy,\alpha,\eta)$. Again, the probability of this event is bounded away from zero,
this time by assumption. As before, the only remaining randomness is
in the choice of the random translation $\rX$.

Recalling that we may take $m$ and $n$ large, apart from one very minor
technical issue that we postpone to the end of the proof, trivial modifications
to our previous arguments show that with probability
bounded away from zero, the random translate $P+\rX$ of $P$ crosses
from $A$ to $B$, while remaining within the strip $\strip$. In fact, adjusting the constants
slightly if necessary, we can assume that it does not come within distance $d_0$ of
the edges of $\strip$.

Let $E\subset \Omega^2\times\cLl$ denote the set of triples $(\omega_1,\omega_2,\nrl)$
such that $\omega_1\in G_\eps$, and $\omega_2$ contains a path $P$ as above
whose translate $P+\nrl$ meets the interfaces $I_1$ and $I_2$ in $\omega_1$.
We have shown that $\Pr(E)$ is bounded away from zero.
Unfortunately, unlike in the $\Z^2$ setting,
it is not true that if $(\omega_1,\omega_2,\nrl)\in E$,
then $f_\ALG(\omega_1,\omega_2,\nrl)\in J$; the problem is illustrated in Figure~\ref{fig_problem}.
\begin{figure}[htb]
 \centering
 \input{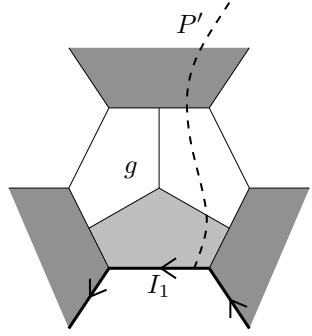}
 \caption{A hexagonal grey face $g$ surrounded by black and white faces of $\cH$. In
the random colouring associated to $\omega_1$, the internal colouring of $g$ is
indicated by the lighter shading.
In this case $I_1$ touches $g$. The dashed line shows a black path $P'$ in the
configuration $\omega_2^\rX$; since the state of $g$ is read from $\omega_1$ (even
though $g$ is in the `gap'), in the combined configuration $\omega$,
the path $P'$ fails to join up with $I_1$.}\label{fig_problem}
\end{figure}

To overcome this problem, we adjust the colourings of at most two faces.
Let $(\omega_1,\omega_2,\nrl)\in E$, and
let $P'$ denote a minimal part of the path $P+\nrl$ joining $I_1$ to $I_2$.
Then $P'$ necessarily lies in the gap $G$. Let $P''$ denote a minimal
subpath of $P'$ joining $Z(I_1)$ to $Z(I_2$), and let $v_1$ and $v_2$ denote
its endpoints (which may coincide, if $Z(I_1)$ and $Z(I_2)$ meet);
see Figure~\ref{fig_gap}.
Note that any interior point of $P''$ lies in $G\setminus (Z(I_1)\cup Z(I_2))$,
so its colour in $\omega=f_\ALG(\omega_1,\omega_2,\nrl)$ is its colour in $\omega_2^\nrl$;
since $P''\subset P+\nrl$, such points are therefore black.
\begin{figure}[htb]
 \centering
 \input{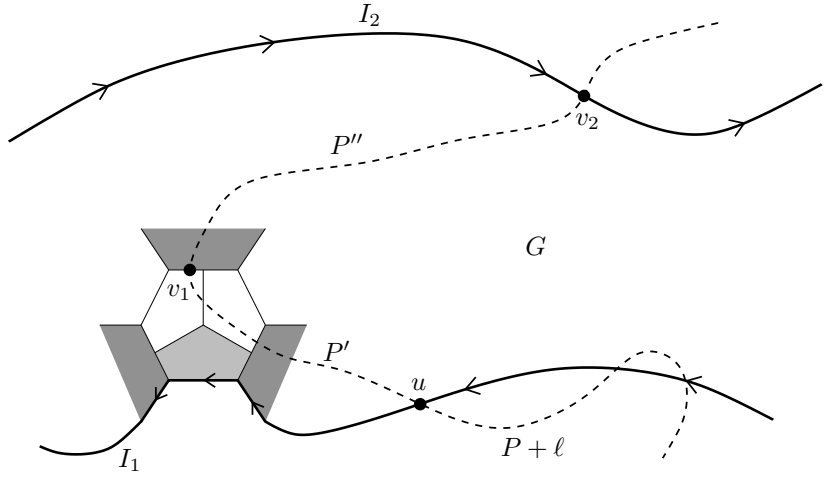}
 \caption{The path $P+\ell$ (dashed line) crossing the gap $G$, from below $I_1$
to above $I_2$. The subpath $P'$ of $P+\ell$, here from $u$ to $v_2$, is minimal subject
to crossing $G$, and so lies within $G$. In turn, $P''$, from $v_1$ to $v_2$,
is minimal subject to joining $Z(I_1)$ to $Z(I_2)$. In this example, $v_2$ is on $I_2$,
while $v_1$ is in a grey face $f_1$ (the hexagon in the figure) touching $I_1$;
the two white subfaces inside $f_1$ will be recoloured black.}\label{fig_gap}
\end{figure}

Now each $v_i$ is either on $I_i$, or belongs to a grey face $f_i$ which meets $I_i$.
Suppose the latter case holds for $i=1,2$. To handle the remaining cases
we simply recolour at most one face in the following argument, rather than two.
Recalling that $P+\nrl$ does not pass
within distance $d_0$ of the edges of $T$, 
note that each $f_i$ lies entirely with the strip $T$.
Let $\omega_1'$ be the configuration obtained from $\omega_1$ by recolouring
all points of $(f_1\cup f_2)\cap G$ black. As we shall show in a moment,
this is a legal configuration. Assuming this for the moment,
if we restrict our attention overall to a finite region of the plane, as we may,
the ratio $\Pr(\omega_1')/\Pr(\omega_1)$ of the probabilities of the individual
configurations is bounded below by $p_0^2$, where $p_0>0$ is the minimum probability
of any possible state of a grey face.

The key point is that recolouring a set of points within the gap $G$
black does not change the interfaces $I_1$ and $I_2$; these
interfaces are determined by the full interfaces
$I^\pm$ shown in Figure~\ref{fig_interfaces}, which are defined `locally',
and the side of $I_i$ on which the gap lies
is already `locally black'. Thus the algorithm $\ALG$ examines the states of the same
set of faces if run on $\omega_1'$ or on $\omega_1$.
Recalling that the states of $f_1$ and $f_2$ in $\omega$ are those in $\omega_1$,
it follows that
in the configuration $\omega'=f_\ALG(\omega_1',\omega_2,\nrl)$ there is a black path
joining $I_1$ to $I_2$, given by the union of $P''$ and two short
paths from $v_i$ to $I_i$ within $f_i\cap G$. Hence $\omega'\in J$.

Let $g:E\to\Omega^2\times\cLl$ denote the map $(\omega_1,\omega_2,\nrl)
\mapsto (\omega_1',\omega_2,\nrl)$, and let $E'=g(E)$. We have
shown that $E'\subset f_\ALG^{-1}(J)$. Recalling that $f_\ALG$ is measure preserving,
it follows that $\Pr(J)\ge \Pr(E')$.
Since our recolouring does not change
the interfaces $I_1$ and $I_2$,
given some $(\tilde\omega,\omega_2,\nrl)\in E'$ known to be the image
of some unknown $(\omega_1,\omega_2,\nrl)$ under $g$,
we can read off the interfaces $I_1$ and $I_2$ (defined in $\omega_1$) by looking
at $\tilde\omega$. We also know the path $P$ from $\omega_2$.
This allows us to determine $P'$ and $P''$ as defined above, and hence
$f_1$ and $f_2$. In other words, we know which two (or at most two) faces
were recoloured, though not how.
It follows that $g^{-1}(\{(\tilde\omega,\omega_2,\nrl)\})$ consists of a bounded
number of configurations, each of whose probabilities is at most $p_0^{-2}$
times that of $(\tilde\omega,\omega_2,\nrl)$. Hence
there is a constant $C$ such that $\Pr(g^{-1}(E')) \le C \Pr(E')$.
Since $g^{-1}(E')=g^{-1}(g(E))\supset E$, it follows that $\Pr(E')\ge \Pr(E)/C$.
Since $E$ is known to have probability bounded away from zero, the result follows.

\medskip
It remains to establish that the recolouring is permissible, i.e., to show that
if we recolour within a grey face $g$ (either $f_1$ or $f_2$ above),
the new colouring has positive probability; this is where
we use the assumption that $\cC$ is malleable (a condition
of Theorem~\ref{th_RSW1}). 
Now $G$ is bounded by interfaces running between black and white regions, as well
as the sides of the strip $\strip$; the latter do not meet $g$.
It follows that $g\cap G$ is the union of one or more colour components (maximal
connected monochromatic subsets) of $g$. Our recolouring thus recolours
one or more white components within $g$ to black; the definition of malleability
ensures that the resulting colouring has positive probability.

Finally, let us comment on the technical issue we overlooked, which is
that since the endpoints of $P$ may not differ by a lattice element, we cannot
exactly join up translates of $P$ though lattice elements
to form $P^*$. One way to handle this is to find
a short (length $O(1)$) black path $P'$ in $\omega_2$ joining appropriate
points within a fundamental domain of $\cL$, and form $P^*$ by chaining together
alternate copies of $P$ and $P'$, as described in Remark~\ref{Rchain}
at the end of Subsection~\ref{ss_z2}.
Alternatively, simply leave small (length at most $d_0=O(1)$) gaps between the translates of $P$
making up $P^*$: when colouring $R_0$, assign colour 0 to any point $v$
such that in the copy of $P^*$ starting at $v$, one or more of these small gaps meets $G$.
It remains the case that few points receive colour 0: the set of such
points is contained in the
union of $I=O(1)$ translates of the $d_0$-neighbourhood $G^{d_0}$ of $G$. Since $G$
is made up of faces whose size is bounded below, the area of $G^{d_0}$ is at most
a constant times that of $G$, so the area receiving colour 0 is still $O(\eps n^2)$;
choosing $\eps$ small enough, the rest of the argument is unchanged.
\end{proof}

\subsection{A stronger rectangle-crossing lemma}

Although technically we can do without it, we now present a more convenient
version of Theorem~\ref{th_RSW1}
giving the same conclusion under a weaker assumption. First we need a simple geometric lemma.

By the \emph{displacement} of a path $P$ we mean the Euclidean distance
from its start to its endpoint. By the \emph{direction} of $P$
we mean the direction from its start to its endpoint, considered
as an angle modulo $\pi$. The \emph{angle} between two paths
is simply the (unsigned) angle between their directions, taken
as a real number between $0$ and $\pi/2$.

\begin{lemma}\label{lchain}
Given $\delta>0$ there exists a constant $C=C(\delta)$ such that whenever
$P_1$ and $P_2$ are two paths in $[0,1/10]^2$ with displacement at
least $\delta$ such that the angle between $P_1$ and $P_2$ is at least $\delta$,
then we can chain together at most $C$ paths each of which is a translate
of $P_1$ or $P_2$ to form a path $P^*$ that lies within $[0.1,0.9]\times [-0.5,1.5]$,
starts below $y=-0.1$, and ends above $y=1.1$.
\end{lemma}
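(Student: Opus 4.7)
The plan is to build $P^*$ explicitly as a chain of translates of $P_1$ and $P_2$ whose link-endpoints closely track a vertical segment near $x=1/2$. Write $d_1,d_2$ for the (oriented) displacement vectors of $P_1,P_2$, so $\delta\le\|d_i\|\le\sqrt{2}/10$. The angle between $d_1,d_2$ as vectors is automatically in $[\delta,\pi-\delta]$, so $|\det(d_1,d_2)|\ge\delta^2\sin\delta$, and by Cramer's rule we may write $(0,3/2)=\alpha d_1+\beta d_2$ with $|\alpha|,|\beta|\le C_0(\delta):=(3/2)/(\delta\sin\delta)$. Reversing the orientation of $P_1$ or $P_2$ (replacing $d_i$ by $-d_i$) as needed, we may take $\alpha,\beta\ge 0$.

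Set $k_i$ equal to the nearest integer to $\alpha$ (respectively $\beta$), and put $K:=k_1+k_2\le 2C_0(\delta)+1$. Starting from $v_0:=(1/2,-1/5)$, form the chain of $K$ translates, $k_1$ realizing displacement $d_1$ and $k_2$ realizing $d_2$, in the balanced order where the $i$th link is a $P_1$-translate iff $\operatorname{round}(ik_1/K)>\operatorname{round}((i-1)k_1/K)$. Writing $v_0,v_1,\dots,v_K$ for the resulting link-endpoints and $j_i$ for the number of $P_1$-translates among the first $i$ links, one has $|j_i-ik_1/K|\le 1/2$ together with the identity
\[
v_i \;=\; v_0 + (i/K)(v_K-v_0) + (j_i-ik_1/K)(d_1-d_2).
\]
Take $P^*$ to be the prefix of this chain up to $v_{i^*}$, where $i^*$ is the smallest index with $y$-coordinate of $v_{i^*}$ exceeding $11/10$.

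To verify the conclusion, note that $v_K-v_0=k_1 d_1+k_2 d_2=(0,3/2)+\epsilon$ with $|\epsilon_x|,|\epsilon_y|\le 1/10$ (since $|k_i-\alpha_i|\le 1/2$ and $|d_i^x|,|d_i^y|\le 1/10$), so $v_K$ has $y$-coordinate at least $6/5>11/10$ and $i^*$ exists with $i^*\le K$. Since $v_0$ has $y$-coordinate $-1/5<-1/10$, the chain starts below $y=-1/10$. Because $v_{i^*-1}$ has $y$-coordinate at most $11/10$ and one step changes $y$ by at most $1/10$, the displayed identity and the bound $|(j_i-ik_1/K)(d_1-d_2)^y|\le 1/10$ give that the $y$-coordinate of every $v_i$, $i\le i^*$, lies in $[-3/10,6/5]$; the analogous estimate in $x$ yields $|v_i^x-1/2|\le 1/5$. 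Since each link lies in a $(1/10)\times(1/10)$ square about its starting vertex, the whole of $P^*$ is contained in $[1/10,9/10]\times[-1/2,3/2]$, and the value $C(\delta):=2C_0(\delta)+1=O(\delta^{-2})$ suffices.

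The only real obstacle is minor bookkeeping: arranging the sign-reversals so that $\alpha,\beta\ge 0$, treating the degenerate cases where one of $\alpha,\beta$ vanishes (so a chain of copies of a single path suffices), and collating the approximation, balance, and local-excursion errors to confirm that $P^*$ fits strictly inside the stated rectangle.
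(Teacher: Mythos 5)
Your proof is correct and takes essentially the same approach as the paper: both approximate a near-vertical segment by a chain of steps in the lattice spanned by the displacement vectors $d_1,d_2$, and the non-degeneracy $|\det(d_1,d_2)|\gtrsim\delta^2\sin\delta$ is what bounds the number of links. The paper handles the approximation by passing to a generic grid via an affine change of coordinates, while you make it fully explicit via Cramer's rule and a balanced interleaving; this is just a difference in how much detail is written out.
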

\begin{proof}
Let $v_i$ be the vector from the start of $P_i$ to its end,
and let $\cL$ be the lattice $\{av_1+bv_2:a,b\in \Z\}$.
Let $D$ be the fundamental domain of $\cL$ whose corners are the origin, $v_1$, $v_2$
and $v_1+v_2$. Note that the area of $D$ is bounded below
by a constant depending on $\delta$.

The idea is simply to approximate the line-segment $L$ joining the points
$(0.5,-0.3)$ and $(0.5,1.3)$ by a path $Q$ in the graph associated to $\cL$
where two lattice points are neighbours if they differ by $\pm v_i$
for some $i$; then replace each edge by an appropriate translate of $P_i$.
We can find such a path $Q$ so that every point of $Q$
is in the same domain $D+av_1+bv_2$ as some point of $L$,
and $Q$ starts and ends in the same domains
as the start and end of $L$; to see this, simply apply an affine transformation
mapping $\cL$ to the usual square grid, and approximate the image
of $L$ by an appropriate path.

Since any two points of $D$ have $x$-coordinates that differ by at most $2/10$
and $y$-coordinates that differ by at most $2/10$, the path $P^*$ obtained
from $Q$ has the required properties.
\end{proof}

\begin{theorem}\label{th_RSW2}
Let $\cC$ be a malleable
independent lattice colouring, and let $\delta$, $c_1$ and $c_2$ be positive constants.
Then there exists a constant $c>0$ such that the following holds.
Suppose that $R$ is a large rectangle with \ww\ $\xx$, \hh\ $\yy$
and any orientation, and $R'$ is a large rectangle
with the same orientation, \ww\ $\xx/10$, and \hh\ $\yy/10$.
Let $E$ be the event that $R'$ contains two black paths such that their
images under the affine transformation mapping $R'$ to $[0,1/10]^2$ satisfy
the conditions of Lemma~\ref{lchain}.
If $\Pr(\Hb(R))\ge c_1$, $\Pr(\Vb(R))\ge c_1$, and $\Pr(E)\ge c_2$,
then $\Pr(\Vb(R''))\ge c$
for any rectangle $R''$ with the same orientation as $R$, \ww\ $\xx$ and \hh\ $100\yy$.
\end{theorem}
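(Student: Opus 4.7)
The plan is to reduce to Theorem~\ref{th_RSW1} by the chaining construction outlined in Remark~\ref{Rchain}. Since the hypotheses on $\Hb(R)$ and $\Vb(R)$ are identical, the difference is only in the assumption about $R'$: instead of a single nearly-vertical black path inside $R'$, we have two black paths inside $R'$ whose directions differ by at least $\delta$. Lemma~\ref{lchain} is designed precisely to extract from such a pair a chain of $O(1)$ translates spanning a long vertical strip. So the approach is to repeat the proof of Theorem~\ref{th_RSW1} verbatim, only replacing the role of the single path witnessing $\Ed(R',\xx,\yy,\alpha,\eta)$ by the chained path produced by Lemma~\ref{lchain}.

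More concretely, I would first apply Lemma~\ref{smallgap2} with some $\eps>0$ to obtain (assuming we are not already done) two translated copies $S_1,S_2$ of an $\xx$-by-$\yy$ rectangle separated by $\eps\yy/10$ for which $\Pr(G_\eps(S_1,S_2))\ge c_3$; after rotating and translating $\cC$ together with the rectangles we may assume they are axis-aligned. Following the proof of Theorem~\ref{th_RSW1}, I would take independent configurations $\omega_1,\omega_2$ and an independent random lattice translate $\rX\in\cLl$, define $\omega=f_\ALG(\omega_1,\omega_2,\rX)$ by exploring the interfaces $I_1,I_2$ in $S_1,S_2$ in $\omega_1$, and condition on $\omega_1\in G_\eps$ and $\omega_2\in E$. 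The events $J(S_1,S_2)$ and $G_\eps(S_1,S_2)$, the zones $Z(I_i)$, and the gap $G$ are defined as in Subsection~\ref{ss_RSWh}. The task is to produce, with conditional probability bounded away from zero over the choice of $\rX$, a $\cC$-black path in the combined configuration joining $I_1$ to $I_2$.

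The crucial change is in the construction of the virtual spanning path $P^*$. Given two paths $P_1,P_2$ in $R'$ witnessing $E$, rescaling $R'$ to $[0,1/10]^2$ and applying Lemma~\ref{lchain} with parameter $\delta$, we obtain a constant $C=C(\delta)$ and a chain of at most $C$ translates of $P_1$ and $P_2$ forming a virtual path $P^*$ lying in a rectangle of width $0.8\,\xx$ centred in the strip $\strip$, starting below $S_1$ and ending above $S_2$, with a uniform margin of elbow room (at least $\gamma\yy$, say, for some fixed $\gamma$ depending only on $\delta$) on all sides. We then pick a uniformly random lattice translate of $P^*$ whose starting point lies in a square $R_0$ of side $\asymp \gamma\yy$ below $S_1$, colour points of $R_0$ by which link of the chain crosses from $A$ to $B$ (or, in the bad case, meets $G$ or leaves $\strip$), and observe as in the proof of Theorem~\ref{th_RSWZ} that the bad set has area $O(\eps\,\xx\yy/\alpha)$ where $\alpha$ depends only on $\delta$ (via the lower bound on displacements); choosing $\eps$ small in terms of $\delta$ keeps the bad area at most half of $R_0$. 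Hence some link colour class has area $\asymp \gamma^2\xx\yy/C$, and the probability that $\rX$ places the corresponding link as the random translate of the appropriate $P_j$ is bounded below. This is exactly the extended form of the argument sketched in Remark~\ref{Rchain}.

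The main obstacle, as in the proof of Theorem~\ref{th_RSW1}, is the interface repair: the random translate of the link may end in a grey face straddling the interface $I_i$ rather than reaching it directly, so one recolours at most two grey faces (the ones containing the endpoints $v_1,v_2$ of the minimal subpath joining $Z(I_1)$ to $Z(I_2)$) to force the join. Since $\cC$ is malleable, recolouring appropriate white components to black within a grey face preserves positivity of the measure, and the recolouring does not alter the interfaces $I_1,I_2$ (which depend only on local information on the `black side'), so the algorithm $\ALG$ behaves identically on the recoloured configuration; the standard bounded multiplicity-to-one argument then transfers the lower bound from the recoloured event to $J(S_1,S_2)$. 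The same minor lattice-quantisation issue (endpoints of $P_i$ need not differ by a lattice vector) is handled exactly as at the end of Subsection~\ref{ss_RSWh}, either by inserting short connector paths or by enlarging the colour-$0$ set by a $d_0$-neighbourhood and absorbing the extra area into the choice of $\eps$. Once $\Pr(J(S_1,S_2))$ is bounded below, Lemma~\ref{lJ2} yields the desired lower bound on $v(\xx,100\yy)$, which transfers to any $\xx$-by-$100\yy$ rectangle with the correct orientation via Corollary~\ref{c_trans_lower}.
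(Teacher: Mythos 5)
Your proposal is correct and takes essentially the same route as the paper: repeat the proof of Theorem~\ref{th_RSW1}, but construct the virtual spanning path $P^*$ by chaining $O(1)$ translates of the two paths from $\omega_2$ via Lemma~\ref{lchain} (as sketched in Remark~\ref{Rchain}), handle the lattice quantisation with short connector paths or a $d_0$-thickened bad set, and reuse the malleability-based interface repair and the bounded-multiplicity argument to bound $\Pr(J(S_1,S_2))$ below, then finish with Lemma~\ref{lJ2}. The only difference is that you spell out details (the gap-area and colour-class accounting) that the paper compresses into ``the rest of the proof is as before.''
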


\begin{proof}
The proof is the same as that of Theorem~\ref{th_RSW1}, except
that we construct the `virtual' path $P^*$ as the union of translates
of our black paths $P_1$ and $P_2$ in $\omega_2$ given by Lemma~\ref{lchain}.
As before, since we can only translate by elements of the lattice $\cL$
of symmetries of $\cC$, we may need additional short paths
to join up these translates. Each of these additional paths lies within
a fundamental domain of $\cL$, and we may take them to be translates
of appropriate short black paths $P_3,\ldots$ in $\omega_2$. (With extremely
high probability, such paths will exist; we only need one of the
many domains to have the property that every grey face meeting it
is coloured black.)

Arguing as before, after conditioning on $\omega_1$ and $\omega_2$,
we find that with probability bounded away from zero our random
translate of one of the $P_i$ joins $I_1$ to $I_2$, and the rest
of the proof is as before.
\end{proof}

The precise constants appearing in Lemma~\ref{lchain} are not important;
the key point is that we can chain together translates of our
paths $P_i$ to give a path $P^*$ with the properties discussed
in Remark~\ref{Rchain}. After appropriate rescaling, these properties
are that $P^*$ crosses a square from bottom to top, with some `elbow room', i.e.,
starting well below the bottom and ending well above the top, without
coming too close to the vertical sides.

\section{Self-duality and rectangle crossings}\label{sec_RSWapp}

Our aim in this section is to show that if $\cC$ is a malleable independent
lattice colouring associated to a self-dual plane hyperlattice $\cH$,
then an analogue of Theorem~\ref{th_RSWZ} holds for $\cC$.
It turns out that, due to the lack of symmetry, we cannot specify
in advance the orientation of the rectangles we work with. In fact,
we cannot even fix their aspect ratio.

Recall that $h_\theta(m,n)$ and $v_\theta(m,n)$ denote $\Pr(\Hb(R))$ and $\Pr(\Vb(R))$,
where $R$ is an $\xx$-by-$\yy$ $\theta$-aligned rectangle centred on the origin.

\begin{definition}\label{lrrdef}
Let $\rho>1$. We say that a lattice colouring $\cC$ has the `large rectangles'
property $LR_{\rho}$
if there exists a constant $c>0$ such that for all $L$
and for all large enough $A$ (depending on $L$)
there are $m,n\ge L$ with $mn=A$ and an angle $\theta$ such that
$h_\theta(\rho m,n)\ge c$ and $v_\theta(m,\rho n)\ge c$.
\end{definition}
In symbols, this large rectangles property with parameter $\rho$ may be stated as follows:
\begin{multline*}
 \exists c>0\ \forall L\ \exists A_0\ \forall A\ge A_0\ \exists m,n,\theta:\\
 m,n\ge L, mn=A, h_\theta(\rho m,n)\ge c\hbox{ and }v_\theta(m,\rho n)\ge c.
\end{multline*}
Roughly speaking, the idea is that we can find a rectangle $R$ so that when we extend it
by a factor of $\rho$ horizontally or vertically, the probability of a black horizontal or
vertical crossing, respectively, is not too small. We can take this rectangle to
have any given large area, and can assume that both sides are at least any given length.
However, the orientation and aspect ratio cannot be specified in advance.

Recall that $\cC$ is non-degenerate if, within each grey face, the all-black
and all-white colourings have positive probability. Under this assumption,
one can adapt the usual argument from Harris's Lemma to
show that for any $\rho_1$, $\rho_2>1$, the property
$LR_{\rho_1}$ implies $LR_{\rho_2}$; the argument is as for Lemma~\ref{Cextend}.
In the light of this, the following definition makes sense.

\begin{definition}\label{lrdef}
A non-degenerate
independent lattice colouring $\cC$ has the \emph{large rectangles property}
if it has the property $LR_{\rho}$ for some $\rho>1$, and hence for all $\rho>1$.
\end{definition}

Recall that $\cC$ is \emph{malleable} if it is non-degenerate and satisfies
certain technical extra conditions; for the full definition
see Subsection~\ref{ss_RSWh}. Our aim in this section is to prove the following result;
as we shall see in the next section, it is then easy to deduce Theorems~\ref{th1}
and~\ref{th2}.

\begin{theorem}\label{rswc}
Let $\cC$ be a malleable independent lattice colouring realizing an (approximately)
self-dual hyperlattice percolation model $\cH(\vecp)$.
Then $\cC$ has the large rectangles property.
\end{theorem}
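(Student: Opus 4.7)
The plan is to reduce Theorem~\ref{rswc} to producing, for every sufficiently large area $A$ and every $L$, a single large rectangle $R_*$ of area $A$ at some orientation $\theta_*$ for which both $h_{\theta_*}(R_*)$ and $v_{\theta_*}(R_*)$ are bounded between two positive absolute constants. Given such an $R_*$, Theorem~\ref{th_RSW1} (or the more convenient Theorem~\ref{th_RSW2}) applied at $\theta_*$ yields $v_{\theta_*}(m_*, 100 n_*) \ge c$, and the same argument with horizontal and vertical swapped yields $h_{\theta_*}(100 m_*, n_*) \ge c$; together these are the property $LR_{100}$, whence $\cC$ has the large rectangles property.

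Three ingredients combine to deliver the base rectangle. Lemma~\ref{oneway} gives $\Pr(\Hb(R)) + \Pr(\Vw(R)) = 1$ for every $R$ in general position. (Approximate) self-duality identifies a white path in $\cC$ with a black path in the image of $\cC$ under an isometry $T$ of the plane, a rotation by some angle $\alpha$ (possibly composed with a reflection); modulo an $O(d_0)$ boundary correction controlled by Lemma~\ref{reg} and the Hausdorff bound in the definition of approximate self-duality, the image of a $\theta$-aligned large rectangle $R$ is a congruent $(\theta+\alpha)$-aligned rectangle $T(R)$, so Lemma~\ref{oneway} rephrases as
$$
h_\theta(R) \,+\, \Pr\bigl(\Vb(T(R))\bigr) \;=\; 1.
$$
Finally, Lemma~\ref{1mod} furnishes, at every orientation $\theta$ and every sufficiently large area $A$, dimensions $(m_\theta, n_\theta)$ with $m_\theta n_\theta = A$ and $\phi(h_\theta(m_\theta, n_\theta)) \in [-4,4]$, so the above identity forces $\phi\bigl(\Pr(\Vb(T(R_\theta)))\bigr) \in [-4,4]$ as well. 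Thus a bounded horizontal black-crossing probability at angle $\theta$ automatically comes with a bounded vertical black-crossing probability of a congruent rectangle at angle $\theta+\alpha$.

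The principal obstacle is to pass from these bounds on the \emph{pair} of angles $(\theta, \theta+\alpha)$ and a congruent \emph{pair} of rectangles to bounds on a \emph{single} rectangle at a \emph{single} angle. When $\alpha \equiv \pi/2 \pmod \pi$ (the square-lattice paradigm) this is immediate: a $(\theta+\pi/2)$-aligned $m \times n$ rectangle is geometrically a $\theta$-aligned $n \times m$ rectangle with horizontal and vertical swapped, yielding $h_\theta(m,n) + h_\theta(n,m) = 1$, from which a continuity argument in the aspect ratio $r = m/n$ produces the desired $R_*$. For generic $\alpha$ no such reduction is available. My plan is to argue by a two-parameter continuity in $(r, \theta)$: let $F(r,\theta) = \phi\bigl(h_\theta(R_{r,\theta})\bigr)$ and $G(r,\theta) = \phi\bigl(v_\theta(R_{r,\theta})\bigr)$ on area-$A$ rectangles $R_{r,\theta}$ of aspect ratio $r$ and angle $\theta$; Lemma~\ref{cont} and Corollary~\ref{c_rot} ensure $F$ and $G$ are continuous, and Lemma~\ref{1mod}, applied at $\theta$ and at $\theta + \pi/2$, ensures that both level sets $\{F = 0\}$ and $\{G = 0\}$ project onto the full $\theta$-axis modulo $\pi$. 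The self-duality identity rigidly links $F$ at $\theta$ to $G$ at $\theta + \alpha$, which forces the two zero sets to cross by a winding/intermediate-value argument; at a crossing $(r_*, \theta_*)$, the rectangle $R_{r_*, \theta_*}$ is the desired $R_*$. Justifying this topological step carefully—in particular handling the case where $\alpha$ is irrational relative to $\pi$ and using Corollary~\ref{c_trans_lower} to reconcile congruent rectangles at different positions—is the main technical work.

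With $R_*$ in hand, the diagonal-crossing hypothesis $\Pr\bigl(\Ed(R', m, n, \alpha, \eta)\bigr) \ge c_2$ required by Theorem~\ref{th_RSW1} follows from the Harris-type inequality Lemma~\ref{l_ourHarris}: simultaneous black horizontal and vertical crossings of $R_*$ exist with probability bounded below, and any black connection between them inside a constant-aspect-ratio sub-rectangle $R'$ changes direction significantly, producing a black path of displacement and angle meeting the requirements of $\Ed$. Applying Theorem~\ref{th_RSW1} then delivers the elongated vertical crossing, and the symmetric argument at $\theta_* + \pi/2$ delivers the horizontal one, completing the verification of $LR_{100}$.
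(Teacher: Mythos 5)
Your proposal correctly identifies the need to produce a base rectangle $R_*$ with both crossing probabilities moderate, and your sketch for obtaining it by combining Lemma~\ref{oneway}, the self-duality map, and Lemma~\ref{1mod} is in the right spirit (it is essentially what the paper's Lemma~\ref{2mod} makes precise). Your worry about ``generic $\alpha$'' and ``$\alpha$ irrational relative to $\pi$'' is misplaced, however: the paper's Lemma~\ref{lsymclass} shows that after an affine change of coordinates (which is harmless by Lemma~\ref{raf}), the self-duality map is either a reflection or a rotation through $0$, $\pi/2$, or $\pi$, so the ``generic'' case never arises, and your two-parameter winding argument is not needed.

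The genuine gap is in the final paragraph, where you claim that the hypothesis $\Pr(\Ed(R',\xx,\yy,\alpha,\eta))\ge c_2$ of Theorem~\ref{th_RSW1} ``follows from the Harris-type inequality'' because simultaneous black horizontal and vertical crossings of a sub-rectangle ``change direction significantly.'' This is false. Both crossings of a near-square sub-rectangle can be concentrated along the same diagonal; their union is then a black set hugging the diagonal, and no sub-path of it is substantially steeper than the diagonal, which is exactly what $\Ed$ requires (a path whose rescaled displacement satisfies $|x'-x|/\xx\le(1-\eta)(y'-y)/\yy$, i.e.\ is strictly steeper than $45^\circ$). This is precisely the obstruction that occupies most of the paper's proof: it introduces the notion of a ``useful'' rectangle (one containing two black paths at substantially different directions), shows that an inapplicable $\Ed$ forces every near-square small rectangle to be of ``type~1'' or ``type~2'' (crossings pinned near one of the two diagonals), and then rules out this scenario by two separate arguments — reflection symmetry of the self-dual map in the reflection case, and a continuity-in-$\theta$ argument showing that types must swap under a $\pi/2$ rotation in the rotation case. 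Without some replacement for this case analysis, your deduction does not go through.
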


The proof of Theorem~\ref{rswc} will require a little preparation. First,
let us restate the property slightly. Given an ellipse $E$ with centre $x_0$,
let $2E/3$ denote the ellipse $\{x_0+2(x-x_0)/3:x\in E\}$ obtained by shrinking
$E$ by a factor of 3/2, keeping the centre the same.
Let $E^0$ denote the `annulus' between $E$ and $2E/3$,
and let $\Sb(E)$ denote the event that there is a closed black path in $E^0$ surrounding
the central hole.

\begin{definition}\label{ledef}
A lattice colouring $\cC$ has the \emph{large ellipses property}
if there exists a constant $c>0$ such that for all $L$
and for all large enough $A$ (depending on $L$)
there is an ellipse $E$ centred on the origin with area $A$ and
with both axes having length at least $L$
such that $\Pr(\Sb(E))\ge c$.
\end{definition}

It is easy to see that the large rectangles property and the large ellipses property are
equivalent.
\begin{lemma}\label{r_e}
A non-degenerate independent lattice colouring $\cC$ has the large rectangles property
if and only if it has the large ellipses property.
\end{lemma}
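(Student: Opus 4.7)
The plan is to prove both directions using Lemma~\ref{ourHarris2} (our Harris-type inequality) and Corollary~\ref{c_trans_lower}, together with standard Jordan-curve topology. A useful simplification is the remark following Definition~\ref{lrrdef}: $LR_{\rho_1}$ implies $LR_{\rho_2}$ for all $\rho_1,\rho_2>1$, so $\rho$ may be taken as large as we wish.

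For the forward direction, assume LR and fix $\rho=140$. For each large $A$, the property gives $m,n\ge L$, an angle $\theta$, and crossings $h_\theta(\rho m,n), v_\theta(m,\rho n)\ge c$. Working in $\theta$-rotated coordinates, let $E$ be the axis-aligned ellipse with semi-axes $100m$ and $100n$, so $2E/3$ has semi-axes $200m/3$ and $200n/3$. Place four rectangles along the top, bottom, left, and right of $E^0$, each a translate of the relevant rectangle from the LR hypothesis: for example,
\[
 R_T=[-70m,70m]\times[200n/3,200n/3+n],\qquad R_R=[200m/3,200m/3+m]\times[-70n,70n].
\]
A direct check using $(70/100)^2+(200/300+1/100)^2<1$ shows $R_T\subset E^0$ (and similarly for $R_B$, $R_L$, $R_R$), while $R_T\cap R_R=[200m/3,200m/3+m]\times[200n/3,200n/3+n]$ is a nonempty $m\times n$ rectangle. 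By Corollary~\ref{c_trans_lower}, each translated rectangle has its long-direction crossing probability at least $\psi(c)>0$. Applying Lemma~\ref{ourHarris2} three times, all four crossings occur jointly with probability bounded below. In each overlap, the horizontal crossing of the horizontal rectangle and the vertical crossing of the vertical rectangle must intersect (Jordan curve theorem in a rectangle), so the crossings combine into a closed black path in $R_T\cup R_B\cup R_L\cup R_R\subset E^0$ surrounding $2E/3$. The resulting ellipse $E$ has area $10000\pi\, mn=10000\pi A$, with both semi-axes at least $100L$, establishing LE.

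For the reverse direction, assume LE and, for each large $A$, take the promised $\theta$-aligned ellipse $E$ with semi-axes $a,b\ge L$ and $\Pr(\Sb(E))\ge c$. In $\theta$-rotated coordinates consider
\[
R=[-a/3,a/3]\times[b/2,b+1].
\]
The inner short end at $y=b/2$ lies entirely inside $2E/3$: its extreme points $(\pm a/3,b/2)$ satisfy $(a/3)^2/(2a/3)^2+(b/2)^2/(2b/3)^2=1/4+9/16=13/16<1$. The outer short end at $y=b+1$ lies strictly outside $E$. Any black circuit in $E^0$ surrounding $2E/3$ must therefore separate these two short ends (one lies in its bounded complement, one in its unbounded), so by a standard planar-topology fact the intersection of the circuit with $R$ contains a connected arc from one long side of $R$ to the other: a black horizontal crossing of $R$. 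Hence $\Pr(\Hb(R))\ge c$, and the symmetric rectangle $R'=[a/2,a+1]\times[-b/3,b/3]$ gives $\Pr(\Vb(R'))\ge c$ by the same argument. Setting $m=a/2$, $n=b/2$, and $\rho=4/3$, these bounds read $h_\theta(\rho m,n), v_\theta(m,\rho n)\ge\psi(c)$ after translation (Corollary~\ref{c_trans_lower}); since $mn=ab/4$ ranges over all sufficiently large values as $A$ does, this is $LR_{4/3}$ and hence LR.

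The main obstacle is the variable aspect ratio: neither property fixes the ratio $m:n$ or $a:b$ in terms of $A$, so the construction must be uniform across all aspect ratios. This is handled by scaling the ellipse's semi-axes as a common multiple of $(m,n)$ in the forward direction and extracting rectangles whose dimensions are common fractions of $(a,b)$ in the reverse direction, which matches the aspect ratios of the two objects; choosing the scaling constants sufficiently large ensures that the overlap and containment conditions hold in any aspect ratio.
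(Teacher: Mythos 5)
Your proof is correct in its essential strategy, and your reverse direction is genuinely different from the paper's.

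\textbf{Forward direction.} Your construction (four explicitly placed rectangles of aspect ratio $140$, whose long-direction crossings chain into a circuit in the elliptical annulus, followed by Corollary~\ref{c_trans_lower} and three applications of Lemma~\ref{ourHarris2}) is essentially the paper's own argument, which uses the rectangle arrangement of Figure~\ref{fig_rinc}(a) and $LR_{20}$. You have merely substituted concrete coordinates and a larger $\rho$; the geometric idea is the same.

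\textbf{Reverse direction.} Here you diverge substantially from the paper. The paper arranges nine translated copies of the ellipse to overlap in such a way that nine circuits chain together into both a horizontal and a vertical crossing, paying eight further applications of Lemma~\ref{ourHarris2}; it also has to snap each translate to a lattice point to use translation invariance. You instead use a single ellipse and observe that a circuit in the annulus, surrounding the inner hole, must topologically separate the inner end of your rectangle $R$ (inside $2E/3$) from the outer end (outside $E$), so it contains a black horizontal crossing of $R$. This is cleaner -- it costs no Harris-type inequality at all in the reverse direction -- and is, in my view, a nicer argument. Two caveats, neither fatal. First, the ``standard planar-topology fact'' deserves a sentence of justification, especially because the black circuit need not be a simple closed curve: the cleanest formulation is to apply Lemma~\ref{oneway} --- if $\Hb(R)$ failed then $\Vw(R)$ would give a white path from inside $2E/3$ to outside $E$, which would have to cross the black circuit by a winding-number argument, a contradiction. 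Second, your additive buffer (outer side at $y=b+1$, $x=a+1$) does not scale with $a,b$, so the rectangle $R$ has height $b/2+1$ rather than the $n=b/2$ you declare, and $R'$ has width $a/2+1$ rather than $m=a/2$; as written your $(m,n,\rho)$ are mutually inconsistent unless $a=b$. This is an $O(1)$ discrepancy and is easily repaired either by a multiplicative buffer (outer side at $y=1.01b$, so the height is a fixed fraction of $b$ and a single $\rho$ works for all aspect ratios), or by invoking Lemma~\ref{cont} once to absorb the side-shift of $1$. You should make one of these repairs, but the idea of the construction is sound.
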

\begin{figure}[htb]
 \centering
 \input{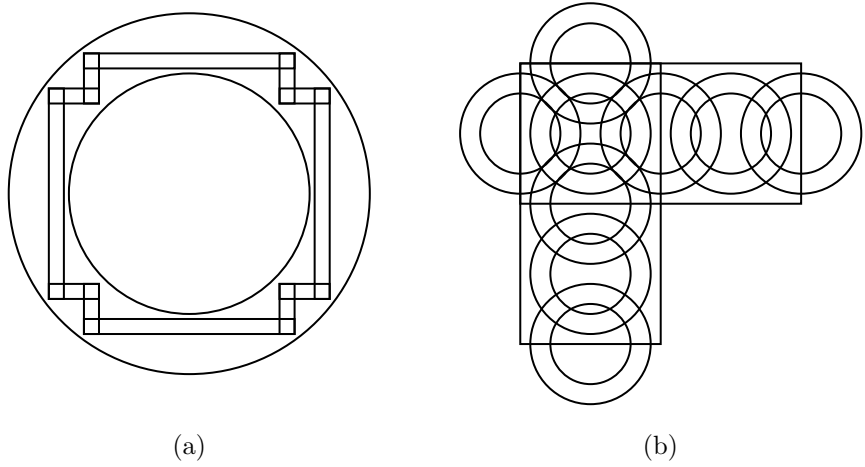}
 \caption{(a) Rectangles in an annulus, and (b) circles crossing two rectangles.}\label{fig_rinc}
\end{figure}

\begin{proof}
Suppose first that $\cC$ has the large rectangles property; in particular,
it has the property $LR_{20}$.

Consider the arrangement of overlapping rectangles
shown in Figure~\ref{fig_rinc}(a). Taking the shorter side
of each rectangle to have length $1$, the arrangement is such that the longer side
of each rectangle has length at most $20$. Also, if each rectangle
has a black crossing in the long direction, then $\Sb(C)$
holds, where $C$ is the outer circle. Let $a$
denote the area of $C$, so $a$ is an absolute constant.

We must show the existence of an ellipse $E$ with (large enough) area $A$
and both axes at least $L$ such that $\Pr(\Sb(E))\ge c_1$, for some
constant $c_1$.  Let $d_0=d_0(\cC)$ be the constant
in Lemma~\ref{reg}. The large rectangles property $LR_{20}$
gives us a $\theta$ and $m,n\ge \max\{L,100d_0\}$ with $mn=A/a$ such
that $h_{\theta}(20m,n)\ge c$ and $v_{\theta}(m,20n)\ge c$,
for some constant $c$ that is independent of $A$.
Consider the image of Figure~\ref{fig_rinc}(a) under a linear transformation
mapping each 1-by-1 square to a $\theta$-aligned $m$-by-$n$ rectangle.
Note that the resulting ellipse $E$ (the image of $C$) has both axes at least $L$ and
has area $A$. Using Corollary~\ref{c_trans_lower} to allow for translation,
each image rectangle has a black crossing in the relevant
direction with probability at least some constant $c'=\psi(c)>0$.
Using Lemma~\ref{ourHarris2} (in the form~\eqref{lH}), it follows that $\Pr(\Sb(E))$ is
bounded away from zero.

For the reverse implication, we assume the large ellipses
property and deduce the property $LR_2$. As shown
in Figure~\ref{fig_rinc}(b), one can arrange 9 circles $C_i$ of radius
$0.9$ to `cross' a $2$-by-$1$ rectangle $R_1$ and a $1$-by-$2$ rectangle
$R_2$ in such a way that if $\Sb(C_i)$ holds
for each $i$, then $\Hb(R_1)$ and $\Vb(R_2)$ hold.
Moreover, this remains true if each circle is translated by a small
distance (at most $0.01$, say). Given an ellipse $E$
with both axes at least $\max\{100d_0,L\}$ such that $\Pr(\Sb(E))\ge c$,
one can rotate the arrangement in Figure~\ref{fig_rinc}(b) and
then scale it along the directions of the axes
of $E$ so that each circle is mapped to a translate $E_i$
of $E$, and $R_1$ and $R_2$ are mapped to $\theta$-aligned
$2m$-by-$n$ and $m$-by-$2n$ rectangles $R_1'$ and $R_2'$, for some $\theta$, $m$ and $n$.
Note that $m$, $n\ge L$, and $mn$ is an absolute constant times the area of $E$.

Moving each $E_i$ by a distance of at most $d_0$ (which
corresponds to translating $C_i$ through a distance of
at most $0.01$), we may assume that $E_i=E+\ell_i$ for some $\ell_i\in \cL$.
Then $\Pr(\Sb(E_i))=\Pr(\Sb(E))\ge c$, and if all 9
upsets $\Sb(E_i)$ hold, then $\Hb(R_1')$ and $\Vb(R_2')$ hold.
Using Lemma~\ref{ourHarris2} thus
gives a constant lower bound on $h_{\theta}(2m,n)$ and $v_{\theta}(m,2n)$,
establishing the property $LR_2$.
\end{proof}

Most of the time, we work with the large rectangles
property; the large ellipses property will be convenient to use in 
Section~\ref{sec_crit}. The equivalence is also useful in that
it leads to a quick proof that the large rectangles property is affine-invariant.

\begin{lemma}\label{raf}
Let $T$ be an invertible linear map from $\RR^2$ to $\RR^2$ and
let $\cC$ be a non-degenerate independent lattice colouring.
If $\cC$ has the large rectangles property, then so does $T(\cC)$.
\end{lemma}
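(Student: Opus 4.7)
\begin{proof}[Proof proposal]
The plan is to exploit Lemma~\ref{r_e}: rather than trying to track how rectangles transform (a non-trivial issue, since $T$ will in general send an axis-aligned rectangle to a parallelogram that is not a rectangle), we work with the large ellipses property, which is manifestly well-behaved under invertible linear maps because $T$ sends ellipses to ellipses.

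Let $\sigma>0$ be the smaller singular value of $T$, so that $T$ stretches every vector by at least a factor of $\sigma$; in particular, if an ellipse $E$ centred on the origin has both axes of length at least $L$, then $T(E)$ is an ellipse centred on the origin with both axes of length at least $\sigma L$. Also $\area(T(E)) = |\det T|\,\area(E)$, and $T(2E/3) = 2T(E)/3$, so $T$ maps the annular region $E^0$ bijectively onto $T(E)^0$. Finally, since the colouring $T(\cC)$ is obtained by transporting $\cC$ through $T$, a point $x$ is $T(\cC)$-black iff $T^{-1}(x)$ is $\cC$-black, and hence there is a black closed path in $T(E)^0$ surrounding the central hole in $T(\cC)$ iff the corresponding event holds for $E$ in $\cC$. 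In particular,
\[
  \Pr_{T(\cC)}(\Sb(T(E))) = \Pr_{\cC}(\Sb(E)).
\]

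Now suppose $\cC$ has the large rectangles property. By Lemma~\ref{r_e}, $\cC$ has the large ellipses property, with some constant $c>0$. To verify the large ellipses property for $T(\cC)$ (with the same constant $c$), fix $L'>0$ and set $L = L'/\sigma$. For any sufficiently large target area $A'$, set $A = A'/|\det T|$; provided $A'$ is large enough (depending on $L'$ and $T$), $A$ will be large enough (depending on $L$) for the large ellipses property of $\cC$ to yield an ellipse $E$ centred on the origin with area $A$ and both axes at least $L$ such that $\Pr_\cC(\Sb(E))\ge c$. Then $E':=T(E)$ is an ellipse centred on the origin with area $A'$ and both axes at least $\sigma L = L'$, and by the displayed equation above, $\Pr_{T(\cC)}(\Sb(E'))\ge c$. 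Note that $T(\cC)$ is still a non-degenerate independent lattice colouring (with lattice of symmetries $T(\cL)$), so Lemma~\ref{r_e} applies in the reverse direction, and $T(\cC)$ has the large rectangles property.
\end{proof}

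There is essentially no obstacle here once the large ellipses property has been introduced; the entire point of Lemma~\ref{r_e} is to reduce affine-invariance questions to a shape (the ellipse) that transforms cleanly under linear maps. The only routine bookkeeping is confirming that $T(\cC)$ is itself a legitimate non-degenerate independent lattice colouring, which is immediate from the construction.
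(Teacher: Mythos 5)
Your proof is correct and follows essentially the same route as the paper: pass to the large ellipses property via Lemma~\ref{r_e}, observe that an invertible linear map sends ellipses to ellipses and scales axis lengths by at least a constant (the paper phrases this via inscribed circles, you via the smallest singular value of $T$), and then apply Lemma~\ref{r_e} again in the reverse direction. The only cosmetic difference is that you also spell out the measure-transport identity $\Pr_{T(\cC)}(\Sb(T(E)))=\Pr_{\cC}(\Sb(E))$ and the area bookkeeping, which the paper leaves implicit.
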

\begin{proof}
There is a constant $a=a(T)>0$ such that if $E$ is an ellipse
both of whose axes have length at least $L$ then $T(E)$ is an ellipse
both of whose axes have length at least $aL$. (For example,
note that $E$ contains a circle of radius $L$, and the image
of this circle contains a circle of radius $aL$ for some $a>0$.)
Hence the large ellipses property is invariant under $T$. Applying
Lemma~\ref{r_e} twice, we see that the large rectangles property is too.
\end{proof}

Our next lemma shows that after a linear transformation, 
a self-dual hyperlattice percolation model must be
related to its dual by one of a small number of linear transformations.
We consider both the self-dual case (for Theorem~\ref{th1}) and the
approximately self-dual case (for Theorem~\ref{th2}).

\begin{lemma}\label{lsymclass}
Let $\cH(\vecp)$ be an (approximately) self-dual hyperlattice percolation model.
Then there is a map $S:\RR^2\to\RR^2$
giving an isomorphism between $\cH(\vecp)$ and a hyperlattice
model equivalent to $\cH^*(\vecp^*)$
such that $S$ can be written in the form $S(x)=T(x)+\Delta(x)$,
where $T$ is linear and $|\Delta(x)|$ is bounded. Furthermore,
after a linear change of coordinates (if necessary),
we may assume that $T$ is either reflection in some line,
or rotation through one of the angles $0$, $\pi/2$, or $\pi$.
\end{lemma}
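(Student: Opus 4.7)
The plan has two main ingredients: producing the decomposition $S = T + \Delta$ with $\Delta$ bounded, and then bringing $T$ into the stated form by a linear change of coordinates.

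For the decomposition I would set $\Delta(x) := S(x) - T(x)$; the identity $S(x+\ell) = S(x) + T(\ell)$ for $\ell \in \cL$ immediately gives $\Delta(x+\ell) = \Delta(x)$, so $\Delta$ is $\cL$-periodic. Continuity of $S$ and $T$ then makes $\Delta$ bounded on any fundamental domain of $\cL$, and hence on $\RR^2$. The fact that $S$ is a homeomorphism together with the boundedness of $\Delta$ also forces $T$ to be invertible, since otherwise $S(\RR^2)$ would lie in a bounded neighbourhood of a proper subspace.

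For the classification I would introduce the group $\tilde G \subset GL(2,\RR)$ generated by the linear parts of all plane-hyperlattice automorphisms of $\cH$ together with the linear parts of all isomorphisms $\cH \to \cHd$ (or, in the approximately self-dual case, of isomorphisms onto the equivalent model), after choosing the drawing of $\cHd$ so that its lattice of symmetries is also $\cL$. The main technical point is that $\tilde G$ is finite: every such map has the form $x \mapsto Ax + \Delta'(x)$ with $\Delta'$ bounded and must permute hyperedges, which have bounded diameter by Lemma~\ref{reg}; this forces $\|A\|$ and $\|A^{-1}\|$ to be uniformly bounded, which combined with $A \in GL(\cL) \cong GL(2,\Z)$ yields finiteness. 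Averaging the Euclidean inner product over $\tilde G$ and then applying a linear change of coordinates, I may assume $\tilde G \subset O(2)$; this reduction is unnecessary in the approximately self-dual case, since $T$ is already an isometry by hypothesis. The crystallographic restriction then forces $\tilde G$ to be cyclic $C_n$ or dihedral $D_n$ with $n \in \{1,2,3,4,6\}$.

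The linear part $T$ lies in the non-trivial coset of the index-$2$ subgroup $G_0 \le \tilde G$ consisting of linear parts of automorphisms of $\cH$, because $S \circ S$ is an automorphism of $\cH$ (under the canonical identification of the double dual with $\cH$) and so $T^2 \in G_0$. Replacing $S$ by $S \circ U$ for an automorphism $U$ of $\cH$ multiplies $T$ on the right by the linear part of $U$, so I may choose $T$ to be any element of $T G_0$. The main obstacle is the case analysis enumerating the possible pairs $(\tilde G, G_0)$ and verifying that $T G_0$ always contains an element of the required form: the odd cyclic case $\tilde G = C_n$ with $n$ odd does not arise (no index-$2$ subgroup); any dihedral-over-cyclic coset $D_n \setminus C_n$ consists of reflections; the cyclic-over-cyclic cases $C_{2k} \setminus C_k$ for $k \in \{1,3\}$ contain $R_\pi$ while $C_4 \setminus C_2$ contains $R_{\pi/2}$; and the dihedral-over-dihedral cases $D_4 \setminus D_2$ and $D_6 \setminus D_3$ contain $R_{\pi/2}$ and $R_\pi$ respectively.
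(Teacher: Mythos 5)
Your decomposition step ($\Delta := S - T$ is $\cL$-periodic hence bounded, and $T$ is forced to be invertible) is exactly the paper's argument. Your route to the classification of $T$ is genuinely different, however, and it contains a gap.

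The gap is in the assertion that $G_0$ has index $2$ in $\tilde G$. You argue this from $T^2 \in G_0$, but that only gives index $2$ once you know $\tilde G = G_0 \cup TG_0$, equivalently that $G_0$ is normal in $\tilde G$, equivalently that $TG_0T^{-1} = G_0$. But $TG_0T^{-1}$ is the group of linear parts of automorphisms of $\cHd$, not of $\cH$, and there is no reason for these two groups to coincide (they are merely conjugate). For instance, if $G_0 = \{ \mathrm{id}, r\}$ with $r$ a reflection and $T = rR_{\pi/3}$, then $T^2 = \mathrm{id} \in G_0$ but $TG_0T^{-1} = \{\mathrm{id}, rR_{2\pi/3}\} \ne G_0$, so $\langle G_0, T\rangle$ is all of $D_6$ and $G_0$ has index $6$. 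Your case enumeration (which restricts to pairs with $[\tilde G : G_0] = 2$) then misses such cases, including $\tilde G$ odd cyclic, which can in fact occur (e.g.\ $\tilde G = G_0 = C_3$, $T = \mathrm{id}$). A smaller issue in the same spirit: the bounded-hyperedge argument controls $\|A\|$ for linear parts of actual automorphisms and isomorphisms, but an arbitrary word in $\tilde G$ (such as $TgT$ for $g \in G_0$) does not typecheck as the linear part of any single hyperlattice map, so the uniform bound, and hence finiteness of $\tilde G$, is not immediate from what you wrote.

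The paper takes a shorter route that avoids the group $\tilde G$ entirely. It works directly with $T$: since $T(\cL) = \cL$, $\det T = \pm 1$, and after a linear change of coordinates $T$ is a rotation, a shear, or a stretch. Iterating $S^2$ (an automorphism of $\cH$) and using that hyperedges have bounded diameter rules out nontrivial shears and stretches, so $T$ is a rotation or a reflection, and then the crystallographic restriction applied to $T$ alone (not to the whole symmetry group) gives rotation angle a multiple of $\pi/3$ or $\pi/2$. Finally, replacing $S$ by $S^3$ reduces the $\pi/3$-multiples to $0$ or $\pi$ and the $3\pi/2$ case to $\pi/2$. This is your "$T \in TG_0$" observation specialized to $g = T^2$, which is all that is actually needed and sidesteps the classification of $\tilde G$ and $G_0$.
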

\begin{proof}
Suppose first that $\cH(\vecp)$ is self-dual.
By the definition of self-duality for hyperlattices, there is a homeomorphism
$S$ from $\RR^2$ to $\RR^2$ mapping $\cH$ to its dual, and preserving
the lattice structure. More precisely, there is a linear map
$T$ with $T(\cL)=\cL$ such that $S(x+\ell)=S(x)+T(\ell)$
whenever $\ell\in \cL$. Since $\Delta=S-T$ is continuous
and doubly periodic, it is bounded, giving the first statement.

Since $T(\cL)=\cL$, the map
$T$ preserves area, so
$T\in GL_2(\RR^2)$ with $\det(T)=\pm 1$.
From standard results,
$T$ is conjugate in $GL_2(\RR^2)$ to a map $T'$ that is either
a rotation, a shear with matrix 
$\left(\begin{smallmatrix}1&\la\\ 0&1\end{smallmatrix}\right)$,
or a stretch with matrix $\left(\begin{smallmatrix}\la&0\\ 0&\pm1/\la\end{smallmatrix}\right)$.
Changing coordinates (or applying a linear transformation to $\cH$ and its
dual simultaneously), we may assume that $T=T'$.

Now $S^2$ maps $\cH$ into itself, and $S^2(x)=T^2(x)+\Delta_2(x)$
where $\Delta_2$ is bounded. 
If $T$ is a shear with $\la\ne 0$ or a stretch with $|\la|\ne 1$,
then repeated application of $S^2$ shows the existence of arbitrarily long edges in $\cH$,
giving a contradiction. Thus $T$ is either a rotation or a reflection.
In the former case, the fact that $T$ maps $\cL$ into itself guarantees
that the angle of rotation $\theta$ is a multiple of either $\pi/3$ or $\pi/2$.
Replacing $S$ by the isomorphism $S^3$ from $\cH(\vecp)$ to its dual allows
us to reduce the cases $\theta=k\pi/3$, $k\in \Z$, to the cases $\theta=0$ or $\theta=\pi$,
and the case $\theta=3\pi/2$ to the case $\theta=\pi/2$.

The argument for the approximately self-dual case is similar, except that
from the definition of approximate self-duality we may simply assume that $T$
is a rotation or a reflection.
\end{proof}

For the rest of the section, in the light of Lemma~\ref{raf},
we assume as we may that $\cH(\vecp)$ and its dual
are related as described
in Lemma~\ref{lsymclass}. In the case where the map $T$ is a reflection,
we take it to be reflection in the $x$-axis, and call this the \emph{reflection} case.
The remaining cases are \emph{rotation} cases; we write $\td$ for the angle
of rotation passing from $\cH$ to $\cHd$, so $\td\in\{0,\pi/2,\pi\}$.

Let $\cC^\dual$ be the \emph{negative}
of the colouring $\cC$, defined simply by
interchanging black and white (both in $\cH$, and in the colours of the
subfaces of the grey faces of $\cH$).
Even in the self-dual case,
although $\cC$ realizes $\cH(\vecp)$, which is self-dual via the map $S:\RR^2\to\RR^2$,
we cannot assume that $\cC$ is self-dual in the natural sense.
For example, in the dual colouring to that shown in the centre of Figure~\ref{fig_shading3},
there are two black subfaces within the hexagon. This pattern may not occur in $\cC$.
However, writing $d_0$ for the constant given by Lemma~\ref{reg}, since $\cC$
realizes $\cH(\vecp)$, there is a natural coupling of $\cC$ and $\cH(\vecp)$
such that any black path $P$ in $\cC$ is within Hausdorff distance $d_0$
of an open path in $\cH(\vecp)$, and vice versa.
Applying this observation also to $\cC^\dual$, which realizes $\cHd(\vecp^\dual) = S(\cH(\vecp))$,
it follows that we can couple $\cC$ and $\cC^\dual$ so that for any black
path $P$ in $\cC$ there is a black path in $\cC^\dual$ within distance $O(1)$ of $S(P)$,
and vice versa.
Recalling that black in $\cC^\dual$ simply means white in $\cC$, and using Lemma~\ref{lsymclass},
this has the following consequence. Here, $\phi$ is the `probability scaling function'
defined before Lemma~\ref{cont}.

\begin{lemma}\label{wb}
Let $\cC$ be a non-degenerate
independent lattice colouring realizing an (approximately) self-dual
hyperlattice percolation model $\cH(\vecp)$, and let $T$ be the linear
map given by Lemma~\ref{lsymclass}.
Then there are constants $L$ and $C$ such that, for any angle $\theta$
and any $\theta$-aligned $m$-by-$n$ rectangle $R$ with $m$, $n\ge L$, we have
\[
 \phi\bb{\Pr(\Hb(R))} - \phi\bb{\Pr(\Hw(T(R)))} \in [-C,C]
\]
\end{lemma}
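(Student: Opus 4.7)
The plan is to use (approximate) self-duality to establish a correspondence between black horizontal crossings of $R$ in $\cC$ and white crossings of a rectangle close to $T(R)$, and then apply Lemma~\ref{cont} to pass from the close rectangle to $T(R)$ itself.

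First I would set up the correspondence between black and white paths in $\cC$ via the duality. Since $\cC$ realizes $\cH(\vecp)$, up to Hausdorff distance $O(d_0)$ there is a bijection between open paths in the associated configuration $\omega$ on $\cH$ and black paths in $\cC$. The negative $\cC^\dual$ realizes $\cHd(\vecp^\dual)$, so open paths in the dual configuration $\omega^\dual$ on $\cHd$ correspond to black paths in $\cC^\dual$, i.e., white paths in $\cC$, again up to Hausdorff distance $O(d_0)$. In the exactly self-dual case, Lemma~\ref{lsymclass} gives an isomorphism between $\cH(\vecp)$ and $\cHd(\vecp^\dual)$ carried by $S = T + \Delta$ with $|\Delta|$ bounded; in the approximately self-dual case we compose additionally with an equivalence between $\cHd(\vecp^\dual)$ and some $\cH''(\vecp'')$, which only adds a further bounded Hausdorff error. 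Combining, I obtain a constant $K_0$ depending only on $\cH$, $\vecp$ and $S$ such that for any set $\cal Q$ of curves defined by geometric conditions,
\[
 \Pr\bb{\exists\ \cC\text{-black path }P\in{\cal Q}} \;=\; \Pr\bb{\exists\ \cC\text{-white path }P'\text{ within Hausdorff distance }K_0\text{ of }S({\cal Q})}.
\]

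Next I would apply this with $\cal Q$ the set of black horizontal crossings of $R$. Under $S$, such a crossing $P\subset R$ maps within bounded distance of $T(P)\subset T(R)$, a path joining the images of the two vertical sides of $R$. With the convention that $T(R)$ is regarded as $(\theta+\td)$- or $(-\theta)$-aligned (according as $T$ is a rotation by $\td$ or a reflection), these images are precisely the vertical sides of $T(R)$. Let $(x_L,x_R)$ and $(y_B,y_T)$ denote the horizontal and vertical extents of $T(R)$ in its aligned frame, and set $R^* = [x_L+K_0,x_R-K_0]\times[y_B-K_0,y_T+K_0]$. Any white path $P'$ within Hausdorff distance $K_0$ of a horizontal crossing of $T(R)$ lies in the $K_0$-neighbourhood of $T(R)$ (hence has aligned $y$-coordinate in $[y_B-K_0,y_T+K_0]$) and has endpoints with aligned $x$-coordinates in $[x_L-K_0,x_L+K_0]$ and $[x_R-K_0,x_R+K_0]$ respectively. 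Extracting the minimal sub-path of $P'$ between the vertical lines $\{x=x_L+K_0\}$ and $\{x=x_R-K_0\}$ yields a white horizontal crossing of $R^*$. Thus $\Pr(\Hw(R^*))\ge \Pr(\Hb(R))$.

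The rectangle $R^*$ differs from $T(R)$ by moving each side a bounded distance; provided $L$ is large enough (say $L\ge 100d_0+2K_0$) both $R^*$ and $T(R)$ are large, and applying Lemma~\ref{cont} a bounded number of times (once per unit of distance, for each of the four sides) gives $|\phi(\Pr(\Hw(R^*)))-\phi(\Pr(\Hw(T(R))))|\le C_1$ for a constant $C_1$ depending only on $K_0$ and $d_0$. Hence $\phi(\Pr(\Hw(T(R))))\ge \phi(\Pr(\Hb(R)))-C_1$. For the reverse inequality one argues symmetrically: $S^{-1}=T^{-1}+\Delta'$ with $\Delta'$ bounded, and starting from a white horizontal crossing of $T(R)$ the same correspondence produces a black horizontal crossing of some rectangle $R^{**}$ close to $R$, giving $\phi(\Pr(\Hb(R)))\ge \phi(\Pr(\Hw(T(R))))-C_2$. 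Taking $C=\max\{C_1,C_2\}$ concludes.

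The main obstacle will be the careful bookkeeping of Hausdorff-distance errors accumulated from the several sources (subdivision of grey faces into subfaces, the equivalence in the approximately self-dual case, and the $\Delta$ term), and verifying that the alignment convention for $T(R)$ is used consistently — in particular, when $T$ is rotation by $\pi/2$ the notion of "horizontal" for $T(R)$ shifts by $\pi/2$, so that the images of the vertical sides of $R$ really are the "vertical" sides of $T(R)$ in the new alignment. Once these details are handled, the reduction to Lemma~\ref{cont} is routine.
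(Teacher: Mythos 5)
Your proof is correct and takes essentially the same approach as the paper; the only cosmetic difference is that the paper enlarges/shrinks $R$ to an intermediate rectangle $R^+$ before applying duality and then invokes Lemma~\ref{cont} on black crossings near $R$, whereas you apply duality first and then invoke Lemma~\ref{cont} on white crossings near $T(R)$ — logically these are mirror images. The only imprecision is stating the black/white correspondence as an exact equality of probabilities: as written it cannot hold for arbitrary $\cal Q$ with a single $K_0$; what is true (and what you actually use) is the one-sided implication that a black crossing of $R$ in $\cC$ produces, in a coupled copy with the same law, a white path within bounded Hausdorff distance of $T(P)$, which suffices.
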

\begin{proof}
We write out only the self-dual case; since we in any case allow some
`elbow room' when passing from $\cH(\vecp)$ to its dual, there 
are no additional difficulties in the approximately
self-dual case.

Let $L=100d_0+10D$, where $D=\sup\{|S(x)-T(x)|\}$, which is finite
by Lemma~\ref{lsymclass}. Couple $\cC$ and $\cC_1=\cC^\dual$ as above.
Let $\cC_2$ be the colouring obtained from $\cC_1$ by interchanging
white and black, so $\cC_2$ has the same distribution as $\cC$.

Let $R$ be a rectangle as described, and let $R^+$ be obtained by moving
the vertical sides of $R$ outwards by a distance $D$ and the horizontal sides
inwards by the same distance. Suppose $R^+$ has a black horizontal crossing in $\cC$.
Then from the remarks before the lemma, there is a black path in $\cC_1$
close to $S(P)$ and hence to $T(P)$, and thus a white path in $\cC_2$
close to $T(P)$. But any such path crosses $T(R)$.
Hence
\[
 \Pr_{\cC}(\Hw(T(R))) = \Pr_{\cC_2}(\Hw(T(R))) \ge \Pr_{\cC}(\Hb(R^+)).
\]
Writing $\Pr$ for $\Pr_{\cC}$ as usual, and using Lemma~\ref{cont},
this gives $\phi(\Pr(\Hw(T(R))))\ge \phi(\Pr(\Hb(R))) -O(1)$.
The reverse inequality is proved similarly.
\end{proof}

In what follows we assume throughout that our `large' length $L$ is at least
$100d_0$, and is such that Lemma~\ref{wb} applies. Note that rectangles
with dimensions at least $L$ are `large' in the terminology of the previous
sections.

\begin{lemma}\label{2mod}
Let $\cC$ be a non-degenerate
independent lattice colouring realizing an (approximately) self-dual
hyperlattice percolation model $\cH(\vecp)$.
For any $L$
there are constants $A_0=A_0(\cC,L)$ and $C=C(\cC,L)$ such that for any $A\ge A_0$,
and any $\theta$ in the rotation case or
$\theta=0$ in the reflection case,
we may find $\xx$, $\yy\ge L$ with $\xx\yy=A$ such that
$\phi(h_\theta(\xx,\yy))$, $\phi(v_\theta(\xx,\yy))\in [-C,C]$.
\end{lemma}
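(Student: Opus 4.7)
The plan is to combine Lemma~\ref{1mod}, which already delivers $m,n\ge L$ with $mn=A$ and $\phi(h_\theta(m,n))\in[-4,4]$, with Lemma~\ref{wb}, which together with Lemma~\ref{oneway_I} translates self-duality into a relation between $\phi(h_\theta)$ and $\phi(v_\theta)$. The exact form of that relation depends on whether the linear map $T$ of Lemma~\ref{lsymclass} preserves or interchanges the ``horizontal'' and ``vertical'' directions of a $\theta$-aligned rectangle, so the argument splits naturally into two cases.

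\textbf{Case A: $T$ preserves the alignment of $\theta$-aligned rectangles centred at the origin.} This covers $\td=0$ (where $T$ is the identity), $\td=\pi$ (where $T$ maps $R\subset \RR^2$ to $-R=R$ since $R$ is centred at $0$), and the reflection case with $\theta=0$ (where $R$ is symmetric about the $x$-axis, so $T(R)=R$). In every such case $T(R)=R$, so by Lemma~\ref{wb},
\[
 \phi(h_\theta(m,n))-\phi(\Pr(H_w(R)))\in [-C_1,C_1].
\]
Lemma~\ref{oneway_I} gives $\Pr(H_w(R))=1-v_\theta(m,n)$, and using $\phi(1-p)=-\phi(p)$ this rearranges to
\[
 \phi(h_\theta(m,n))+\phi(v_\theta(m,n))\in[-C_1,C_1].
\]
Feeding in the $(m,n)$ supplied by Lemma~\ref{1mod} then yields $\phi(v_\theta(m,n))\in[-C_1-4,C_1+4]$, completing the case with $C=C_1+4$.

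\textbf{Case B: $\td=\pi/2$.} Here $T$ rotates alignments by $\pi/2$, so instead we take the square $m=n=\sqrt{A}$ (forcing $A_0\ge L^2$). Then $T(R)$ is a $(\theta+\pi/2)$-aligned $\sqrt A\times\sqrt A$ square, and combining Lemma~\ref{oneway_I} with the identities $v_\alpha(a,b)=h_{\alpha+\pi/2}(b,a)$ and $h_{\alpha+\pi}=h_\alpha$ gives
\[
 \Pr(H_w(T(R)))=1-v_{\theta+\pi/2}(\sqrt A,\sqrt A)=1-h_\theta(\sqrt A,\sqrt A).
\]
Lemma~\ref{wb} then forces $2\phi(h_\theta(\sqrt A,\sqrt A))\in[-C_1,C_1]$, so $\phi(h_\theta(\sqrt A,\sqrt A))\in[-C_1/2,C_1/2]$. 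Since the conclusion of Lemma~\ref{wb} holds at every angle, running the same argument with $\theta$ replaced by $\theta+\pi/2$ gives the matching bound on $\phi(h_{\theta+\pi/2}(\sqrt A,\sqrt A))=\phi(v_\theta(\sqrt A,\sqrt A))$. Take $C=C_1/2$.

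\textbf{Main obstacle.} The genuinely delicate point is recognising that the four possibilities for $T$ do not all behave the same: when $T$ preserves the alignment the duality $h_\theta\leftrightarrow v_\theta$ is a pointwise relation on any rectangle, but when $\td=\pi/2$ the duality only relates $h_\theta$ to itself on \emph{squares} (and $v_\theta$ to itself similarly), so one must abandon the freedom granted by Lemma~\ref{1mod} and simply take $m=n=\sqrt A$. Most of the remaining work is just bookkeeping of how the labels ``horizontal'' and ``vertical'' transform under $T$; since Lemma~\ref{wb} has already packaged away the bounded error $\Delta$ from $S=T+\Delta$, no further hyperlattice-specific geometric work is required at this stage.
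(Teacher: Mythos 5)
Your proof is correct and follows essentially the same route as the paper: for $\td\in\{0,\pi\}$ and the reflection case with $\theta=0$, use Lemma~\ref{1mod} to pin down $\phi(h_\theta(m,n))$ and then combine Lemma~\ref{wb} with Lemma~\ref{oneway_I} (and $\phi(1-p)=-\phi(p)$) to transfer the bound to $\phi(v_\theta(m,n))$; for $\td=\pi/2$, take $m=n=\sqrt A$ and observe that the same duality identity forces $2\phi(h_\theta(\sqrt A,\sqrt A))$ to be bounded. The paper presents the $\td=\pi/2$ case first and the rest second, whereas you organise by whether $T$ preserves alignment, but the underlying calculation is identical.
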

In other words, both the horizontal and vertical crossing probabilities
for some rectangle of the given area and orientation are
bounded away from 0 and~1.
\begin{proof}
Let us write $h_\theta^*(\xx,\yy)$ and $v_\theta^*(\xx,\yy)$
for the probabilities that an $\xx$-by-$\yy$ $\theta$-aligned
rectangle has a white horizontal crossing or a white vertical
crossing, respectively.

Consider first the rotation case with $\td=\pi/2$.
In this case we simply set $A_0=L^2$, and
choose $\xx=\yy=\sqrt{A}$.
Let $R$ be the $\xx$-by-$\xx$ $\theta$-aligned square centred
on the origin. Note that $T(R)$ is the same rectangle $R$, but
viewed as $(\theta+\pi/2)$-aligned, so $v_\theta^*(\xx,\xx)=\Pr(\Hw(T(R)))$.
Hence Lemma~\ref{wb} gives $\phi(h_\theta(\xx,\xx))-\phi(v_\theta^*(\xx,\xx))=O(1)$.
But by Lemma~\ref{oneway}, we have $h_\theta(\xx,\xx)+v_\theta^*(\xx,\xx)=1$,
so $\phi(v_\theta^*(\xx,\xx))=-\phi(h_\theta(\xx,\xx))$. It follows
that $\phi(h_\theta(\xx,\xx))=O(1)$. Similarly, $\phi(v_\theta(\xx,\xx))=O(1)$.

For the remaining cases, for $A$ large enough,
Lemma~\ref{1mod} gives us $\xx,\yy\ge L$ with $\xx\yy=A$ and
$\phi(h_\theta(\xx,\yy))\in [-4,4]$.
This time (noting that $\theta=0$ in the reflection case), the map $T$
maps our $\theta$-aligned $\xx$-by-$\yy$ rectangle
into itself, and Lemma~\ref{wb} gives
$\phi(h_\theta(\xx,\yy))-\phi(h_\theta^*(\xx,\yy))=O(1)$.
Hence $\phi(v_\theta(\xx,\yy))=-\phi(h_\theta^*(\xx,\yy))=\phi(h_\theta(\xx,\yy))+O(1)=O(1)$.
\end{proof}

We are now ready to prove Theorem~\ref{rswc}.
\begin{proof}[Proof of Theorem~\ref{rswc}]
We assume as we may that $L\ge 100d_0$.
Throughout we fix an arbitrary $A\ge 10^6A_0(\cC,L)$,
where $A_0(\cC,L)$ is the constant in Lemma~\ref{2mod}. All constants $c$ or $c_i$
below will depend only on $\cC$, not on $A$ or $L$. We shall show that
for some $\theta$, $a$ and $b$ with $ab=10A$ we have
\begin{equation}\label{rswcaim}
 h_\theta(10a,b),\, v_\theta(a,10b)\ge c,
\end{equation}
where $c>0$ does not depend on $A$. This will establish
that $\cC$ has the large rectangles property $LR_{10}$.

Fix an orientation $\theta$, with $\theta=0$ in the reflection case, and $\theta$
arbitrary otherwise. Suppressing the dependence on $\theta$ in the notation,
by Lemma~\ref{2mod} there are $\xx$, $\yy\ge L$ with $\xx\yy=A$ such that
\begin{equation}\label{bw}
 h(\xx,\yy),\, v(\xx,\yy)\ge c_0,
\end{equation}
where $c_0>0$ is a constant depending
only on $\cC$.

Suppose for the moment that $h(100\xx,\yy)\ge c_1$, where $c_1$ is a positive
constant not depending on $A$. Set $\xx'=10\xx$ and $\yy'=\yy$, and consider
an $\xx'$-by-$\yy'$ rectangle $R$ and the $\xx'/10$-by-$\yy'$ rectangle $R'$
inside it. Note that $h(R)=h(10\xx,\yy)\ge h(100\xx,\yy)\ge c_1$.
Also, since $R'$ is $\xx$-by-$\yy$, we have $v(R')\ge c_0$.
We may thus apply Theorem~\ref{th_RSW1} with $\xx$ and $\yy$ replaced
by $\xx'$ and $\yy'$, and $\alpha=\beta_1=1/10$, $\beta_2=1$, and $\eta=1/2$.
Any black vertical crossing of $R'$ satisfies the conditions
for the event $\Ed$ considered in Theorem~\ref{th_RSW1} simply because,
seen with respect to the $\xx'$-by-$\yy'$ grid, $R'$ is much taller than wide.
Theorem~\ref{th_RSW1} thus gives $v(\xx',100\yy')\ge c$,
for some constant $c>0$. But then $h(10\xx',\yy')=h(100\xx,\yy)$
and $v(\xx',10\yy')\ge v(\xx',100\yy')$ are both at least $\min\{c_0,c\}$,
establishing \eqref{rswcaim} with $a=\xx'=10\xx$ and $b=\yy'=\yy$.

A similar argument (using a `rotated' version of Theorem~\ref{th_RSW1})
shows that if $v(\xx,100\yy)\ge c_1$, then \eqref{rswcaim} holds with $a=\xx$ and $b=10\yy$.
Thus, in what follows, it suffices to show that one of $h(100\xx,\yy)$
and $v(\xx,100\yy)$ is not too small.

Applying Lemma~\ref{2mod} again we find $\xx_1$, $\yy_1\ge L$ with $\xx_1\yy_1=\xx\yy/400=A/400$
such that
\[
 h(\xx_1,\yy_1),\,v(\xx_1,\yy_1)\ge c_0.
\]
Let $R_1$ be the $\xx_1$-by-$\yy_1$ rectangle centred on the
origin, so $\Pr(\Hb(R_1))\ge c_0$ and $\Pr(\Vb(R_1))\ge c_0$.

Fix a small constant $\delta>0$; for example, $\delta=1/1000$ will do.
Suppose first that $\yy_1/\yy\ge (1+\delta)\xx_1/\xx$, say.
Then on the $\xx$-by-$\yy$ scale, the rectangle $R_1$ is significantly
taller than wide, and has area $1/400$. After truncating $R_1$ vertically
if necessary (after which we still have $\Pr(\Vb(R_1))\ge c_0$),
we may apply Theorem~\ref{th_RSW1} with $R$ an $\xx$-by-$\yy$ rectangle,
with $\beta_1=\xx_1/\xx\le 1/20$, $\alpha=\beta_2=\min\{\yy_1/\yy,1/3\}\ge (1+\delta)\xx_1/\xx$,
and $\eta=\delta/2$.
A black vertical crossing of $R_1$ once again ensures that $\Ed$ holds,
and it follows that $v(\xx,100\yy)\ge c$ for some constant $c>0$. As noted
above, we are done in this case.

A similar argument applies if $\xx_1/\xx\ge (1+\delta)\yy_1/\yy$, so we may assume
that $(1-\delta)/20\le \xx_1/\xx,\,\yy_1/\yy \le (1+\delta)/20$.

As before, let $F$ be the function given by Lemma~\ref{ourHarris2} applied to the
product of partially ordered sets corresponding to the random partitions
induced by each edge $e$ of $\cH$.
Recalling that $h(\xx_1,\yy_1)$, $v(\xx_1,\yy_1)\ge c_0$, set $c_2=F(\psi(c_0),\psi(c_0))$,
where $\psi$ is the function appearing
in Corollary~\ref{c_trans_lower}.
For any rectangle $R$,
let $X(R)$ denote the event $\Hb(R)\cap \Vb(R)$ that $R$ has both horizontal
and vertical black crossings.
For any $\xx_1$-by-$\yy_1$ rectangle $R$, we have $\Pr(\Hb(R))$, $\Pr(\Vb(R))\ge \psi(c_0)$
by Corollary~\ref{c_trans_lower}, and thus
\begin{equation}\label{XR}
 \Pr(X(R))\ge c_2.
\end{equation}
Set $c_3=F(c_2/4,c_2/4) < c_2/4$, and
$c_4=F(c_3,c_3/2)< c_3/2$.  

Let us say that a rectangle $R$ is \emph{useful} if it contains
two black paths $P_1$ and $P_2$ such that, after scaling by dividing
all $x$-coordinates by $\xx$ and all $y$-coordinates by $\yy$, the
paths satisfy the assumptions of Lemma~\ref{lchain}. In other words,
for each $i$, the vector $v_i$ from the start of $P_i$ to the end
has (after rescaling) length at least $\delta$, and the angle
between $v_1$ and $v_2$ is at least $\delta$. We write
$U(R)$ for the event that $R$ is useful.
If there is any rectangle $R$ with width at most $\xx/10$ and height
at most $\yy/10$ for which $\Pr(U(R))\ge c_4$ then, recalling \eqref{bw},
Theorem~\ref{th_RSW2} gives $v(\xx,100\yy)\ge c$ for some $c>0$.
Hence we may assume that
\begin{equation}\label{noU}
 \Pr(U(R)) < c_4 < c_3/2 < c_2/8
\end{equation}
for any rectangle with these dimensions.
Note that $1.5\xx_1\le \xx/10$ and $1.5\yy_1\le \yy/10$, so
the rectangle $R_1$ defined earlier
satisfies the size restriction above with room to spare.

Suppose that some $\xx_1$-by-$\yy_1$ rectangle
$R$ is coloured in such a way that $X(R)\setminus U(R)$ holds.
Then $R$ has a black horizontal crossing $P_1$ and
a black vertical crossing $P_2$. The angle (after scaling) between
these crossings must be very close to 0, so it follows
that either both $P_1$ and $P_2$ cross $R$ from near the top left
to near the bottom right, or both cross from near the bottom left
to near the top right. Suppose the former holds. Then, in addition,
every black horizontal or vertical
crossing stays (after the usual scaling) within distance
$5\delta$ of the diagonal; otherwise, such a crossing can
be split into two parts ($AB$ and $BC$ in the figure)
with an angle of at least $\delta$ between
them, so $U(R)$ holds; see Figure~\ref{fig_U1U2}.
\begin{figure}[htb]
 \centering
 \input{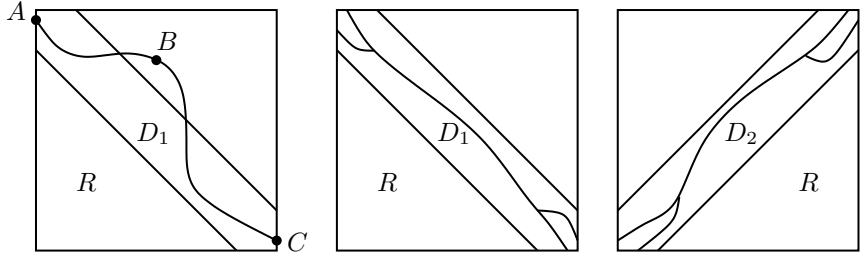}
 \caption{Various possible configurations in an $\xx_1$-by-$\yy_1$ rectangle $R$, rescaled by dividing
$x$-coordinates by $\xx$ and $y$-coordinates by $\yy$. Note that the width and height
of $R$ are then between $(1-\delta)/20$ and $(1+\delta)/20$. 
The strips $D_1$ and $D_2$ have width $10\delta$.
The second and third figures illustrate $X_1(R)$ and $X_2(R)$.}\label{fig_U1U2}
\end{figure}

Let $D_1=D_1(R)$ and $D_2=D_2(R)$ denote the strips of width (after rescaling
as above) $5\delta$ about the two diagonals of $R$.
Let $X_i(R)$ denote the event that $D_i(R)$ contains both horizontal and vertical
crossings of $R$;
then $X(R)\setminus U(R)\subset X_1(R)\cup X_2(R)$.
Hence, for any $\xx_1$-by-$\yy_1$ rectangle
$R$, for some $i$ we have $\Pr(X_i(R))\ge (\Pr(X(R))-\Pr(U(R)))/2 \ge c_2/4$,
using \eqref{XR} and \eqref{noU}. We say that $R$ is \emph{of type $i$}
if $\Pr(X_i(R))\ge c_2/4$, so any $R$ is of type 1 or type 2.
Furthermore, no $R$ can be of both types: otherwise,
using Lemma~\ref{ourHarris2} again, we have
$\Pr(U(R))\ge \Pr(X_1(R)\cap X_2(R))\ge F(c_2/4,c_2/4)=c_3$,
contradicting \eqref{noU}.

At this point we consider two separate cases.

1. \emph{Reflection case.}
Let $R_1$ and $R_2$ be two $\xx_1$-by-$\yy_1$ rectangles with $R_2$
obtained by translating $R_1$ vertically through a distance
$10\delta\yy\le \yy_1/10$.
Recalling that our orientation $\theta$ is the standard
orientation $\theta=0$ in this case, we may choose $R_1$ and $R_2$
so that the rectangle $R_1\cup R_2$ is centred at the origin;
in particular, the $x$-axis is an axis of symmetry of $R_1\cup R_2$.

Suppose first that $R_1$ and $R_2$ are of opposite types, say
with $R_i$ of type $i$.
Then $\Pr(U(R_1\cup R_2))\ge \Pr(X_1(R_1)\cap X_2(R_2))\ge F(c_2/4,c_2/4)=c_3$,
contradicting \eqref{noU}.
It follows that $R_1$ and $R_2$ must be of the same type, say type $1$.

Let $E$ be the event $X_1(R_1)\cap X_1(R_2)\setminus U(R_1\cup R_2)$,
so $\Pr(E)\ge F(c_2/4,c_2/4)-c_3/2=c_3/2$ by \eqref{noU}.  If $E$
holds, then $R_1$ contains a black horizontal crossing $P_1$ in the
strip $D_1=D_1(R_1)$, and $R_2$ contains a black horizontal crossing
$P_2$ in the strip $D_1'=D_1(R_2)$. It is easy to check that
if $P_1$ and $P_2$ are joined by a black path in $R_1\cup R_2$,
then $U(R_1\cup R_2)$ holds. (For example, since we also
have vertical crossings of $R_1$ and $R_2$, we obtain a vertical
crossing of $R_1\cup R_2$, whose direction is necessarily more
than an angle $\delta$ away from that of $P_1$ or $P_2$.)
Hence, whenever $E$ holds, so does the event $F$, that
$D_1\cup D_1'$ contains a \emph{white} horizontal crossing
of $R_1\cup R_2$. Thus, $\Pr(F)\ge c_3/2$.

We now apply self-duality as in the proof of Lemma~\ref{wb}, recalling
that now $T$ is reflection in the $x$-axis, an axis of symmetry of $R_1\cup R_2$.
Let $\tilde F$ be the image of $F$ under the symmetry transformation.
Then $\Pr(\tilde F)=\Pr(F)$. Also, if $\tilde F$ holds then there
is a black path in $R_1\cup R_2$ that almost crosses
$R_1\cup R_2$ horizontally, and lies within or close to the mirror image of $D_1\cup D_1'$.
Since $\tilde F$ is an upset, 
by Lemma~\ref{ourHarris2} we have $\Pr(X_1(R_1)\cap X_2(R_2)\cap \tilde F)
\ge F(c_3,c_3/2)= c_4$. But whenever this event holds, $R_1\cup R_2$
is clearly useful: the relevant crossings must in fact meet, but
we do not even need this, simply the observation that $R_1\cup R_2$
then contains two longish black paths that, after the usual rescaling,
are at almost 90 degrees to each other.
Thus $\Pr(U(R_1\cup R_2))\ge c_4$, contradicting \eqref{noU}.
This contradiction completes the proof in the reflection case.

2. \emph{Rotation case.} In this case, Lemma~\ref{2mod} applies
\emph{regardless of the orientation $\theta$}.
The idea is to observe that the argument above shows that for each
$\theta$, the $\theta$-aligned $\xx_1$-by-$\yy_1$ rectangle $R_1$
centred on the origin is either of type~1 or of type 2.
As we rotate, we can assume (as we will shortly show) that $R_1$ varies
continuously. Thus it should not jump from type 1 to type 2. But after
rotating by 90 degrees, we return to the same rectangle
viewed with a different orientation. 
Changing orientation in this way interchanges types
1 and 2.

To make this precise, first note that the argument above establishes
the following. Let $C$ be the constant
in Lemma~\ref{2mod}, let $\theta$ be any orientation, and let $\xx_1$ and $\yy_1$
satisfy $\xx_1\yy_1=A/400$ and $\xx_1$, $\yy_1\ge L$.
If $R$ is the $\theta$-aligned $\xx_1$-by-$\yy_1$ rectangle centred
on the origin, and $\phi(h_\theta(\xx_1,\yy_1))$, $\phi(v_\theta(\xx_1,\yy_1))\in [-C-8,C+8]$,
then $R$ is either of type 1 or of type 2 (and not both,
although we shall not use this). To see this, simply use
the rectangle $R$ under consideration as $R_1$ in the argument above:
so far we selected an arbitrary rectangle with the properties
described by Lemma~\ref{2mod}; now we choose a specific one
with the same properties, except that we have replaced $C$ by $C+8$, which makes no difference.

Pick an integer $k\ge A/d_0$, and let $\eps=(\pi/2)/k$.
We claim that we can construct a finite sequence $R_0,R_1,\ldots,R_N$
of rectangles, all centred on the origin,
where $R_i$ is $\theta_i$-aligned and satisfies the conditions
above, with $\theta_0=0$, $\theta_N=\pi/2$ and $R_0$ and $R_N$
the same geometric rectangle, such that for $i=0,1,\ldots,N-1$,
either $R_i$ and $R_{i+1}$ have the same dimensions and $\theta_{i+1}=\theta_i+\eps$,
or $R_i$ and $R_{i+1}$ have the same orientation, and their corresponding dimensions
differ by at most one.

To establish the claim, simply start with $\theta_0=0$ and $R_0$ of the dimensions
given by Lemma~\ref{2mod}. At each stage, if $R_i$
satisfies $\phi(h(R_i))$, $\phi(v(R_i))\in [-C,C]$, then rotate it through
an angle $\eps$ to obtain $R_{i+1}$.
Since the largest dimension of $R$ is at most $(A/400)/(100d_0)\le k/10$,
Corollary~\ref{c_rot} ensures that $R_{i+1}$ has the required properties.
Otherwise, by assumption we have $\phi(h(R_i))$, $\phi(v(R_i))\in [-C-8,C+8]$.
Applying Lemma~\ref{2mod}, pick a rectangle $R$ with the same orientation and area for which
$\phi(h(R))$, $\phi(v(R))\in [-C,C]$. Since $R_i$ and $R$ have the same
orientation, we may pass gradually from one to the other with $h(\cdot)$
increasing and $v(\cdot)$ decreasing or vice versa; the intermediate
rectangles thus all satisfy our requirements. Once we reach $R$, we are ready
for the next rotation step.

Since the type of $R_N$ is the opposite of that of $R_0$, there is some $i$
such that $R_i$ and $R_{i+1}$ have different types; suppose that $R_i$
is of type $1$ and $R_{i+1}$ of type 2. Then by Lemma~\ref{ourHarris2}
we have $\Pr(X_1(R_i)\cap X_2(R_{i+1}))\ge c_3$.
Although the dimensions
of our rectangles may change radically as we rotate them, they cannot do so in one
step. In particular, after the rescaling above applied to $R_i$,
the paths witnessing $X_1(R_i)$ and $X_2(R_{i+1})$ are close to orthogonal.
It follows that if $X_1(R_i)\cap X_2(R_{i+1})$ holds, then $R_i$ is useful,
so $\Pr(U(R_i))\ge c_3$, contradicting \eqref{noU} above.
\end{proof}

\section{From rectangle crossings to percolation}\label{sec_deduce}

In this section we shall deduce Theorems~\ref{th1} and~\ref{th2} from Theorem~\ref{rswc};
this turns out to be relatively straightforward, adapting the very
simple argument for bond percolation on $\Z^2$ described
in~\cite{ourKesten}. First, we get one technical detail out of the way.

Recall that we can represent any hyperlattice percolation model $\cH(\vecp)$
by an independent lattice colouring $\cC$; our results
in the previous section apply only to \emph{malleable} $\cC$.
In the bulk of this section we shall prove the following variant
of Theorem~\ref{th1}, differing only in the additional assumption of malleability.
\begin{theorem}\label{th1p}
Let $\cH(\vecp)$ be a malleable (approximately) self-dual hyperlattice percolation model.
Then for any $\vecq\pg\vecp$ the model $\cH(\vecq)$ percolates,
and for any $\vecq\pl\vecp$ the model $\cH(\vecq)$ exhibits exponential decay.
\end{theorem}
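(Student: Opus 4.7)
By Lemma~\ref{regpos}, realize $\cH(\vecp)$ by a malleable independent lattice colouring $\cC=\cC(\vecp)$, and then apply Theorem~\ref{rswc} to conclude that $\cC$ has the large rectangles property, and equivalently (via Lemma~\ref{r_e}) the large ellipses property. My first step is to promote this to a \emph{uniform annulus-crossing} statement: there is a constant $c_0>0$ such that for every $R$ there exists an ellipse $E$ with both axes at least $R$, centred at the origin, for which the probability of a black closed circuit in $E^0$ is at least $c_0$, and the analogous statement holds for white circuits under $\cC^\dual$ (using the self-duality, exactly as in Lemma~\ref{wb}). A standard combination of such annulus events at well-separated scales, iterated with Harris via Lemma~\ref{ourHarris2}, gives at $\vecp$ itself both (i)~crossing probabilities of bounded-aspect-ratio rectangles that stay bounded away from $0$ and~$1$, and (ii)~the standard RSW-style picture at the self-dual point.

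The core step is then the sharp-threshold input. Given $\vecq\pg\vecp$, strict domination means that for each orbit~$i$ and each non-trivial upset $\cU\subset\PP_{e_i}$, we have $q_i(\cU)>p_i(\cU)$; equivalently (by the Hall-type remark after the definition of $\pg$), $\vecq$ is obtained from $\vecp$ by transferring strictly positive probability mass from each $\pi$ to each $\pi'\succ\pi$ in every factor. The plan is to interpolate a one-parameter family $\vecp_t$ with $\vecp_0=\vecp$ and $\vecp_1=\vecq$, so that the crossing probability $h_\theta(\rho m,n;\vecp_t)$ of a fixed large rectangle is an increasing (and smooth) function of $t$. I would then invoke a Friedgut--Kalai / BKKKL style sharp-threshold result, adapted to the present product-of-posets setting as in~\cite{ourKesten}, to show that as $mn\to\infty$, this crossing probability jumps from being bounded away from~$0$ and~$1$ at $t=0$ to $1-o(1)$ at $t=1$. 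The key inputs for the adaptation are: (a)~the upset structure of the crossing event, (b)~Lemma~\ref{ourHarris2} as a substitute for classical Harris, and (c)~the fact that each hyperedge touches only $O(1)$ `copies' of the event, so no single coordinate has influence close to~$1$.

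Once rectangle-crossing probabilities are $1-o(1)$ under $\vecq$ for arbitrarily large rectangles, percolation of $\cH(\vecq)$ follows by a standard Peierls-type block argument: use Corollary~\ref{csr} / Lemma~\ref{manyhold} in combination with the RSW rectangles, chained on an exponentially growing sequence of scales, to obtain nested black circuits surrounding the origin at infinitely many scales with positive total probability, forcing an infinite black cluster. Symmetrically, for $\vecq\pl\vecp$, pass to the dual colouring $\cC(\vecq)^\dual$, which (by the (approximate) self-duality of $\cH(\vecp)$ and the monotone coupling) dominates $\cC(\vecp)^\dual$; the sharp-threshold argument applied on the dual side shows that white crossing probabilities of large rectangles tend to~$1$. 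Existence of white circuits at a geometric sequence of scales, each with probability at least $1-\eps_k$ with $\sum\eps_k<\infty$, then gives exponential decay of the black cluster containing any given vertex by a standard Borel--Cantelli plus renormalisation argument (compare Section~\ref{sec_deduce} of~\cite{ourKesten}).

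The main obstacle I anticipate is the sharp-threshold step. Classical sharp-threshold theorems (Friedgut--Kalai, BKKKL, Talagrand) are formulated for $\{0,1\}$-valued product measures and exploit the symmetric group action on coordinates. Here the state of each hyperedge lives in a general non-crossing-partition poset, the product measure is not uniform, and the lattice $\cL$ provides only translational symmetry rather than the full transitive action that simplifies the classical statements. Adapting the influence / entropy machinery to this setting, and in particular turning the strict inequality $q_i(\cU)>p_i(\cU)$ on \emph{every} non-trivial upset into a quantitative lower bound on the derivative of the crossing probability along the path $\vecp_t$, is where the real work sits; the deduction of percolation and exponential decay from the sharp threshold is then, as the authors remark, a small step.
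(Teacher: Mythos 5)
Your high-level outline correctly identifies the three ingredients (realize $\cH(\vecp)$ by a malleable colouring, establish the large-rectangles/ellipses property via Theorem~\ref{rswc}, then push crossing probabilities up with a sharp-threshold result and convert to percolation/exponential decay), and this is indeed the structure of the paper's proof. However, there are two concrete gaps in the middle of your plan that would derail it as written.

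First, your claim that iterated annulus events at well-separated scales give ``crossing probabilities of bounded-aspect-ratio rectangles that stay bounded away from $0$ and $1$'' at $\vecp$ itself is exactly the conjecture the paper explicitly leaves open in Section~\ref{sec_crit}; the authors state plainly that Theorem~\ref{rswc} ``lacks the uniformity needed to prove \eqref{rbounded}''. The obstruction is real: the ellipse/rectangle whose crossing probability Theorem~\ref{rswc} controls is allowed to have orientation and aspect ratio that drift with the scale $A$, so you cannot chain annulus events at well-separated scales in the usual RSW manner, and you do not get a bounded-aspect-ratio rectangle with uniformly controlled crossing probability. Fortunately, the argument does not need this: the paper applies the sharp-threshold theorem once, for a single suitably chosen large area, so the non-uniformity is harmless.

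Second, and more crucially, you flag ``only translational symmetry rather than the full transitive action'' as the main obstacle to a Friedgut--Kalai/BKKKL-style result, and propose bounding single-coordinate influences via ``each hyperedge touches only $O(1)$ copies of the event''. This is the wrong direction: BKKKL produces a lower bound on the maximum influence, and you need a transitive symmetry group to conclude that all influences are comparably large, whence the expected number of pivotals is at least of order $\log n$. The paper's resolution (the torus trick) is exactly the missing idea: after a linear transformation bringing two lattice vectors to $(1,0)$ and $(0,1)$, pass to the torus $\TT=\RR^2/10\Z^2$, where the translations of $\cL$ \emph{do} act transitively on the finitely many copies of the fundamental domain, making the event ``some translate of the $8\times 1$ rectangle has a black crossing'' symmetric in the sense of Theorem~\ref{sharp}. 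That theorem (a BKKKL-based sharp-threshold statement for powers of a finite poset, together with the Margulis--Russo analogue \eqref{MR} for strictly dominating measures) is already proved in Section~\ref{sec_deduce}, so the sharp-threshold ``obstacle'' you anticipate has been dispatched there. Finally, your Peierls step is slightly off: nested black circuits at infinitely many scales do not by themselves force an infinite cluster (the circuits need to be joined). The paper instead uses the square-root trick (Corollary~\ref{csr}) to get a specific rectangle crossing with probability at least $0.99$, and then a $1$-independent renormalized bond percolation plus the Liggett--Schonmann--Stacey/Balister--Bollob\'as--Walters domination bound; your proposed argument would need an analogous renormalization to finish.
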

Before proving this result, we note that Theorem~\ref{th1} follows.
\begin{proof}[Proof of Theorem~\ref{th1}]
Let $\cH(\vecp)$ be self-dual,
and let $S$ be an isomorphism from $\cH$ to $\cHd$ witnessing
this.
For this $S$, the self-duality condition reduces to a set of equations equating
certain entries $p_{i,\pi}$ of $\vecp$. In particular, any entry $p_{i,\pi}$ where
$\pi$ is a partition into singletons
is equated with some $p_{j,\pi'}$, where $\pi'$ is a partition into one part.
Recall that we call entries of these two types \emph{bottom} and \emph{top} entries,
respectively.

If $\vecq\pg\vecp$, then by definition of our partial order, each bottom entry of $\vecp$
is non-zero. Hence by self-duality each top entry is non-zero. Since corresponding
top or bottom entries of $\vecp$ and $\vecq$ cannot be equal, it follows
that we can adjust $\vecp$ slightly to find some $\vecp'$ all of whose
entries are strictly positive such that $\cH(\vecp')$ is self-dual, with $\vecp'\pl\vecq$.
Since $\vecp'$ is malleable,
Theorem~\ref{th1p} implies that $\cH(\vecq)$ percolates, as required.

The argument that any $\vecq\pl\vecp$ exhibits exponential decay proceeds similarly.
\end{proof}

The argument above shows that in proving Theorem~\ref{th1}, we may impose
the condition of malleability (which we need in the proof) without loss
of generality, so we do not need to assume malleability in the statement
of the theorem. Unfortunately, there does not seem to be an obvious 
analogous argument
in the approximately self-dual case: it is not clear how to adjust
the probabilities slightly while preserving approximate self-duality.
For this reason we simply impose malleability as a condition in Theorem~\ref{th2},
so Theorem~\ref{th2} trivially follows from Theorem~\ref{th1p}.

To deduce Theorem~\ref{th1p} from Theorem~\ref{rswc}, we shall use an analogue
for posets of the well-known Friedgut--Kalai sharp-threshold result
for symmetric events, Theorem 2.1 of~\cite{FK},
which is itself a consequence of a result of Kahn, Kalai and Linial~\cite{KKL}
(see also~\cite{BKKKL}) concerning the influences of coordinates in a product space.
This sharp-threshold result has been applied in many contexts --
it was first used to prove criticality (for random Voronoi percolation)
in~\cite{Voronoi}; we shall use the same technique here (see also~\cite{ourKesten}).

Let $\PP$ be a finite poset. Given two probability measures $\Pr_0$ and $\Pr_1$
on $\PP$, recall that $\Pr_1$ \emph{strictly dominates} $\Pr_0$, written
$\Pr_1\succ\Pr_0$, if $\Pr_1(\cU)>\Pr_0(\cU)$ for every upset $\cU\subset \PP$,
except the trivial upsets $\cU=\emptyset$, $\PP$.

As usual, given a (po)set $\PP$ and a subset $\cA$ of $\PP^n$, a coordinate $i$
is \emph{pivotal} for $\cA$ in a configuration $\omega\in \PP^n$
if changing the $i$th coordinate of $\omega$ can affect whether $\omega\in \cA$.
Let $\cA_i(\omega)\subset \PP$ denote the set of values
that, when substituted for the $i$th coordinate of $\omega$, give
some $\omega'\in \cA$. Thus $i$ is pivotal for $\cA$ in $\omega$
if and only if $\emptyset\ne \cA_i(\omega)\ne \PP$.
If $\cA$ is an upset, then $\cA_i(\omega)$ is an upset.

Given $\Pr_0\prec\Pr_1$, for $0<h<1$ define $\Pr_h$ by linear interpolation:
$\Pr_h(x)=h\Pr_1(x)+(1-h)\Pr_0(x)$ for all $x\in \PP$.
Let $c_0=c_0(\Pr_0,\Pr_1)$ be the minimum of $\Pr_1(\cU)-\Pr_0(\cU)$ over
all non-trivial upsets in $\PP$, so $c_0>0$ by assumption.
Then, for any non-trivial upset $\cU$, we have $\ddh \Pr_h(\cU)\ge c_0$.
Considering partial derivatives in a product with different values
for $h$ in each coordinate, one obtains an analogue of the Margulis--Russo
formula~\cite{Margulis74,Russo81}: if $\cA\subset \PP^n$ is an upset,
then with $c_0=c(\Pr_0,\Pr_1)>0$ as above, we have
\begin{equation}\label{MR}
 \ddh \Pr_h^n(\cA)\ge c_0 \E_h N,
\end{equation}
where $\E_h$ denotes expectation
with respect to the product measure $\Pr_h^n$, and $N=N(\omega)$ is the number of pivotal
coordinates for $\cA$ in the random configuration $\omega$.

Bourgain, Kahn, Kalai, Katznelson and Linial~\cite{BKKKL} showed that
if $X$ is any probability space, and $\cA$ is a subset of $X^n$, then
then there is some coordinate $i$ such that the probability that $i$
is pivotal for $\cA$ is at least $ct(1-t)\log n/n$, where $c>0$ is an absolute
constant, and $t$ is the probability of $\cA$.
As usual, we say that $\cA$ is \emph{symmetric}
if there is a permutation group acting transitively
on $\{1,2,\ldots,n\}$ whose induced action on $X^n$ preserves $\cA$.
If $\cA$ is symmetric, each coordinate has the same probability
of being pivotal, so
the expected number of pivotal elements is at least
$ct(1-t)\log n$. Using \eqref{MR} in place of the usual Margulis--Russo formula,
one then obtains the following result; we omit the simple calculation,
noting that one may take $c_1(\Pr_0,\Pr_1)=c\, c_0(\Pr_0,\Pr_1)/2$.

\begin{theorem}\label{sharp}
Let $\Pr_0$ and $\Pr_1$ be probability measures on a poset $\PP$ with $\Pr_0\prec\Pr_1$.
There is a constant $c_1(\Pr_0,\Pr_1)>0$ with the following property.
Let $0<\eps<1/2$, and let $\cA$ be a symmetric, increasing event in a power $\PP^n$ of $\PP$
with $\Pr_0^n(\cA)>\eps$. If
\[
 c_1(\Pr_0,\Pr_1) \log n \ge \log(1/\eps),
\]
then $\Pr_1^n(\cA)>1-\eps$.\noproof
\end{theorem}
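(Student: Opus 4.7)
The plan is to combine the poset-version Margulis--Russo formula \eqref{MR} with the Bourgain--Kahn--Kalai--Katznelson--Linial (BKKKL) influence theorem to obtain a differential inequality for $t(h):=\Pr_h^n(\cA)$, which I then integrate directly. Writing $c_0=c_0(\Pr_0,\Pr_1)>0$ for the minimum gap appearing before \eqref{MR}, note first that $\Pr_h = h\Pr_1+(1-h)\Pr_0$ is a genuine probability measure on $\PP$ for each $h\in[0,1]$, so $\Pr_h^n$ is a product measure on the finite space $\PP^n$ and BKKKL applies.

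Now fix $h\in(0,1)$. BKKKL gives some coordinate $i$ pivotal for $\cA$ under $\Pr_h^n$ with probability at least $c\,t(1-t)(\log n)/n$, where $c>0$ is absolute and $t=t(h)$. The symmetry hypothesis on $\cA$ means some transitive permutation group acts on coordinates preserving $\cA$, so every coordinate has the same pivotal probability, and the expected number of pivotal coordinates satisfies $\E_h N\ge c\,t(1-t)\log n$. Substituting into \eqref{MR} yields
\[
 t'(h)\ge c_0\,\E_h N \ge C\,t(1-t)\log n,\qquad C:=c\,c_0.
\]
Since $t'/[t(1-t)]=(\log(t/(1-t)))'$, dividing by $t(1-t)$ and integrating from $0$ to $1$ gives
\[
 \log\frac{t(1)}{1-t(1)} \;\ge\; \log\frac{t(0)}{1-t(0)} + C\log n.
\]

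To finish, set $c_1(\Pr_0,\Pr_1):=C/2$. The hypothesis $t(0)=\Pr_0^n(\cA)>\eps$ together with monotonicity of $x\mapsto x/(1-x)$ gives $\log(t(0)/(1-t(0)))\ge \log(\eps/(1-\eps))\ge \log\eps = -\log(1/\eps)$, while the assumption $c_1\log n\ge \log(1/\eps)$ means $C\log n\ge 2\log(1/\eps)$. Combining, $\log(t(1)/(1-t(1)))\ge \log(1/\eps)$, i.e., $t(1)/(1-t(1))\ge 1/\eps$, which rearranges (using $\eps<1/2$) to $t(1)\ge 1/(1+\eps)>1-\eps$, as required. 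The real work of the theorem is in verifying \eqref{MR}, which the paper has already sketched from the definition of strict domination; granted that and BKKKL, the rest is just the routine separation-of-variables calculation above, so I expect no serious obstacle.
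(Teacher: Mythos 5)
Your proof is correct and is essentially the calculation the paper deliberately omits: you reproduce the standard Friedgut--Kalai argument with the poset Margulis--Russo inequality \eqref{MR} in place of the usual one and BKKKL supplying $\E_h N \ge c\,t(1-t)\log n$ via the symmetry hypothesis, integrate $t'/(t(1-t))$ over $h\in[0,1]$, and arrive at exactly the constant $c_1 = c\,c_0/2$ that the paper states can be taken. The only tiny stylistic point is that the hypothesis $t(0)>\eps$ is strict, so one can carry a strict inequality through and avoid the final appeal to $1/(1+\eps)>1-\eps$ (which in any case holds strictly for $\eps>0$); and the hedging in your last sentence is unnecessary since the argument is complete.
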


Using Theorem~\ref{sharp} in place of the Friedgut--Kalai result, it is very simple
to adapt (one of) the simple arguments given in~\cite{ourKesten} to
deduce Theorem~\ref{th1p} from Theorem~\ref{rswc}.

\begin{proof}[Proof of Theorem~\ref{th1p}]
Let $\cH(\vecp)$ be a malleable (approximately) self-dual hyperlattice percolation model, with
$\cL$ the corresponding lattice of translational symmetries, and 
let $\vecq\pg\vecp$. Note that since $\cH(\vecp)$ is non-degenerate,
so is $\cH(\vecq')$, where $\vecq'=(\vecp+\vecq)/2$.
If some bottom entries in $\vecq$ are zero, we
replace $\vecq$ by $\vecq'$ in what follows: since $\vecq\pg\vecq'\pg\vecp$,
it suffices to prove percolation in $\cH(\vecq')$. Thus
we may assume without loss of generality that $\cH(\vecq)$ is non-degenerate.

As usual, we wish to work with crossings of rectangles, so it is more convenient
to work with independent lattice colourings.

By Lemma~\ref{regpos} there is
a malleable independent lattice colouring $\cC_0$ realizing $\cH(\vecp)$.
As in Section~\ref{sec_cc} (before Lemma~\ref{reg}), we regard the state space
$\Omega$ underlying the random colouring $\cC_0$ 
as a product of one poset $\PP_F$ for each grey face $F$ of $\cH$:
in the partial order, we have $c_1\preccurlyeq c_2$ if every subface that is black
in $c_1$ is black in $c_2$.
Picking a finite set $F_1,\ldots,F_k$ of faces representing the orbits of $E(\cH)$ (the set
of grey faces) under the action of $\cL$, from lattice invariance
we may regard $\Omega$ as a countable power of the poset
$\PP=\PP_{F_1}\times\cdots\times \PP_{F_k}$. (As usual, the events we consider
in the following arguments will be defined in terms of finite regions
of the plane, and so can be viewed as events in a finite power of $\PP$.)

From independence, the probability measure associated
to $\cC_0$ is a power of a probability measure $\Pr_0$ on $\PP$.
Furthermore, since $\vecp\pl\vecq$, we may choose another measure $\Pr_1$ on $\PP$
with $\Pr_0\prec\Pr_1$ such that the corresponding independent lattice colouring
$\cC_1$ realizes $\cH(\vecq)$. We may and shall
assume that $\cC_1$ is non-degenerate.

Let $\delta(\PP,\Pr,k,\eps)$ be the function appearing in Corollary~\ref{csr}
(our version of the square-root trick), and set
\begin{equation}\label{cd}
 \delta = \delta(\PP,\Pr_1,100,0.01) >0.
\end{equation}
Let $c>0$ be the constant in the $LR_{10}$ property of $\cC_0$;
such a constant exists by Theorem~\ref{rswc}.
Let $\eps>0$ be the minimum of $c$
and $\delta$, and choose $N$ such that
\begin{equation}\label{cn}
 c_1(\Pr_0,\Pr_1) \log N \ge \log(1/\eps),
\end{equation}
where $c_1(\Pr_0,\Pr_1)$ is defined as in Theorem~\ref{sharp}.
Let $D=F_1\cup\cdots\cup F_k$, and
choose $L_0$ so that $L_0^2\ge N\area(D)$, so any region
of area at least $L_0^2$ meets at least $N$ translates of $D$
by elements of $\cL$.

Let $d_1$ be the maximum of the quantity $d_0$ appearing in Lemma~\ref{reg}
and $\diam(D)$, the diameter of $D$. Since $\cC_0$ has the large rectangles
property $LR_{10}$, we can find
an angle $\theta$ and $m,n\ge \max\{L_0,100d_1\}$ such that $h_\theta(10m,n)\ge c$
and $v_\theta(m,10n)\ge c$. In other words, there are orthogonal
vectors $v_1$ and $v_2$ (obtained by rotating $(m,0)$ and $(0,n)$ through
an angle $\theta$) with the following property:
the rectangles $R_1$ with corners $\pm 5v_1\pm v_2/2$
and $R_2$ with corners $\pm v_1/2 \pm 5v_2$ are
such that the probability that $R_i$ has a `long' (parallel to the
$10v_i$ side) black crossing in $\cC_0$ is at least $c$.

By Lemma~\ref{reg}, every point of $\RR^2$ is within distance $d_1$
of some point of $\cL$, so we may find $\ell_1$, $\ell_2\in \cL$ within
distance $d_1$ of $1.1v_1$ and $1.1v_2$, respectively. Let $\tR_1$
be the parallelogram with corners $0$, $8\ell_1$, $8\ell_1+\ell_2$ and $\ell_2$.
Since $v_1$ and $v_2$ have length at least $100d_1$, this parallelogram
is obtained from a translate of $R_1$ by first `distorting it very slightly',
and then
making it significantly shorter and thicker. It is easy to check that a
translate of $\tR_1$ through a suitable lattice element
has the property that any `long' crossing of $R_1$
includes a `long' crossing of $\tR_1$, so, in $\cC_0$, the probability
that $\tR_1$ has a `long' black crossing is at least $c$.
Define $\tR_2$ from $R_2$ similarly.
Since $\ell_i$ is close to $1.1v_i$ and the vectors $v_i$ are not too short
and are orthogonal, the area of $\tR_i$ is (crudely) at least $8|v_1||v_2|\ge 8mn\ge 8L_0^2\ge
8N\area(D)$.

Applying a linear transformation mapping $\ell_1$ to $(1,0)$ and $\ell_2$ to
$(0,1)$, we find that in the transformed model $\cC_0'$, with lattice
of symmetries $\cL'\supset \Z^2$,
the probability that the rectangle $R_1'=[0,8]\times [0,1]$ has a black horizontal
crossing is at least $c$, as is the probability that $R_2'=[0,1]\times [0,8]$
has a black vertical crossing. Note that the image $D'$ of our fundamental
domain $D$ has diameter at most $1/10$, say, since $|\ell_i|\ge 100d_1\ge 100\diam(D)$
and the $\ell_i$ are close to orthogonal. Also, the area
of $D'$ is $\area(D)\times\area(R_1')/\area(\tR_1)\le 1/N$.

Let $\TT$ be the torus obtained by taking the quotient of
$\RR^2$ by the lattice
$10\Z^2$ generated by $(10,0)$ and $(0,10)$. Since $10\Z^2\subset \cL'$,
we may choose $n=100/\area(D')$ translates of $D'$ by elements of $\cL'$
so that their images in $\TT$ cover $\TT$ exactly once. This allows
us to define a natural equivalent of $\cC_0'$ on $\TT$; the corresponding
probability measure may be seen as $\Pr_0^n$, where $n=100/\area(D')\ge 100N$. Moreover,
given a rectangle that does not come `close' to wrapping around the torus,
the events that it has a horizontal black crossing in the plane
or in the torus have the same probability. 

Let $E$ be the event that some translate of $[0,8]\times [0,1]$ in $\TT$
has a black horizontal crossing. Then $\Pr_0^n(E)\ge c$, and $E$
is a symmetric, increasing event in $\Pr_0^n$ in the sense of Theorem~\ref{sharp}.
Since $n\ge N$, from our choice \eqref{cn} of $N$ and Theorem~\ref{sharp},
we have $\Pr_1^n(E)\ge 1-\delta$.
As in~\cite{ourKesten}, let $R_1,\ldots,R_{100}$ be translates in $\TT$
of the rectangle $[0,6]\times [0,2]$ arranged so that any $8$-by-$1$
rectangle crosses some $R_i$ horizontally. Then we have $\Pr_1^n(\bigcup\Hb(R_i))\ge 1-\delta$,
so by Corollary~\ref{csr} and our choice \eqref{cd} of $\delta$
we have $\Pr_1^n(\Hb(R_i))\ge 0.99$ for some $i$, and thus for all~$i$.

Translating back to the plane, we see that in $\cC_1'$ (obtained from $\cC_1$
by the linear transformation mapping $\cC_0$ to $\cC_0'$),
any $6$-by-$2$ rectangle $R$ with corners at points of the lattice $\cL'$ has $\Pr(\Hb(R))\ge 0.99$.
The same argument shows that any $2$-by-$6$ rectangle $R'$
with lattice point corners has $\Pr(\Vb(R'))\ge 0.99$. From here it is
very easy to prove that percolation occurs, using any of several
standard methods; we shall give one example.

Note that if $S$ is a $2$-by-$2$ square then, with $\Pr$ denoting
the probability measure associated to $\cC_1'$, we have $\Pr(\Hb(S))\ge \Pr(\Hb(R))\ge 0.99$
and $\Pr(\Vb(S))\ge \Pr(\Vb(R'))\ge 0.99$.
As in~\cite{ourKesten} (the third version of the proof of Theorem 10 there),
let $G(R)$ be the event that $\Hb(R)$ holds and each of the two
$2$-by-$2$ `end squares' of $R$ has a black vertical crossing,
and define $G(R')$ similarly. Then $\Pr(G(R))$, $\Pr(G(R'))\ge 1-3(1-0.99)=0.97$.
Of course, the bound $0.97$ here can be replaced by any constant less
than $1$, although, as we shall see, $0.97$ is more than good enough.

Considering a square grid of $6$-by-$2$ and $2$-by-$6$ rectangles
overlapping in $2$-by-$2$ squares as in~\cite{ourKesten}, and taking a bond of $\Z^2$
to be open if $G(R)$ holds for the corresponding rectangle,
one obtains a dependent bond percolation measure on $\Z^2$.
Given sets $S$ and $T$ of bonds of $\Z^2$ separated by a distance
(in the graph $\Z^2$) of at least 1, the corresponding unions
of rectangles are disjoint, and are separated in the plane by a distance
of at least $2\ge 1/10$. It follows that the states of the bonds in $S$ are
independent of the sates of the bonds in $T$, i.e.,
the bond percolation measure is \emph{$1$-independent}. It is rather
easy to see that any such measure in which each bond is open
with high enough probability has an infinite open cluster with probability $1$;
see, for example, the general domination result of Liggett, Schonmann and Stacey \cite{LSS}.
The best current bound on what `high enough' means is due to 
Balister, Bollob\'as and Walters~\cite{BBWsquare}, who showed that all bond probabilities
at least $0.8639$ will do. Since $\Pr(G(R))$, $\Pr(G(R'))\ge 0.97$,
we see that with probability $1$ there is an infinite open cluster
in $\Z^2$. Translating back, the definition of $G(R)$ ensures
that we find a corresponding infinite black cluster
in $\cC_1'$. Since $\cC_1'$ is simply a linear image of $\cC_1$,
it follows that $\cC_1$ contains an infinite black cluster
with probability $1$; hence $\cH(\vecq)$ percolates,
as required.

It remains to establish exponential decay of the volume in $\cH(\vecq)$
for $\vecq\pl\vecp$. But first note that with $\vecq\pg\vecp$ as above,
and with the arbitrary constant $0.99$ replaced by a suitable constant $a<1$,
the argument in~\cite[Section 3]{ourKesten2} (again using
locally-dependent percolation) shows that the dual of the model
$\cC'_1$ exhibits exponential decay (of the volume). It follows that 
the dual $\cHd(\vecq^*)$ of $\cH(\vecq)$ exhibits exponential decay.

Given a self-dual model $\cH(\vecp)$ and $\vecq\pl\vecp$,
the model $\cHd(\vecp^*)$ is self-dual (it is isomorphic to $\cH(\vecp)$),
and $\vecq^*\pg\vecp^*$. Applying the result above to $\cHd(\vecp^*)$ and $\cHd(\vecq^*)$,
we see that the dual $\cH(\vecq)$ of $\cHd(\vecq^*)$ exhibits exponential decay,
as required.

Suppose instead that $\cH(\vecp)$ is approximately self-dual. Then by definition
there is a model $\cH'(\vecp')$ that is isomorphic to $\cH(\vecp)$, such
that $\cHd(\vecp^*)$ and $\cH'(\vecp')$ are equivalent, in the sense
that they may be coupled so that for any open path in either model there
is a nearby open path in the other model. Recall also
that we may take the isomorphism to be given by an isometry of the plane (plus
a small `distortion', if needed).
Taking the colouring viewpoint,
open paths are simply black paths. Now the notion of equivalence
is not obviously preserved under taking duals, i.e., inverting the colouring.
However, the condition for approximate self-duality is exactly that for
every {\em white} path in $\cH(\vecp)$ (corresponding to a black path in $\cHd(\vecp^*)$),
there is a nearby {\em black} path in $\cH'(\vecp')$, and vice versa.
Since $\cH(\vecp)$ and $\cH'(\vecp')$ are isomorphic, $\cH'(\vecp')$ also
satisfies this condition, so we may couple $\cH'(\vecp')$ and $\cH(\vecp)$
so that for every white path in $\cH'(\vecp')$ there is a nearby black
path in $\cH(\vecp)$, and vice versa. In other words,
approximate self-duality holds after interchanging black and white, i.e.,
$\cHd(\vecp^*)$ is approximately self-dual. From this point
the argument for exponential decay is as in the self-dual case.
\end{proof}

\section{On the critical surface}\label{sec_crit}

In the bulk of this paper we have shown that any self-dual hyperlattice
percolation model $\cH(\vecp)$ is `critical' in the sense
that if $\vecq\pg\vecp$ then $\cH(\vecq)$ percolates,
while if $\vecq\pl\vecp$ then $\cH(\vecq)$ exhibits exponential decay.

As noted earlier, the model $\cH(\vecp)$ itself may or may not percolate.
Here we show that, except for degenerate cases, it does not.
Furthermore, we show that one has power-law decay of the radius, as
expected.
Let $v_0$ be any fixed vertex of $\cH$;
we write $v_0\to r$ for the event that there is an open
path from $v_0$ to a vertex at distance at least $r$ from $v_0$.

\begin{theorem}\label{crit}
Let $\cH(\vecp)$ be a malleable self-dual hyperlattice percolation model.
Then there are constants $0<a_1<a_2$ and $r_0$ such that
$r^{-a_2}\le \Pr(v_0\to r)\le r^{-a_1}$ for all $r\ge r_0$.
\end{theorem}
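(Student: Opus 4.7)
The plan is a standard two-part argument at $\Theta(\log r)$ dyadic scales, using independence across well-separated scales. The key geometric input is a \emph{large annuli property}: at every sufficiently large scale $s$, there is an annular region $A_s$ around $v_0$ of outer diameter $\Theta(s)$ such that both the probability of a black circuit in $A_s$ around $v_0$ and the probability of a white circuit in $A_s$ around $v_0$ are at least some constant $c>0$. The black circuit bound follows from Theorem~\ref{rswc} by combining Corollary~\ref{c_trans_lower} (to handle translation to $v_0$), the RSW-style boosting of aspect ratios in Theorem~\ref{th_RSW2}, and our Harris-type Lemma~\ref{ourHarris2} applied to the crossings of four overlapping rectangles whose union surrounds $v_0$. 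The white circuit bound follows by self-duality via Lemma~\ref{wb}, applied to the same construction with colours swapped.

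For the upper bound $\Pr(v_0\to r)\le r^{-a_1}$, choose $K=\Theta(\log r)$ nested annuli $A_1,\ldots,A_K$ inside the disk of radius $r$ about $v_0$, mutually separated by distance greater than $d_0$ (the constant of Lemma~\ref{reg}). Since faces have diameter at most $d_0$, the events $W_k$ that $A_k$ contains a white circuit around $v_0$ can be realized using only faces contained in $A_k$; these restricted events depend on disjoint sets of faces and are therefore independent, each with probability at least $c$. A white circuit surrounding $v_0$ confines the black cluster of $v_0$ to its interior (by the duality of black clusters and white cycles recalled at the start of Section~\ref{sec_cc}), so $\{v_0\to r\}\subseteq \bigcap_k W_k^\cc$ and $\Pr(v_0\to r)\le (1-c)^K\le r^{-a_1}$.

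For the lower bound $\Pr(v_0\to r)\ge r^{-a_2}$, use the symmetric construction: let $E_k$ be a black circuit in $A_k$, $C_k$ a black crossing of a narrow radial strip joining $A_k$ to $A_{k+1}$, and $E_0$ the local event that some bounded neighbourhood of $v_0$ is entirely black (which has positive probability by non-degeneracy). Arrange all regions to be pairwise disjoint and separated by more than $d_0$ so that the events are independent, and construct the $E_k$ and $C_k$ so that $E_k\cap C_k\cap E_{k+1}$ forces the two circuits to be joined by a black path through the strip. Then these events jointly imply $v_0\to r$, and the probability of the intersection is at least $c^{O(K)}\ge r^{-a_2}$.

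The main obstacle is the first step: producing black and white annular circuits at every scale with uniformly bounded probability. The large rectangles property in Definition~\ref{lrdef} gives rectangles whose orientation and aspect ratio depend on the scale, so building an annulus around $v_0$ at each scale $s$ requires showing that one can always cover the annular region by a bounded number of rectangles whose long crossings have uniformly bounded probability. This is where the full strength of the RSW machinery of Sections~\ref{sec_RSW} and~\ref{sec_RSWapp}, together with self-duality via Lemma~\ref{wb}, is needed; once this geometric input is in hand, the independence argument above is routine.
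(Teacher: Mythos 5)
Your overall architecture (circuits at $\Theta(\log r)$ scales, exploiting independence across well-separated annuli) matches the paper's in spirit, and your upper-bound sketch is essentially the one the paper carries out. However, the "large annuli property" you state as the key geometric input is genuinely stronger than what Theorem~\ref{rswc} (via Lemma~\ref{r_e}) provides, and you explicitly leave this gap open. The large ellipses property gives, for each sufficiently large area $A$, \emph{some} ellipse of area $A$ centred at the origin with both axes at least $L$ and a good probability of a black circuit in the associated annulus — but it controls neither the orientation nor the aspect ratio. In particular, these ellipses can in principle become increasingly eccentric as $A$ grows, so one cannot simply assert annular regions of outer diameter $\Theta(s)$ at each scale $s$, nor can one assume consecutive annuli are disjoint. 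Your four-rectangles construction of a circuit around $v_0$ requires rectangles of prescribed position and roughly prescribed shape, which the property $LR_\rho$ does not supply.

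The step the paper uses to close exactly this gap is a nesting claim proved by contradiction: choosing $E_i = E((10N)^i A_0)$, it shows that $T(E_i)$ fits inside $E_{i+1}/2$, where $T$ is the linear part of the self-duality map from Lemma~\ref{lsymclass} and $N$ is chosen with $(1-c)^N < c$. If $E_{i+1}$ were too eccentric relative to $T(E_i)$, one could pack $N$ disjoint translates of $T(E_i)$ across the annulus $E_{i+1}^0$; each $S(E_i)$ then carries an independent white circuit with probability at least $c$ by self-duality, and each such white circuit blocks $\Sb(E_{i+1})$, forcing $\Pr(\Sb(E_{i+1})) \le (1-c)^N < c$ — contradicting the defining property of $E_{i+1}$. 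With $T^2 = \pm\mathrm{id}$, this nesting makes the even-indexed annuli $E_{2i}^0$ pairwise disjoint and separated, yielding the independence you invoke. Without this argument (or a substitute), the independence of your events $W_k$ is unjustified; this is the central missing idea. Separately, your lower bound via connected black circuits across consecutive annuli is workable but more elaborate than necessary: the paper gets the lower bound directly from Lemma~\ref{oneway} and a union bound over $O(r)$ translates of $v_0$ along the side of an $r$-by-$r$ square, giving exponent $1$ with no need for circuit-gluing.
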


Note that since we argue directly about properties of the
self-dual case, we need to impose the technical condition of malleability
defined in Definition~\ref{maldef2}. It seems
likely that this can be weakened to non-degeneracy; the latter
condition is used throughout the proofs in the previous sections, whereas
malleability is only used at one point, where the need for it could
perhaps be circumvented.

\begin{proof}
By Lemma~\ref{regpos}, $\cH(\vecp)$ can be realized by a malleable
independent lattice colouring $\cC$.
By Theorem~\ref{rswc}, $\cC$ has the large rectangles property and hence,
by Lemma~\ref{r_e}, the large ellipses property.

Let $S$ and $T$ be the maps exhibiting self-duality, as described
in Lemma~\ref{lsymclass}.
Let $L=100\max\{d_0,d_1\}$, where $d_0$ is the constant in Lemma~\ref{reg} and $d_1$ is the
bound on $|S(x)-T(x)|$ from Lemma~\ref{lsymclass}.

The large ellipses property tells us that
there are constants $c>0$ and $A_0$ such that for every
$A\ge A_0$ there is an ellipse $E=E(A)$ centred on the origin with area $A$
and with both axes having length at least $L$ such that $\Pr(\Sb(E))\ge c$.
Let $N$ be an integer such that $(1-c)^N<c$.

For $i\ge 0$ let $E_i=E((10N)^iA_0)$ be an ellipse as above with area $(10N)^iA_0$.
We claim that for each $i$, $T(E_i)$ fits inside a copy of $E_{i+1}$
scaled by a factor $1/2$. To see this, rotate and scale so that the ellipse $T(E_i)$
becomes a circle with diameter $1$, and $E_{i+1}$ has horizontal major axis
with length $a$ and vertical minor axis with length $b$.
Our claim is exactly that $b\ge 2$. But if not,
then $b\le 2$ so, since $\area(E_{i+1})=10N \area(E_i)=10N\area(T(E_i))$, we have
$a\ge 5N$. Writing $E^0$ for the annulus between an ellipse $E$
and the concentric ellipse $2E/3$,
it is not hard to see that when $b\le 2$ and $a\ge 5N$ one can arrange $N$ disjoint copies
of $T(E_i)$ to `cross' the annulus $E_{i+1}^0$ as in Figure~\ref{fig_ellipse}.
\begin{figure}[htb]
\[
 \epsfig{file=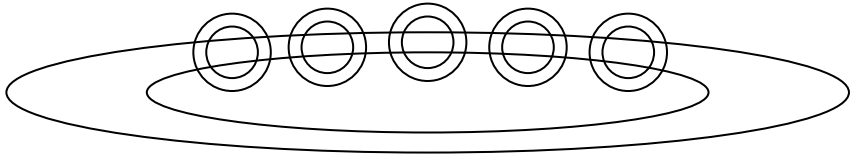}
\]                                             
 \caption{Circles `crossing' an elliptical annulus. Depending on the height
of the ellipse, the circles may or may not intersect the bottom half
of the ellipse; this is irrelevant for the argument.}\label{fig_ellipse}
\end{figure}
Moreover, in the rescaled arrangement one can easily ensure that the circles are separated
by a distance of at least $1/10$, say, and each `crosses' $E_{i+1}^0$ even after any transformation
moving points by a distance of at most $1/10$. Since lengths with original scale $\max\{d_0,d_1\}$
have transformed scale at most $1/100$, it follows that each image $S(E_i)$ crosses $E_{i+1}^0$,
and the $S(E_i)$ are separated by a distance of at least $d_0$.
But by self-duality each $S(E_i)$ contains a \emph{white} path surrounding
its centre with probability at least $c$, and these events are independent.
Whenever one of these white paths is present, $\Sb(E_{i+1})$ cannot hold.
Hence $\Pr(\Sb(E_{i+1})) \le (1-c)^N <c$, a contradiction.

Recalling from Lemma~\ref{lsymclass} that $T^2$ is either the identity
or reflection in the origin, we see that
$E_{i+2}^0$ surrounds $T(E_{i+1}^0)$ which surrounds
$T(T(E_i^0))=E_i^0$. Hence the annuli $E_{2i}^0$ are disjoint.
Moreover, since $1/2<2/3$ and all axes of all $E_i$ have length at least $L$,
the annuli $E_{2i}^0$ 
are separated by distances of at least $d_0$, and so meet disjoint sets of faces of $\cH$.
Now the shorter axis of $E_i$ has length at least $L\ge 1$, so its longer axis
has length at most $(10N)^i A_0$. It follows that for large $r$, any point at distance
$r$ from the origin is outside $E_k$, where $k=\Theta(\log r)$.
There can only be a \emph{white} path starting at the origin and ending at least
distance $r$ away if none of the events $\Sb(E_{2i})$, $2i<k$, holds.
Since these events are independent, this has probability at most $(1-c)^{\floor{k/2}} =\exp(-\Theta(\log r))$.
Passing to the dual, this proves the upper bound on $\Pr(v_0\to r)$. 

The lower bound is essentially immediate, with $1$ as the exponent. Indeed,
Lemma~\ref{oneway} tells us that if $R$ is an $r$-by-$r$ square, then
either $\Pr(\Hb(R))\ge 1/2$ or $\Pr(\Vw(R))\ge 1/2$.
Suppose first that $\Pr(\Hb(R))\ge 1/2$. Then considering one of the $O(r)$ translates of our
reference vertex $v_0$ within distance $O(1)$ of the left-hand side of $R$,
by the union bound there is a translate of $v_0$ such that the probability
that there is an open path starting at $v_0$ with length at least $r-O(1)$
is at least $O(1/r)$.

If $\Pr(\Vw(R))\ge 1/2$ then we apply the same
argument in the dual, which is isomorphic to the original percolation model.
\end{proof}

It is easy to see that the argument above extends to the approximately self-dual case.
The key point is that there is enough `elbow room' for small distortions of the paths
considered not to matter.

It seems very probable that the general conjecture of 
Aizenman and Langlands, Pouliot and Saint-Aubin~\cite{Langlands_confinvar}
concerning conformal invariance of the scaling limit of critical
plane percolation will hold for all non-degenerate self-dual hyperlattice percolation
models. However, this is likely to be very hard to prove.
This conjecture asserts, among other things, that if $R$ is any
rectangle, $\lambda R$ denotes its image under a dilation with scale-factor
$\lambda$, and $H(R)$ denotes the event
that $R$ has an open (or here, black) horizontal crossing,
then for any fixed $R$, the limit
$\lim_{\lambda\to\infty} \Pr(H(\lambda R))$ exists
and lies strictly between $0$ and $1$. Moreover, this limit
should be given by Cardy's formula~\cite{Cardy92}, after first applying a suitable
linear transformation to the model.

This conjecture has been proved by Smirnov~\cite{Smirnov}
for site percolation on the triangular lattice; this is essentially
the only case known.
For many other models, such as bond percolation on the square lattice, 
RSW-type theorems give the much weaker result that
\begin{equation}\label{rbounded}
 0<\liminf_{\la\to\infty} \Pr(H(\lambda R)) \le
 \limsup_{\la\to\infty} \Pr(H(\lambda R)) <1.
\end{equation}
(This applies just as well to shapes other than rectangles.)
Unfortunately, Theorem~\ref{rswc}, while strong enough to establish criticality,
lacks the uniformity needed to prove \eqref{rbounded},
so we leave this as a conjecture.

\begin{conjecture}
Let $\cH(\vecp)$ be a non-degenerate (approximately) self-dual hyperlattice percolation model.
Then for any fixed rectangle $R$, the bounds \eqref{rbounded} hold.
\end{conjecture}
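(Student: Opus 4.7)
The plan is to first normalise via Lemma~\ref{lsymclass}, passing to coordinates in which the self-duality map $T$ is either reflection in the $x$-axis or a rotation through $0$, $\pi/2$ or $\pi$, and to work throughout with the malleable independent lattice colouring $\cC$ realising $\cH(\vecp)$ given by Lemma~\ref{regpos}. The event $H(\la R)$ is then simply the black horizontal crossing event $\Hb(\la R)$, and its white/dual analogues are controlled by Lemma~\ref{oneway} and Lemma~\ref{wb}.

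The upper bound in \eqref{rbounded} should be the easier half. Suppose $\limsup_{\la\to\infty}\Pr(\Hb(\la R)) = 1$ for some rectangle $R$. Lemma~\ref{oneway} converts this into $\liminf_{\la\to\infty}\Pr(\Vw(\la R)) = 0$, and Lemma~\ref{wb} then translates this to $\liminf_{\la\to\infty}\Pr(\Vb(T(\la R))) = 0$ up to a bounded shift in the scaling function $\phi$. Feeding this into the finite-size criterion built from the sharp-threshold Theorem~\ref{sharp} and the $1$-independent percolation step at the end of Section~\ref{sec_deduce}, one should obtain a perturbed probability vector $\vecq\pl\vecp$ for which $\cH(\vecq)$ nevertheless percolates, contradicting the exponential-decay half of Theorem~\ref{th1p}.

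For the lower bound, the strategy is to upgrade Theorem~\ref{rswc} from an existence result at a sparse set of scales to a uniform-in-$\la$ statement at a \emph{fixed} orientation and aspect ratio. The large rectangles property delivers, for each area $A$, a favourable $\theta(A)$-aligned $m(A)$-by-$n(A)$ rectangle with $m(A)n(A)=A$; compactness of $(\theta,\log(m/n))$ modulo the symmetries of $T$ lets one extract a subsequence $A_k\to\infty$ along which these parameters converge to some $(\theta^*,\rho^*)$. Theorem~\ref{th_RSW2} then extends these crossings to every fixed aspect ratio at the limit orientation $\theta^*$, and the rotation-case argument inside the proof of Theorem~\ref{rswc} (the type-continuity and IVT argument as $\theta$ is rotated through $\pi/2$) can be exploited to transport the resulting crossings to every orientation; Lemma~\ref{cont} and Corollary~\ref{c_rot} should then fill in the intermediate scales between consecutive $A_k$. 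The main obstacle, and surely why the authors leave this as a conjecture, is that Theorem~\ref{rswc} gives no control of how $m(A)/n(A)$ and $\theta(A)$ depend on $A$, while the RSW of Section~\ref{sec_RSW} preserves orientation and only modifies the aspect ratio by a bounded factor per application, and Lemma~\ref{cont} only handles small perturbations. Bridging the two to produce a single $(\theta^*,\rho^*)$ at which $\Hb$ and $\Vb$ are simultaneously bounded away from $0$ and $1$ for \emph{all} sufficiently large $\la$ would seem to require either a quantitative, continuous-in-$A$ version of the type-change argument in Theorem~\ref{rswc}, or a genuinely new RSW-style construction that is equivariant under small rotations; this is where I expect a proof using only the tools developed in the paper to stall.
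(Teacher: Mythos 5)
This is stated as a conjecture in the paper, and the authors explicitly say that Theorem~\ref{rswc} ``lacks the uniformity needed to prove \eqref{rbounded}''. You correctly recognise this and pinpoint the same obstacle: the large rectangles property produces, for each area $A$, \emph{some} $\theta(A)$-aligned rectangle with the needed crossing bounds, but gives no control of how $\theta(A)$ and the aspect ratio depend on $A$; the RSW machinery of Section~\ref{sec_RSW} preserves orientation and only rescales aspect ratio by a bounded factor per application, and Lemma~\ref{cont} and Corollary~\ref{c_rot} handle only $O(1)$ perturbations, so none of this bridges to a fixed $R$ uniformly in $\la$.

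Your ``easier half'' is, however, not actually easier, and the sketch does not close. Theorem~\ref{sharp} runs the wrong way: it upgrades $\Pr_{\vecq}(\cA)>\eps$ to $\Pr_{\vecp}(\cA)>1-\eps$ for $\vecq\pl\vecp$, but does not let you deduce that $\Pr_{\vecq}(\cA)$ is bounded below from $\Pr_{\vecp}(\cA)\to 1$. Indeed, Theorem~\ref{th1p} already gives exponential decay for every $\vecq\pl\vecp$, hence $\Pr_{\vecq}(\Hb(\la_k R))\to 0$; so if $\Pr_{\vecp}(\Hb(\la_k R))\to 1$, this merely says the threshold for these crossing events sits at $\vecp$, which is exactly what criticality predicts and is not a contradiction. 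Moreover, even granting some lower bound on $\Pr_{\vecq}(\Hb(\la R))$ at one aspect ratio, the $1$-independent step of Section~\ref{sec_deduce} requires both $\Hb$ and $\Vb$ of long rectangles to be close to $1$ \emph{simultaneously} at a common orientation; a one-sided estimate combined with Lemma~\ref{oneway} and Lemma~\ref{wb} gives only $\Pr(\Vb(T(\la R)))\to 0$ (the wrong sign), without matching control of vertical crossings of $\la R$ itself. As the paper notes, the conjecture is open even for inhomogeneous bond percolation on $\Z^2$ with probabilities $p$ and $1-p$; you should treat both inequalities in \eqref{rbounded} as genuinely open, not only the lower bound.
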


As far as we are aware, this conjecture is open even for the simple
special case of inhomogeneous bond percolation on the square lattice,
where each horizontal bond is open with probability $p$ and each
vertical one with probability $1-p$, with the states of all bonds independent.

\bigskip\noindent
{\bf Acknowledgements.}
We are grateful to Robert Ziff for bringing the results of Scullard and himself
to our attention: this paper started from an attempt to show that the self-duality
they established in certain (quite general) cases does imply criticality.
The last section of this paper was added in response to a question asked
by Marek Biskup.

\end{document}